\newcommand{\CP}{\mathbb{C}\mathbb{P}^{1}}
\newcommand{\RP}{\mathbb{R}\mathbb{P}^{1}}
\newcommand{\RPP}{\mathbb{R}\mathbb{P}^{2}}
\newcommand{\CPP}{\mathbb{C}\mathbb{P}^{2}}
\newcommand{\R}{\mathbb{R}}
\newcommand{\C}{\mathbb{C}}
\newcommand{\Z}{\mathbb{Z}}
\newcommand{\ZZ}{\mathbb{Z}/2\mathbb{Z}}
\newcommand{\N}{\mathbb{N}}
\newcommand{\lra}{\longrightarrow}
\newcommand{\lmt}{\longmapsto}
\newcommand{\Si}{\Sigma}
\renewcommand{\Im}[1]{\mathrm{Im}\, #1}
\newtheorem{thm}{Theorem}[section]
\newtheorem{lm}[thm]{Lemma}
\newtheorem{coro}[thm]{Corollary}
\newtheorem{prop}[thm]{Proposition}
\theoremstyle{definition}
\newtheorem{df}[thm]{Definition}
\theoremstyle{plain}
\newcommand{\tv}{$\times$-vertex}
\newcommand{\tvs}{$\times$-vertices}
\newcommand{\gc}{generalized cut}
\DeclareMathOperator{\deep}{dp}
\DeclareMathOperator{\Cut}{Cut}
\DeclareMathOperator{\Ver}{Ver}
\DeclareMathOperator{\Ind}{Ind}
\DeclareMathOperator{\Dssn}{Dssn}
\DeclareMathOperator{\Sing}{Sing}
\title[Rigid isotopy classification of generic rational quintics in $\RPP$]{Rigid isotopy classification of generic rational curves of degree $5$ in the real projective plane}
\author{Andrés Jaramillo Puentes}
\begin{document}
\begin{abstract}

In this article we obtain the rigid isotopy classification of generic rational curves of degre~$5$ in~$\RPP$.
In order to study the rigid isotopy classes of nodal rational curves of degree $5$ in $\RPP$, we associate to every real rational nodal quintic curve with a marked real nodal point a nodal trigonal curve in the Hirzebruch surface $\Sigma_3$
and the corresponding nodal real dessin on~$\CP/(z\mapsto\bar{z})$.
The dessins are real versions, proposed by S. Orevkov~\cite{Orevkov}, of Grothendieck's {\it dessins d'enfants}. The dessins are graphs embedded in a topological surface and endowed with a certain additional structure.

We study the combinatorial properties and decompositions of dessins corresponding to real nodal trigonal curves~$C\subset \Sigma_n$ in real Hirzebruch surfaces~$\Sigma_n$.
Nodal dessins in the disk can be decomposed in blocks corresponding to cubic dessins in the disk~$\mathbf{D}^2$, which produces a classification of these dessins.
The classification of dessins under consideration leads to a rigid isotopy classification of real rational quintics in~$\RPP$.
\end{abstract}

\maketitle
\tableofcontents
\section*{Introduction}

The first part of Hilbert's 16th problem \cite{Hilbert} asks for a topological classification of all possible pairs $(\RPP,\R A)$,
where $\R A$ is the real point set of a non-singular curve $A$ of fixed degree $d$ in the real projective plane $\RPP$. 
The fact that any homeomorphism of $\RPP$ is isotopic to the identity implies that two topological pairs $(\RPP,\R A)$ and $(\RPP,\R B)$ are homeomorphic if and only if $\R A$ is isotopic to $\R B$ as subsets of $\RPP$. 
The moduli space $\mathbb{RP}^{\frac{d(d+3)}{2}}$ of real homogeneous polynomials of degree $d$ in three variables (up to multiplication by a non-zero real number) has open strata formed by the non-singular curves (defined by polynomials with non-vanishing gradient in $\C^3\setminus\{\bar{0}\}$) and a codimension one subset formed by the singular curves; the latter subset is an algebraic hypersurface called the \emph{discriminant}. 
The discriminant divides $\mathbb{RP}^{\frac{d(d+3)}{2}}$ into connected components, called {\it chambers}.
Two curves in the same chamber can be connected by a path that does not cross the discriminant, 
and therefore every point of the path corresponds to a non-singular curve. Such a path is called a \emph{rigid isotopy}. 
A version of Hilbert's 16th problem asks for a description of the set of chambers of the moduli space $\mathbb{RP}^{\frac{d(d+3)}{2}}$, equivalently, for a classification up to rigid isotopy of non-singular curves of degree~$d$ in $\RPP$. 
More generally, given a deformation class of complex algebraic varieties, one can ask for a description of the set of real varieties up to equivariant deformation within this class.
In the case of rational curves of degree $d$ in $\RPP$, a generic rational curve has only nodal points.
A rigid isotopy classification of real rational curves of degree 4 in $\RPP$ can be found in \cite{Gud}.

\vskip10pt 

In the case of curves of degree $5$ in $\RPP$, the rigid isotopy classification of non-singular curves and curves with one non-degenerate double point can be found in \cite{Kha81}.
Our main result is a rigid isotopy classification of real nodal rational curves of degree $5$ in $\RPP$.
(The isotopy classification of nodal rational curves of degree~$5$ in~$\RPP$ was obtained in~\cite{IMR}.) 
In the case where the studied curves have at least one real nodal point, we use a correspondence between real nodal rational curves of degree $5$ with a marked real nodal point and dessins on the disk $\CP/{(z\lmt\bar{z})}$ endowed with some extra combinatorial data.

The above correspondence makes use of trigonal curves on the Hirzebruch surface~$\Sigma_3$. In this paper, a trigonal curve is a complex curve $C$ lying in a ruled surface such that the restriction to $C$ of the projection provided by the ruling is a morphism of degree $3$. 
We consider an auxiliary morphism of $j$-invariant type in order to associate to the curve $C$ a dessin, 
a real version, proposed by S. Orevkov, of  dessins d'enfants introduced by A. Grothendieck.
A toile is a particular type of dessin corresponding the to the curves under consideration. 

\vskip10pt 

In Section \ref{ch:tri}, we introduce the basic notions and discuss some properties of trigonal curves and the associated dessins in the complex and real case.
We explain how the study of equivalence classes of dessins allows us to obtain information about the equivariant deformation classes of real trigonal curves. 

In Section \ref{ch:toi}, we proved that a dessin lying on $\mathbb{D}^2$ and having at most nodal singular vertices corresponds to a dessin having a decomposition as gluing of dessins associated to cubic curves in~$\RPP$.
These combinatorial properties are valid in a more general setting than the one needed for the case of quintic curves in $\RPP$.

In Section \ref{ch:hir}, we deal with a relation between curves in $\RPP$ and trigonal curves on the Hirzebruch surfaces $\Sigma_n$ and detail the case of plane cubic curves, which leads to the dessins that serve as building blocks for our constructions.
We prove a formula of Rokhlin's complex orientation type for nodal trigonal curves on $\Sigma_{2k+1}$.

Finally, in Section \ref{ch:deg5} we presented a rigid isotopy classification of nodal rational curves of degree $5$ in $\RPP$. 
These classes are presented in Figures~\ref{fig:b01} to~\ref{fig:maxellip} and in Figures~\ref{fig:MM2I} to~\ref{fig:M6}.

This classification is shown using two rigid isotopy invariants: the total Betti number with $\Z/2\Z$-coefficients (cf. \ref{thm:sti}) of the type~$\mathrm{I}$ perturbation of the curve (i.e., a perturbation of the curve of type~$\mathrm{I}$ with non-singular real point set, see Definition \ref{df:Mdpert}), and the total Betti number with $\Z/2\Z$-coefficients of the rational curve.

\section{Trigonal curves and dessins}\label{ch:tri}

In this chapter we introduce trigonal curves and dessins, which are the principal tool we use in order to study the rigid isotopy classification of generic rational curves of degree $5$ in $\RPP$.
The content of this chapter is based on the book~\cite{deg} and the article~\cite{DIZ}.

\subsection{Ruled surfaces and trigonal curves}

\subsubsection{Basic definitions}
A compact complex surface $\Sigma$ is a \emph{(geometrically) ruled surface} over a curve $B$ 
if $\Si$ is endowed with a projection $\pi:\Si\lra B$ of fiber~$\CP$ as well as a special section $E$ 
of non-positive self-intersection.

\begin{df}
A \emph{trigonal curve} is a reduced curve $C$ lying in a ruled surface $\Sigma$ such that $C$ contains neither the exceptional section $E$ nor a fiber 
 as component, and	the restriction $\pi|_{C}:C\lra B$ is a degree $3$ map.
 
A trigonal curve $C \subset \Sigma$ is {\it proper} if it does not intersect the exceptional section~$E$.
A {\it singular fiber} of a trigonal curve $C\subset\Si$ is a fiber~$F$ of $\Si$ intersecting $C\cup E$ geometrically in less than $4$ points.
\end{df} 

A fiber $F$ is singular if $C$ passes through $E\cap F$, or if~$C$ is tangent to $F$ or if $C$ has a singular point belonging to $F$ (those cases are not exclusive). A singular fiber~$F$ is {\it proper} if $C$ does not pass through $E\cap F$. Then, $C$ is proper if and only if all its singular fibers are.
We call a singular fiber~$F$ \emph{simple} if either $C$ is tangent to $F$ or $F$ contains a node (but none of the branches of $C$ is tangent to $F$), a cusp or an inflection point. The set $\{b\in B\mid F_b \text{ is a singular fiber }\}$ of points in the base having singular fibers is a discrete subset of the base $B$. We denote its complement by $B^{\#}=B^{\#}(C)$; it is a curve with punctures.
We denote by $F_b^0$ the complement $F_b\setminus\{F_b\cap E\}$.

\subsubsection{Deformations} 
We are interested in the study of trigonal curves up to deformation. In the real case, we consider the curves up to equivariant deformation (with respect to the action of the complex conjugation, cf.~\ref{sssec:realstruc}).

In the Kodaira-Spencer sense, a \emph{deformation} of the quintuple $(\pi\colon\Si\lra B,E,C)$ refers to an analytic space $X\lra S$ fibered over an marked open disk $S\ni o$ endowed with analytic subspaces $\mathcal{B}, \mathcal{E}, \mathcal{C}\subset X$ such that for every $s\in S$, the fiber~$X_{s}$ is diffeomorphic to $\Si$ and the intersections $\mathcal{B}_s:= X_s\cap \mathcal{B}$, $\mathcal{E}_s:= X_s\cap \mathcal{E}$ and $\mathcal{C}_s:= X_s\cap \mathcal{C}$ are diffeomorphic to $B$, $E$ and $C$, respectively, and there exists a map $\pi_s\colon X_s\lra B_s$ making $X_s$ a geometrically ruled surface over $B_s$ with exceptional section $E_s$, such that the diagram in Figure~\ref{fig:KodSpe} commutes
and $(\pi_o\colon X_o\lra B_o,E_o,C_o)=(\pi\colon\Si\lra B,E,C)$.

\begin{figure}[h]
\begin{center}
\begin{tikzcd}
	& E_s \arrow[r, "\text{diff.}"] \arrow[d, hook] & E \arrow[d, hook] & \\
C_s \arrow[rd, "\pi_s|_{C_s}"'] \arrow[r, hook]	& X_s \arrow[r, "\text{diff.}"] \arrow[d, "\pi_s"] & \Sigma	\arrow[d,"\pi"'] \arrow[r,hookleftarrow] & C \arrow[ld, "\pi|_C"]  \\
	& B_s \arrow[r, "\text{diff.}"] & B &
\end{tikzcd}
\end{center}
\caption{Commuting diagram of morphisms and diffeomorphisms of the fibers of a deformation.}
\label{fig:KodSpe}
\end{figure}

\begin{df}
 An \emph{elementary deformation} of a trigonal curve $C\subset\Si\lra B$ is a deformation of the quintuple $(\pi\colon\Si\lra B,E,C)$ in the Kodaira-Spencer sense.
\end{df}

An elementary deformation $X\lra S$ is \emph{equisingular} if for every $s\in S$ there exists a neighborhood $U_s\subset S$ of $s$ such that for every singular fiber~$F$ of $C$, there exists a neighborhood $V_{\pi(F)}\subset B$ of $\pi(F)$, where $\pi(F)$ is the only point with a singular fiber for every $t\in U_s$.
An elementary deformation over $D^2$ is a \emph{degeneration} or \emph{perturbation} if the restriction to $D^2\setminus\{0\}$ is equisingular and for a set of singular fibers~$F_i$ there exists a neighborhood $V_{\pi(F_i)}\subset B$ where there are no points with a singular fiber for every $t\neq0$. In this case we say that $C_t$ \emph{degenerates} to $C_0$ or $C_0$ is \emph{perturbed} to $C_t$, for $t\neq0$.

\subsubsection{Nagata transformations} 
One of our principal tools in this paper are the dessins, which we are going to associate to proper trigonal curves. A trigonal curve $C$ intersects the exceptional section $E$ in a finite number of points, since $C$ does not contain $E$ as component. We use the Nagata transformations in order to construct a proper trigonal curve out of a non-proper one. 

\begin{df}
 A {\it Nagata transformation} is a fiberwise birational transformation $\Si\lra\Si'$ consisting of blowing up a point $p\in\Si$ (with exceptional divisor $E''$) and contracting the strict transform of the fiber~$F_{\pi(p)}$ containing $p$. 
The new exceptional divisor $E'\subset\Si'$ is the strict transform of $E\cup E''$.
 The transformation is called {\it positive} if $p\in E$, and {\it negative} otherwise.
\end{df}
 
The result of a positive Nagata transformation is a ruled surface $\Si'$, with an exceptional divisor $E'$ such that $-E'^2=-E^2+1$. The trigonal curves $C_1$ and $C_2$ over the same base $B$ are {\it Nagata equivalent} if there exists a sequence of Nagata transformations mapping one curve to the other by strict transforms. Since all the points at the intersection $C\cap E$ can be resolved, every trigonal curve $C$ is Nagata equivalent to a proper trigonal curve $C'$ over the same base, called a {\it proper model} of $C$.
 
 \subsubsection{Weierstraß equations}
 For a trigonal curve, the Weierstraß equations are an algebraic tool which allows us to study the behavior of the trigonal curve with respect to the zero section and the exceptional one. They give rise to an auxiliary morphism of $j$-invariant type, which plays an intermediary role between trigonal curves and dessins. 
Let $C\subset\Si\lra B$ be a proper trigonal curve. Mapping a point $b\in B$ of the base to the barycenter of the points in $C\cap F_b^0$ (weighted according to their multiplicity) defines a section $B\lra Z\subset\Si$ called the {\it zero section}; it is disjoint from the exceptional section $E$.\\

The surface $\Si$ can be seen as the projectivization of a rank $2$ vector bundle, which splits as a direct sum of two line bundles such that the zero section $Z$ corresponds to the projectivization of $\mathcal{Y}$, one of the terms of this decomposition. In this context, the trigonal curve $C$ can be described by a Weierstraß equation, which in suitable affine charts has the form
\begin{equation} \label{eq:Weiertrass}
x^3+g_{2}x+g_{3}=0,
\end{equation}
where $g_{2}$, $g_{3}$ are sections of $\mathcal{Y}^2$, $\mathcal{Y}^3$ respectively, and $x$ is an affine coordinate such that $Z=\{x=0\}$ and $E=\{x=\infty\}$. For this construction, we can identify $\Si\setminus B$ with the total space of $\mathcal{Y}$ and take $x$ as a local trivialization of this bundle. Nonetheless, the sections $g_{2}$, $g_{3}$ are globally defined. The line bundle $\mathcal{Y}$ is determined by $C$. The sections $g_{2}$, $g_{3}$ are determined up to change of variable defined by 
\begin{equation*}
(g_{2}, g_{3})\lra(s^2g_{2},s^3 g_{3} ),\;s\in H^{0}(B,\mathcal{O}^{*}_{B}).
\end{equation*}
Hence, the singular fibers of the trigonal curve $C$ correspond to the points where the equation~\eqref{eq:Weiertrass} has multiple roots, i.e., the zeros of the discriminant section
\begin{equation}
 \Delta:=-4g_2^3-27g_3^2\in H^0(B,\mathcal{O}_B(\mathcal{Y}^6)).
\end{equation}
Therefore, $C$ being reduced is equivalent to $\Delta$ being identically zero.
A Nagata transformation over a point $b\in B$ changes the line bundle $\mathcal{Y}$ to $\mathcal{Y}\otimes\mathcal{O}_B(b)$ and the sections $g_2$ and $g_3$ to $s^2 g_2$ and $s^3 g_3$, where $s\in H^0(B,\mathcal{O}_B)$ is any holomorphic function having a zero at $b$.

\begin{df} \label{df:gen}
 Let $C$ be a non-singular trigonal curve with Weierstraß model determined by the sections $g_2$ and $g_3$ as in \eqref{eq:Weiertrass}. The trigonal curve $C$ is {\it almost generic} if every singular fiber corresponds to a simple root of the determinant section $\Delta=-4g_2^3-27g_3^2$ which is not a root of $g_2$ nor of $g_3$. The trigonal curve $C$ is {\it generic} if it is almost generic and the sections $g_2$ and $g_3$ have only simple roots. 
\end{df}

\subsubsection{The $j$-invariant}
The $j$-invariant describes the relative position of four points in the complex projective line $\CP$. We describe some properties of the $j$-invariant in order to use them in the description of the dessins.

\begin{df}
 Let $z_1$, $z_2$, $z_3$, $z_4\in\CP$. The $j$-{\it invariant} of a set $\{z_1, z_2, z_3, z_4\}$ is given by
\begin{equation} \label{eq:jinvariant}
\displaystyle j(z_1, z_2, z_3, z_4)=\frac{4(\lambda^2-\lambda+1)^3}{27\lambda^2(\lambda-1)^2},
\end{equation}
where $\lambda$ is the {\it cross-ratio} of the quadruple $(z_1, z_2, z_3, z_4)$ defined as

 \begin{equation*}
\displaystyle\lambda(z_1, z_2, z_3, z_4)=\frac{z_1-z_3}{z_2-z_3} : \frac{z_1-z_4}{z_2-z_4}.
\end{equation*}
 \end{df}
 
The cross-ratio depends on the order of the points while the $j$-invariant does not. Since the cross-ratio $\lambda$ is invariant under Möbius transformations, so is the $j$-invariant. When two points $z_i$, $z_j$ coincide, the cross-ratio $\lambda$ equals either $0$, $1$ or~$\infty$, and the $j$-invariant equals $\infty$. 
 
Let us consider 
a polynomial $z^3+g_2z+g_3$. We define the $j$-invariant $j(z_1, z_2, z_3)$ of its roots $z_1$, $z_2$, $z_3$ as $j(z_1, z_2, z_3,\infty)$. If $\Delta=-4g_2^3-27g_3^2$ is the discriminant of the polynomial, then
\begin{equation*}
\displaystyle j(z_1, z_2, z_3,\infty)=\frac{-4g_2^3}{\Delta}.
\end{equation*}

A subset $A$ of $\CP$ is real if $A$ is invariant under the complex conjugation. We say that $A$ has a nontrivial symmetry if there is a nontrivial permutation of its elements which extends to a linear map $z\lmt az+b$, $a\in\C^*$, $b\in\C$.

\begin{lm}[\cite{deg}]
The set $ \{z_1, z_2, z_3\}$ of roots of the polynomial $z^3+g_2z+g_3$ has a nontrivial symmetry if and only if its $j$-invariant equals $0$ (for an order 3 symmetry) or 1 (for an order 2 symmetry).
\end{lm}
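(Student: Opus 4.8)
The plan is to exploit the distinguished role of the point $\infty$: since the symmetries in question are affine maps $z\mapsto az+b$ and every such map fixes $\infty$, the problem reduces to understanding the affine symmetries of the three finite roots $\{z_1,z_2,z_3\}$, while $j$ is computed by the closed formula $j=-4g_2^3/\Delta$ recorded just above. Throughout I would work in the regime of distinct roots, which is forced here because both $j=0$ and $j=1$ are finite values whereas coinciding roots give $j=\infty$.

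First I would record two normalizations. Because the cubic $z^3+g_2z+g_3$ has no quadratic term, the barycenter $c=(z_1+z_2+z_3)/3$ equals $0$. Now suppose $\phi(z)=az+b$ induces a nontrivial permutation $\bar\phi$ of $\{z_1,z_2,z_3\}$; then $\bar\phi\in S_3$ is either a transposition (order $2$) or a $3$-cycle (order $3$). Since $\phi$ permutes the root set it fixes the barycenter, so $\phi(0)=0$ and hence $b=0$: every nontrivial symmetry is in fact linear, $\phi(z)=az$. Moreover $\phi$ shares the order of $\bar\phi$, because an affine map fixing three distinct points is the identity, so $\phi^{\mathrm{ord}(\bar\phi)}$ fixes all roots and equals $\mathrm{Id}$, while $\phi\neq\mathrm{Id}$. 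Solving $a^n=1$ then forces $a=-1$ in the order-$2$ case and $a=\omega$, a primitive cube root of unity, in the order-$3$ case.

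Next I would read off the root configuration in each case and compute $j$. An order-$2$ symmetry $z\mapsto -z$ leaves $\{z_1,z_2,z_3\}$ invariant under negation; as the set has odd cardinality and barycenter $0$, the fixed root must be $0$ and the other two are $\pm w$, so the product of the roots vanishes, $g_3=0$, and $\Delta=-4g_2^3$ gives $j=-4g_2^3/(-4g_2^3)=1$. An order-$3$ symmetry $z\mapsto\omega z$ makes the root set a single $\omega$-orbit $\{w,\omega w,\omega^2 w\}$, i.e. the roots of $z^3-w^3$, so $g_2=0$ and $j=0$. This establishes the forward implication with exactly the claimed pairing of symmetry orders and values of $j$.

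For the converse I would invert these computations directly from the formula for $j$: in the distinct-root regime $j=0$ is equivalent to $g_2=0$ and $j=1$ to $27g_3^2=0$, that is $g_3=0$. If $g_2=0$ the roots are the three cube roots of $-g_3$, manifestly permuted by $z\mapsto\omega z$, an order-$3$ symmetry; if $g_3=0$ the cubic factors as $z(z^2+g_2)$ with roots $0,\pm\sqrt{-g_2}$, permuted by $z\mapsto -z$, an order-$2$ symmetry. The argument is elementary, and I expect no serious obstacle; the only points needing care are the two structural reductions in the second paragraph — that a symmetry must fix the barycenter (hence is linear) and that it has the same order as the induced permutation — together with the standing restriction to distinct roots, which keeps $j$ finite and makes the cases $j=0$ and $j=1$ mutually exclusive.
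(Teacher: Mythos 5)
The paper does not prove this lemma --- it is quoted from \cite{deg} with a citation only --- so there is no in-text argument to compare against; your proposal supplies a complete, correct, and elementary proof. All the key steps are sound: the barycenter argument forcing $b=0$, the identification of the order of the affine map with the order of the induced permutation via the fact that an affine map fixing two distinct points is the identity, the normal forms $g_3=0$ (order $2$, giving $j=-4g_2^3/(-4g_2^3)=1$) and $g_2=0$ (order $3$, giving $j=0$), and the converse read off directly from $j=-4g_2^3/\Delta$. Your explicit restriction to distinct roots is also the right convention to flag: coinciding roots give $j=\infty$, and the resulting two-element root set would otherwise carry a spurious order-$2$ symmetry, so the lemma is only meaningful in the simple-root regime in which it is applied.
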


\begin{prop}[\cite{deg}]
Assume that $j(z_1, z_2, z_3)\in\R$. Then, the following holds
\begin{itemize}
\item
The $j$-invariant $j(z_1, z_2, z_3)<1$ if and only if the points $z_1, z_2, z_3$ form an isosceles triangle. The special angle seen as a function of the $j$-invariant is a increasing monotone function. This angle tends to $0$ when $j$ tends to $-\infty$, equals $\frac{\pi}{3}$ at $j=0$ and tends to $\frac{\pi}{2}$ when $j$ approaches $1$. 

\item
The $j$-invariant $j(z_1, z_2, z_3)\geq1$ if and only if the points $z_1, z_2, z_3$ are collinear.The ratio between the lengths of the smallest segment and the longest segment $\overline{z_lz_k}$ seen as a function of the $j$-invariant is a decreasing monotone function. This ratio equals $1$ when $j$ equals $1$, and $0$ when $j$ approaches $\infty$.
\end{itemize}
\end{prop}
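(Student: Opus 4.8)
The plan is to exploit that $j(z_1,z_2,z_3)=j(z_1,z_2,z_3,\infty)$ is a complete invariant of the similarity class of the unordered triple. Since the cross-ratio is $\mathbf{PGL}$-invariant and the fourth argument is fixed at $\infty$, the value $j(z_1,z_2,z_3)$ is unchanged under every affine map $z\mapsto az+b$ with $a\in\C^{*}$, i.e.\ under all orientation-preserving similarities of the plane. First I would record the normalized cross-ratio $\lambda=\frac{z_1-z_3}{z_2-z_3}$, so that $\arg\lambda$ is the angle of the triangle at $z_3$ and $|\lambda|=|z_1-z_3|/|z_2-z_3|$ is the ratio of the two sides meeting there; the companion identities $\lambda-1=\frac{z_1-z_2}{z_2-z_3}$ and $1/\lambda$, etc.\ record the analogous data at the other vertices. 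Because $j$ is symmetric, it descends through the anharmonic $S_3$-action $\lambda\mapsto\{\lambda,\,1-\lambda,\,1/\lambda,\dots\}$ to a bijection $\{\lambda\in\C\setminus\{0,1\}\}/S_3\xrightarrow{\ \sim\ }\C$; hence $j$ is a complete invariant of the oriented similarity class of the triple.

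The real-locus characterization I would get conceptually. Complex conjugation has real coefficients, so $\overline{j(z_1,z_2,z_3)}=j(\bar z_1,\bar z_2,\bar z_3)$, and $\{\bar z_i\}$ is the mirror image of $\{z_i\}$. Thus $j\in\R$ iff the triple and its mirror image share the same $j$, i.e.\ (by completeness) iff they are orientation-preserving similar, i.e.\ iff the triangle is amphichiral; and a triangle is directly similar to its mirror image exactly when it is isosceles (equilateral included) or degenerate (collinear). This gives the stated dichotomy. The three isosceles families are pinned down computationally in parallel: $j\in\R$ iff $\bar\lambda$ lies in the $S_3$-orbit of $\lambda$, and solving $\bar\lambda=g(\lambda)$ over the six group elements yields exactly the real axis (the two order-$3$ elements give nothing), the unit circle $|\lambda|=1$, the line $\operatorname{Re}\lambda=\tfrac12$, and the circle $|\lambda-1|=1$. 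Reading these back through $\lambda$, $\lambda-1$, $1/\lambda$ identifies the real axis with collinearity ($\lambda\in\R$ means $z_1-z_3\parallel z_2-z_3$) and the three circles/line with the three isosceles conditions $|z_1-z_3|=|z_2-z_3|$, $|z_1-z_2|=|z_1-z_3|$, $|z_1-z_2|=|z_2-z_3|$.

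It then remains to separate the strata by the value $1$ and to establish monotonicity. On the collinear stratum I would normalize $z_3=0$, $z_2=1$, $z_1=t\in\R$, so $\lambda=t$ and
\begin{equation*}
j=\frac{4(t^{2}-t+1)^{3}}{27\,t^{2}(t-1)^{2}};
\end{equation*}
the symmetry $j(t)=j(1-t)$ forces the minimum at the harmonic point $t=\tfrac12$, where $j=1$ and $z_1$ is the midpoint, while $j\to+\infty$ as two points collide. A one-variable derivative computation shows $j$ is strictly monotone on a fundamental interval, so $j\ge1$ on this stratum, and the ratio of the shorter to the longer segment is strictly decreasing in $j$, equal to $1$ at $j=1$ and tending to $0$ as $j\to\infty$. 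On the isosceles stratum I would put $\lambda=e^{i\phi}$, which reduces the invariant to
\begin{equation*}
j=\frac{2(2\cos\phi-1)^{3}}{27(\cos\phi-1)},
\end{equation*}
whose derivative in $\cos\phi$ has constant sign on $(-1,1)$; hence $j$ is strictly monotone in the apex angle, stays below $1$, vanishes precisely at the equilateral configuration (where the preceding Lemma forces $j=0$ via the order-$3$ symmetry), and degenerates into the collinear stratum as $j\to1$. Evaluating at the distinguished angles recovers the stated limiting values.

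The conceptual crux is the real-locus determination, which I expect to be the clean part once completeness of $j$ as an oriented-similarity invariant is in hand. The genuinely fiddly point is the monotonicity together with the bookkeeping needed to ensure that \emph{the special angle} and \emph{the length ratio} are \emph{single-valued} functions of $j$ despite the sixfold $S_3$ ambiguity in $\lambda$. I would handle this by restricting each explicit parametrization to one fundamental domain of the $S_3$-action, so that every unordered configuration is counted once, and by checking that the chosen geometric quantity is constant on $S_3$-orbits; after that, the sign of a single derivative finishes the argument.
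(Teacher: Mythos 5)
The paper offers no proof of this proposition --- it is imported verbatim from \cite{deg} --- so there is nothing internal to compare against; your task is really to supply the missing argument, and your strategy is the right one. The completeness of $j$ as an invariant of the oriented similarity class, the observation that $j\in\R$ iff $\bar\lambda$ lies in the anharmonic orbit of $\lambda$, the resulting decomposition of the real locus into the real axis (collinear) and the three circles/line (isosceles), and the two one-parameter reductions all check out. On the collinear stratum, $j(t)=\frac{4(t^2-t+1)^3}{27t^2(t-1)^2}$ is indeed strictly increasing from $1$ to $\infty$ on the fundamental interval $[2,\infty)$ and the adjacent-segment ratio $1/(t-1)$ decreases from $1$ to $0$; on the isosceles stratum your formula $j=\frac{2(2\cos\phi-1)^3}{27(\cos\phi-1)}$ is correct, and $\frac{dj}{d(\cos\phi)}=\frac{2(2\cos\phi-1)^2(4\cos\phi-5)}{27(\cos\phi-1)^2}\le 0$ makes $j$ a strictly increasing bijection from the apex angle $\phi\in(0,\pi)$ onto $(-\infty,1)$.

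The one genuine gap is your closing claim that ``evaluating at the distinguished angles recovers the stated limiting values.'' You never perform that evaluation, and it does not come out as stated. With the apex-angle parametrization one gets $\phi\to 0$ as $j\to-\infty$ and $\phi=\pi/3$ at $j=0$, in agreement with the proposition, but $j\to 1^-$ corresponds to $\cos\phi\to-1$, i.e.\ $\phi\to\pi$ (the triangle degenerating to the harmonic collinear configuration with the apex at the midpoint), not $\phi\to\pi/2$; a right angle at the apex gives $j=2/27$. No other natural choice of ``special angle'' rescues all three stated values together with the claimed monotonicity: the base angle $(\pi-\phi)/2$ gives $\pi/3$ at $j=0$ but is decreasing with the endpoints swapped, and the half-apex angle $\phi/2$ has the right endpoints and monotonicity but equals $\pi/6$ at $j=0$. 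So you must either pin down precisely what \cite{deg} means by the special angle, or record that the limit at $j\to1^-$ is $\pi$ rather than $\pi/2$; as written, your proof asserts an agreement with the statement that your own computation contradicts.
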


As a corollary, if the $j$-invariant of $\{z_1, z_2, z_3\}$ is not real, then the points form a triangle having side lengths pairwise different. Therefore, in this case, the points $z_1, z_2, z_3$ can be ordered according to the increasing order of side lengths of the opposite edges. E.g., for $\{z_1, z_2, z_3\}=\{\frac{i}{2},0,1\}$ the order is $1$, $\frac{i}{2}$ and $0$.

\begin{prop}[\cite{deg}]
 If $j(z_1, z_2, z_3)\notin\R$, then the above order on the points~$z_1, z_2, z_3$ is clockwise if $\Im(j(z_1, z_2, z_3))>0$ and anti-clockwise if ${\Im(j(z_1, z_2, z_3))<0}$.
\end{prop}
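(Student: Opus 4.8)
The plan is to reduce the assertion to a single explicit evaluation, using invariance under similarities together with a connectedness argument.

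First I would invoke the corollary preceding the statement: since $j(z_1,z_2,z_3)\notin\R$, the three points span a genuine scalene triangle, so the pairwise distances are distinct and both the canonical order (by increasing length of the opposite edge) and the orientation are well defined. The structural point is that $j(z_1,z_2,z_3)$, the canonical order, and the orientation are all invariant under an orientation-preserving similarity $z\mapsto az+b$ with $a\in\C^{*}$, $b\in\C$: such a map extends to a Möbius transformation of $\CP$ fixing $\infty$, hence preserves $j(z_1,z_2,z_3)=j(z_1,z_2,z_3,\infty)$ by the Möbius invariance noted above; it scales every distance by $|a|$, hence preserves the order of the edge lengths; and it has positive Jacobian, hence preserves orientation. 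I may therefore normalize by the orientation-preserving affine map sending $z_1\mapsto0$ and $z_2\mapsto1$, and write $z_3=\tau$.

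Next I would turn the order hypothesis into a region for $\tau$. With $z_1=0$, $z_2=1$, $z_3=\tau$, the inequalities $|z_2-z_3|<|z_1-z_3|<|z_1-z_2|$ become $|1-\tau|<|\tau|<1$, so $\tau$ lies in the lune $L=\{\tau:\Re\tau>\tfrac12,\ |\tau|<1\}$. This $L$ is convex and meets the real axis exactly in the open segment $(\tfrac12,1)$, which crosses it; removing that segment splits $L$ into the two connected components $L^{\pm}=L\cap\{\pm\Im\tau>0\}$. The signed area of $(0,1,\tau)$ has the sign of $\Im\tau$, so $L^{+}$ is the anti-clockwise locus and $L^{-}$ the clockwise one. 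The composite $\tau\mapsto\lambda=\tau/(\tau-1)\mapsto j$ is continuous on $L$, and by the first step $\Im j$ is nowhere zero for $\Im\tau\neq0$ (such $\tau$ give scalene triangles); hence $\Im j$ has constant sign on each of $L^{+}$ and $L^{-}$. Moreover complex conjugation $\tau\mapsto\bar\tau$ interchanges $L^{+}$ and $L^{-}$, reverses orientation, and carries $j$ to $\bar{j}$ because the formulas for $\lambda$ and for $j$ have real coefficients; it therefore reverses the sign of $\Im j$, so the two constant signs on $L^{+}$ and $L^{-}$ are opposite.

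It then suffices to fix the global correspondence at one explicit point. I would take $\tau=\tfrac35+\tfrac15 i\in L^{+}$, for which $\lambda=\tau/(\tau-1)=-1-i$, substitute into the defining formula for $j$, and read off the sign of $\Im j$; by the previous paragraph this one value pins down the sign on each component, and the stated dichotomy follows. The topological core --- constant sign on each component and its reversal under conjugation --- is robust and convention-free; the step I expect to demand the most care is exactly this base case, namely reconciling the sign conventions for the cross-ratio and for ``clockwise'' on $\CP$ so that the explicit computation matches the orientation named in the statement.
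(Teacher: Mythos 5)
A preliminary remark: the paper does not prove this proposition --- it is quoted from~\cite{deg} --- so there is no internal argument to compare yours against. Your reduction itself is sound and well organized: the normalization to $z_1=0$, $z_2=1$, $z_3=\tau$ by an orientation-preserving similarity, the identification of the admissible region with the lune $L=\{\tau:\Re\tau>\tfrac12,\ |\tau|<1\}$, the constancy of the sign of $\Im{j}$ on each half $L^{\pm}$ by connectedness, and the exchange of these two signs under $\tau\mapsto\bar\tau$ are all correct, and together they prove that \emph{some} bijection between $\{\Im{j}>0,\ \Im{j}<0\}$ and $\{\text{clockwise},\ \text{anti-clockwise}\}$ holds.

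The gap is that the one step you defer --- the base-point evaluation --- is exactly the step that decides which of the two possible bijections is the true one, and it cannot be set aside as ``reconciling conventions''. Carrying it out at your chosen point $\tau=\tfrac35+\tfrac15 i\in L^{+}$ gives $\lambda=-1-i$, hence $\lambda^2-\lambda+1=2+3i$, $\lambda^2(\lambda-1)^2=-8+6i$ and $j=\tfrac{422+204i}{675}$, so $\Im{j}>0$; but $L^{+}$ is the component on which the ordered triple $(0,1,\tau)$ has positive signed area, i.e.\ is \emph{anti-clockwise} for the standard orientation of $\C$. The same tension is already visible in the paper's own worked example: for $\{\tfrac i2,0,1\}$ the canonical order $1,\tfrac i2,0$ traverses the triangle anti-clockwise, while $j=\tfrac{-157+174i}{675}$ has $\Im{j}>0$. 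So with the usual orientation of $\C$ your argument, once completed, yields the statement with the two orientations interchanged. To finish, you must either pin down the orientation convention under which ``clockwise'' is meant in~\cite{deg} (so that your computed sign matches the assertion), or conclude that the signs in the statement as transcribed here are swapped relative to the standard convention; as written, your proof stops exactly short of resolving this, which is the entire content of the proposition.
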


\subsubsection{The $j$-invariant of a trigonal curve}
Let $C$ be a proper trigonal curve. We use the $j$-invariant defined for triples of complex numbers in order to define a meromorphic map $j_C$ on the base curve $B$. The map $j_{C}$ encodes the topology of the trigonal curve $C$ with respect to the fibration $\Sigma\lmt B$. The map $j_C$ is called the $j$-\emph{invariant} of the curve $C$ and provides a correspondence between trigonal curves and dessins.

\begin{df}
 For a proper trigonal curve~$C$, we define its $j$-invariant $j_C$ as the analytic continuation of the map
 \[
\begin{array}{ccc}
 B^{\#}&\lra&\C \\
 b &\lmt &j\mbox{-invariant of } C\cap F_b^0\subset F_b^0\cong\C.
\end{array}
\]
We call the trigonal curve $C$ {\it isotrivial} if its $j$-invariant is constant.
\end{df}

If a proper trigonal curve $C$ is given by a Weierstraß equation of the form \eqref{eq:Weiertrass}, then
\begin{equation}
j_C=-\frac{4g_2^3}{\Delta}\; , \text{ where }\Delta=-4g_2^3-27g_3^2.
\end{equation}

\begin{thm}[\cite{deg}] \label{thm:Etc}
 Let $B$ be a compact curve and $j\colon B\lra\CP$ a non-constant meromorphic map. Up to Nagata equivalence, there exists a unique trigonal curve $C\subset\Si\lra B$ such that $j_{C}=j$.
\end{thm}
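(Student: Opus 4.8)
The plan is to pass to Weierstrass models and translate the equality $j_C=j$ into an algebraic condition on the sections $g_2,g_3$. Since every trigonal curve is Nagata equivalent to a proper one, I may assume throughout that the curves involved are proper and hence given by an equation of the form~\eqref{eq:Weiertrass}. The starting observation is that the two identities $j=-4g_2^3/\Delta$ and $j-1=27g_3^2/\Delta$, with $\Delta=-4g_2^3-27g_3^2$, show that $j$ determines the ratio $g_2^3/g_3^2=\tfrac{27}{4}\,j/(1-j)$ as a meromorphic function on $B$, and conversely. I will treat existence and uniqueness separately, the uniqueness being the part where Nagata equivalence genuinely enters.

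For existence I would exhibit an explicit solution. Treating $j$ and $1-j$ as meromorphic functions on $B$, set $g_2:=3\,j(1-j)$ and $g_3:=2\,j(1-j)^2$. A direct computation gives $\Delta=-108\,j^2(1-j)^3$, whence $-4g_2^3/\Delta=j$, so these functions have the prescribed $j$-invariant; reducedness ($\Delta\not\equiv0$) follows from $j$ being non-constant. It then remains to promote $g_2,g_3$ to honest sections: one chooses a line bundle $\mathcal{Y}$ on $B$ large enough that $g_2\in H^0(B,\mathcal{Y}^2)$ and $g_3\in H^0(B,\mathcal{Y}^3)$, for instance $\mathcal{Y}=\mathcal{O}_B(P)$ with $P$ the polar divisor of $j$, since a pole of $j$ of order $m$ produces poles of orders $2m$ and $3m$ in $g_2$ and $g_3$. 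The Weierstrass equation then defines a proper trigonal curve $C\subset\Si\lra B$ with $j_C=j$.

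For uniqueness, let $C_1,C_2$ be proper trigonal curves with $j_{C_1}=j_{C_2}=j$, defined by sections $(g_2^{(i)},g_3^{(i)})$. Because $j$ is non-constant we have $g_2^{(i)},g_3^{(i)}\not\equiv0$, and the reformulation above forces $(g_2^{(1)})^3/(g_3^{(1)})^2=(g_2^{(2)})^3/(g_3^{(2)})^2$. Writing $u=g_2^{(2)}/g_2^{(1)}$ and $v=g_3^{(2)}/g_3^{(1)}$ this reads $u^3=v^2$, so that $s:=v/u$ is a meromorphic function on $B$ with $s^2=u$ and $s^3=v$; equivalently $g_2^{(2)}=s^2g_2^{(1)}$ and $g_3^{(2)}=s^3g_3^{(1)}$.

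The main point, and the step I expect to be the real obstacle, is to recognize this change of Weierstrass data as a Nagata equivalence. When $s\in H^0(B,\mathcal{O}_B^*)$ it is exactly the change of variable identifying the two Weierstrass models over the same $\mathcal{Y}$. In general I would decompose the multiplication $(g_2,g_3)\mapsto(s^2g_2,s^3g_3)$ according to the divisor $\operatorname{div}(s)=\sum_b n_b\,b$: by the description recalled before the statement, a positive Nagata transformation over $b$ multiplies $(g_2,g_3)$ by $(\tilde s^2,\tilde s^3)$ for $\tilde s$ vanishing at $b$ and replaces $\mathcal{Y}$ by $\mathcal{Y}\otimes\mathcal{O}_B(b)$, while its inverse handles poles. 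Composing one such transformation (or its inverse) for each $b$ in the support of $\operatorname{div}(s)$, and absorbing the remaining nowhere-vanishing factor by an $\mathcal{O}_B^*$-change of variable, realizes $(g_2^{(1)},g_3^{(1)})$ and $(g_2^{(2)},g_3^{(2)})$ as strict transforms of one another. Hence $C_1$ and $C_2$ are Nagata equivalent, which is the asserted uniqueness. The delicate bookkeeping is thus entirely in matching the local orders of $s$ at the points of $B$ with the corresponding sequence of Nagata transformations.
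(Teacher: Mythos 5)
Your argument is correct and is essentially the standard proof of this statement (the paper itself only cites \cite{deg} for it): existence via the explicit Weierstra{\ss} data $g_2=3j(1-j)$, $g_3=2j(1-j)^2$ with a suitable twist $\mathcal{Y}$, and uniqueness by showing $u^3=v^2$ forces $(g_2,g_3)\mapsto(s^2g_2,s^3g_3)$, which is exactly the effect of a chain of Nagata transformations along $\operatorname{div}(s)$. The only cosmetic point is that $u$, $v$ and $s$ are meromorphic \emph{sections} of powers of $\mathcal{Y}_2\otimes\mathcal{Y}_1^{-1}$ rather than functions when $\mathcal{Y}_1\not\cong\mathcal{Y}_2$, but the divisor bookkeeping you describe goes through verbatim.
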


Following the proof of the theorem, $j_B\lra\CP$ leads to a unique \emph{minimal} proper trigonal curve $C_j$, in the sense that any other trigonal curve with the same $j$-invariant can be obtained by positive Nagata transformations from $C_j$.

An equisingular deformation $C_s$, $s\in S$, of $C$ leads to an analytic deformation of the couple $(B_s,j_{C_s})$.

\begin{coro}[\cite{deg}] \label{coro:Etc}
 Let $(B,j)$ be a couple, where $B$ is a compact curve and $j\colon B\lra\CP$ is a non-constant meromorphic map. Then, any deformation of $(B,j)$ results in a deformation of the minimal curve $C_j\subset\Si\lra B$ associated to $j$.
\end{coro}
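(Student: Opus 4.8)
The plan is to carry out the construction underlying Theorem~\ref{thm:Etc} in families, so that an analytic family of couples $(B_s,j_s)$ produces an analytic family of minimal trigonal curves $C_{j_s}$ satisfying the Kodaira--Spencer requirements of an elementary deformation. Concretely, a deformation of $(B,j)$ is an analytic family $\mathcal{B}\lra S$ over a marked disk $S\ni o$, whose fibers $B_s$ are compact curves diffeomorphic to $B$, together with a meromorphic map $J\colon\mathcal{B}\lra\CP$ restricting to a non-constant $j_s\colon B_s\lra\CP$ on each fiber, with $(B_o,j_o)=(B,j)$. For every $s$, Theorem~\ref{thm:Etc} associates to $j_s$ a minimal proper trigonal curve $C_{j_s}\subset\Si_s\lra B_s$, and I would show these glue into a single analytic space $X\lra S$ carrying the subspaces $\mathcal{B},\mathcal{E},\mathcal{C}$ prescribed in the definition of a deformation.

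First I would recall how $C_j$ is reconstructed from $j$ in the proof of Theorem~\ref{thm:Etc}. The divisor of $j$, that of $j-1$ and the polar divisor of $j$ prescribe the zeros of $g_2$, of $g_3$ and of $\Delta=-4g_2^3-27g_3^2$ through the identities $j=-4g_2^3/\Delta$ and $j-1=27g_3^2/\Delta$; these divisorial constraints single out the line bundle $\mathcal{Y}$ as the minimal one admitting sections $g_2\in H^0(B,\mathcal{Y}^2)$ and $g_3\in H^0(B,\mathcal{Y}^3)$ compatible with $j$, the sections being fixed up to the scaling $(g_2,g_3)\lmt(u^2g_2,u^3g_3)$ with $u\in H^0(B,\mathcal{O}_B^*)$. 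The surface $\Si$ is then the projectivization built from $\mathcal{Y}$, and $C_j$ is cut out by the Weierstraß equation $x^3+g_2x+g_3=0$.

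The core of the argument is that each ingredient of this reconstruction varies analytically with $s$. Since $J$ is meromorphic on the total space $\mathcal{B}$, the divisors of $j_s$, of $j_s-1$ and the poles of $j_s$ form flat families of divisors over $S$; the associated line bundles $\mathcal{Y}_s$ therefore fit into a line bundle on $\mathcal{B}$, and the sections $g_{2,s}$, $g_{3,s}$ can be chosen to depend analytically on $s$. Projectivizing in families yields a ruled surface $X\lra S$ over $\mathcal{B}$ with a relative exceptional section $\mathcal{E}$ and a relative zero section, and the family of Weierstraß equations cuts out $\mathcal{C}\subset X$ whose fiber over $s$ is $C_{j_s}$. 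Restricting to $s=o$ recovers $(\Si,E,C_j)$, and the projections assemble into the commuting Kodaira--Spencer diagram, so $X\lra S$ is an elementary deformation of $C_j$.

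The main obstacle I anticipate is controlling the minimality of the model uniformly in $s$, i.e.\ showing that the self-intersection $-E_s^2$ does not jump. The minimal line bundle $\mathcal{Y}_s$ depends only upper-semicontinuously on the divisorial data, so $\mathcal{Y}_s$ could in principle be strictly smaller at the special fiber than at nearby ones, which would force extra positive Nagata transformations and break the fibered structure. I would resolve this by shrinking $S$: for $s$ close to $o$ the multiplicities of $j_s$, of $j_s-1$ and of the poles of $j_s$ at the relevant base points remain constant, so the divisorial constraints defining $\mathcal{Y}_s$ are locally constant. Hence $\deg\mathcal{Y}_s$ and the ruled surface type are constant, the fibers $\Si_s$ are all diffeomorphic to $\Si$, and the family $X\lra S$ meets every condition of the definition, as required.
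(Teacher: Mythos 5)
The paper states this corollary as a citation from~\cite{deg} and supplies no proof of its own, so there is nothing internal to compare your argument against; I can only assess it on its merits. Your overall strategy---rerunning the Weierstra{\ss} reconstruction of Theorem~\ref{thm:Etc} fiberwise, using $j=-4g_2^3/\Delta$ and $j-1=27g_3^2/\Delta$ to pin down $(\mathcal{Y},g_2,g_3)$, and projectivizing in families---is the natural one, and you correctly isolate the one genuinely delicate point: whether the \emph{minimal} model varies flatly, i.e.\ whether $\deg\mathcal{Y}_s$ is constant.

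Your proposed fix for that point, however, does not work. You claim that after shrinking $S$ the multiplicities of $j_s$, of $j_s-1$ and of the poles of $j_s$ at the relevant points become constant. But the marked fiber $o$ survives every shrinking of $S$, and it is exactly at $o$ that multiplicities can concentrate: for instance three simple zeros of $j_s$ ($s\neq o$) may collide into a single zero of $j_o$ of multiplicity $3$, or two simple zeros of $j_s-1$ into a double zero of $j_o-1$. For the minimal model one computes $6\deg\mathcal{Y}=\deg j+2a_1+4a_2+3b$, where $a_i$ counts zeros of $j$ of multiplicity $\equiv i\pmod 3$ and $b$ counts zeros of $j-1$ of odd multiplicity; in either collision above this quantity drops by $6$ at $s=o$, so the family of minimal curves $C_{j_s}$ is not flat at $o$ and its limit is a positive Nagata transform of $C_{j_o}$ rather than $C_{j_o}$ itself. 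The correct resolution---under which both the corollary and the rest of your construction are sound---is that the deformations of $(B,j)$ through which this corollary is actually invoked (simple, equisingular, or fixed-dessin deformations, cf.\ Definition~\ref{df:equidef} and the discussion preceding Theorem~\ref{th:correpondance1}) preserve by definition the divisorial pullbacks $j^{*}(0)$, $j^{*}(1)$, $j^{*}(\infty)$; with the multiplicities constant by hypothesis rather than by a shrinking argument, the local data determining $\mathcal{Y}_s$ is locally constant and the remainder of your family construction goes through.
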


The $j$-invariant of a generic trigonal curve $C\subset\Sigma\lra B$ has degree $\deg(j_C)=6d$, where $d=-E^2$. A positive Nagata transformation increases $d$ by one while leaving $j_C$ invariant.
The $j$-invariant of a generic trigonal curve $C$ has a ramification index equal to $3$, $2$ or $1$ at every point $b\in B$ such that $j_C(b)$ equals $0$, $1$ or $\infty$, respectively. We can assume, up to perturbation, that every critical value of $j_C$ is simple. In this case we say that $j_C$ has a {\it generic branching behavior}.

\subsubsection{Real structures}\label{sssec:realstruc}
We are mostly interested in real trigonal curves. A {\it real structure} on a complex variety $X$ is an anti-holomorphic involution $c\colon X\lra X$. We define a {\it real variety} as a couple $(X,c)$, where $c$ is a real structure on a complex variety $X$. We denote by $X_{\R}$ the fixed point set of the involution $c$ and we call $X_{\R}$ {\it the set of real points} of $c$.

\begin{thm}[Smith-Thom inequality \cite{DK}] \label{thm:sti}
If $(X,c)$ is a compact real variety, then
\begin{equation}
 \sum_{i = 0}^{+\infty} \dim_{\Z/2\Z}H_i(X_{\R}; \Z/2\Z)
\leq
 \sum_{i = 0}^{+\infty} \dim_{\Z/2\Z}H_i(X; \Z/2\Z).
\end{equation}
\end{thm}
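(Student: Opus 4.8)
The statement is the classical Smith--Thom inequality, and the plan is to prove it by Smith theory for the involution $c$, working throughout with coefficients in $\ZZ$ so that the group ring of $G=\langle t\rangle\cong\ZZ$ (with $t$ acting as $c$) carries a nilpotent element. I will write $b(Y):=\sum_i\dim_{\ZZ}H_i(Y;\ZZ)$ for the total Betti number, so that the goal is $b(X_{\R})\le b(X)$. First I would reduce to a finite combinatorial model: since $(X,c)$ is a compact real (hence triangulable) variety, $c$ admits a finite $c$-invariant triangulation in which $X_{\R}=\operatorname{Fix}(c)$ is a subcomplex; after one barycentric subdivision one may also assume the action is regular, i.e.\ a closed simplex is fixed setwise if and only if it is fixed pointwise, so that $G$ permutes the simplices off $X_{\R}$ freely. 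Compactness makes all homology finitely generated, so the $b(Y)$ are finite.

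The algebraic heart is the following. Let $C=C_\bullet(X;\ZZ)$, regarded as a complex of $\ZZ[G]$-modules, let $A=C_\bullet(X_{\R};\ZZ)\subseteq C$ be the subcomplex of chains supported on the fixed set, and set $\rho=1+t\in\ZZ[G]$. In characteristic $2$ one has $\rho^2=1+2t+t^2=0$, and $\rho$ commutes with $\partial$. I would record two structural facts: on $A$ one has $\rho=0$, while the chains supported off $X_{\R}$ form a free $\ZZ[G]$-module, on which $\ker\rho=\operatorname{im}\rho$. Decomposing a chain into its fixed and non-fixed parts then shows $\rho C\cap A=0$ and
\begin{equation*}
\ker(\rho\colon C\to C)=\rho C\oplus A,
\end{equation*}
a direct sum of two $\partial$-stable subcomplexes (both $\rho C$ and $A$ are subcomplexes). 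In particular $H_\bullet(\ker\rho)=H_\bullet(\rho C)\oplus H_\bullet(X_{\R};\ZZ)$, so $b(\ker\rho)=b(\rho C)+b(X_{\R})$.

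Viewing $\rho$ as a surjective degree-$0$ chain map $C\to\rho C$ with this kernel gives the Smith short exact sequence of $\ZZ$-complexes
\begin{equation*}
0\lra \rho C\oplus A\lra C\xrightarrow{\ \rho\ }\rho C\lra 0 .
\end{equation*}
I would then pass to the long exact homology sequence and apply subadditivity of total Betti numbers \emph{to the kernel term}: exactness at $H_i(\rho C\oplus A)$, where this group sits between the connecting map from $H_{i+1}(\rho C)$ and the map to $H_i(C)$, gives $\dim H_i(\rho C\oplus A)\le \dim H_{i+1}(\rho C)+\dim H_i(C)$. Summing over $i$ yields $b(\rho C)+b(X_{\R})\le b(\rho C)+b(X)$, and cancelling $b(\rho C)$ leaves exactly $b(X_{\R})\le b(X)$, as claimed.

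The genuinely delicate point is the geometric input of the first step: producing a finite $c$-invariant triangulation whose action is free off $X_{\R}$, since this is what makes the off-fixed chains a free $\ZZ[G]$-module and guarantees the identity $\ker\rho=\operatorname{im}\rho$ there; this is precisely where the hypothesis that $X$ is a bona fide compact variety, rather than an arbitrary $G$-space, enters. By contrast the algebra is routine, its only subtlety being the choice of which term of the long exact sequence to estimate so that the inequality emerges in the direction $b(X_{\R})\le b(X)$ and not its reverse. I would also remark that the same short exact sequence, read through the Euler-characteristic count of the long exact sequence, yields the companion congruence $b(X)-b(X_{\R})\equiv 0\pmod 2$, though only the inequality is needed in what follows.
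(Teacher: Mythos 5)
The paper does not prove this statement: it is quoted as a classical result with a citation to [DK], so there is no in-paper argument to compare against. Your proposal is the standard Smith-theoretic proof of exactly the kind found in that reference, and it is correct: the identity $\ker(\rho)=\rho C\oplus A$ for $\rho=1+t$ acting on the $\Z/2\Z$-chains of a regular equivariant triangulation, the resulting short exact sequence $0\to\rho C\oplus A\to C\to\rho C\to 0$, and the estimate at the kernel term of the long exact sequence (using finiteness of $b(\rho C)$ to cancel) are all in order, and you correctly isolate the existence of a finite $c$-invariant regular triangulation as the geometric input supplied by the hypothesis that $X$ is a compact variety.
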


\begin{thm}[Smith congruence \cite{DK}]
If $(X,c)$ is a compact real variety, then
\begin{equation}
 \sum_{i = 0}^{+\infty} \dim_{\Z/2\Z}H_i(X_{\R}; \Z/2\Z)
\equiv
 \sum_{i = 0}^{+\infty} \dim_{\Z/2\Z}H_i(X; \Z/2\Z) \mod 2. 
\end{equation}
\end{thm}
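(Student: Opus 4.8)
My plan is to reduce the congruence to an elementary statement about Euler characteristics and then prove that statement using the freeness of the involution away from the real locus. Write $\chi$ for the topological Euler characteristic. The first observation is that for any compact triangulable space $Y$ the number $\chi(Y)=\sum_i(-1)^i\dim_{\Z/2\Z}H_i(Y;\Z/2\Z)$ is independent of the coefficient field, and reducing the signs modulo $2$ gives
\begin{equation*}
\chi(Y)\equiv\sum_{i=0}^{+\infty}\dim_{\Z/2\Z}H_i(Y;\Z/2\Z)\pmod 2.
\end{equation*}
Applying this to $Y=X$ and to $Y=X_{\R}$ (both compact, since $X$ is compact and $X_{\R}$ is closed in $X$), the asserted congruence becomes equivalent to the single relation $\chi(X)\equiv\chi(X_{\R})\pmod 2$.

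\textbf{The Euler characteristic congruence.} To prove $\chi(X)\equiv\chi(X_{\R})\pmod 2$ I would decompose $X$ into the $c$-invariant pieces $X=X_{\R}\sqcup(X\setminus X_{\R})$. On the open piece $X\setminus X_{\R}$ the involution $c$ has no fixed points, so it acts freely and exhibits $X\setminus X_{\R}$ as a double cover of the quotient $(X\setminus X_{\R})/c$. Using the Euler characteristic with compact supports $\chi_c$, which is additive over a decomposition into locally closed subsets and satisfies $\chi_c=\chi$ on compact spaces, I obtain
\begin{equation*}
\chi(X)-\chi(X_{\R})=\chi_c(X)-\chi_c(X_{\R})=\chi_c(X\setminus X_{\R}).
\end{equation*}
Since $\chi_c$ is multiplicative under finite covers, $\chi_c(X\setminus X_{\R})=2\,\chi_c\big((X\setminus X_{\R})/c\big)$, which is even. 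This yields $\chi(X)\equiv\chi(X_{\R})\pmod 2$ and hence, by the reduction above, the theorem.

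\textbf{Main obstacle and an alternative.} The delicate point is not the combinatorics but the legitimacy of the Euler characteristic bookkeeping on the \emph{non-compact} stratum $X\setminus X_{\R}$: one must pass to the compactly supported Euler characteristic to retain additivity, and one must know that $\chi_c$ is multiplicative in finite (here double) covers. For a compact real variety both points are standard once one invokes the triangulability of real and complex algebraic sets, which makes every space in sight a finite CW complex and reduces $\chi_c$ to an honest alternating cell count. An alternative route, parallel to the proof of the preceding Smith--Thom inequality, is the equivariant localization machinery: form the Borel construction $X\times_{\Z/2\Z}E\Z/2\Z$, regard $H^*_{\Z/2\Z}(X;\Z/2\Z)$ as a module over $H^*(B\Z/2\Z;\Z/2\Z)=\Z/2\Z[t]$, and apply the localization theorem identifying the localizations of $H^*_{\Z/2\Z}(X)$ and $H^*_{\Z/2\Z}(X_{\R})$. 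Analysing the Leray--Serre spectral sequence of the Borel fibration shows that the differentials cancel classes in pairs, so that $\sum_i\dim_{\Z/2\Z}H_i(X;\Z/2\Z)-\sum_i\dim_{\Z/2\Z}H_i(X_{\R};\Z/2\Z)=2k$ for some $k\ge 0$; this single computation yields both the inequality of the previous theorem and the present congruence simultaneously.
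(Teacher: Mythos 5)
The paper does not prove this statement at all: it is quoted from the reference \cite{DK} as background, so there is no in-text argument to compare against. Your proof is correct and is the classical elementary derivation of the Smith congruence. The two reductions are both sound: first, for a finite CW complex the total $\Z/2\Z$-Betti number is congruent mod $2$ to the Euler characteristic (field independence of $\chi$ plus $\dim\equiv-\dim\bmod 2$); second, $\chi(X)\equiv\chi(X_{\R})\bmod 2$ because the complement $X\setminus X_{\R}$ carries a free involution, so its compactly supported Euler characteristic is twice that of the quotient. The point you flag as delicate — additivity of $\chi_c$ over the locally closed decomposition and multiplicativity under the double cover — is indeed the only place where one needs input beyond formal homological algebra, and it is supplied by (equivariant) triangulability of compact complex varieties with an antiholomorphic involution: after an equivariant triangulation in which $X_{\R}$ is a full subcomplex, the cells outside $X_{\R}$ are permuted freely in pairs, which proves the congruence by a direct cell count and makes the $\chi_c$ formalism dispensable. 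By contrast, the standard Smith-theoretic route (Smith exact sequences, or the Borel construction with localization at $t\in H^*(B\Z/2\Z;\Z/2\Z)$) establishes the inequality of Theorem~\ref{thm:sti} and the congruence in one stroke, since it shows the defect $\sum_i\dim H_i(X;\Z/2\Z)-\sum_i\dim H_i(X_{\R};\Z/2\Z)$ is a sum of ranks of differentials counted twice; your Euler characteristic argument is more elementary but yields only the parity statement, not the inequality. Both are valid; your primary argument is complete as written for the class of spaces considered here.
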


\begin{df}
 A real variety $(X,c)$ is {\it an $(M-a)$-variety} if 
 \[ \sum_{i = 0}^{+\infty} \dim_{\Z/2\Z}H_i(X; \Z/2\Z)-\sum_{i = 0}^{+\infty} \dim_{\Z/2\Z}H_i(X_{\R}; \Z/2\Z)=2a.\] 
\end{df}

We say that a real curve $(X,c)$ is of \emph{type~$\mathrm{I}$} if $\widetilde{X}\setminus \widetilde{X}_{\R}$ is disconnected, where $\widetilde{X}$ is the normalization of $X$.

\subsubsection{Real trigonal curves}
\label{sssec:rtc}

A geometrically ruled surface $\pi\colon\Si\lra B$ is \emph{real} if there exist real structures $c_{\Si}\colon\Si\lra\Si$ and $c_{B}\colon B\lra B$ compatible with the projection~$\pi$, i.e., such that $\pi\circ c_{\Si}=c_{B}\circ\pi$. We assume the exceptional section is {\it real} in the sense that it is invariant by conjugation, i.e., $c_{\Si}(E)=E$. 
Put $\pi_{\R}:=\pi|_{\Si_{\R}}\colon{\Si_{\R}\lra B_{\R}}$. Since the exceptional section is real, the fixed point set of every fiber is not empty, implying that the real structure on the fiber is isomorphic to the standard complex conjugation on $\CP$. Hence all the fibers of $\pi_{\R}$ are isomorphic to~$\RP$.
Thus, the map $\pi_{\R}$ establishes a bijection between the connected components of the real part $\Si_{\R}$ of the surface $\Si$ and the connected components of the real part~$B_{\R}$ of the curve $B$. Every connected component of $\Si_{\R}$ is homeomorphic either to a torus or to a Klein bottle.

The ruled surface $\Sigma$ can be seen as the fiberwise projectivization of a rank 2 vector bundle over~$B$.
Let us assume $\Sigma=\mathbf{P}(1\oplus \mathcal{Y})$, where $1$ is the trivial line bundle over~$B$ and $\mathcal{Y}\in\operatorname{Pic}(B)$. We put $\mathcal{Y}_{i}:=\mathcal{Y}_{\R}|_{B_{i}}$ for every connected component $B_{i}$ of~$B_{\R}$. Hence $\Si_{i}:=\Si_{\R}|_{B_{i}}$ is orientable if and only if $\mathcal{Y}_{i}$ is topologically trivial, i.e., its first Stiefel-Whitney class $w_{1}(\mathcal{Y}_{i})$ is zero.

\begin{df}
 A \emph{real trigonal curve} $C$ is a trigonal curve contained in a real ruled surface $(\Si,c_{\Si})\lra (B,c_{B})$ such that $C$ is $c_{\Si}$-invariant, i.e., $c_{\Si}(C)=C$.
\end{df}

The line bundle $\mathcal{Y}$ can inherit a real structure from $(\Sigma,c_{\Sigma})$. 
If a real trigonal curve is proper, then its $j$-invariant is real, seen as a morphism $j_{C}\colon(B,c_{B})\lra (\CP,z\lmt\bar{z})$, where $z\lmt\bar{z}$ denotes the standard complex conjugation on $\CP$. In addition, the sections $g_{2}$ and $g_{3}$ can be chosen real.

 Let us consider the restriction $\pi|_{C_{\R}}\colon C_{\R}\lra B_{\R}$. We put $C_{i}:=\pi|_{C_{\R}}^{-1}(B_{i})$ for every connected component $B_{i}$ of $B_{\R}$. We say that $B_{i}$ is \emph{hyperbolic} if $\pi|_{C_{i}}\colon C_{i}\lra~B_{i}$ has generically a fiber with three elements. The trigonal curve $C$ is \emph{hyperbolic} if its real part is non-empty and all the connected components of $B_{\R}$ are hyperbolic.

\begin{df} \label{df:ovzz}
 Let $C$ be a non-singular generic real trigonal curve.
A connected component of the set $\{b\in B\mid \operatorname{Card}(\pi|_{C_{\R}}^{-1}(b))\geq2\}$ is an \emph{oval} if it is not a hyperbolic component and its preimage by $\pi|_{C_{\R}}$ is disconnected. Otherwise, the connected component is called a \emph{zigzag}.
\end{df}

\subsection{Dessins}

The dessins d'enfants were introduced by A. Grothendieck (cf.~\cite{schneps})
in order to study the action of the absolute Galois group. We use a modified version of dessins d'enfants which was proposed by S. Orevkov~\cite{Orevkov}.

\subsubsection{Trichotomic graphs}

Let $S$ be a compact connected topological surface. A graph $D$ on the surface $S$ is a graph embedded into the surface and considered as a subset $D\subset S$. We denote by $\operatorname{Cut}(D)$ the \emph{cut} of $S$ along $D$, i.e., the disjoint union of the closure of connected components of $S\setminus D$.

\begin{df} \label{df:trigra}
 A \emph{trichotomic graph} on a compact surface $S$ is an embedded finite directed graph $D\subset S$ decorated with the following additional structures (referred to as \emph{colorings} of the edges and vertices of $D$, respectively):
 
\begin{itemize}
\item every edge of $D$ is color solid, bold or dotted,
\item every vertex of $D$ is black ($\bullet$), white ($\circ$), cross ($\times$) 
or monochrome (the vertices of the first three types are called \emph{essential}),
\end{itemize}
and satisfying the following conditions:
\begin{enumerate}
\item $\partial S\subset D$,
\item every essential vertex is incident to at least $2$ edges,
\item every monochrome vertex is incident to at least $3$ edges,
\item the orientations of the edges of $D$ form an orientation of the boundary $\partial\operatorname{Cut} (D)$ which is compatible with an orientation on $\operatorname{Cut} (D)$,
\item all edges incident to a monochrome vertex are of the same color,
\item $\times$-vertices are incident to incoming dotted edges and outgoing solid edges,
\item $\bullet$-vertices are incident to incoming solid edges and outgoing bold edges,
\item $\circ$-vertices are incident to incoming bold edges and outgoing dotted edges.
\end{enumerate}
\end{df}

Let $D\subset S$ be a trichotomic graph. A \emph{region} $R$ is an element of $\operatorname{Cut(D)}$. The boundary $\partial R$ of $R$ contains $n=3k$ essential vertices. 
A region with $n$ vertices on its boundary is called an \emph{$n$-gonal region}.
We denote by $D_{solid}$, $D_{bold}$, $D_{dotted}$ the monochrome parts of $D$, i.e., the sets of vertices and edges of the specific color. On the set of vertices of a specific color, we define the relation $u\preceq v$ if there is a monochrome path from $u$ to~$v$, i.e., a path formed entirely of edges and vertices of the same color. We call the graph $D$ \emph{admissible} if the relation $\preceq$ is a partial order, equivalently, if there are no directed monochrome cycles.

\begin{df}
 A trichotomic graph $D$ is a \emph{dessin} if
\begin{enumerate}
 \item $D$ is admissible;
 \item every trigonal region of $D$ is homeomorphic to a disk.
\end{enumerate}
\end{df}

The orientation of the graph $D$ is determined by the pattern of colors of the vertices on the boundary of every region.

\subsubsection{Complex and real dessins}

Let $S$ be an orientable surface. Every orientation of $S$ induces a \emph{chessboard coloring} of $\Cut(D)$, i.e., a function on $\Cut(D)$ determining if a region $R$ endowed with the orientation set by $D$ coincides with the orientation of $S$.

\begin{df} A \emph{real trichotomic graph} on a real closed surface $(S,c)$ is a trichotomic graph $D$ on $S$ which is invariant under the action of $c$. Explicitly, every vertex $v$ of $D$ has as image $c(v)$ a vertex of the same color; every edge $e$ of $D$ has as image $c(e)$ an edge of the same color.
\end{df}

Let $D$ be a real trichotomic graph on $(S,c)$. Let $\overline{S}:=S/c$ be the quotient surface and put $\overline{D}\subset\overline{S}$ as the image of $D$ by the quotient map $S\lra S/c$. The graph $\overline{D}$ is a well defined trichotomic graph on the surface $S/c$.

In the inverse sense, let $S$ be a compact surface, which can be non-orientable or can have non-empty boundary. Let $D\subset S$ be a trichotomic graph on $S$.
Consider its complex double covering $\widetilde{S}\lra S$ (cf. \cite{AG} for details), which has a real structure given by the deck transformation, and put $\widetilde{D}\subset\widetilde{S}$ the inverse image of $D$. The graph~$\widetilde{D}$ is a graph on $\widetilde{S}$ invariant by the deck transformation. We use these constructions in order to identify real trichotomic graphs on real surfaces with their images on the quotient surface.

\begin{lm}[\cite{deg}]
 Let $D$ be a real trichotomic graph on a real closed surface~$(\widetilde{S},c)$. Then, every region $R$ of $D$ is disjoint from its image $c(R)$.
\end{lm}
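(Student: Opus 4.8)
The plan is to deduce the statement from the behaviour of the chessboard coloring under $c$; the heart of the matter is that $c$ reverses the orientation of $\widetilde{S}$ while preserving all the combinatorial decorations of $D$, so that it must interchange the two chessboard classes and, in particular, can fix no region. Accordingly I will prove the equivalent assertion that $c(R)\neq R$ for every region $R$, and then read off disjointness from the definition of $\operatorname{Cut}(D)$.

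To make this precise I would first record two properties of $c$. On one hand, $c$ is a real structure, i.e. an anti-holomorphic involution of the Riemann surface $\widetilde{S}$; being anti-holomorphic on a surface, it reverses the ambient orientation. On the other hand, since $D$ is a \emph{real} trichotomic graph, $c$ maps $D$ onto itself, taking vertices and edges to vertices and edges of the same color. By the remark that the orientation of a trichotomic graph is completely determined by the color pattern of the vertices on the boundary of its regions (equivalently, by conditions (6)--(8) for the essential edges together with the compatibility condition (4)), the color-preserving homeomorphism $c$ also preserves the orientation of $D$. Consequently $c$ permutes the elements of $\operatorname{Cut}(D)$ and carries the edge-induced orientation of each region $R$ to the edge-induced orientation of $c(R)$.

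Now fix an orientation $\mathcal{O}$ of $\widetilde{S}$ and let $\chi_{\mathcal{O}}(R)\in\{+,-\}$ be the chessboard value of $R$, recording whether its edge-induced orientation agrees with $\mathcal{O}$. Replacing $\mathcal{O}$ by $-\mathcal{O}$ interchanges the two values, so $\chi_{-\mathcal{O}}=-\chi_{\mathcal{O}}$. Since $c$ preserves the edge-induced orientations but sends $\mathcal{O}$ to $c_*\mathcal{O}=-\mathcal{O}$, the relation ``agree/disagree'' transported along the homeomorphism $c$ gives $\chi_{\mathcal{O}}(R)=\chi_{-\mathcal{O}}(c(R))=-\chi_{\mathcal{O}}(c(R))$; thus $R$ and $c(R)$ lie in opposite chessboard classes. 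In particular $c(R)\neq R$ for every region $R$. As the elements of $\operatorname{Cut}(D)$ are pairwise disjoint by definition (they are the connected pieces of the surface cut along $D$), the inequality $c(R)\neq R$ is precisely the assertion that $R$ is disjoint from $c(R)$, which proves the lemma.

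The one step demanding care---and the part I expect to be the genuine obstacle---is the bookkeeping in the preceding paragraph: the chessboard value depends on comparing two orientations, the ambient one and the edge-induced one, and $c$ acts oppositely on them (reversing the first, preserving the second). Verifying that $c$ really preserves the edge-induced orientation, rather than secretly reversing the direction of the edges, and then tracking the resulting sign, is exactly where the hypotheses defining a real trichotomic graph together with conditions (4) and (6)--(8) are essential; once this comparison is settled the conclusion is immediate.
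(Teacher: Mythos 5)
Your proof is correct and is the standard argument for this lemma (which the paper only cites from \cite{deg} without reproducing a proof): $c$ preserves the colour-determined orientation of $D$ while reversing the ambient orientation of $\widetilde{S}$, so it interchanges the two chessboard classes and therefore can fix no region. Your reduction of ``disjoint'' to ``distinct'' is also the right reading of the statement, since regions are by definition elements of the abstract cut $\operatorname{Cut}(D)$, a disjoint union --- as subsets of $\widetilde{S}$ the closures of $R$ and $c(R)$ may well meet along $D$, so the stronger set-theoretic reading would be false (e.g.\ for the two hemispheres of $\CP$ cut along $\RP$).
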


\begin{prop}[\cite{deg}] \label{prop:realdd}
Let $S$ be a compact surface. Given a trichotomic graph~${D\subset S}$, then its oriented double covering $\widetilde{D}\subset\widetilde{S}$ is a real trichotomic graph. Moreover, $\widetilde{D}\subset\widetilde{S}$ is a dessin if and only if so is $D\subset S$.
Conversely, if $(S,c)$ is a real compact surface and $D\subset S$ is a real trichotomic graph, then its image $\overline{D}$ in the quotient $\overline{S}:=S/c$ is a trichotomic graph. Moreover, $\overline{D}\subset\overline{S}$ is a dessin if and only if so is $D\subset S$. 
\end{prop}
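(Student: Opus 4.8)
The plan is to exploit the fact that the covering map $p\colon\widetilde{S}\lra S$ is a local homeomorphism away from the fixed-point locus $\widetilde{S}^{c}=p^{-1}(\partial S)$, together with the preceding lemma describing how regions behave under the deck involution $c$. Since $p\circ c=p$, the inverse image $\widetilde{D}=p^{-1}(D)$ is automatically invariant under $c$, and I would define the decorations of $\widetilde{D}$ (edge colors, vertex types, edge orientations) by pulling them back along $p$; because $p$ commutes with $c$, the image $c(v)$ of any vertex has the same color as $v$, and likewise for edges, which is exactly the $c$-invariance required of a real trichotomic graph. I would first verify that $\widetilde{D}$ is a trichotomic graph. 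The incidence and coloring conditions (2), (3) and (5)--(8) of Definition \ref{df:trigra} are purely local statements about a vertex and its incident edges; since $p$ is a local homeomorphism at every point of $\widetilde{D}\setminus\widetilde{S}^{c}$ and the decorations are pulled back, these conditions hold upstairs if and only if they hold downstairs. Condition (1) is immediate, as the complex double $\widetilde{S}$ is closed, while $\partial S\subset D$ guarantees $\widetilde{S}^{c}\subset\widetilde{D}$.

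The remaining condition (4), compatibility of the edge orientations with an orientation of $\Cut(\widetilde{D})$, is where the orientability of the complex double is used: I would fix the orientation of $\widetilde{S}$ coming from its structure as an oriented double cover and transport the orientation of $\Cut(D)$ through $p$. Here the key structural input, supplied by the preceding lemma applied to the real trichotomic graph $\widetilde{D}$ on $(\widetilde{S},c)$, is that every region of $\widetilde{D}$ is disjoint from its $c$-image; equivalently, since $\widetilde{S}^{c}\subset\widetilde{D}$ separates the two sheets, each region $\widetilde{R}$ of $\widetilde{D}$ lies in $\widetilde{S}\setminus\widetilde{S}^{c}$ and is mapped homeomorphically by $p$ onto a region $R$ of $D$. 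Thus the regions of $\widetilde{D}$ fall into conjugate pairs $\{\widetilde{R},c(\widetilde{R})\}$, each homeomorphic to a unique region $R$ of $D$, and this homeomorphism preserves the number and colors of the essential vertices on the boundary. Consequently $\widetilde{R}$ is an $n$-gonal region exactly when $R$ is, and condition (4) transfers region by region.

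For the equivalence of the dessin property I would treat admissibility and the disk condition separately. Because $p$ restricted to each region is a homeomorphism preserving vertex colors, $\widetilde{R}$ is homeomorphic to a disk if and only if the corresponding trigonal region $R$ is; hence the disk condition passes between $D$ and $\widetilde{D}$. For admissibility I would show that $D$ contains a directed monochrome cycle if and only if $\widetilde{D}$ does: any monochrome cycle of $\widetilde{D}$ projects under $p$ to a closed directed monochrome walk in $D$, which contains a monochrome cycle; conversely a monochrome cycle in $D$ lifts to a directed monochrome path in $\widetilde{D}$ which either closes up into a cycle or runs from a point $x$ to $c(x)$, in which case concatenating it with its $c$-image yields a monochrome cycle. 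Hence the relation $\preceq$ is a partial order upstairs if and only if it is downstairs, and $\widetilde{D}$ is a dessin if and only if $D$ is.

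The converse statement, for a real trichotomic graph $D$ on $(S,c)$ and its image $\overline{D}$ on $\overline{S}=S/c$, is handled by the same dictionary read in the opposite direction: the preceding lemma guarantees that each region of $D$ is disjoint from its conjugate, so the quotient map restricts to a homeomorphism from each region of $D$ onto a region of $\overline{D}$, while the local coloring, incidence and orientation data descend because they are $c$-invariant; the trichotomic-graph conditions and both halves of the dessin property then transfer verbatim by the region- and cycle-correspondences above. The main obstacle I anticipate is the bookkeeping along the fixed locus $\widetilde{S}^{c}=p^{-1}(\partial S)$, where $p$ is not a genuine covering: one must check that the pulled-back coloring is consistent on the vertices and edges lying on $\widetilde{S}^{c}$ and that no spurious region is created or destroyed there. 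This is precisely controlled by condition (1), $\partial S\subset D$, together with the separating property of $\widetilde{S}^{c}$, which ensures that the fixed locus is a union of edges and vertices of $\widetilde{D}$ rather than an interior part of any region.
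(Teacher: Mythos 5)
The paper does not actually prove Proposition~\ref{prop:realdd}: it is quoted from~\cite{deg} without argument, so there is no in-text proof to compare against. Judged on its own, your proposal follows the standard argument and its overall architecture is sound: pulling back (resp.\ descending) the decorations along $p\colon\widetilde{S}\lra S$, checking the local conditions (2), (3), (5)--(8) via the local homeomorphism away from the fixed locus, transferring condition (4) and the two dessin conditions region by region, and handling admissibility by projecting and lifting directed monochrome cycles. The lifting argument for admissibility and the region-by-region treatment of the disk condition are exactly what is needed, and you correctly flag that the only delicate bookkeeping is along $p^{-1}(\partial S)$.

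One point deserves repair. In the forward direction you justify the disjointness of each region $\widetilde{R}$ from $c(\widetilde{R})$ by ``the preceding lemma applied to the real trichotomic graph $\widetilde{D}$'' --- but at that stage you have not yet verified condition (4) for $\widetilde{D}$, so $\widetilde{D}$ is not yet known to be a real trichotomic graph and the lemma cannot be invoked; your fallback, that $\widetilde{S}^{c}$ ``separates the two sheets,'' is valid only when $S$ is orientable with boundary (the case of the disk used later in the paper), not for a non-orientable closed $S$, where $\widetilde{S}\setminus\widetilde{S}^{c}$ is connected. The gap is easily closed without the lemma: condition (4) for $D$ makes every region $R$ orientable, and since $c$ acts freely and orientation-reversingly on $\widetilde{S}\setminus\widetilde{S}^{c}$, the preimage of $R^{\circ}$ cannot be a connected double cover (its quotient by a free orientation-reversing involution would be non-orientable), so it is two disjoint copies swapped by $c$. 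Note also that for condition (4) alone the disjointness is not even needed --- one can pull back an orientation along a covering of any degree --- but it \emph{is} needed for the equivalence of the dessin conditions, since a region of $D$ with three essential vertices must lift to trigonal regions upstairs for the disk condition to transfer in both directions. With that adjustment the proof is complete and consistent with the argument in~\cite{deg}.
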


\begin{df}
Let $D$ be a dessin on a compact surface $S$. Let us denote by $\Ver(D)$ the set of vertices of $D$. For a vertex $v\in\Ver(D)$, we define the \emph{index} $\Ind(v)$ of $v$ as half of the number of incident edges of $\widetilde{v}$, where $\widetilde{v}$ is a preimage of $v$ by the double complex cover of $S$ as in Proposition~\ref{prop:realdd}.

A vertex $v\in\Ver(D)$ is \emph{singular} if
\begin{itemize}
\item $v$ is black and $\Ind(v)\not\equiv0\mod3$,
\item or $v$ is white and $\Ind(v)\not\equiv0\mod2$,
\item or $v$ has color $\times$ and $\Ind(v)\geq2$.
\end{itemize}
We denote by $\Sing(D)$ the set of singular vertices of $D$. A dessin is \emph{non-singular} if none of its vertices is singular.
\end{df}

\begin{df}
 Let $B$ be a complex curve and let $j\colon B\lra\CP$ a non-constant meromorphic function, in other words, a ramified covering of the complex projective line. The dessin $D:=\Dssn(j)$ associated to $j$ is the graph given by the following construction:
\begin{itemize}
\item as a set, the dessin $D$ coincides with $j^{-1}(\RP)$, where $\RP$ is the fixed point set of the standard complex conjugation in $\CP$;
\item black vertices $(\bullet)$ are the inverse images of $0$;
\item white vertices $(\circ)$ are the inverse images of $1$;
\item vertices of color $\times$ are the inverse images of $\infty$;
\item monochrome vertices are the critical points of $j$ with critical value in $\CP\setminus\{0,1,\infty\}$;
\item solid edges are the inverse images of the interval $[\infty,0]$;
\item bold edges are the inverse images of the interval $[0,1]$;
\item dotted edges are the inverse images of the interval $[1,\infty]$;
\item orientation on edges is induced from an orientation of $\RP$.
\end{itemize}
\end{df}

\begin{lm}[\cite{deg}]
Let $S$ be an oriented connected closed surface. Let ${j\colon S\lra\CP}$ a ramified covering map. The trichotomic graph $D=\Dssn(j)\subset S$ is a dessin. Moreover, if $j$ is real with respect to an orientation-reversing involution $c\colon S\lra S$, then $D$ is $c$-invariant.
\end{lm}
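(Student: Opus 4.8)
The plan is to reduce everything to the local structure of the ramified covering $j$ and to the combinatorics of the triple $\{0,1,\infty\}\subset\RP\subset\CP$. Recall that $\RP$ cuts $\CP$ into two open hemispheres and is itself cut by $0,1,\infty$ into the three open arcs $(\infty,0)$, $(0,1)$, $(1,\infty)$, whose preimages are the solid, bold and dotted edges. First I would check that $D=\Dssn(j)$ is a trichotomic graph in the sense of Definition~\ref{df:trigra}. Since $j$ is a ramified covering, in suitable local coordinates it is modelled by $z\mapsto z^{k}$, with $k=1$ at regular points and $k\geq 2$ at critical points; as $j^{-1}(\RP)$ is compact with finitely many critical points, $D$ is a finite embedded graph. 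Condition~(1) is vacuous because $S$ is closed. For the remaining conditions I would pull back the local picture of $(\CP,\RP,\{0,1,\infty\})$: near a preimage of $0$ the two arcs adjacent to $0$ lift to $k$ solid and $k$ bold edges alternating around the vertex, giving incoming solid and outgoing bold edges and $2k\geq 2$ incident edges, which is exactly~(7) together with~(2); the analogous computations at preimages of $1$ and $\infty$ give~(8) and~(6). A critical point whose value lies in the interior of an arc lifts that arc to $2k\geq 4$ edges of a single colour, yielding a monochrome vertex satisfying~(3) and~(5). Finally, each region of $\Cut(D)$ covers one of the two hemispheres, and orienting it accordingly makes the edge orientations coincide with the induced boundary orientation, which is~(4).

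Next I would verify the two defining properties of a dessin. For admissibility I would produce a function that is strictly monotone along monochrome paths: fix a coordinate $t$ on, say, the solid arc increasing in the direction of the orientation of $\RP$; along any solid edge $t\circ j$ is strictly monotone, and at a monochrome vertex over a value $t_0$ the incoming edges approach with $t\circ j\to t_0^{-}$ while the outgoing ones leave with $t\circ j$ increasing from $t_0$, so $t\circ j$ strictly increases along every directed solid monochrome path. The same holds for the bold and dotted arcs, so there are no directed monochrome cycles and $\preceq$ is a partial order. For the disk property I would argue by degree: each region $R$ of $\Cut(D)$ maps under $j$ to a closed hemisphere $\overline{H}$ (a disk), and $j|_{\overline{R}}\colon\overline{R}\to\overline{H}$ is a branched covering. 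Reading the colours around $\partial R$, the essential vertices occur in the cyclic pattern $\times,\bullet,\circ$ (monochrome vertices only subdividing edges of a fixed colour), so a trigonal region, having exactly three essential vertices, has boundary mapping onto $\RP=\partial\overline{H}$ with degree one; hence $j|_{\overline{R}}$ has degree one, admits no interior branching, and is therefore a homeomorphism onto the disk $\overline{H}$. Thus every trigonal region is a disk and $D$ is a dessin.

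For the last assertion, suppose $j$ is real with respect to an orientation-reversing involution $c$, meaning $j\circ c=\mathrm{conj}\circ j$ for the standard conjugation on $\CP$. Since conjugation fixes $\RP$ pointwise, $c$ preserves $D=j^{-1}(\RP)$ and fixes each special value $0,1,\infty$ and each of the three arcs setwise; consequently $c$ sends black, white, cross and monochrome vertices to vertices of the same colour and solid, bold, dotted edges to edges of the same colour. Because conjugation restricts to the identity on $\RP$ it preserves the orientation of $\RP$, hence the orientation of the edges. Therefore $D$ is $c$-invariant, i.e.\ a real trichotomic graph in the sense used before Proposition~\ref{prop:realdd}.

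I expect the main obstacle to be the disk property of trigonal regions: the trichotomic-graph axioms and the real invariance are direct local or symmetry computations, whereas showing that a trigonal region is a disk requires the global degree argument above, namely controlling the branched cover $j|_{\overline{R}}$ from the combinatorics of its boundary and ruling out interior ramification. Some care is also needed to set up the local normal form $z\mapsto z^{k}$ uniformly and to match the induced orientations in condition~(4).
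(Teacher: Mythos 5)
This lemma is quoted in the paper from~\cite{deg} and no proof is given there, so there is no in-paper argument to compare against; your proposal has to be judged on its own. It is essentially the standard proof and it is correct. The local normal form $z\mapsto z^{k}$ does give exactly the incidence and colouring conditions of Definition~\ref{df:trigra} at the three kinds of essential vertices and at monochrome vertices, and the pullback of the hemisphere decomposition of $\CP$ gives the chessboard colouring needed for condition~(4). Your monotonicity argument for admissibility is the right one; the only point worth making explicit is why $t\circ j$ does not ``turn around'' at a monochrome vertex: in the model $z\mapsto z^{k}$ the $2k$ rays of $j^{-1}(\RP)$ alternate between the two sides of the critical value, so the two boundary edges of any sector at such a vertex map to opposite sides, and the coordinate continues monotonically both along a directed monochrome path and along the boundary of a region. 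That same observation is what justifies the key step in your disk argument, namely that each maximal monochrome segment of $\partial R$ maps homeomorphically onto its closed arc, so that a trigonal region has boundary degree one over $\RP$; from there the conclusion that $j|_{\overline{R}}$ is a degree-one branched cover of the hemisphere, hence a homeomorphism, is standard. The reality statement is, as you say, immediate from $j\circ c=\mathrm{conj}\circ j$ together with the fact that conjugation fixes $\RP$ pointwise and hence preserves the three arcs, the three special points, and the orientation of $\RP$. In short, your write-up supplies a complete proof of a statement the paper imports as a black box, and the one step you flag as delicate (the disk property) is handled correctly.
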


Let $(S,c)$ be a compact real surface. If $j\colon (S,c)\lra (\CP,z\lra\bar{z})$ is a real map, we define $\Dssn_{c}(j):=\Dssn(j)/c\subset S/c$.

\begin{thm}[\cite{deg}]
Let $S$ be an oriented connected closed surface (and let ${c\colon S\lra S}$ a orientation-reversing involution). A (real) trichotomic graph $D\subset S$ is a (real) dessin if and only if $D=\Dssn(j)$ 
for a (real) ramified covering $j\colon S\lra\CP$. 

Moreover, $j$ is unique up to homotopy in the class of (real) ramified coverings with dessin $D$.
\end{thm}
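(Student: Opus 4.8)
The reverse implication---that $D=\Dssn(j)$ is a dessin for every (real) ramified covering $j$---is precisely the lemma stated just above, so the plan is to prove realizability together with the uniqueness clause. The guiding picture is that $\CP\setminus\RP$ consists of two open half-planes $\cH_{+},\cH_{-}$, each of whose closure is a topological triangle with corners $0,1,\infty$ and sides the arcs $[\infty,0]$, $[0,1]$, $[1,\infty]$. Any ramified covering $j$ with $\Dssn(j)=D$ has no critical values in the open half-planes, hence over each of $\cH_{\pm}$ it is an honest covering; since a half-plane is simply connected, every component of $j^{-1}(\cH_{\pm})$ maps homeomorphically onto it. Thus $j$ must carry each region of $D$ homeomorphically onto a closed half-plane, matching the colour conventions $\bullet\mapsto0$, $\circ\mapsto1$, $\times\mapsto\infty$ and solid/bold/dotted edges to the corresponding arcs. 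Reading this backwards dictates the construction, and a preliminary step is to check that the defining conditions of a dessin (admissibility, the colour--orientation rules (4)--(8), and the requirement that trigonal regions be disks) force \emph{every} region to be a trigonal disk whose boundary cycle spells the arc sequence solid--bold--dotted, possibly subdivided by monochrome vertices---that is, a copy of $\partial\overline{\cH_{\pm}}$.

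To build $j$, I would first invoke the chessboard colouring of $\Cut(D)$ induced by the orientation of $S$: it partitions the regions into two classes, reversed upon crossing any edge, and I assign one class to $\cH_{+}$ and the other to $\cH_{-}$. I then define $j$ on the $1$-skeleton $D$ by the prescribed correspondence, and extend it over each region $R$ to a homeomorphism onto the assigned closed half-plane restricting to the given boundary map; such an extension exists because $R$ is a disk and every boundary homeomorphism of a disk onto $\partial\overline{\cH_{\pm}}$ extends over the interior. Continuity across a shared edge holds because the two adjacent regions are sent to opposite half-planes, so jointly they cover a neighbourhood of the interior of the corresponding arc. The remaining verification is that the resulting map is a genuine ramified covering with the correct local model at each vertex: around an essential or monochrome vertex $v$ the $2\,\Ind(v)$ incident edges bound $2\,\Ind(v)$ regions alternating between $\cH_{+}$ and $\cH_{-}$, which glue to the branched model $z\mapsto z^{\Ind(v)}$ over a neighbourhood of $j(v)$. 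This is exactly where conditions (5)--(8) and the definition of $\Ind$ enter.

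For uniqueness, suppose $j_{0}$ and $j_{1}$ are two ramified coverings with $\Dssn(j_{i})=D$. Both restrict to the same combinatorial map on $D$, and on each region each $j_{i}$ is a homeomorphism onto the same half-plane with the same boundary values. By Alexander's trick, any two homeomorphisms of a disk onto $\overline{\cH_{\pm}}$ that coincide on the boundary are isotopic rel boundary through homeomorphisms; performing these isotopies region by region, fixed on $D$ throughout, assembles into a homotopy from $j_{0}$ to $j_{1}$ inside the class of ramified coverings carrying the dessin $D$.

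Finally, for the real statement I would carry out both the construction and the homotopy $c$-equivariantly. Using the lemma that every region $R$ is disjoint from its conjugate $c(R)$, I choose representatives for the pairs $\{R,c(R)\}$, define $j$ on the representatives as above, and extend by $j\circ c=\mathrm{conj}\circ j$ on the conjugate regions; this forces $j$ to be real, and the same representative-by-representative isotopy yields equivariant uniqueness. Equivalently, one may descend to the quotient $S/c$ via Proposition~\ref{prop:realdd} and reconstruct there. I expect the main obstacle to lie not in the homotopy argument, which is comparatively soft, but in the global bookkeeping of the construction: confirming that the regions of an abstract dessin really are trigonal disks, that the chessboard assignment to $\cH_{\pm}$ is globally consistent, and that the local branched models at the essential and monochrome vertices patch into a single ramified covering of the prescribed local degrees---equivariantly, in the real case.
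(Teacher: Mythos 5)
The paper itself offers no proof of this statement -- it is quoted from \cite{deg} -- so there is nothing internal to compare against; judged on its own terms, your strategy (chessboard colouring of $\Cut(D)$, sending each region onto a closed half-plane of $\CP\setminus\RP$, verifying the local model $z\mapsto z^{\Ind(v)}$ at the vertices, Alexander's trick for uniqueness, and equivariant bookkeeping via the lemma that $R\cap c(R)=\emptyset$ in the real case) is the standard one and is sound for regions that are trigonal disks.

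The genuine gap is your ``preliminary step''. The axioms of a dessin do \emph{not} force every region to be a trigonal disk: condition (2) of the definition constrains only the trigonal regions, and the colour--orientation rules merely force the essential vertices on the boundary of a region to read $(\times,\bullet,\circ)^{k}$ for some $k\ge 1$. Regions with $k\ge 2$ genuinely occur and must be covered by the theorem. For instance, a degree-two covering $\CP\to\CP$ whose two critical values lie off $\RP$, one in each half-plane, has $\Dssn(j)$ equal to a single circle with six essential vertices bounding two hexagonal disk regions; by the lemma preceding the theorem this is a dessin, so the ``only if'' direction forces such objects into the class. More to the point for this paper, the $j$-invariant of a generic trigonal curve has, beyond the prescribed ramification over $0$, $1$, $\infty$, additional simple critical points whose (generically non-real) critical values place them in the interior of $3k$-gonal regions. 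Over such a region your construction must produce a degree-$k$ covering of the closed half-plane branched at $k-1$ interior points (counted with multiplicity), not a homeomorphism -- this needs a Riemann-existence or explicit plumbing argument -- and the uniqueness claim there is no longer Alexander's trick but the connectivity of the space of degree-$k$ branched covers of a disk with fixed boundary behaviour, i.e.\ transitivity of the Hurwitz braid-group action on the corresponding factorizations of the boundary monodromy. These are precisely the points your proposal elides, and they carry the remaining content of the theorem.
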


The last theorem together with the Riemann existence theorem provides the next corollaries, for the complex and real settings.

\begin{coro}[\cite{deg}] \label{coro:Ej}
Let $D\subset S$ be a dessin on a compact closed orientable surface~$S$. Then there exists a complex structure on $S$ and a holomorphic map $j\colon S\lra\CP$ such that $\Dssn(j)=D$. Moreover, this structure is unique up to deformation of the complex structure on $S$ and the map $j$ in the Kodaira-Spencer sense.
\end{coro}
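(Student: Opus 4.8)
The plan is to deduce existence and uniqueness separately, upgrading the purely topological statement of the preceding theorem to a holomorphic one by means of the Riemann existence theorem. The essential point is that a ramified covering of $\CP$ carries a \emph{canonical} complex structure, obtained by pull-back, so that the only freedom left in the construction is the homotopy class of the covering, which the preceding theorem has already shown to be determined by the dessin.

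For existence, I would first apply the preceding theorem to the dessin $D\subset S$: since $S$ is oriented, connected and closed and $D$ is a dessin, there is a ramified covering $j\colon S\lra\CP$ with $\Dssn(j)=D$, unique up to homotopy among such coverings. This $j$ is so far only continuous (or smooth); to produce a complex structure I would pull back the standard one on $\CP$ through $j$. Concretely, over the complement of the finite critical set the map $j$ is a genuine covering, so the holomorphic atlas of $\CP$ lifts to a holomorphic atlas on this open subset of $S$; at each ramification point the local normal form of a ramified covering of surfaces is $z\mapsto z^{k}$, which supplies a compatible chart extending the structure across the finitely many critical points. The resulting complex structure on $S$ makes $j$ holomorphic by construction, and this is exactly the content of the Riemann existence theorem in its pull-back form. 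Because the underlying set map is unchanged, the preimages $j^{-1}(0)$, $j^{-1}(1)$, $j^{-1}(\infty)$, the critical points and the preimage $j^{-1}(\RP)$ all coincide with the previous ones, so $\Dssn(j)=D$ for the holomorphic $j$ as well.

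For uniqueness, suppose $(S,\mathcal{J}_{0},j_{0})$ and $(S,\mathcal{J}_{1},j_{1})$ are two holomorphic realizations of $D$. By the homotopy-uniqueness clause of the preceding theorem, $j_{0}$ and $j_{1}$ are homotopic through ramified coverings $j_{t}\colon S\lra\CP$, all with dessin $D$. I would then promote this topological homotopy to a deformation in the Kodaira-Spencer sense: applying the pull-back construction above to the family $\{j_{t}\}$ yields a continuously varying family of complex structures $\mathcal{J}_{t}$ together with holomorphic maps $j_{t}\colon(S,\mathcal{J}_{t})\lra\CP$, which assembles into an analytic family over the interval (equivalently over a disk $S\ni o$), i.e.\ a deformation joining $(S,\mathcal{J}_{0},j_{0})$ to $(S,\mathcal{J}_{1},j_{1})$.

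The hard part will be this uniqueness step. One must verify that the topological homotopy $j_{t}$ can be chosen through honest ramified coverings with constant combinatorial type for all but finitely many parameter values, and that the branch values, which move in $\CP$ as $t$ varies, do not collide in a way that destroys the local normal forms; granting this, the pulled-back structures $\mathcal{J}_{t}$ depend analytically on $t$ and the family is genuinely analytic rather than merely continuous. Once this is in place, the preceding theorem pins down the homotopy class of $j$ and the Riemann existence theorem makes each $j_{t}$ holomorphic in a canonical way, so the residual freedom is precisely a deformation of the complex structure together with the map, which is the asserted uniqueness.
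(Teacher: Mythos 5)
Your proposal is correct and follows exactly the route the paper indicates: the statement is obtained by combining the preceding correspondence theorem (which produces the ramified covering $j$ with $\Dssn(j)=D$, unique up to homotopy in the class of coverings with dessin $D$) with the Riemann existence theorem, realized via pull-back of the complex structure. The only remark worth adding is that the difficulty you flag in the uniqueness step is already absorbed by the statement of the preceding theorem, since the homotopy there is taken within the class of ramified coverings having the fixed dessin $D$, so the combinatorial type and branching data do not degenerate along the family.
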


\begin{coro}[\cite{deg}] \label{coro:EjR}
Let $D\subset S$ be a dessin on a compact surface $S$. Then there exists a complex structure on its double cover $\widetilde{S}$ and a holomorphic map~$j\colon\widetilde{S}\lra\CP$ such that $j$ is real with respect to the real structure $c$ of $\widetilde{S}$ and $\Dssn_{c}(j)=D$. Moreover, this structure is unique up to equivariant deformation of the complex structure on $S$ and the map $j$ in the Kodaira-Spencer sense.
\end{coro}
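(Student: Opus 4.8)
The plan is to mirror the proof of the complex statement, Corollary~\ref{coro:Ej}, carrying the real structure along at each step. First I would pass to the complex double cover $p\colon\widetilde{S}\lra S$, equipped with the real structure $c$ given by its deck transformation, and set $\widetilde{D}:=p^{-1}(D)$. By Proposition~\ref{prop:realdd}, $\widetilde{D}$ is a $c$-invariant real trichotomic graph on the oriented closed surface $\widetilde{S}$, and it is a dessin precisely because $D$ is. This reduces the corollary to producing a complex structure on $\widetilde{S}$ for which $c$ is anti-holomorphic, together with a $c$-real holomorphic map $j\colon\widetilde{S}\lra\CP$ with $\Dssn(j)=\widetilde{D}$: once this is done, $\Dssn_{c}(j)=\Dssn(j)/c=\widetilde{D}/c=D$ by definition of $\Dssn_{c}$.

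Second, I would invoke the equivariant existence-and-uniqueness theorem for real ramified coverings stated above (the real version of the theorem preceding Corollary~\ref{coro:Ej}) to obtain a ramified covering $j\colon\widetilde{S}\lra\CP$ that is real with respect to $c$ and the standard conjugation $\kappa\colon\CP\lra\CP$, $z\lmt\bar{z}$, with $\Dssn(j)=\widetilde{D}$, unique up to homotopy through real ramified coverings. At this stage $j$ is only a topological branched covering, so the substance of the corollary is to upgrade it to a holomorphic map for a $c$-compatible complex structure, i.e.\ to establish an equivariant form of the Riemann existence theorem.

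Third, and this is the heart of the matter, I would carry out that equivariant upgrade. Applying the non-equivariant Corollary~\ref{coro:Ej} to $\widetilde{D}$ furnishes a complex structure $J$ on $\widetilde{S}$ and a holomorphic map $j$ with $\Dssn(j)=\widetilde{D}$. The key observation is that the conjugated pair $(J',j')$, with $J':=-\,dc\circ J\circ dc$ and $j':=\kappa\circ j\circ c$, is again a realization of the same dessin. Indeed, $J'$ is a complex structure inducing the same orientation, and $c\colon(\widetilde{S},J')\lra(\widetilde{S},J)$ is anti-holomorphic since $dc\circ J'=-J\circ dc$; hence $j'$ is holomorphic for $J'$. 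Moreover, because $\widetilde{D}$ is $c$-invariant with $c$ preserving the colors of all vertices and edges, while $\{0,1,\infty\}$, $\RP$, and its three arcs are $\kappa$-invariant, one checks directly that $\Dssn(j')=\widetilde{D}$. By the uniqueness clause of Corollary~\ref{coro:Ej} the realizations $(J,j)$ and $(J',j')$ are connected by a deformation. The expected obstacle is precisely to promote this deformation-equivalence to genuine $c$-equivariance, i.e.\ to choose $J$ so that $c$ is literally anti-holomorphic and $j\circ c=\kappa\circ j$. I would resolve this by the standard rigidity argument for the involution on the space of realizations of $\widetilde{D}$ induced by $(J,j)\mapsto(J',j')$: a fixed point of this involution is exactly a $c$-real realization, and its existence follows from the connectedness and deformation-uniqueness supplied by Corollary~\ref{coro:Ej}.

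Finally, uniqueness up to equivariant deformation would follow by running the same comparison. Two $c$-real realizations of $D$ lift to two $c$-real realizations of $\widetilde{D}$; by the real topological theorem these are homotopic through real ramified coverings, and by the equivariant version of the Kodaira--Spencer deformation argument underlying Corollary~\ref{coro:Ej} they are joined by an equivariant deformation of the complex structure and of $j$. The single genuinely delicate point is the equivariant refinement in the third step; everything else is a formal transcription of the complex case combined with the double-cover dictionary of Proposition~\ref{prop:realdd}.
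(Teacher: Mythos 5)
Your steps 1, 2 and 4 follow the route the paper indicates (it cites \cite{deg} and derives the corollary from the preceding correspondence theorem for real ramified coverings together with the Riemann existence theorem), but your step 3 — which you correctly identify as the heart of the matter — contains a genuine gap. You propose to obtain a $c$-real realization as a fixed point of the involution $(J,j)\mapsto(J',j')$ on the space of realizations of $\widetilde{D}$, and you assert that the existence of such a fixed point ``follows from the connectedness and deformation-uniqueness supplied by Corollary~\ref{coro:Ej}.'' It does not: an involution on a connected space need not have a fixed point (rotation by $\pi$ on a circle is the standard counterexample), and deformation-uniqueness only tells you that $(J,j)$ and $(J',j')$ lie in the same path component, which is exactly the hypothesis you started from. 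As written, this step does not prove existence of an equivariant complex structure.

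The gap is easily repaired, and the repair is what makes your step 3 unnecessary in the first place. In step 2 you already have, from the real version of the correspondence theorem, a \emph{topological} ramified covering $j\colon\widetilde{S}\lra\CP$ satisfying $j\circ c=\kappa\circ j$ with $\Dssn(j)=\widetilde{D}$. The Riemann existence theorem produces the \emph{unique} complex structure on $\widetilde{S}$ for which $j$ is holomorphic: away from the critical points the local charts are $j$ itself, and near a ramification point of index $r$ they are $r$-th roots of $j$. The relation $j\circ c=\kappa\circ j$ then expresses $c$ in these charts as $z\lmt\bar z$ (up to the local root ambiguity), so $c$ is automatically anti-holomorphic for this pulled-back structure --- no fixed-point or averaging argument is needed, because the complex structure is forced by $j$ rather than chosen independently of it. With this in place, your step 4 goes through: a homotopy through real ramified coverings induces, stage by stage, the pulled-back complex structures and hence an equivariant deformation in the Kodaira--Spencer sense.
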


\subsubsection{Deformations of dessins}

In this section we describe the notions of deformations which allow us to associate classes of non-isotrivial trigonal curves and classes of dessins, up to deformations and equivalences that we explicit.

\begin{df}
A {\it deformation of coverings} is a homotopy $S\times [0,1] \lra \CP$ within the class of (equivariant) ramified coverings. The deformation is {\it simple} if it preserves the multiplicity of the inverse images of $0$, $1$, $\infty$ and of the other real critical values.
\end{df}

Any deformation is locally simple except for a finite number of values~$t\in [0,1]$.

\begin{prop}[\cite{deg}]
 Let $j_{0}, j_{1}\colon S\lra\CP$ be ($c$-equivariant) ramified coverings. They can be connected by a simple (equivariant) deformation if and only their dessins $D(j_0)$ and $D(j_{1})$ are isotopic (respectively, $D_{c}(j_{0})$ and $D_{c}(j_{1})$).
\end{prop}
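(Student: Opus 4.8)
The plan is to prove both implications and to reduce the real (equivariant) case to the complex one via the double-cover correspondence of Proposition~\ref{prop:realdd} and Corollary~\ref{coro:EjR}: a $c$-equivariant deformation of coverings descends to a deformation on the quotient $\overline{S}=S/c$, while an isotopy of the quotient dessin $D_{c}(j)\subset\overline{S}$ lifts to a $c$-equivariant isotopy of $D(j)\subset S$. I will therefore carry out the constructions below on $S$, arranging each one to be $c$-invariant by performing it on $\overline{S}$ and lifting. It then suffices to treat ordinary ramified coverings $j\colon S\lra\CP$ together with their dessins $D(j)=j^{-1}(\RP)$.

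\textbf{Simple deformation $\Rightarrow$ isotopic dessins.} Suppose $j_{0}$ and $j_{1}$ are joined by a simple deformation $j_{t}$, $t\in[0,1]$. Since $\RP\subset\CP$ is fixed, the dessins are the members of the family of preimages $j_{t}^{-1}(\RP)\subset S$. By the definition of simplicity the multiplicities of the preimages of $0$, $1$, $\infty$ and of every other real critical value are constant in $t$; in particular no two essential vertices collide, no critical point crosses $\RP$, and the local picture of $D(j_{t})$ at each vertex is preserved. An Ehresmann-type (covering-homotopy) argument then upgrades this to a topological isotopy: away from the vertices $j_{t}$ is a local diffeomorphism depending continuously on $t$, so the edges move continuously without crossing, while near each essential or monochrome vertex the preserved local model $z\mapsto z^{k}$ (with $k$ the constant ramification index) glues the moving edges together consistently. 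Hence $D(j_{0})$ and $D(j_{1})$ are isotopic.

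\textbf{Isotopic dessins $\Rightarrow$ simple deformation.} Conversely, let $h_{t}\colon D(j_{0})\hookrightarrow S$ be an isotopy of dessins with $h_{0}$ the inclusion and $h_{1}$ a color- and orientation-preserving isomorphism onto $D(j_{1})$. By the isotopy extension theorem it extends to an ambient isotopy $\Phi_{t}\colon S\lra S$ with $\Phi_{0}=\Id$ and $\Phi_{t}(D(j_{0}))=h_{t}(D(j_{0}))$. Set $j_{t}:=j_{0}\circ\Phi_{t}^{-1}$: this is a homotopy within the class of ramified coverings, and since each $\Phi_{t}$ is a homeomorphism it preserves all multiplicities exactly, so $j_{t}$ is a simple deformation from $j_{0}$ to $j_{1}':=j_{0}\circ\Phi_{1}^{-1}$. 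Because $(j_{0}\circ\Phi_{1}^{-1})^{-1}(\RP)=\Phi_{1}(D(j_{0}))=D(j_{1})$ with matching decoration, we have $D(j_{1}')=D(j_{1})$ as decorated graphs. Invoking the uniqueness clause of the realizability theorem above, $j_{1}'$ and $j_{1}$ are joined by a homotopy within the class of ramified coverings carrying the fixed dessin $D(j_{1})$, and such a homotopy is automatically simple. Composing the two simple deformations connects $j_{0}$ to $j_{1}$.

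\textbf{Main obstacle.} The crux is the first implication: one must show that constancy of multiplicities forces $j_{t}^{-1}(\RP)$ to move by a genuine ambient isotopy rather than merely a continuous family of graphs, ruling out transverse self-contacts of edges and changes in the cyclic order of edges around a vertex. This is precisely what simplicity controls, but making it rigorous requires the local normal forms of $j_{t}$ at the essential and monochrome vertices together with a partition-of-unity patching, all carried out $c$-equivariantly in the real setting. The secondary subtlety, ensuring that $\Phi_{t}$ and the uniqueness homotopy are $c$-equivariant, is handled by performing both constructions on the quotient $\overline{S}$ and lifting.
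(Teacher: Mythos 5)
The paper does not actually prove this proposition: it is quoted from \cite{deg} without an argument, so there is no in-text proof to compare yours against. Your sketch follows the standard route from that reference and is sound: in the forward direction, preservation of the ramification data over $0$, $1$, $\infty$ and the real critical values makes $j_{t}^{-1}(\RP)$ a combinatorially constant family of embedded decorated graphs, which upgrades to an ambient isotopy via the local models $z\mapsto z^{k}$; in the converse direction, transporting $j_{0}$ along an ambient extension of the isotopy and then invoking the uniqueness-up-to-homotopy clause of the preceding realizability theorem is exactly right, and your observation that a homotopy through coverings with a fixed dessin automatically preserves all the relevant multiplicities (hence is simple) closes the argument. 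The reduction of the equivariant case to the quotient surface via Proposition~\ref{prop:realdd} is also the intended mechanism. The only place that genuinely requires care is the one you yourself flag, namely making the Ehresmann-type upgrade rigorous near the vertices, and your appeal to the constant local normal forms is the correct way to do it.
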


\begin{df} \label{df:equidef}
 A deformation $j_{t}\colon S\lra\CP$ of ramified coverings is \emph{equisingular} if the union of the supports 
\[\bigcup_{t\in [0,1]} \operatorname{supp}\left\{ (j^{*}_{t}(0)\mod 3)+(j^{*}_{t}(1)\mod 2)+j^{*}_{t}(\infty)\right\}\] 
considered as a subset of $S\times [0,1]$ is an isotopy. Here $^{*}$ denotes the divisorial pullback of a map $\varphi:S\lra S'$ at a point $s'\in S'$:
 \[ \varphi^{*}(s')=\displaystyle\sum_{s\in\varphi^{-1}(s')}r_{s}s, \] where $r_{s}$ if the ramification index of $\varphi$ at $s\in S$.
\end{df}

A dessin $D_1\subset S$ is called a \emph{perturbation} of a dessin $D_0\subset S$, and $D_0$ is called a \emph{degeneration} of $D_1$, if for every vertex $v\in \Ver(D_0)$ there exists a small neighboring disk $U_v\subset S$ such that $D_0\cap U_v$ only has edges incident to $v$,  $D_1\cap U_v$ contains essential vertices of at most one color, and $D_0$ and $D_1$ coincide outside of $U_v$.

\begin{thm}[\cite{deg}] \label{thm:support}
 Let $D_{0}\subset S$ be a dessin, and let $D_{1}$ be a perturbation. Then there exists a map $j_{t}\colon S\lra\CP$ such that
\begin{enumerate}
 \item $D_{0}=\Dssn(j_0)$ and $D_{1}=\Dssn(j_{1})$;
 \item $j_{t}|_{S\setminus\bigcup_{v}U_{v}}=j_{t'}|_{S\setminus\bigcup_{v}U_{v}}$ for every $t$, $t'$;
 \item the deformation restricted to $S\times (0,1]$ is simple.
\end{enumerate}
\end{thm}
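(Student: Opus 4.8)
The plan is to reduce this global statement to a purely local construction inside each disk $U_v$, exploiting that $D_0$ and $D_1$ already coincide on $S\setminus\bigcup_v U_v$. First I would realize the degeneration: by the existence half of the dessin/covering correspondence (Corollary~\ref{coro:Ej}, or its real counterpart Corollary~\ref{coro:EjR} after passing to the oriented double cover as in Proposition~\ref{prop:realdd}), choose a ramified covering $j_0\colon S\lra\CP$ with $\Dssn(j_0)=D_0$. Since the deformation must leave everything fixed off the perturbation locus, I would simply set $j_t:=j_0$ on $S\setminus\bigcup_v U_v$ for all $t\in[0,1]$; this gives condition~(2) and forces $j_t|_{\partial U_v}=j_0|_{\partial U_v}$ for each $v$. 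It then remains to define $j_t$ on each $U_v$ so that it equals $j_0$ near $\partial U_v$, equals $j_0$ at $t=0$, satisfies $\Dssn(j_1)\cap U_v=D_1\cap U_v$, and restricts to a simple family for $t>0$. Once this is achieved the pieces glue smoothly to a global family of ramified coverings (the collars match because there $j_t=j_0$ and carry no critical points), and $\Dssn(j_1)=D_1$ globally, because the two dessins already agree outside the disks.

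The core is therefore the local model near a single vertex $v$. After centering coordinates at $v$ and normalizing the relevant critical value by a Möbius transformation, $j_0|_{U_v}$ is a standard branched monomial: $z\mapsto z^{\Ind(v)}$ at a black vertex (critical value $0$), $z\mapsto 1-z^{\Ind(v)}$ at a white vertex, $z\mapsto z^{-\Ind(v)}$ at a \tv, and a branched monomial of the appropriate degree at a monochrome vertex. The hypothesis that $D_1\cap U_v$ contains essential vertices of at most one color is exactly what keeps the prescribed splitting of $v$ inside a single one of the three essential types, so that it is realized by deforming the monomial model into a polynomial $p_t$: for a black vertex, $p_t=\prod_i\bigl(z-a_i(t)\bigr)$ with the prescribed multiplicities coalescing to $z^{\Ind(v)}$ as $t\to0$, and symmetrically for the other colors. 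I would then patch $p_t$ to $j_0$ across a collar of $\partial U_v$ using a cutoff, choosing $U_v$ small enough that $j_0$ and all the $p_t$ take values in one fixed small disk around the critical value; this keeps the interpolation away from the remaining two of $\{0,1,\infty\}$ and away from the auxiliary critical values, so no spurious essential or monochrome vertices appear in the collar. By construction the local dessin of $p_1$ is $D_1\cap U_v$; in the real/equivariant setting the same recipe is applied to conjugate pairs of disks, or $c$-invariantly to a real disk, so that the family stays equivariant.

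Finally I would verify conditions~(1)--(3). Condition~(1) holds by the endpoint constructions (and holomorphic representatives at $t=0,1$ may be recovered by re-applying Corollary~\ref{coro:Ej} to $D_0$ and $D_1$ if desired); condition~(2) holds by the choice $j_t=j_0$ off $\bigcup_v U_v$. For condition~(3), the multiplicities of the preimages of $0,1,\infty$ and of the auxiliary real critical values are locally constant along $t\in(0,1]$, since the polynomial family $p_t$ has a fixed combinatorial type there and only the degenerate fiber at $t=0$ merges them back into $v$; hence the restriction to $S\times(0,1]$ is a simple deformation.

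I expect the main obstacle to be the local realizability step of the second paragraph: one must check that \emph{every} admissible one-colored perturbation datum $D_1\cap U_v$ is the dessin of some polynomial deformation of the monomial model whose boundary values match $j_0|_{\partial U_v}$, and that the cutoff patching creates no extra vertices. This is precisely where the ``essential vertices of at most one color'' hypothesis does the work, confining the deformation to a single monomial factor and making the interior splitting compatible with the fixed boundary monodromy; verifying this compatibility uniformly over all vertex colors and indices is the delicate point, while the gluing and the checking of simplicity are routine once the local models are in place.
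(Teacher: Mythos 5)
The paper does not actually prove this statement: it is quoted from~\cite{deg} without proof, so there is no internal argument to compare yours against. Judged on its own, your skeleton --- freeze $j_t=j_0$ off $\bigcup_v U_v$, and deform a local model of $j_0$ inside each disk --- is the right one and is essentially how the result is established in the reference. But the write-up has two genuine gaps, both located exactly where you yourself flag the ``delicate point''.

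First, the local realizability step is asserted, not proved. The claim that \emph{every} admissible fragment $D_1\cap U_v$ with the prescribed boundary trace is the dessin of a family $p_t=\prod_i(z-a_i(t))^{k_i}$ is the entire content of the theorem localized to the disk, and it is not obviously true as stated: the configuration of the auxiliary (non-real and real) critical values of such a polynomial is constrained, whereas $D_1\cap U_v$ is only required to contain \emph{essential} vertices of one color --- it may in addition contain monochrome vertices and a nontrivial pattern of inner edges, which your ``roots coalescing'' model does not visibly realize. The clean route is to abandon holomorphic local models and invoke the disk version of the dessin--covering correspondence (the same region-by-region construction underlying Corollary~\ref{coro:Ej}): a dessin fragment on $U_v$ with fixed boundary data \emph{is} a topological ramified covering $U_v\to\CP$ agreeing with $j_0$ near $\partial U_v$, with no realizability question left over. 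Second, the cutoff patching is not controlled: confining the interpolated values to a small disk around the critical value keeps them away from the other two points of $\{0,1,\infty\}$, but it does not prevent the interpolation from acquiring new critical points with \emph{real} critical values near the distinguished one --- i.e.\ new monochrome vertices --- or tangencies of $j_t^{-1}(\RP)$ in the collar, so ``no spurious vertices appear'' does not follow from what you wrote. Working with topological coverings removes this problem too (one glues along an annulus on which both maps are honest unramified coverings of an arc), and then the simplicity on $(0,1]$ is obtained by letting $j_t$ for $t>0$ realize a copy of $D_1\cap U_v$ shrunk toward $v$, so that all multiplicities of $j_t^*(0)$, $j_t^*(1)$, $j_t^*(\infty)$ and of the real critical values are constant in $t>0$, exactly as you intend.
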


\begin{coro}[\cite{deg}] \label{coro:def}
 Let $S$ be a complex compact curve, $j\colon S\lra\CP$ a non-constant holomorphic map, and let $\Dssn(j)=D_{0}, D_{1}, \dots, D_{n}$ be a chain of dessins in $S$ such that for $i=1, \dots, n$ either $D_{i}$ is a perturbation of $D_{i-1}$, or $D_{i}$ is a degeneration of $D_{i-1}$, or $D_{i}$ is isotopic to $D_{i-1}$. Then there exists a piecewise-analytic deformation $j_{t}\colon S_{t}\lra\CP$, $t\in[0,1]$, of $j_0=j$ such that $\Dssn(j_{1})=D_{n}$.
\end{coro}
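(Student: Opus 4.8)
The plan is to argue by induction on the length $n$ of the chain, realizing each elementary transition between consecutive dessins by an analytic deformation over a subinterval of $[0,1]$ and then concatenating the resulting pieces. For $n=0$ the constant deformation $j_t\equiv j$ does the job. In general I would fix a subdivision $0=t_0<t_1<\dots<t_n=1$ and suppose inductively that a piecewise-analytic deformation $j_t\colon S_t\lra\CP$ has been built over $[0,t_{n-1}]$ with $j_0=j$ and $\Dssn(j_{t_{n-1}})=D_{n-1}$; it then suffices to produce an analytic piece over $[t_{n-1},1]$ starting at $j_{t_{n-1}}$ and realizing the last transition from $D_{n-1}$ to $D_n$.

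First I would reduce the three admissible kinds of transition to a single construction. If $D_n$ is a perturbation of $D_{n-1}$, Theorem \ref{thm:support} provides a family of ramified coverings $S\lra\CP$ realizing this perturbation and simple on $(t_{n-1},1]$. If $D_n$ is a degeneration of $D_{n-1}$, then by definition $D_{n-1}$ is a perturbation of $D_n$, so I would apply Theorem \ref{thm:support} to that perturbation and reverse the time parameter, which preserves both simplicity and analyticity. If $D_n$ is isotopic to $D_{n-1}$, the proposition characterizing simple deformations of coverings by isotopy of their dessins supplies a simple deformation connecting representatives of the two dessins. In all three cases one obtains a topological deformation of ramified coverings over $[t_{n-1},1]$.

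Next I would upgrade each such topological family to a genuine analytic deformation. Pulling back the complex structure of $\CP$ along each $j_t$ and extending it across the ramification points—equivalently, invoking the Riemann existence theorem fiberwise as in Corollary \ref{coro:Ej}—endows $S$ with a complex structure $S_t$ depending on $t$, with respect to which $j_t$ is holomorphic. Since $j_t$ varies analytically, this is an analytic deformation of the pair $(S_t,j_t)$ in the Kodaira--Spencer sense, with the same dessin at the endpoints as the topological family.

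The main obstacle, and the reason the conclusion is only \emph{piecewise}-analytic, is the matching of the pieces at the subdivision points $t_i$ (together with the finitely many instants at which a genuine degeneration occurs). The piece ending the $i$-th step and the piece beginning the $(i+1)$-th step both realize the dessin $D_i$, a priori with different complex structures and maps. Here the uniqueness clause of Corollary \ref{coro:Ej}—that the complex structure and the holomorphic map realizing a fixed dessin are unique up to deformation—lets me identify the terminal data of one piece with the initial data of the next after an admissible reparametrization, so that the concatenated family is continuous; analyticity in $t$ may genuinely fail at these finitely many points, which is exactly what the word piecewise records. Gluing the $n$ analytic pieces over the subdivision and rescaling to $[0,1]$ then yields the desired piecewise-analytic deformation $j_t\colon S_t\lra\CP$ with $j_0=j$ and $\Dssn(j_1)=D_n$.
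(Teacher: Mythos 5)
Your argument is correct and assembles exactly the ingredients the paper supplies for this purpose: Theorem~\ref{thm:support} for perturbations (and its time-reversal for degenerations), the proposition characterizing simple deformations by isotopy of dessins, and the Riemann existence theorem via Corollary~\ref{coro:Ej} to promote each topological family to an analytic one, with uniqueness up to deformation handling the matching at subdivision points. The paper itself quotes this corollary from \cite{deg} without proof, and your reconstruction coincides with the standard argument given there.
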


\begin{coro}[\cite{deg}] \label{coro:defR}
 Let $(S,c)$ be a real compact curve, $j\colon (S,c)\lra(\CP,{z\lmt\bar{z}})$ be a real non-constant holomorphic map, and let $\Dssn_{c}(j)= D_{0}, D_{1}, \dots, D_{n}$ be a chain of real dessins in $(S,c)$ such that for $i=1, \dots, n$ either $D_{i}$ is a equivariant perturbation of $D_{i-1}$, or $D_{i}$ is a equivariant degeneration of $D_{i-1}$, or $D_{i}$ is equivariantly isotopic to $D_{i-1}$. Then there is a piecewise-analytic real deformation $j_{t}\colon (S_{t},c_{t})\lra(\CP,\bar{\cdot})$, $t\in[0,1]$, of $j_0=j$ such that $\Dssn_{c}(j_{1})=D_{n}$.
\end{coro}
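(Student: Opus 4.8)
The statement is the equivariant counterpart of Corollary~\ref{coro:def}, and the plan is to run the proof of that corollary one elementary step at a time while carrying the real structure throughout. Here $S$ is the oriented double cover of the quotient $\overline{S}:=S/c$, so I would first transport the chain to $S$: by Proposition~\ref{prop:realdd} each real dessin $D_i$ corresponds to a $c$-invariant dessin $\widetilde{D}_i$ on $S$ with $\widetilde{D}_0=\Dssn(j)$, and $\widetilde{D}_i$ is a dessin precisely because $D_i$ is. Moreover the elementary relation linking $D_{i-1}$ to $D_i$ lifts to the corresponding relation between $\widetilde{D}_{i-1}$ and $\widetilde{D}_i$ in a $c$-equivariant way. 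Concretely, for a perturbation or degeneration the neighboring disks $U_v$ from the definition of perturbation preceding Theorem~\ref{thm:support} can be chosen so that the collection $\{U_v\}$ is $c$-invariant: a disk around a real vertex is taken $c$-invariant, and disks around a pair of $c$-conjugate vertices are taken to be exchanged by $c$. Thus the entire chain on $S$ becomes $c$-equivariant.

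Next I would realize each elementary step by a $c$-equivariant piecewise-analytic deformation. For an isotopy step this is immediate from the equivariant half of the simple-deformation proposition stated just before Definition~\ref{df:equidef}: equivariantly isotopic real dessins are joined by a simple equivariant deformation. For a perturbation step I would invoke the equivariant enhancement of Theorem~\ref{thm:support}. That theorem produces a family $j_t\colon S\lra\CP$ agreeing with a fixed map outside $\bigcup_v U_v$ and simple on $S\times(0,1]$; since the support $\bigcup_v U_v$ and the prescribed local models inside it have been chosen $c$-equivariantly, one may build the local perturbations compatibly with $c$, defining them freely inside one disk of each conjugate pair and transporting by $c$ to the other, and choosing a real local model inside each invariant disk. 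Gluing these to the unchanged real map outside the support yields a $c$-equivariant $j_t$ with $\Dssn(j_1)=\widetilde{D}_i$. A degeneration step is the time-reversal of a perturbation step. Concatenating the $n$ equivariant deformations and reparametrizing gives a single piecewise-analytic real deformation $j_t\colon(S_t,c_t)\lra(\CP,\bar{\cdot})$ with $j_0=j$; passing to the quotient through $\Dssn_{c}(\cdot)=\Dssn(\cdot)/c$ then gives $\Dssn_{c}(j_1)=D_n$, as required.

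The main obstacle is exactly this equivariant perturbation, i.e. upgrading Theorem~\ref{thm:support} to the real setting. The content is entirely local: one must check that the local perturbation models used in the proof of Theorem~\ref{thm:support} admit $c$-compatible representatives, so that the symmetrized family is again a genuine equivariant ramified covering rather than merely a $c$-invariant trichotomic graph. This is where the equivariant Riemann existence theorem, Corollary~\ref{coro:EjR}, together with its uniqueness-up-to-equivariant-homotopy clause, does the essential work: it certifies that the $c$-invariant dessin $\widetilde{D}_i$ produced at the end of each step is the dessin of an honest real holomorphic map, and that this map is unique up to equivariant homotopy, so the local pieces glue to a global real deformation with no residual monodromy obstruction. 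The remaining points — that a finite concatenation of piecewise-analytic equivariant deformations is again piecewise-analytic and equivariant, and that the base $(S_t,c_t)$ varies analytically — are routine and parallel the complex case of Corollary~\ref{coro:def}.
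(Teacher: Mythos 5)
The paper offers no proof of this corollary---it is imported verbatim from \cite{deg}---and your argument is precisely the standard equivariant upgrade of Corollary~\ref{coro:def} that the reference carries out: lift the chain to the double cover via Proposition~\ref{prop:realdd}, choose the supports $U_v$ in Theorem~\ref{thm:support} to be $c$-invariant or $c$-conjugate in pairs, symmetrize the local models, and invoke Corollary~\ref{coro:EjR} to realize each stage by a genuine real holomorphic map on a (possibly varying) real structure $(S_t,c_t)$. This is correct and is the intended proof; no gaps.
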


Due to Theorem \ref{thm:support}, the deformation $j_t$ given by Corollaries \ref{coro:def} and \ref{coro:defR} is equisingular in the sense of Definition \ref{df:equidef} if and only if all perturbations and degenerations of the dessins on the chain $D_{0}, D_{1}, \dots, D_{n}$ are equisingular.

\subsection{Trigonal curves and their dessins}

In this section we describe an equivalence between dessins.

\subsubsection{Correspondence theorems}

Let $C\subset\Si\lra B$ be a non-isotrivial proper trigonal curve.
We associate to $C$ the dessin corresponding to its $j$-invariant $\Dssn(C):=\Dssn(j_{C})\subset B$. 
In the case when $C$ is a real trigonal curve we associate to $C$ the image of the real dessin corresponding to its $j$-invariant under the quotient map, $\Dssn_{c}(C):=\Dssn(j_{C})\subset B/{c_{B}}$, where~$c_{B}$ is the real structure of the base curve $B$.\\

So far, we have focused on one direction of the correspondences: we start with a trigonal curve $C$, 
consider its $j$-invariant and 
construct the dessin associated to it.  
Now, we study the opposite direction. 
Let us consider a dessin $D$ on a topological orientable closed surface $S$. By Corollary \ref{coro:Ej},
there exist a complex structure~$B$ on~$S$ and a holomorphic map $j_{D}\colon B\lra\CP$ such that $\Dssn(j_{D})=D$. By Theorem~\ref{thm:Etc} and Corollary~\ref{coro:Etc} there exists a trigonal curve~$C$ having~$j_{D}$ as $j$-invariant; such a curve is unique up to deformation in the class of trigonal curves with fixed dessin. Moreover, due to Corollary~\ref{coro:def}, any sequence of isotopies, perturbations and degenerations of dessins gives rise to a piecewise-analytic deformation of trigonal curves, which is equisingular if and only if all perturbations and degenerations are.

In the real framework, let $(S,c)$ a compact close oriented topological surface endowed with a orientation-reversing involution. Let $D$ be a real dessin on $(S,c)$. By Corollary~\ref{coro:EjR}, there exists a real structure $(B,c_{B})$ on $(S,c)$ and a real map \linebreak[1]$j_{D}\colon(B,c_{B})\lra(\CP,z\longmapsto\bar{z})$ such that $\Dssn_c(j_{D})=D$. By Theorem \ref{thm:Etc}, Corollary \ref{coro:Etc} and the remarks made in Section \ref{sssec:rtc}, there exists a real trigonal curve~$C$ having~$j_{D}$ as $j$-invariant; such a curve is unique up to equivariant deformation in the class of real trigonal curves with fixed dessin. Furthermore, due to Corollary~\ref{coro:defR}, any sequence of isotopies, perturbations and degenerations of dessins gives rise to a piecewise-analytic equivariant deformation of real trigonal curves, which is equisingular if and only if all perturbations and degenerations are.

\begin{df}
A dessin is \emph{reduced} if
\begin{itemize}
\item 
for every $v$ $\bullet$-vertex one has $\Ind{v}\leq3$,
\item 
for every $v$ $\circ$-vertex one has $\Ind{v}\leq2$,
\item every monochrome vertex is real and has index $2$.
\end{itemize}
A reduced dessin is {\it generic} if all its $\bullet$-vertices and $\circ$-vertices are non-singular and all its $\times$-vertices have index~$1$.
\end{df}

Any dessin admits an equisingular perturbation to a reduced dessin. The vertices with excessive index (i.e., index greater than 3 for $\bullet$-vertices or than 2 for $\circ$-vertices) can be reduced by introducing new vertices of the same color.

In order to define an equivalence relation of dessins, we introduce 
{\it elementary moves}. Consider two reduced dessins $D$, $D'\subset S$ such that they coincide outside a closed disk $V\subset S$.
If $V$ does not intersect $\partial S$ and the graphs  $D\cap V$ and $D'\cap V$ are as shown in Figure~\ref{fig:elem}(a), then we say that performing a {\it monochrome modification} on the edges intersecting $V$ produces $D'$ from $D$, or {\it vice versa}.
This is the first type of {\it elementary moves}. 
Otherwise, the boundary component inside $V$ is shown in light gray. In this setting, if the graphs $D\cap V$ and $D'\cap V$ are as shown in one of the subfigures in Figure~\ref{fig:elem}, we say that performing an \emph{elementary move} of the corresponding type on $D\cap V$ produces $D'$ from $D$, or {\it vice versa}. 

\begin{df} 
Two reduced dessins $D$, $D'\subset S$ are {\it elementary equivalent} if, after a (preserving orientation, in the complex case) homeomorphism of the underlying surface $S$ they can be connected by a sequence of isotopies and elementary moves between dessins, as described in Figure~\ref{fig:elem}.
\end{df}

\begin{figure} 
\centering
\includegraphics[width=5in]{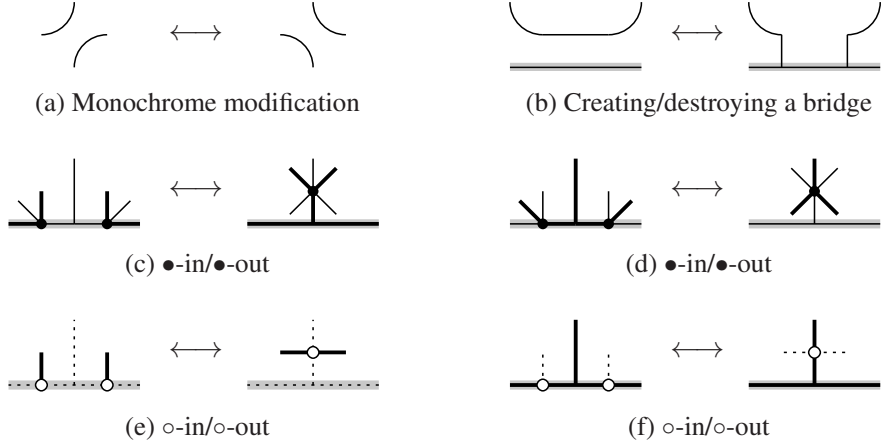}
\caption{Elementary moves.\label{fig:elem}}
\end{figure}

This definition is meant so that two reduced dessins are elementary equivalent if and only if they can be 
connected up to homeomorphism by a sequence of isotopies, equisingular perturbations and degenerations.

The following theorems establish the equivalences between the deformation classes of trigonal curves we are interested in and elementary equivalence classes of certain dessins. We use these links to obtain different classifications of curves {\it via} the combinatorial study of dessins.

\begin{thm}[\cite{deg}] \label{th:correpondance1}
 There is a one-to-one correspondence between the set of equisingular deformation classes of non-isotrivial proper trigonal curves ${C\subset\Si\lra B}$ with $\widetilde{A}$ type singular fibers only and the set of elementary equivalence classes of reduced dessins $D \subset B$.
\end{thm}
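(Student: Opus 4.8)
The plan is to realize the bijection through the $j$-invariant, using the correspondence results already assembled and upgrading the deformation statements to the level of elementary equivalence. I would define the forward map by sending a non-isotrivial proper trigonal curve $C\subset\Si\lra B$ to the elementary equivalence class of a reduced dessin obtained from $\Dssn(C)=\Dssn(j_C)\subset B$: since $C$ is non-isotrivial, $j_C$ is a non-constant meromorphic map and $\Dssn(j_C)$ is a genuine dessin, and since any dessin admits an equisingular perturbation to a reduced one (replacing each excessive $\bullet$- or $\circ$-vertex by several vertices of the same color), I may and do reduce it. In the reverse direction I would send a reduced dessin $D\subset B$ to the equisingular deformation class of the minimal proper trigonal curve attached to it: Corollary~\ref{coro:Ej} endows $B$ with a complex structure and a holomorphic map $j_D$ with $\Dssn(j_D)=D$, and Theorem~\ref{thm:Etc} together with Corollary~\ref{coro:Etc} produces a trigonal curve with $j$-invariant $j_D$, unique up to Nagata equivalence; I take its proper model.

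To see that the forward map is well defined I would use that an equisingular deformation of $C$ induces an analytic deformation of the pair $(B,j_C)$, hence a deformation of the associated ramified coverings. Such a deformation is simple away from finitely many parameter values, so by the proposition identifying simple deformations of coverings with isotopies of dessins, and by Theorem~\ref{thm:support} at the non-simple values, it decomposes into a finite chain of isotopies, perturbations and degenerations of dessins; by the equisingularity criterion recorded after Corollary~\ref{coro:defR}, these perturbations and degenerations are equisingular, so after reduction the chain becomes a sequence of elementary moves. Thus the elementary equivalence class of the reduced dessin is independent both of the representative $C$ and of the choices made in the reduction. For the reverse map I would run the implication in the other direction: by Corollary~\ref{coro:def} a chain of isotopies, equisingular perturbations and degenerations of dessins lifts to a piecewise-analytic deformation of trigonal curves, which is equisingular exactly when all the perturbations and degenerations are; since every elementary move is built from equisingular perturbation–degeneration pairs, elementary equivalent reduced dessins yield equisingularly deformation-equivalent proper curves.

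That the two maps are mutually inverse then follows from the uniqueness clauses already available. Starting from a dessin $D$, Theorem~\ref{thm:Etc} gives $j_{C_{j_D}}=j_D$, whence $\Dssn(C_{j_D})=D$ and reduction returns the class of $D$. Starting from a curve $C$, the uniqueness up to homotopy of the ramified covering realizing a given dessin, combined with the uniqueness up to Nagata equivalence in Theorem~\ref{thm:Etc}, identifies the proper curve reconstructed from $\Dssn(C)$ with the proper model of $C$ up to equisingular deformation. Since a Nagata transformation and the passage to the proper model leave $j_C$, and hence the dessin, unchanged, the correspondence descends to the stated equivalence classes.

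The step I expect to be the genuine obstacle is the exact matching of the hypothesis ``$\widetilde{A}$ type singular fibers only'' with the reduced-dessin condition, which is what makes both maps land in the right sets. Concretely, I must establish the local dictionary: a singular fiber of $C$ is of type $\widetilde{A}$ if and only if, in a neighborhood of the corresponding base point, $j_C$ has a pole (a $\times$-vertex) accompanied only by $\bullet$- and $\circ$-vertices of index admissible for a reduced dessin, whereas the excluded fiber types are precisely those forcing $g_2$ and $g_3$ to vanish simultaneously and hence a vertex configuration outside the reduced class. This amounts to a case analysis of the Weierstraß equation~\eqref{eq:Weiertrass} and of the ramification of $j_C$ over $0$, $1$ and $\infty$, and it is here that the restriction to $\widetilde{A}$ fibers is used to guarantee that the minimal proper curve reconstructed from the dessin has the prescribed fiber type and that no information about the curve is lost in passing to $j_C$.
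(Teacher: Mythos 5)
The paper does not actually prove this theorem: it is imported verbatim from \cite{deg}, and the only ``proof'' present is the surrounding discussion in the subsection on correspondence theorems, which sketches precisely the route you take ($j$-invariant in one direction; Corollary~\ref{coro:Ej}, Theorem~\ref{thm:Etc} and Corollary~\ref{coro:Etc} in the other; Corollary~\ref{coro:def} and Theorem~\ref{thm:support} to match equisingular deformations with chains of isotopies, equisingular perturbations and degenerations, hence with elementary moves on reduced dessins). Your proposal is therefore essentially the same approach as the source, and you correctly isolate the one ingredient not covered by the quoted statements --- the local dictionary between $\widetilde{A}$-type singular fibers and the index bounds defining reduced dessins, which indeed comes down to the Weierstra\ss{}-equation analysis of the ramification of $j_C$ over $0$, $1$, $\infty$.
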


 \begin{thm}[\cite{deg}] \label{th:correpondance2}
 There is a one-to-one correspondence between the set of equivariant equisingular deformation classes of non-isotrivial proper real trigonal\linebreak[1] curves $C\subset\Si\lra(B,c)$ with $\widetilde{A}$ type singular fibers only and the set of elementary equivalence classes of reduced real dessins $D \subset B/c$.
\end{thm}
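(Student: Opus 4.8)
The plan is to establish the bijection by constructing the two maps between the classes explicitly, showing each is well defined on equivalence classes, and checking they are mutually inverse, running the whole argument equivariantly in parallel with the complex correspondence of Theorem~\ref{th:correpondance1}. Throughout I would work on the quotient surface $B/c$ together with its oriented double cover, using Proposition~\ref{prop:realdd} and Corollary~\ref{coro:EjR} to pass freely between a real dessin on $B/c$ and a $c_B$-invariant dessin on $B$.

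For the forward map, given a non-isotrivial proper real trigonal curve $C\subset\Si\lra(B,c)$, its $j$-invariant $j_C\colon(B,c_B)\lra(\CP,z\mapsto\bar z)$ is real, as recalled in Section~\ref{sssec:rtc}; hence $\Dssn(j_C)$ is a $c_B$-invariant dessin and its image $\Dssn_c(C)=\Dssn(j_C)/c_B$ is a real dessin on $B/c$. I would first check that after an equivariant equisingular perturbation this dessin can be taken reduced, and that the hypothesis of having only $\widetilde A$-type singular fibers corresponds exactly to the reducedness conditions on the indices of the $\bullet$-, $\circ$- and $\times$-vertices, precisely as in the complex case. The key point is that an equivariant equisingular deformation of $C$ induces an equivariant deformation of the couple $(B,j_C)$ by Corollary~\ref{coro:Etc}, hence, via the equisingularity criterion following Corollary~\ref{coro:defR}, a chain of equivariant isotopies together with equisingular equivariant perturbations and degenerations of the associated real dessins; by the definition of elementary equivalence this shows the class of $\Dssn_c(C)$ is well defined.

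For the backward map, starting from a reduced real dessin $D\subset B/c$, Corollary~\ref{coro:EjR} furnishes a complex structure on the double cover together with a real holomorphic map $j_D$ satisfying $\Dssn_c(j_D)=D$, unique up to equivariant deformation; Theorem~\ref{thm:Etc}, Corollary~\ref{coro:Etc} and the reality remarks of Section~\ref{sssec:rtc} then produce a real proper trigonal curve $C$ with $j_C=j_D$, unique up to equivariant equisingular deformation in the class of real trigonal curves with fixed dessin. I would then verify that the two constructions invert each other on equivalence classes: their composite returns the same $j$-invariant up to equivariant homotopy, hence the same curve up to Nagata equivalence and equivariant deformation, and the same reduced dessin up to elementary equivalence. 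Here the passage to proper models (every real trigonal curve is Nagata equivalent to a proper one) and the exclusion of the isotrivial case guarantee that $j_C$ is a genuine non-constant real meromorphic map to which the dessin machinery applies.

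The main obstacle I expect is matching the two equivalence relations precisely and equivariantly: one must show that elementary moves on reduced real dessins correspond one-to-one with equivariant equisingular perturbation--degeneration chains of curves, and conversely. This rests on the remark following the definition of elementary equivalence, namely that two reduced dessins are elementary equivalent if and only if they are connected, up to orientation-preserving homeomorphism, by isotopies and equisingular perturbations and degenerations, combined with Corollary~\ref{coro:defR} and Theorem~\ref{thm:support} to realize each combinatorial step by a piecewise-analytic equivariant deformation of curves that is equisingular exactly when the dessin step is. The delicate points will be carrying out the reduction to a reduced dessin compatibly with the real structure $c$, so that the perturbing vertices introduced are either real or come in conjugate pairs, and checking that the local models for the elementary moves of Figure~\ref{fig:elem}, including those meeting $\partial(B/c)$, lift to $c$-equivariant modifications on the double cover.
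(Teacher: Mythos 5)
Your plan coincides with the paper's treatment: the theorem is quoted from \cite{deg}, and the surrounding discussion in the section on correspondence theorems establishes exactly the two maps you describe, using the $j$-invariant and $\Dssn_c$ in one direction, Corollary~\ref{coro:EjR}, Theorem~\ref{thm:Etc} and Corollary~\ref{coro:Etc} in the other, and Corollary~\ref{coro:defR} together with Theorem~\ref{thm:support} to match elementary equivalence with equivariant equisingular deformation. Your proposal is correct and takes essentially the same route.
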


\begin{thm}[\cite{deg}] \label{th:correpondance3}
 There is a one-to-one correspondence between the set of equivariant equisingular deformation classes of almost generic real trigonal\linebreak[1] curves $C \subset\Si\lra(B,c)$ and the set of elementary equivalence classes of generic real dessins $D \subset B/c$.
\end{thm}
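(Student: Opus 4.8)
The plan is to deduce Theorem~\ref{th:correpondance3} from Theorem~\ref{th:correpondance2} by restricting the correspondence established there to suitable subclasses on each side. First I would check that every almost generic real trigonal curve lies in the domain of Theorem~\ref{th:correpondance2}: by Definition~\ref{df:gen} such a curve comes with a Weierstraß model, hence is proper; it is non-isotrivial, since its $j$-invariant $j_C=-4g_2^3/\Delta$ has poles at the singular fibers and is therefore non-constant; and, because its singular fibers correspond to simple roots of $\Delta$ that are roots of neither $g_2$ nor $g_3$, each singular fiber is nodal, that is of type $\widetilde{A}$. Thus the bijection of Theorem~\ref{th:correpondance2} is defined on the equivariant equisingular deformation classes of almost generic curves, and the task reduces to identifying the image of this subclass with the elementary equivalence classes of generic real dessins.

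The core of the argument is the dictionary, read off from $j_C=-4g_2^3/\Delta$ with $\Delta=-4g_2^3-27g_3^2$, between the analytic conditions of Definition~\ref{df:gen} and the combinatorial conditions defining a generic dessin. I would record three local computations. Over $\infty$ the map $j_C$ has a pole exactly at the zeros of $\Delta$, and a $\times$-vertex has index $1$ precisely when the corresponding zero is simple. Over $0$ one has $j_C\sim g_2^3$, so a zero of $g_2$ of multiplicity $m$ disjoint from $\{\Delta=0\}$ yields a $\bullet$-vertex of index $3m\equiv 0 \bmod 3$, hence non-singular. Over $1$ the identity $j_C-1=27g_3^2/\Delta$ shows that a zero of $g_3$ of multiplicity $n$ disjoint from $\{\Delta=0\}$ yields a $\circ$-vertex of index $2n\equiv 0 \bmod 2$, hence non-singular. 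Conversely, at a simple zero of $\Delta$ one automatically has $g_2\neq 0$ and $g_3\neq 0$: if $g_2=0$ there, then $\Delta=-27g_3^2$ would force $g_3=0$ and a non-simple zero, and symmetrically for $g_3$. This proves that a curve is almost generic if and only if all its $\times$-vertices have index $1$, and that in that case the $\bullet$- and $\circ$-vertices are automatically non-singular; equivalently, the reduced form of the dessin of an almost generic curve is generic, while the minimal curve $C_j$ of Theorem~\ref{thm:Etc} and Corollary~\ref{coro:Etc} associated to a generic dessin is almost generic.

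The remaining and, I expect, most delicate point is to match the two equivalence relations. A single almost generic curve may have $g_2$ or $g_3$ with high-multiplicity roots, so its raw dessin need not be reduced; here I would use that excessive-index $\bullet$- and $\circ$-vertices are resolved by introducing new vertices of the same color, an operation realized by an \emph{equisingular} perturbation, since the multiple roots of $g_2$, $g_3$ lie on regular fibers and can be split without creating or destroying singular fibers. Hence every equivariant equisingular deformation class of almost generic curves contains a representative with reduced, generic dessin, so the assignment to a class of generic dessins is well defined. Combined with Theorem~\ref{th:correpondance2}, this would give that two almost generic curves are equivariantly equisingularly deformation equivalent if and only if their generic dessins are elementary equivalent. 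The genuine obstacle is to verify that the elementary moves of Figure~\ref{fig:elem} preserve the defining conditions of a generic dessin — index $1$ at every $\times$-vertex and non-singularity of the $\bullet$- and $\circ$-vertices — so that the generic dessins form a union of full elementary equivalence classes; once this stability is checked, the restriction of the bijection of Theorem~\ref{th:correpondance2} is a genuine bijection onto the generic-dessin classes, which is the asserted correspondence.
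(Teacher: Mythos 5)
The paper does not actually prove this statement: Theorem~\ref{th:correpondance3} is quoted from~\cite{deg}, so there is no internal argument to compare yours against. That said, your proposal is essentially the derivation one finds in the cited source: the result is obtained by specializing Theorem~\ref{th:correpondance2}, and your dictionary between the analytic conditions of Definition~\ref{df:gen} and the combinatorial definition of a generic dessin (simple poles of $j_C$ at simple zeros of $\Delta$, indices divisible by $3$, resp.\ $2$, at zeros of $g_2$, resp.\ $g_3$, and the automatic non-vanishing of $g_2$ and $g_3$ at a simple zero of $\Delta$) is correct. Two small points deserve attention. First, your ``if and only if'' between almost genericity and index-$1$ $\times$-vertices holds only for the minimal (fixed-degree) model: a positive Nagata transformation leaves $j_C$, hence the dessin, unchanged while multiplying $\Delta$ by $s^6$, so a non-minimal curve over a simple pole of $j_C$ is not almost generic; this is harmless because the correspondence of Theorem~\ref{th:correpondance2} is already normalized by the degree of the curve. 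Second, the stability of genericity under the elementary moves of Figure~\ref{fig:elem}, which you rightly isolate as the remaining obstacle, follows from the observation that these moves do not alter the index of any $\times$-vertex nor the residues modulo $3$ and $2$ of the indices of the $\bullet$- and $\circ$-vertices; with that observation your argument is complete.
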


This correspondences can be extended to trigonal curves with more general singular fibers (see \cite{deg}).

\begin{df} 
Let $C\subset\Si\lra B$ be a proper trigonal curve. 
We define the {\it degree} of the curve $C$ as $\deg(C):=-3E^2$ where $E$ is the exceptional section of~$\Si$. For a dessin $D$, we define its {\it degree} as $\deg(D)=\deg(C)$ where $C$ is a minimal proper trigonal curve such that $\Dssn(C)=D$.
\end{df}

\subsubsection{Real generic curves}

Let $C$ be a generic real trigonal curve and let $D:=\Dssn_{c}(C)$ be a generic dessin. 
The {\it real part} of $D\subset S$ is the intersection $D\cap\partial S$. For a specific color $\ast\in\{\mathrm{ solid},\mathrm{ bold},\mathrm{ dotted}\}$, $D_{\ast}$ is the subgraph of the corresponding color and its adjacent vertices. The components of $D_{\ast}\cap\partial S$ are either components of~$\partial S$, called \emph{monochrome components} of $D$, or 
segments, called {\it maximal monochrome segments} of $D$. 
We call these monochrome components or segments \emph{even} or \emph{odd} according to 
the parity of the number of $\circ$-vertices they contain.

Furthermore, we refer to the dotted monochrome components as \emph{hyperbolic components}. A dotted segment without $\times$-vertices of even index is referred to as an \emph{oval} if it is even, or as a \emph{zigzag} if it is odd.

\begin{df} 
Let $D\subset S$ be a real dessin. Assume that there is a subset of $S$ in which $D$ has a configuration of vertices and edges as in Figure~\ref{fig:zigzag}. Replacing the corresponding configuration with the alternative one defines another dessin $D'\subset S$. We say that $D'$ is obtained from $D$ by \emph{straightening/creating} a zigzag.

Two dessins $D,D'$ are \emph{weakly equivalent} if there exists a finite sequence of dessins $D=D_0, D_1, \dots,D_n=D'$ such that $D_{i+1}$ is either elementary equivalent to $D_{i}$, or $D_{i+1}$ is obtained from $D_i$ by straightening/creating a zigzag.
\end{df}

Notice that if $D'$ is obtained from $D$ by straightening/creating a zigzag and $\widetilde{D},\widetilde{D'}\subset\widetilde{S}$ are the liftings of $D$ and $D'$ in $\widetilde{S}$, the double complex of $S$, then $\widetilde{D}$ and $\widetilde{D'}$ are elementary equivalent as complex dessins. However, $D$ and $D'$ are not elementary equivalent, since the number of zigzags of a real dessin is an invariant on the elementary equivalence class of real dessins.

\begin{figure}
\centering 
\includegraphics[width=5in]{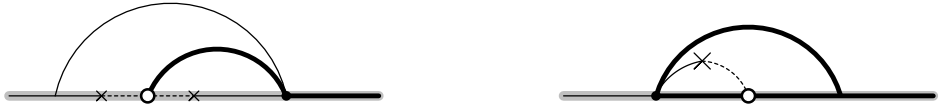}
\caption{Straightening/creating a zigzag.\label{fig:zigzag}}
\end{figure}

\subsubsection{Type of a dessin}
 Let $C\subset\Si\lra B$ be a real trigonal curve over a base curve of type~$I$. We define $C_{\mathrm{Im}}$ as the closure of the set $\pi^{-1}|_{C}(B_{\R})\setminus C_{\R}$ and let~$B_{\mathrm{\Im}}=\pi(C_{\mathrm{\Im}})$. By definition $C_{\mathrm{Im}}$ is invariant with respect to the real structure of~$C$. Moreover, $C_{\mathrm{Im}}=\emptyset$ if and only if $C$ is a hyperbolic trigonal curve.

\begin{lm}[\cite{DIZ}] 
 A trigonal curve is of type~$\mathrm{I}$ if and only if the homology class~$[C_{\mathrm{Im}}]\in H_{1}(C;\Z/2\Z)$ is zero.
\end{lm}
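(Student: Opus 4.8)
The plan is to reduce the statement to the classical fact that, for a real curve with non-empty real locus, being of type~$\mathrm{I}$ is detected by the vanishing of the $\Z/2\Z$-homology class of the real locus, and then to supply the extra geometric input that, over a type~$\mathrm{I}$ base, the real locus and the imaginary locus of $C$ are homologous modulo~$2$. Throughout I would work on the normalization $\nu\colon\widetilde{C}\to C$, setting $\widetilde{\pi}:=\pi\circ\nu\colon\widetilde{C}\to B$, and transport the conclusion back to $C$ only at the end. For the classical input, note first that over every point of $B_{\R}$ the fibre of a trigonal curve meets it in a cubic, so at least one of the three points is real and hence $\widetilde{C}_{\R}\neq\emptyset$. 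Since $\widetilde{C}_{\R}$ is then a non-empty closed $1$-submanifold (a union of circles) of the closed surface $\widetilde{C}$, I would invoke the standard fact that such a $\gamma$ separates a closed surface $S$ if and only if $[\gamma]=0$ in $H_{1}(S;\Z/2\Z)$. Applied to $\gamma=\widetilde{C}_{\R}$, and using that by definition $C$ is of type~$\mathrm{I}$ exactly when $\widetilde{C}\setminus\widetilde{C}_{\R}$ is disconnected, this yields: $C$ is of type~$\mathrm{I}$ $\iff$ $[\widetilde{C}_{\R}]=0$ in $H_{1}(\widetilde{C};\Z/2\Z)$.

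Next I would establish the homological identity $[\widetilde{C}_{\R}]=[\widetilde{C}_{\mathrm{Im}}]$. Because $B$ is of type~$\mathrm{I}$, its real locus bounds a half: $B_{\R}=\partial\overline{B^{+}}$ for one of the two components $B^{+}$ of $B\setminus B_{\R}$, so $[B_{\R}]=0$. Pulling back, $\widetilde{\pi}^{-1}(\overline{B^{+}})$ is a $2$-chain in $\widetilde{C}$ whose boundary, taken mod~$2$, is $\widetilde{\pi}^{-1}(B_{\R})=:\widetilde{G}$; hence $[\widetilde{G}]=0$ in $H_{1}(\widetilde{C};\Z/2\Z)$. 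Now $\widetilde{G}$ is exactly the union of the real points and the non-real points of $\widetilde{C}$ lying over $B_{\R}$, that is $\widetilde{G}=\widetilde{C}_{\R}\cup\widetilde{C}_{\mathrm{Im}}$, where $\widetilde{C}_{\mathrm{Im}}$ denotes the lift of $C_{\mathrm{Im}}$. Both $\widetilde{C}_{\R}$ and $\widetilde{C}_{\mathrm{Im}}$ are closed $1$-submanifolds that meet transversally at the finitely many real ramification points of $\widetilde{\pi}$ over $B_{\R}$, so $[\widetilde{G}]=[\widetilde{C}_{\R}]+[\widetilde{C}_{\mathrm{Im}}]$. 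Therefore $[\widetilde{C}_{\R}]=[\widetilde{C}_{\mathrm{Im}}]$, and combining with the previous paragraph gives that $C$ is of type~$\mathrm{I}$ if and only if $[\widetilde{C}_{\mathrm{Im}}]=0$.

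It remains to descend from $\widetilde{C}$ to $C$. The map $\nu$ restricts to a homeomorphism $\widetilde{C}_{\mathrm{Im}}\to C_{\mathrm{Im}}$ away from the finite node set, so $\nu_{*}[\widetilde{C}_{\mathrm{Im}}]=[C_{\mathrm{Im}}]$ in $H_{1}(C;\Z/2\Z)$. Passing from $\widetilde{C}$ to $C$ amounts to gluing the branches pairwise at the nodes, and since identifying a pair of points in a space can only adjoin a new $1$-cycle (a short cofibre-sequence argument), the induced map $\nu_{*}\colon H_{1}(\widetilde{C};\Z/2\Z)\to H_{1}(C;\Z/2\Z)$ is injective. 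Consequently $[\widetilde{C}_{\mathrm{Im}}]=0$ if and only if $[C_{\mathrm{Im}}]=0$, which together with the second paragraph proves the equivalence.

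The hard part will be the second paragraph: justifying that the divisorial pullback of the bounding chain $\overline{B^{+}}$ has boundary exactly $\widetilde{G}$ across the real ramification points, and that $\widetilde{C}_{\mathrm{Im}}$ is genuinely a closed mod-$2$ cycle there. I expect to settle this with the local model $\widetilde{\pi}\colon z\mapsto z^{2}$, where the preimage of a closed half-plane is $\{xy\geq0\}$, whose boundary is the pair of coordinate axes---locally precisely $\widetilde{C}_{\R}\cup\widetilde{C}_{\mathrm{Im}}$. A secondary point needing care is the injectivity of $\nu_{*}$ and the precise identification of loci in the presence of solitary real nodes (a real point carrying two complex-conjugate branches), where $\widetilde{C}_{\R}$ differs from $\nu^{-1}(C_{\R})$; however these discrepancies occur only at finitely many points and so do not affect any of the homology classes involved.
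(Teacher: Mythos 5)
The paper does not prove this lemma at all: it is quoted from \cite{DIZ}, so there is no internal proof to compare against. Judged on its own, your argument is correct and is essentially the standard (and, as far as I can tell, the cited source's) argument: the mod-$2$ identity $[\widetilde{C}_{\R}]+[\widetilde{C}_{\mathrm{Im}}]=[\widetilde{\pi}^{-1}(B_{\R})]=\partial\,[\widetilde{\pi}^{-1}(\overline{B^{+}})]=0$, combined with the classical criterion that a real curve with non-empty real point set is of type~$\mathrm{I}$ if and only if its real locus is null-homologous mod~$2$. Your local model at a ramification point ($z\mapsto z^{2}$, preimage of a closed half-plane equal to $\{xy\geq 0\}$ with mod-$2$ boundary the two coordinate axes) is exactly what is needed to justify the boundary identity, and the descent to the nodal curve via injectivity of $\nu_{*}$ on $H_{1}(\,\cdot\,;\Z/2\Z)$ is sound.

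Two points deserve tightening. First, the blanket claim that a closed $1$-submanifold $\gamma$ of a closed surface separates if and only if $[\gamma]=0$ fails in the direction you need for general $\gamma$: a disjoint union of a separating circle and a non-separating circle disconnects the surface while being non-trivial in homology. For $\gamma=\widetilde{C}_{\R}$ you should instead argue that $\widetilde{C}\setminus\widetilde{C}_{\R}$ is a double cover of the connected interior of the quotient $\widetilde{C}/\mathrm{conj}$, hence has at most two components, and when there are two they are exchanged by conjugation and the closure of either one is a subsurface with boundary all of $\widetilde{C}_{\R}$; this gives both implications. Second, you implicitly assume $\widetilde{C}$ connected (i.e.\ $C$ irreducible), which is the situation in which the lemma is used but should be stated. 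A cosmetic remark: $\widetilde{C}_{\mathrm{Im}}$ need not be a submanifold at a ramification point of index $3$ over $B_{\R}$, but it is still a mod-$2$ cycle, which is all your computation uses.
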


In view of the last lemma, we can represent a trigonal curve of type~$\mathrm{I}$ as the union of two orientable surfaces $C_{+}$ and $C_{-}$, intersecting at their boundaries ${C_{+}\cap C_{-}=\partial C_{+}=\partial C_{-}}$. Both surfaces, $C_{+}$ and $C_{-}$, are invariant under the real structure of~$C$. We define 
\[
m_{\pm}:
\begin{array}{ccc}
		B &	\lra	& \Z \\
		b & 	\lmt	&	\operatorname{Card}(\pi|_{C}^{-1}(b)\cap C_{\pm})-\chi_{B_{\mathrm{Im}}}(b), 
\end{array}
\]
where $\chi_{B_{\mathrm{Im}}}$ is the characteristic function of the set $B_{\mathrm{Im}}$. These maps are locally constant over $B^{\#}$, and since $B^{\#}$ is connected, the maps are actually constant. Moreover, $m_{+}+m_{-}=3$, so we choose the surfaces $C_{\pm}$ such that $m_{+}|_{B^{\#}}\equiv 1$ and~$m_{-}|_{B^{\#}}\equiv 2$. 

We can label each region $R\in\operatorname{Cut}(D)$ where $D=\Dssn(C)$ according to the label on $C_{+}$. Given any point $b\in R$ on the interior, the vertices of the triangle~$\pi|_{C}^{-1}(b)\subset F_b$ are labeled by 1, 2, 3, according to the
increasing order of lengths of the opposite side of the triangle. We label the region~$R$ by the label of the point~$\pi|_{C}^{-1}(b)\cap C_{+}$. 

We can also label the interior edges according to the adjacent regions in the following way:
\begin{itemize}
 \item every solid edge can be of type~$1$ (i.e., both adjacent regions are of type 1) or type~$\overline{1}$ (i.e., one region of type~$2$ and one of type~$3$);
 \item every bold edge can be of type~$3$ or type~$\overline{3}$;
 \item every dotted edge can be of type~$1$, $2$ or $3$.
\end{itemize}
We use the same rule for the real edges of $D$. Note that there are no real solid edges of type~$1$ nor real bold edges of type~$3$ (otherwise the morphism $C_{+}\lra B$ would have two layers over the regions of $D$ adjacent to the edge).

\begin{thm}[\cite{DIZ}]\label{df:type}
 A generic non-hyperbolic curve $C$ is of type~$\mathrm{I}$ if and only if the regions of $D$ admit a labeling which satisfies the conditions described above.
\end{thm}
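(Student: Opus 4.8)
The plan is to prove both implications by comparing the combinatorial labeling with the geometric splitting $C=C_{+}\cup C_{-}$ supplied by the preceding lemma, which identifies type~$\mathrm{I}$ curves with those for which $[C_{\mathrm{Im}}]=0$; for such a curve $C_{\mathrm{Im}}$ separates $C$ into two $c$-invariant surfaces with $C_{\mathrm{Im}}=\partial C_{+}=\partial C_{-}$. The computational heart of the argument is a local dictionary between the colour of an edge and the permutation of the length-ordering of a fibre induced by crossing it. Using the propositions on the $j$-invariant recalled above: over a solid edge ($j\le0$) the fibre is an isosceles triangle whose apex carries the shortest opposite side, so the labels $2,3$ are tied and get exchanged; over a bold edge ($0\le j\le1$) the apex carries the longest opposite side, so $1,2$ get exchanged; over a dotted edge ($j\ge1$) the three points are collinear with pairwise distinct opposite sides, so no exchange occurs. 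This is precisely the trichotomy encoded by the edge types $1/\overline1$, $3/\overline3$ and $1/2/3$.

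For the direct implication, assume $C$ is of type~$\mathrm{I}$. As $m_{+}|_{B^{\#}}\equiv1$, over the interior of each region $R$ exactly one sheet lies in $C_{+}$, and I label $R$ by the length-order label of that sheet; well-definedness follows from the constancy of $m_{\pm}$ on the connected set $B^{\#}$. Across an interior edge the surface $C_{+}$ continues as a single sheet whose label is transported by the permutation above, which forces the adjacent labels to be $\{1,1\}$ or $\{2,3\}$ on a solid edge, $\{3,3\}$ or $\{1,2\}$ on a bold edge, and to agree on a dotted edge---exactly the edge-type conditions. For a real edge I argue over $B_{\mathrm{Im}}$, where the fibre is a real apex together with a complex-conjugate pair: the pair is $C_{\mathrm{Im}}=\partial C_{+}$, while $m_{+}\equiv1$ forces the real apex into $C_{-}$. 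Hence the $C_{+}$-sheet limits onto the conjugate pair and never onto the apex; since the apex is the label~$1$ point on a solid edge and the label~$3$ point on a bold edge, this excludes real solid edges of type~$1$ and real bold edges of type~$3$.

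Conversely, suppose a labeling satisfying all these conditions is given. Over each region I select the sheet named by its label; by the same dictionary the edge conditions guarantee that adjacent selected sheets match continuously across every interior edge, so their union is an embedded surface over the complement of the real locus. The real-edge conditions ensure that along $B_{\mathrm{Im}}$ the selected sheets limit exactly onto the conjugate pairs $C_{\mathrm{Im}}$ and not onto the real branch, so the closure $C_{+}$ is a surface with $\partial C_{+}=C_{\mathrm{Im}}$; taking $C_{-}$ to be the closure of the remaining two sheets gives $C=C_{+}\cup C_{-}$ meeting along $C_{\mathrm{Im}}$. Therefore $[C_{\mathrm{Im}}]=[\partial C_{+}]=0$ in $H_{1}(C;\Z/2\Z)$, and the preceding lemma yields that $C$ is of type~$\mathrm{I}$.

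The step I expect to be hardest is the gluing in the converse: one must check that the locally selected sheets assemble into a genuinely embedded global surface, which requires consistency not merely across edges but around every essential ($\bullet$, $\circ$, $\times$) vertex of the dessin, where the sheets branch, as well as control of the closure over the singular fibres. The accompanying bookkeeping over $B_{\mathrm{Im}}$---verifying that $\partial C_{+}$ is exactly $C_{\mathrm{Im}}$ and nothing more---is the delicate point; once $C_{+}$ is known to close up as a surface, the homological conclusion is immediate.
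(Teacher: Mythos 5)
Your proposal is correct in outline and uses exactly the ingredients the paper assembles for this purpose: the lemma identifying type~$\mathrm{I}$ with $[C_{\mathrm{Im}}]=0$, the normalization $m_{+}|_{B^{\#}}\equiv1$ singling out one sheet per region, and the $j$-invariant trichotomy giving the tie/swap dictionary (labels $2,3$ tied over solid edges, $1,2$ over bold edges, no tie over dotted edges), which is precisely how the region labels and edge types are defined in the text. Note, however, that the paper does not prove this statement itself --- it is quoted from~\cite{DIZ} --- so there is no in-paper proof to compare against; your argument is a faithful reconstruction of the standard one, and the points you flag as delicate (consistency of the selected sheets around $\bullet$-, $\circ$- and $\times$-vertices, and the identification of $\partial C_{+}$ with $C_{\mathrm{Im}}$ over the real locus, where one must treat the dotted real edges, over which all three sheets are real, separately from the solid and bold ones over $B_{\mathrm{Im}}$) are indeed the ones that require care in the converse direction.
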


\section{Toiles}\label{ch:toi}

Generic trigonal curves are smooth.
Smooth proper trigonal curves have non-singular dessins.
Singular proper trigonal curves have singular dessins and the singular points are represented by singular vertices. 
A generic singular trigonal curve $C$ has exactly one singular point, which is a non-degenerate double point ({\it node}). 
Moreover, if $C$ is a proper trigonal curve, then the double point on it is represented by a $\times$-vertex of index $2$ on its dessin.
In addition, if $C$ has a real structure,
the double point is real and so is its corresponding vertex, leading to the cases where the $\times$-vertex of index $2$ has dotted real edges (representing the intersection of two real branches) or has solid real edges (representing one isolated real point, which is the intersection of two complex conjugated branches).

\begin{df} 
Let $D\subset S$ be a dessin on a compact surface $S$. A {\it nodal vertex} 
({\it node})  
of $D$ is a $\times$-vertex of index $2$.
The dessin $D$ is called \emph{nodal} if all its singular vertices are nodal vertices.
We call a \emph{toile} a non-hyperbolic real nodal dessin on $(\CP,z\lmt\bar{z})$. 
\end{df}

Since a real dessin on $(\CP,z\lmt\bar{z})$ descends to the quotient, we represent toiles on the disk.

In a real dessin, there are two types of real nodal vertices, namely, vertices having either real solid edges and interior dotted edges, or dotted real edges and interior solid edges. We call \emph{isolated nodes} of a dessin $D$ thoses $\times$-vertices of index $2$ corresponding to the former case and \emph{non-isolated nodes} those corresponding to the latter. 

\begin{df} 
Let $D\subset S$ be a real dessin.
A {\it bridge} of $D$ is an edge $e$ contained in a connected component of the boundary $\partial S$ having more than two vertices, such that $e$ connects two monochrome vertices. The dessin $D$ is called {\it bridge-free} if it has no bridges.
The dessin $D$ is called {\it peripheral} if it has no inner vertices other than $\times$-vertices.
\end{df}

For non-singular dessins, combinatorial statements analogous to Lemma~\ref{lm:bf} and Proposition~\ref{prop:deep} are proved in \cite{DIK}.

\begin{lm} \label{lm:bf} \label{cr:bf}
  A nodal dessin $D$ is elementary equivalent to a bridge-free dessin $D'$ having the same number of inner essential vertices and real essential vertices.
\end{lm}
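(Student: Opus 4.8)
The plan is to argue by induction on the number of monochrome vertices lying on $\partial S$, removing bridges one at a time by elementary moves that touch only the monochrome part of $D$. The observation that makes the statement true is that an elementary move supported in a disk $V$ meeting $\partial S$ only along monochrome edges and monochrome vertices neither creates nor destroys any essential ($\bullet$-, $\circ$- or $\times$-) vertex, and preserves whether each essential vertex is inner or real. Hence any dessin obtained from $D$ by a chain of such moves automatically has the same number of inner essential vertices and of real essential vertices as $D$, and it suffices to reach a bridge-free dessin using only moves of this kind. Since elementary equivalence is defined for reduced dessins, I may take $D$ reduced, so that every monochrome vertex has index $2$; reducing the monochrome vertices introduces only new monochrome vertices and so costs nothing in the two counts to be preserved.

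For the inductive step, suppose $D$ has a bridge $e$, lying on a boundary component $\partial_0\subset\partial S$ with more than two vertices and joining two monochrome vertices $u$ and $v$. Being of index $2$, each of $u,v$ is incident to its two boundary edges and exactly one interior edge, all sharing the common color of $e$, say solid; call the interior edges $e_u$ and $e_v$. Together with $e$ they bound a subregion $R$ of $\Cut(D)$ that meets $\partial_0$ only along $e$. I would then apply the boundary elementary move of Figure~\ref{fig:elem} that slides this monochrome configuration off $\partial_0$ across $R$, removing the bridge $e$ from the boundary (so that two monochrome vertices leave $\partial S$) while leaving every essential vertex, and the whole of $D$ outside a small disk around $e\cup e_u\cup e_v$, untouched. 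The resulting graph $D'$ is again a dessin: no monochrome cycle is created, so admissibility survives (the degenerate possibility that $e_u$ and $e_v$ share an endpoint is already excluded, since it would make $e_u,e,e_v$ a solid $3$-cycle in the admissible $D$), and the trigonal regions are unaffected and hence remain disks.

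Finally I would bring in the nodal hypothesis, which is the only feature absent from the non-singular argument of~\cite{DIK}. The endpoints of a bridge are monochrome vertices, whereas a node is a $\times$-vertex of index $2$, hence essential; thus no $u,v$ is ever a node, and the disk $V$ supporting the move can be shrunk toward $e\cup e_u\cup e_v$ so as to avoid every $\times$-vertex of $D$. Consequently the nodes neither obstruct the move nor are disturbed by it, and the reasoning of~\cite{DIK} applies verbatim. Each move strictly decreases the number of monochrome vertices on $\partial S$, a nonnegative integer, so the induction terminates at a bridge-free $D'$ elementary equivalent to $D$ with the required vertex counts. I expect the main obstacle to be the case analysis in the middle step: checking that the appropriate boundary move of Figure~\ref{fig:elem} is genuinely available for every configuration of the neighbours of $u$ and $v$ along $\partial_0$, and confirming that it does not reintroduce a bridge elsewhere, so that the chosen induction measure really is monotone and the process is well-founded.
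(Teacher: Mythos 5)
Your proposal matches the paper's proof: both remove bridges one at a time by the bridge-destruction elementary move, observe that such moves involve only monochrome vertices and edges and hence preserve the counts of inner and real essential vertices, and terminate because each destruction strictly decreases the number of monochrome vertices on $\partial S$. The admissibility check you flag as the main remaining obstacle is dispatched in the paper by noting that every region of a dessin carries essential vertices on its boundary (together with the observation that a monochrome boundary neighbour of an endpoint of a bridge just yields another bridge), so the degenerate configurations you worry about do not occur.
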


\begin{proof}
Let $D$ be a dessin on $S$. Let $e$ be a bridge of $D$ lying on a connected component of $\partial S$. Let $u$ and $v$ be the endpoints of $e$. Since $e$ is a bridge, there exists at least one real vertex $w\neq u$ of $D$ adjacent to $v$. If $w$ is an essential vertex, destroying the bridge is an admissible elementary move. 
Otherwise, the vertex $w$ is monochrome of the same type as $u$ and $v$, and the edge connecting $v$ and $w$ is another bridge $e'$ of $D$. 
The fact that every region of the dessin contains on its boundary essential vertices implies that after destroying the bridge $e$ the regions of the new graph have an oriented boundary with essential vertices. Therefore the resulting graph is a dessin and destroying that bridge is admissible.
All the elementary moves used to construct the elementary equivalent dessin $D'$ from $D$ are destructions of bridges. Since destroying bridges does not change the nature of essential vertices, $D$ and $D'$ have the same amount of inner essential vertices and real essential vertices.
\end{proof}

A real singular $\times$-vertex $v$ of index $2$ in a dessin can be perturbed within the class of real dessins 
on the same surface in two different ways. Locally, when $v$ is isolated,
the real part of the corresponding real trigonal curve has an isolated point as connected component of its real part, which 
can be perturbed to a topological circle or disappears, when $v$ becomes two real $\times$-vertices of index $1$ or one pair of complex conjugated interior $\times$-vertices of index $1$, respectively.
When $v$ is non-isolated, the real part of the corresponding real trigonal curve has a double point, which
can be perturbed to two real branches without ramification or 
leaving a segment of the third branch being one-to-one with respect to the projection $\pi$ while being three-to-one after two vertical tangents (see Figure~\ref{fig:deformsnodal}).  

\begin{figure}
\centering
    {\includegraphics[width=4in]{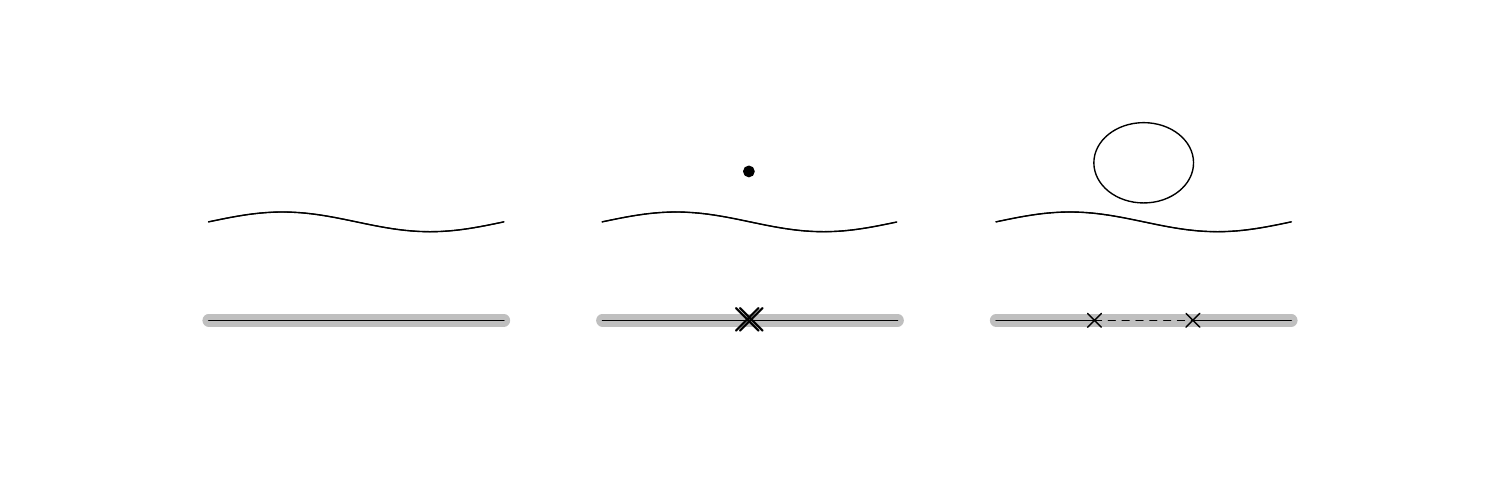}} \\
    {\includegraphics[width=4in]{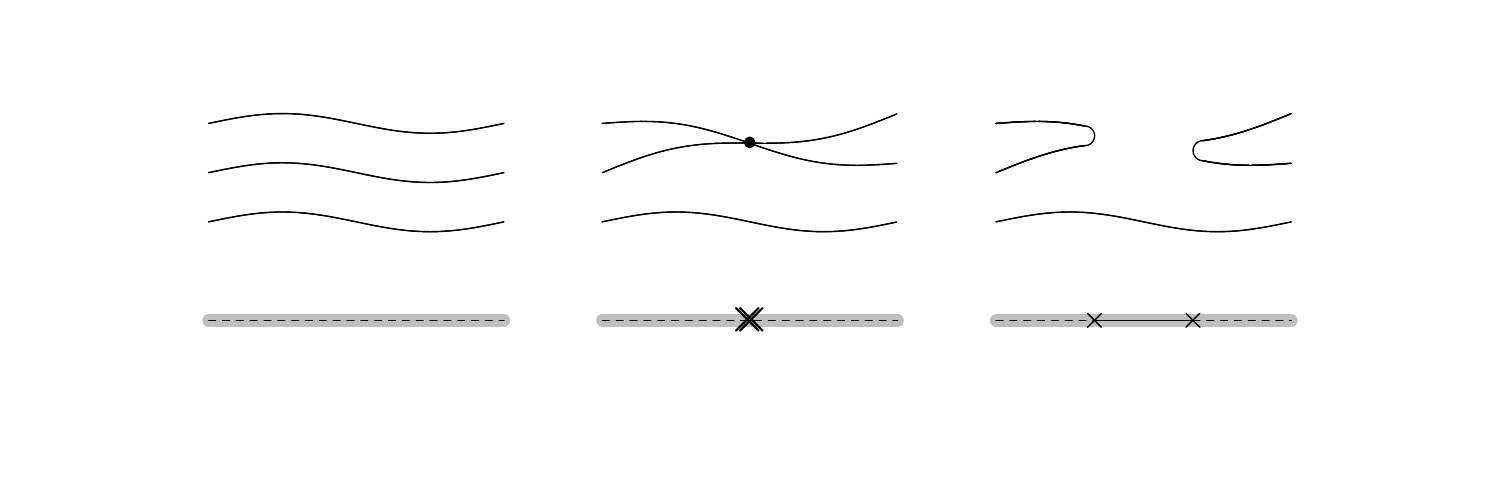}}
    \caption{Perturbations of a real $\times$-vertex of index $2$.}
    \label{fig:deformsnodal}
\end{figure}

\begin{df} 
Given a dessin $D$, a subgraph $\Gamma\subset D$ is a \emph{cut} if it consists of a single interior edge connecting two real monochrome vertices. An {\it axe} is an interior edge of a dessin connecting a real $\times$-vertex of index $2$ and a real monochrome vertex.
\end{df}
 
Let us consider a dessin $D$ lying on a surface $S$ having a cut or an axe~$T$. 
Assume that $T$ divides $S$ and consider the connected components $S_{1}$ and $S_2$ of~$S\setminus T$. 
Then, we can define two dessins $D_1$, $D_2$, each lying on the compact surface $\overline{S_i}\subset S$, respectively for $i=1, 2$, and determined by $D_i:=(D\cap S_i)\cup\{T\}$. 
If $S\setminus T$ is connected, we define the surface $S'=(S\setminus T)\sqcup T_1 \sqcup T_2/\varphi_1, \varphi_2$, where $\varphi_i:T_i\lra S$ is the inclusion of one copy $T_i$ of $T$ into $S$, and the dessin $D':=(D\setminus T)\sqcup T_1 \sqcup T_2/\varphi_1, \varphi_2$.
 
By these means, a dessin having a cut or an axe determines either two other dessins of smaller degree or a dessin lying on a surface with a smaller fundamental group. 
Moreover, in the case of an axe, the resulting dessins have one singular vertex less. 
Considering the inverse process, we call $D$ the \emph{gluing} of $D_1$ and $D_2$ along $T$ or the \emph{gluing} of $D'$ with itself along $T_1$ and $T_2$. 

\label{sec:toiles}

\begin{df} 
Given a real dessin $D$ and a vertex $v\in\Ver(D)$, we call the \emph{depth} of~$v$ the minimal number $n$ such that there exists an undirected inner chain $v_0,\dots, v_n$ in $D$ from $v_0=v$ to a real vertex $v_n$ and we denote the depth of $v$ by $\operatorname{dp}(v)$.
The depth of a dessin $D$ is defined as the maximum of the depth of the black and white vertices of $D$ and it is denoted by $\operatorname{dp}(D)$.
\end{df}

\begin{df} 
A {\it {\gc}} of a dessin $D$ is an inner undirected chain formed entirely of inner edges of the same color, either dotted or solid, connecting two distinct real nodal or monochrome vertices.
\end{df}

Analogously to a cut, cutting a dessin $D\subset S$ by a {\gc} produces two dessins of lower degree or a dessin of the same degree in a surface with a simpler topology, depending on whether the inner chain divides or not the surface $S$.

\begin{prop} \label{prop:deep}
Let $D\subset \mathbb{D}^2$ be a toile of degree greater than $3$. Then, there exists a toile $D'$ weakly equivalent to $D$ such that either $D'$ has depth $1$ or $D'$ has a {\gc}.
\end{prop}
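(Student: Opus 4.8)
The plan is to pass to a convenient normal form and then trace monochrome inner chains, organizing the argument around the disjunction in the conclusion. First I would replace $D$ by an elementary equivalent bridge-free toile using Lemma~\ref{lm:bf}; this keeps the number of inner and real essential vertices, hence does not affect whether the depth can be lowered to $1$, it lies in the same weak equivalence class, and a bridge-free representative makes the boundary behaviour easier to control. I would then read the statement contrapositively: if \emph{some} toile weakly equivalent to $D$ carries a \gc{}, take that toile as $D'$ and we are done. So I may assume that no toile weakly equivalent to $D$ has a \gc{}, and under this standing hypothesis I must produce a weakly equivalent toile of depth~$1$.

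Assume $\deep(D)\ge 2$ and choose a black or white vertex $v$ with $\deep(v)=\deep(D)$, together with a minimal inner chain $v=v_0,v_1,\dots,v_n$ joining it to the real part, $n\ge 2$. The core of the proof is a local analysis around $v$ and along this chain, governed by the incidence and orientation rules of Definition~\ref{df:trigra} (the cyclic pattern $\times\xrightarrow{\text{solid}}\bullet\xrightarrow{\text{bold}}\circ\xrightarrow{\text{dotted}}\times$) and by the requirement that every trigonal region be a disk. Starting from the inner solid and dotted edges incident to this chain, I would follow maximal monochrome inner chains of a single color. Since $D$ is admissible the relation $\preceq$ is a partial order, so such a chain contains no monochrome cycle; as $S=\mathbb{D}^2$ is a disk and $D$ is finite, each is a simple path that either reaches the real part at real nodal or real monochrome vertices, or is obstructed at an inner essential vertex.

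The dichotomy is then as follows. If a maximal monochrome inner chain has both endpoints real nodal or real monochrome, it is by definition a \gc{}, contradicting the standing hypothesis. Hence every maximal monochrome inner chain issuing from the deep region terminates at an inner essential vertex whose incidence pattern forces a reducible local configuration, and there I would apply an elementary move of Figure~\ref{fig:elem} (a monochrome modification, or a creation/straightening of a zigzag as in Figure~\ref{fig:zigzag}) to pull the outermost deep vertex toward $\partial\mathbb{D}^2$. Each such move keeps the graph a nodal dessin, the only singular vertices being \tvs{} of index~$2$, and strictly decreases either $\deep(D)$ or the number of vertices realizing the maximal depth. Iterating, the process terminates in a weakly equivalent toile of depth at most~$1$, the desired $D'$; the hypothesis $\deg(D)>3$ is used here to guarantee that the configurations met are the nondegenerate ones and that the terminal case is genuinely of depth~$1$ rather than a single exceptional cubic block.

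The main obstacle is the exhaustive local case analysis underlying the previous paragraph: one must enumerate, subject to the constraints of Definition~\ref{df:trigra}, all neighbourhoods of a deepest essential vertex and of the endpoints of the monochrome chains, and verify in each case both that the selected elementary or zigzag move is admissible (it creates no monochrome cycle, keeps every region trigonal and homeomorphic to a disk, and preserves the nodal condition) and that the dichotomy is complete — that failure to reduce the depth genuinely forces a monochrome inner chain between two distinct real nodal or monochrome vertices. The interaction of the index-$2$ \tvs{} with these moves, which is absent from the non-singular analogues proved in \cite{DIK}, is the delicate additional point contributed by the nodal setting.
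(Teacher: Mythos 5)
Your skeleton matches the paper's: reduce to a bridge-free representative via Lemma~\ref{lm:bf}, pass to a representative of minimal depth, take a minimal inner chain from a deep vertex to the boundary, and argue that near its real end either the depth can be lowered by elementary moves or a {\gc} appears. However, there is a genuine gap: essentially the entire content of the proof is the exhaustive case analysis that you defer to your final paragraph as ``the main obstacle.'' In the paper this analysis occupies the bulk of the argument (Cases~0 through~4.2.2, indexed by the colors and types of the last three vertices $u=v_{n-2}$, $v=v_{n-1}$, $w=v_n$ of the minimal chain), and it is where all the substance lies; a proof that postpones it has not yet established the proposition.

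Beyond the deferral, your organizing dichotomy is not quite right. You claim that if no {\gc} is present, then every maximal monochrome inner chain issuing from the deep region terminates at an inner essential vertex whose incidence pattern forces a reducible configuration. In the actual argument the {\gc} is generally not an already-existing monochrome chain with two real endpoints: it is \emph{created} by sequences of elementary moves (monochrome modifications, creation of dotted or solid bridges, insertion of inner monochrome vertices) precisely in those configurations where no depth-reducing move is available. So the correct alternative is not ``a {\gc} exists now, or the chain is obstructed and reducible,'' but ``after suitable moves, either the depth drops or a {\gc} can be manufactured,'' and deciding which occurs is exactly the case analysis you omit. Your termination argument (each move strictly decreases $\operatorname{dp}(D)$ or the number of deepest vertices) is also unverified and fragile, since moves can a priori alter depths elsewhere; the paper instead fixes a representative of \emph{minimal} depth in the equivalence class and derives a contradiction, and even then must explicitly rule out configurations where the procedure cycles (e.g., in Case~1.1.1 a cycle of the algorithm is itself converted into a {\gc}). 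Note finally that the paper's proof of this proposition works entirely within elementary equivalence; the zigzag creation/straightening moves you invoke are permissible under weak equivalence but are not needed here.
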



\begin{proof}
Within the class of elementary equivalence of our initial toile $D$, let us choose a toile $D_0$ having minimal depth. Due to Corollary \ref{cr:bf} we can choose $D_0$ bridge-free.
If $\operatorname{dp}(D_0)\geq2$, then there is a vertex $v_0\in\Ver(D_0)$ having $\operatorname{dp}(v_0)\geq2$, hence there exists an undirected inner chain $v_0, v_1,\dots, v_n$, with $n={\operatorname{dp}(v_0)}$, of minimal size connecting $v_0$ to the boundary of $D^2$. By definition, $\operatorname{dp}(v_{n-2})=2$. Put $u=v_{n-2}, v=v_{n-1}, w=v_{n}, e=[u,v], f=[v,w]$. We study the possible configurations of the chain $u,v,w$ in order to show that, unless there exists a {\gc}, we can decreases the depth of every vertex having depth at least $2$, contradicting minimality assumption. We assume that the toile $D_0$ does not have dotted cuts, since otherwise the Proposition follows trivially.

\vskip5pt
{\it Case 0:} whenever $v$ is black or white and connected to a monochrome vertex, an elementary move of type $\bullet$-out or $\circ$-out, respectively, reduces the depth of $u$. For simplicity, from now on we assume that $v$ is only connected to essential vertices.

\vskip5pt
{\it Case 1.1:} the vertex $u$ is white and the vertex $v$ is black.

{\it Case 1.1.1:} the vertex $w$ is white.
Let $g$ be the solid edge adjacent to $v$ sharing a region with the edges $e$ and $f$.
Let $w_1$ be the real neighbor vertex to $w$ in the region $R$ determined by $f$ and $g$.
If $w_1$ is a simple {\tv}, it determines a solid segment where the construction of a bridge with the edge $g$ bring us to Case~0.
If $w_1$ is a nodal {\tv}, up to a monochrome modification it is connected to $v$, and then the creation of a bridge with the inner dotted edge of $u$ beside $w_1$ decreases the depth of $u$.
Otherwise, $w_1$ is monochrome. If it is connected to an inner simple {\tv} $w_2$, up to a monochrome modification the vertex $w_2$ is connected to $v$, and as before, the creation of a bridge with the inner dotted edge of $u$ beside $w_1$ decreases the depth of $u$.
If $w_1$ is connected to an inner nodal {\tv} $w_2$, up to a monochrome modifications the vertex $w_2$ is connected to $u$ and $v$. Since the dessin is a bridge-free toile, $w_1$ has a white real neighbor vertex $w_3\neq w$.

If $w_3$ is connected to a monochrome vertex by an inner edge, then in the region determined by the vertices $w_1$ and $w_3$ there is a black vertex $w_4$ and up to monochrome modifications the vertices $u$ and $w_2$ are connected to $w_4$, reducing the depth of $u$.
If $w_3$ is connected to a real black vertex $w_4$, then there are two cases: in the region determined by the vertices $w_1$ and $w_3$, the vertex $w_4$ is adjacent to a inner or real solid edge $h$. When the edge $h$ is inner, up to a monochrome modification, the vertices $v$ and $w_4$ are connected, then the creation of a bridge with a inner bold edge of $u$ beside $w_4$ bring us to Case~0.

When the edge $h$ is real, connecting $w_4$ with a {\tv} $w_5$, then the creation of a dotted bridge with the edge adjacent to $w_1$ beside $w_5$ or a monochrome modification connecting $w_1$ and $w_5$, respectively if it is a simple or nodal {\tv}, produces a {\gc}.
When the edge $h$ connects $w_4$ with a monochrome vertex$w_5$, sine the toile is bridge free, there exist a real black vertex $w_6\neq w_4$ connected to $w_5$. Monochrome modifications connect $w_2$ with $w_5$ and $u$ with $w_6$ reducing the depth of $u$.
Finally, if the vertex $w_3$ is connected to an inner black vertex $w_4$, which up to monochrome modifications is connected to $u$ and $w_2$, we consider the real neighbor vertex $w_5\neq w_1$.
If $w_5$ is a simple {\tv}, it determines a solid segment where the creation of a bridge with the inner solid edge of $v$ in the region bring us to Case~0.
If $w_5$ is a nodal {\tv}, up to a monochrome modification it is connected to $v$. Let $w_6\neq u,w$ be the vertex connected to $v$ by a bold edge.
If $w_6$ is an inner white vertex, the creation of a dotted bridge followed by an elementary move of type $\circ$-out lead us to the next consideration. 
When the vertex $w_6$ is a real white vertex, let us defined $w':=w_6$ and consider instead the chain $u,v,w'$ and the considerations made in this algorithm. 
If the algorithm cycles back to this configuration and we denote by $w_i'$ the vertices on the second iteration, the edges $[w_1,w_2],[w_2,u],[u,w_2'],[w_2',w_1']$ form a {\gc}.

{\it Case~1.1.2:} the vertex $w$ is nodal.
If $w$ has a real white neighboring vertex $w'$, we consider the chain $u,v,w'$ as in Case~1.1.1.
Otherwise $w$ has a real monochrome neighboring vertex $w_1$.
If $w_1$ is connected to an inner white vertex $w_2$, an elementary move of type $\circ$-out with it at $w_2$ creates real white neighboring vertex of $w$ and we consider the previous case.
If $w_1$ is connected real white vertex $w_2$, then the creation of a bold bridge with an inner bold edge of $v$ beside $w_2$ followed by an elementary move of type $\bullet$-out bring us to a configuration in which $u$ is connected to a real vertex, so its depth is reduced.

\vskip5pt
{\it Case~1.2:} the vertex $u$ is white and the vertex $v$ is a simple {\tv}.

{\it Case~1.2.1:} the vertex $w$ is black.
The vertex $w$ has a real solid bold edge in the same region as $u$. Then, the creation of a bridge with an inner bold edge of $u$ beside $w$ followed by an elementary move of type $\circ$-out transfers the vertex $u$ to the boundary in a bridge-free toile.

{\it Case~1.2.2:} the vertex $w$ is monochrome.
Since the toile is bridge-free, $w$ has a real black neighboring vertex which is connected to $u$ after a monochrome modification reducing the depth of $u$.

\vskip5pt
{\it Case~1.3:} the vertex $u$ is white and the vertex $v$ is a nodal {\tv}.

{\it Case~1.3.1:} the vertex $w$ is a monochrome dotted vertex.
Let $w_1$ be a vertex connected to $v$ by a solid edge.
If $w_1$ is real, then either its bold inner edge can be connected to $u$ after a monochrome modification or its bold real edge allows us to create a bridge with the inner bold edge of $u$ and then an elementary move of type $\circ$-out transfers the vertex $u$ to the boundary in a bridge-free toile.
If $w_1$ is a monochrome vertex, since the toile is bridge-free, then it has has a real black neighboring vertex $w_2$. A monochrome modification connects $u$ to $w_2$ decreasing the depth of $u$ in a bridge-free toile.
Otherwise, the vertex $w_1$ is an inner black vertex. Since the the toile is bridge-free, there exist a white vertex $w_2$ real neighbor of $w$ in the same region as $w_1$. A monochrome modification connects $w_1$ to $w_2$. We consider instead the chain $u,w_1,w_2$ as in Case~1.1.1.

{\it Case~1.3.2:} the vertex $w$ is a monochrome solid vertex.
This configuration was considered in Case~1.3.1.

{\it Case~1.3.3:} the vertex $w$ is white.
Let $w_1$ be a vertex connected to $v$ by a solid edge.
If $w_1$ is a real vertex, the considerations made on the Case~1.3.1 apply.
Otherwise, $w_1$ is an inner black vertex. Up to monochrome modifications 
The creation of a bold bridge beside $w$ with an inner bold edge adjacent to $w_1$ followed by an elementary move of type $\bullet$-out decreases the depth of $u$ in a bridge-free toile.

{\it Case~1.3.4:} the vertex $w$ is black.
This case correspond to the configuration in Case~1.3.1 when $w_1$ is a real vertex.

\vskip5pt
{\it Case~2.1:} the vertex $u$ is black and the vertex $v$ is white.

{\it Case~2.1.1:} the vertex $w$ is black.
Let $w_1$ be a vertex connected to $v$ by a dotted edge.
If $w_1$ is a real nodal {\tv}, then the creation of a bridge beside $w_1$ with an inner solid edge of $u$ in the region followed by an elementary move of type $\bullet$-out transfers the vertex $u$ to the boundary in a bridge-free toile.
If $w_1$ is an inner simple {\tv}, then we consider the inner solid edge $g$ adjacent to $w$. If $g$ and $w_1$ belong to the same region, then we can assume $g$ is adjacent to $w_1$ up to a monochrome modification. In this setting, the creation of a bridge with the edge $e$ beside $w$ followed by an elementary move of type $\bullet$-out transfers the vertex $u$ to the boundary in a bridge-free toile.
If $g$ and $w_1$ do not share any region, then we can assume $w_1$ is connected to $u$ up to a monochrome modification. In this setting the creation of a bridge with the inner solid edge connecting $w_1$ and $u$ beside $w$ followed by an elementary move of type $\bullet$-out transfers the vertex $u$ to the boundary in a bridge-free toile.
Lastly, if $w_1$ is a nodal {\tv}, up to monochrome modifications it is connected to $u$ and $w$.
Let $w_2\neq v$ be the vertex connected to $w_2$ by a dotted edge.
If $w_2$ is a real white vertex, then the creation of a bridge with an inner bold edge adjacent to $u$ beside $w_2$ followed by an elementary move of type $\circ$-out transfers the vertex $u$ to the boundary in a bridge-free toile.
If $w_2$ is an inner white vertex, up to a monochrome modification it is connected to $u$, then the creation of a bridge with an inner bold edge adjacent to $w_2$ beside $w$ followed by elementary moves of type $\circ$-out at $w_2$ and $\bullet$-out at $u$ transfers the vertex $u$ to the boundary in a bridge-free toile.
If $w_2$ is a monochrome vertex, it has two real white neighboring vertices, then an elementary move of type $\circ$-in at $w_2$ brings us to the previous consideration.

{\it Case~2.1.2:} the vertex $w$ is a nodal {\tv}.
This case corresponds to the configuration in Case~2.1.1 when $w_1$ is a real nodal {\tv}.

\vskip5pt
{\it Case~2.2:} the vertex $u$ is black and the vertex $v$ is a simple {\tv}.

{\it Case~2.2.1:} the vertex $w$ is monochrome.
Since the toile is bridge-free, the vertex $w$ has two different real white neighboring vertices. A monochrome modification connects $u$ to one of those vertices decreasing the depth of $u$.

{\it Case~2.2.2:} the vertex $w$ is white.
In this setting, the creation of a bold bridge beside $w$ with an inner bold edge adjacent to $v$ followed by an elementary move of type $\bullet$-out transfers the vertex $u$ to the boundary in a bridge-free toile.

\vskip5pt
{\it Case~2.3:} the vertex $u$ is black and the vertex $v$ is a nodal {\tv}.

{\it Case~2.3.1:} the vertex $w$ is monochrome dotted.
The vertex $u$ is connected to an inner white vertex $w_1$ on the same region as the edge $f$. The creation of a dotted bridge beside $w$ with an inner dotted edge of $w_1$ followed by an elementary move of type $\circ$-out decreasing the depth of $u$.

{\it Case~2.3.2:} the vertex $w$ is white.
In this setting, the creation of a bold bridge beside $w$ with an inner bold edge adjacent to $v$ follow by an elementary move of type $\bullet$-out transfers the vertex $u$ to the boundary in a bridge-free toile.

{\it Case~2.3.3:} the vertex $w$ is monochrome solid.
Let $w_1$ be a vertex connected to $v$ by a dotted edge.
If $w_1$ is a real monochrome vertex or a white vertex, we consider instead the chain $u,v,w_1$ as in Case~2.3.1 and Case~2.3.2, respectively.
If $w_1$ is an inner white vertex, let $w_2$ be a real black vertex neighbor of $w$ sharing a region with $w_1$. Up to a monochrome modification $w_1$ is connected to $w_2$. We consider instead the chain $u,w_1,w_2$ as in Case~2.1.1.

{\it Case~2.3.4:} the vertex $w$ is black.
Let $w_1$ be a vertex connected to $v$ by a dotted edge.
If $w_1$ is a real monochrome vertex or a white vertex, we consider instead the chain $u,v,w_1$ as in Case~2.3.1 and Case~2.3.2, respectively.
If $w_1$ is an inner white vertex, then up to a monochrome modification $w_1$ is connected to $w$. We consider instead the chain $u,w_1,w$ as in Case~2.1.1.

\vskip5pt
{\it Case~3.1:} the vertex $u$ is a simple {\tv} and the vertex $v$ is white.

{\it Case~3.1.1:} the vertex $w$ is black.
Let $g$ be the solid edge adjacent to $w$ sharing a region with the vertex $u$.
If $g$ is an inner edge, up to monochrome modification between $g$ and the solid edge adjacent to $u$ decreases the depth of $u$.
If $g$ is a real edge, let $w_1$ be the vertex connected to $u$ by a solid edge. Since the depth of $u$ is two, the vertex $w_1$ is an inner black vertex. The creation of a bridge on $g$ with the edge $[u,w_1]$ followed by an elementary move of type $\bullet$-out at $w_1$ decreases the depth of $u$ in a bridge-free toile.

{\it Case~3.1.2:} the vertex $w$ is a nodal {\tv}.
Let $w_1$ be a real neighbor vertex of $w$.
If $w_1$ is a monochrome vertex determining a solid cut, let $w_2$ be the monochrome vertex connected to $w_1$ through the cut and let $w_3$ be the black vertex neighbor to $w_2$ sharing a region with $u$. A monochrome modification between the inner solid edges adjacent to $u$ and $w_2$ connects these two vertices, reducing the depth of $u$.
If $w_1$ is a monochrome vertex connected to a real black vertex $w_2$, then in the region determined by $v$ and $w_2$ the vertex $w_2$ either has an inner bold edge and a monochrome modification allows us to consider instead the chain $u,v,w_2$ as in Case~3.1.1 or it has a real bold edge where the creation of a bridge with the inner bold edge adjacent to $v$ in the region followed by an elementary move of type $\circ$-out at $v$ decreases the depth of $u$ in a bridge-free toile.
If $w_1$ is black, up to a monochrome modification $v$ and $w_1$ are connected and we consider instead the chain $u,v,w_1$ as in Case~3.1.1.
Lastly, if $w_1$ is a monochrome vertex connected to an inner black vertex $w_2$, an elementary move of type $\bullet$-out at $w_2$ bring us to the previous consideration.

\vskip5pt
{\it Case~3.2:} the vertex $u$ is a simple {\tv} and the vertex $v$ is black.

{\it Case~3.2.1:} the vertex $w$ is white.
Since $\deep(u)=2$, the vertex $u$ is connected to an inner white vertex $w_1$.
If the vertices $u$ and $w$ share a region, the creation of a dotted bridge beside $w$ with the dotted edge adjacent to $u$ followed by an elementary move of type $\circ$-out at $w_2$ decreases the depth of $u$ in a bridge-free toile.
If the vertices $u$ and $w$ do not share a region, let $w_2, w_2' \neq u$ be the vertices connected to $v$ by a solid edge. We can assume up to monochrome modifications that $v$ and $w_1$ are connected by two different bold edges.
If one of the vertices $w_2$ or $w_2'$ is an inner simple {\tv}, up to a monochrome modifications it is connected to $w_1$, and then the creation of a bridge beside $w$ with the inner dotted edge adjacent to $w_1$ followed by an elementary move of type $\circ$-out at $w_1$ decreases the depth of $u$ in a bridge-free toile.
Otherwise, the vertices $w_2$ and $w_2'$ are nodal {\tvs}. We can assume up to one monochrome modification that $w_2$ and $w_1$ are connected.
If both $w_2$ and $w_2'$ are real, 
the creation of two bridges with an inner dotted edge adjacent to $w_1$, one beside $w_2$ and one beside $w_2'$, produces a dotted cut.
If both $w_2$ and $w_2'$ are inner, the creation of two bridges beside $w$, one in every side, with inner dotted edges adjacent to $w_2$ and $w_2'$ respectively, followed by the creation of an inner monochrome vertex between the inner dotted edges of $w_2$ and $w_2'$ sharing a region with $w_1$, create a {\gc}.
It between the vertices $w_2$ and $w_2'$ one is real and one is inner, then the creation of dotted bridges and an inner monochrome vertex as in the previous considerations allow us to create a {\gc}.

{\it Case~3.2.2:} the vertex $w$ is a nodal {\tv}. 
Since $\deep(u)=2$, the vertex $u$ it is connected to an inner white vertex $w_1$, which we can assume connected to $v$ by two different bold edges up to monochrome modification.
In this setting, the creation of a bridge beside $w$ with the inner dotted edge adjacent to $w_1$ followed by an elementary move of type $\circ$-out decreases the depth of $u$ in a bridge-free toile.

\vskip5pt
{\it Case~4.1:} the vertex $u$ is a nodal {\tv} and the vertex $v$ is white.

{\it Case~4.1.1:} the vertex $w$ is black.
If it was an inner solid edge $g$ sharing a region with~$u$, then the vertices $u$ and $w$ can be connected by a monochrome modification between the edges $e$ and $g$ decreasing the depth of $u$. 
Otherwise, the vertex $w$ has a real solid edge sharing a region with $u$. Since $\deep(u)=2$, the vertex $u$ is connected to an inner black vertex $w_1$. 
In this setting, the creation of a bridge beside $w$ with a solid inner edge adjacent to $w_1$ followed by an elementary move of type $\bullet$-out at $w_1$ decreases the depth of~$u$ in a bridge-free toile.

{\it Case~4.1.2:} the vertex $w$ is a nodal {\tv}.
Since $\deep(u)=2$, the vertex $u$ is connected to an inner black vertex $w_1$, which up to a monochrome modification it is connected to $v$. Then, the creation of a bridge beside $w$ with a solid inner edge adjacent to $w_1$ followed by an elementary move of type $\bullet$-out at $w_1$ decreases the depth of $u$ in a bridge-free toile.

\vskip5pt
{\it Case~4.2:} the vertex $u$ is a nodal {\tv} and the vertex $v$ is black.

{\it Case~4.2.1:} the vertex $w$ is a nodal {\tv}.
Since $\deep(u)=2$, the vertex $u$ is connected to an inner white vertex $w_1$ sharing a region with $w$, which we can assume connected to $v$ up to a monochrome modification.
In this setting, the creation of a bridge beside $w$ with an inner dotted edge adjacent to $w_1$ followed by an elementary move of type $\circ$-out at $w_1$ decreases the depth of $u$ in a bridge-free toile.

{\it Case~4.2.2:} the vertex $w$ is white.
Since $\deep(u)=2$, the vertex $u$ is connected to an inner white vertex $w_1$.
Then, if the vertices $u$ and $w$ belong to the same region, we can choose $w_1$ belonging to the same region as them. Then, the creation of a bridge beside $w$ with an inner dotted edge adjacent to $w_1$ followed by an elementary move of type $\circ$-out at $w_1$ decreases the depth of $u$ in a bridge-free toile.
Otherwise, let $w_2\neq u$ be a vertex connected to $v$ by a solid edge sharing a region with $w_1$.
If $w_2$ is a real nodal {\tv}, we consider instead the chain $u,v,w_2$ as in Case~4.2.1.
If $w_2$ is an inner simple {\tv}, up to monochrome modification it is connected to $w_1$, and then, the creation of a bridge beside $w$ with an inner dotted edge adjacent to $w_1$ followed by an elementary move of type $\circ$-out at $w_1$ decreases the depth of $u$ in a bridge-free toile.
Finally, if $w_2$ is an inner nodal {\tv}, we consider instead the white vertex $w_1'\neq w_1$ connected to $u$ and the {\tv} $w_2'\neq u,w_2$ connected to $v$. If the aforementioned consideration cycle to this configuration, then the creation of two bridges beside $w$, one at every side, with inner dotted edges adjacent to $w_2$ and $w_2'$, respectively, produces a {\gc}.
\end{proof}




 

%

\begin{coro} \label{cr:deepbf}
Let $D$ be as in Proposition \ref{prop:deep}. If there exists a toile $D'$ weakly equivalent to $D$ with depth $1$, then $D'$ can be chosen bridge-free.
\end{coro}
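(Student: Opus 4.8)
The plan is to reduce the statement to a single monotonicity property of the bridge-destruction procedure: \emph{destroying a bridge never increases $\operatorname{dp}$}. Granting this, let $D'$ be a toile weakly equivalent to $D$ with $\operatorname{dp}(D')=1$. Applying Lemma~\ref{cr:bf} to $D'$ produces a bridge-free toile $D''$, elementary equivalent to $D'$ and therefore weakly equivalent to $D$, with the same number of inner and real essential vertices. Since that procedure only destroys bridges, the monotonicity property gives $\operatorname{dp}(D'')\le\operatorname{dp}(D')=1$; and because $D''$ has the same inner essential vertices as $D'$ (in particular at least one inner $\bullet$- or $\circ$-vertex, as $\operatorname{dp}(D')=1>0$ forces some black or white vertex off the boundary), and the move transfers no essential vertex between interior and boundary, that vertex stays inner and hence keeps depth $\ge 1$. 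Thus $\operatorname{dp}(D'')=1$, and $D''$ is the desired bridge-free depth~$1$ toile.

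To prove the monotonicity property I would examine a single bridge destruction. Such a move is supported in a disk $V$ meeting $\partial S$, and its sole effect is to delete a boundary edge joining two monochrome vertices $u,v$ and to reconnect $\partial S$ locally; in particular it creates no inner essential vertex, destroys none, and moves no essential vertex between the interior and the boundary, which is exactly the bookkeeping already recorded in Lemma~\ref{cr:bf}. Hence the set of inner $\bullet$- and $\circ$-vertices, together with every inner edge incident to such a vertex, is left untouched. For an inner essential vertex $x$ with $\operatorname{dp}(x)=1$, the realizing datum is a single inner edge $[x,x_1]$ with $x_1$ real; this edge and its real endpoint survive the move verbatim \emph{unless} $x_1$ coincides with one of the monochrome endpoints $u,v$ altered inside $V$.

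The case $x_1\in\{u,v\}$ is the step I expect to be the main obstacle, and it is precisely where the local shape of the move in Figure~\ref{fig:elem} must be inspected. Here I would use that a monochrome vertex carries at least three edges, all of the same color, so that after the reconnection the branch of $\partial S$ retained by the move still offers a real vertex joined to $x$ by an inner edge of that color; consequently $\operatorname{dp}(x)$ remains $\le 1$. Running through the possible colors of a bridge (solid, bold, dotted) and the corresponding configurations of Figure~\ref{fig:elem} confirms that no essential vertex has its depth raised, and, since the move introduces no new interior vertex, no vertex of positive depth is created. Iterating over the finitely many bridges destroyed in Lemma~\ref{cr:bf} then yields the monotonicity property and completes the proof.
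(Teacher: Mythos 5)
Your overall strategy --- apply Lemma~\ref{lm:bf} and check that destroying bridges cannot raise the depth --- is the natural one (the paper states the corollary without proof, and this is clearly the intended route), but the monotonicity property on which everything rests is not established by your argument, and as you state it it is false. Destroying a bridge $[u,v]$ is the inverse of creating one: the two real monochrome vertices $u$ and $v$ disappear, the three real edges they carry are merged into a single real edge, and --- this is the point your argument misses --- their two inner edges $[x,u]$ and $[v,y]$ are merged into a \emph{single} inner edge $[x,y]$. (In a reduced dessin a real monochrome vertex has index $2$, hence exactly one inner edge.) Consequently, if $x$ is an inner $\bullet$- or $\circ$-vertex whose only inner edge to a real vertex is $[x,u]$, and if $y$ is itself an inner vertex, then after the move $x$ is adjacent only to inner vertices and $\deep(x)\geq 2$. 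Your resolution of the case $x_1\in\{u,v\}$ --- that the three or more edges of a monochrome vertex guarantee that ``the branch of $\partial S$ retained by the move still offers a real vertex joined to $x$'' --- does not hold: $u$ is deleted by the move, it had only one inner edge to begin with, and the surviving inner edge ends at $y$, which need not be real. So the induction over the bridge destructions of Lemma~\ref{lm:bf} breaks down exactly at the step you flagged as the main obstacle.

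The repair is to treat this configuration separately instead of destroying the bridge blindly. Note first that only the endpoints $x$ and $y$ of the two inner edges at $u$ and $v$ can have their depth affected, since every other inner $\bullet$- or $\circ$-vertex keeps its own inner edge to a real vertex, and $\times$-vertices do not enter the definition of $\deep(D)$. If the inner edge of $u$ ends at an inner black (resp.\ white) vertex $x$ having no other real neighbour, perform an elementary move of type $\bullet$-out (resp.\ $\circ$-out) at $u$ --- exactly as in Case~0 of the proof of Proposition~\ref{prop:deep} --- rather than destroying the bridge: this pulls $x$ onto the boundary in place of $u$, eliminates the bridge (the surviving real edges now have an essential endpoint), sends $\deep(x)$ to $0$, and only shortens chains through $x$; argue symmetrically at $v$. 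In all remaining cases the plain destruction is harmless. Be aware that the out-moves change the count of inner versus real essential vertices, so the bookkeeping of Lemma~\ref{lm:bf} no longer applies verbatim and the resulting bridge-free toile may have depth $0$ rather than exactly $1$; this is harmless for how the corollary is used (Proposition~\ref{prop:decomptoi} needs only depth $\leq 1$ and bridge-freeness), but your closing argument that $\deep(D'')=1$ exactly would have to be weakened accordingly.
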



%

\begin{prop} \label{prop:decomptoi}
Let $D$ be a toile of degree at least $6$ and depth at most $1$.
Then, there exists a toile $D'$ weakly equivalent to $D$ such that $D'$ has a {\gc}.
Moreover, if $D$ has isolated real nodal {\tvs}, the {\gc} is dotted or a solid axe.
\end{prop}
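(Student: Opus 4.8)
The plan is to pass to a convenient representative and then combine a counting step with a short local analysis, exploiting the strong constraint $\deep(D)\le 1$ in the same spirit as the proof of Proposition~\ref{prop:deep}, but without ever having to lower the depth. First I would apply Corollary~\ref{cr:deepbf} to replace $D$ by a weakly equivalent bridge-free toile of depth at most $1$, so that every black or white vertex is either real or joined to a real vertex by a single inner edge, while every region still carries essential vertices on its boundary. In this representative the only inner essential vertices are the depth-$1$ black and white vertices together with the inner nodal \tvs{}, and the interior edges form the inner parts of the solid and dotted subgraphs $D_{\mathrm{solid}}$ and $D_{\mathrm{dotted}}$.

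The heart of the argument is a counting step showing that degree at least $6$ forces the interior to be large. Writing $\deg(D)=-3E^2=3d$, the hypothesis gives $d\ge 2$, hence $\deg(j_C)=6d\ge 12$; lifting to the double cover $\CP$ and applying Riemann--Hurwitz, the numbers of black, white and $\times$-vertices grow linearly in $d$ (roughly $2d$, $3d$, $6d$, with $5d-2$ monochrome vertices), whereas the degree-$3$ cubic dessins, which are the indecomposable blocks, carry no \gc{}. I would make this precise by showing that if \emph{no} toile weakly equivalent to $D$ has a \gc{}, then every connected component of the inner solid subgraph and of the inner dotted subgraph meets $\partial\mathbb{D}^2$ in at most one real nodal or monochrome vertex; such a configuration corresponds to a single cubic block and forces $d=1$, i.e. $\deg(D)=3$, contradicting $\deg(D)\ge 6$. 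Thus for $d\ge 2$ at least one inner monochrome subgraph has a component touching the boundary at two distinct real nodal or monochrome vertices.

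Having located two real nodal or monochrome vertices joined through the interior by edges of a single color, I would clean up the connecting path into an honest \gc{} by a short case analysis on the colors of the depth-$1$ vertices along it, using only the moves available in weak equivalence: $\bullet$-out and $\circ$-out to push inner essential vertices toward the boundary, monochrome modifications to merge parallel inner edges of the same color and to expose monochrome vertices, and bridge creation/destruction to reroute an edge to the desired endpoint, exactly as in Cases~0--4 of Proposition~\ref{prop:deep}. Zigzag straightening/creating is kept available for the final adjustment. In each configuration the output is a dotted or solid inner chain between two real nodal or monochrome vertices, that is, a \gc{}.

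For the \emph{moreover} clause I would track colors throughout. An isolated real nodal \tv{} has, by definition, real solid edges and interior dotted edges, so every inner chain emanating from it is dotted; hence whenever an endpoint of the constructed connection is an isolated node the resulting \gc{} is dotted. When the construction is instead forced to use a non-isolated node, whose interior edges are solid, the same clean-up yields an interior solid edge from that node to a real monochrome vertex, which is precisely a solid axe. The main obstacle I anticipate is making the counting step sharp enough to exclude exactly the degree-$3$ single-block case while covering every depth-$1$ configuration uniformly (including the degenerate case $\deep(D)=0$, where one starts from an inner node rather than an inner essential vertex), together with the color bookkeeping that guarantees the connecting chain is monochrome of an admissible color; the individual elementary-move manipulations are routine given the toolkit already developed for Proposition~\ref{prop:deep}.
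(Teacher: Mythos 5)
Your opening move (passing to a bridge-free representative of depth at most $1$ via Corollary~\ref{cr:deepbf}) matches the paper, but the central step of your argument --- the counting claim --- has a genuine gap, and it is precisely where the real content of the proposition lives. You assert that if no toile weakly equivalent to $D$ admits a {\gc}, then every connected component of the inner solid and inner dotted subgraphs meets the boundary in at most one real nodal or monochrome vertex, and that such a configuration ``corresponds to a single cubic block and forces $d=1$.'' The first implication is fine (two such boundary vertices in one inner monochrome component would yield a connecting inner chain, i.e.\ a {\gc}), but the second is unjustified: a cubic block is only defined \emph{a posteriori}, by cutting along {\gc}s, so you cannot invoke it to bound the degree of a dessin that has none. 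Nothing in the linear growth of the vertex counts prevents a high-degree toile from having its inner dotted and solid edges organized into many components, each touching $\partial\mathbb{D}^2$ at most once (e.g.\ each real monochrome dotted vertex joined by a single inner dotted edge to an inner white vertex whose other incidences are bold). The proposition is exactly the statement that weak-equivalence moves can always reroute and merge these components until two admissible endpoints lie in the same one; that cannot be obtained by counting, and your ``clean-up'' step, which defers to ``Cases 0--4 of Proposition~\ref{prop:deep},'' is not an adequate substitute --- those cases are engineered to decrease depth, not to produce a {\gc}, and the analysis actually required here is anchored differently (at the nodal {\tvs}) and is considerably longer.

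Concretely, the paper's proof works as follows: it chooses a representative $D_0$ that is \emph{maximal with respect to the number of zigzags} (in addition to being bridge-free of depth $\le 1$, with bold cuts eliminated), and then runs an exhaustive case analysis on a nodal {\tv} $v$ --- real isolated (Case 1), real non-isolated (Case 2), or inner (Case 3) --- constructing in each subcase either a dotted cut, a solid cut, an axe, or a {\gc} by explicit bridge creations, monochrome modifications and $\bullet$/$\circ$-in/out moves. The zigzag-maximality is not a cosmetic choice: many subcases terminate only because the alternative would create a new zigzag, contradicting maximality, and without this invariant the move sequences you describe can cycle. Your proposal omits this representative entirely. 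The ``moreover'' clause in the paper is likewise not a separate color-bookkeeping argument but falls out of Case 1 being treated first (so that when isolated real nodes are present, the construction is carried out starting from one of them, and the interior edges at such a node are dotted, forcing the {\gc} to be dotted or a solid axe); your reasoning here is consistent with that, but it presupposes the construction you have not supplied. To repair the proof you would need to replace the counting step by (some version of) the case analysis anchored at nodal vertices, with zigzag-maximality as the terminating invariant.
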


\begin{proof}
Let $D_0$ be a dessin within the weak equivalence class of $D$, maximal with respect to the number of zigzags.
Due to Proposition~\ref{prop:deep} and Corollary \ref{cr:deepbf}, we can choose $D_0$ within the weak equivalence class of $D_0$ such that $\deep(D_0)\leq 1$ and $D_0$ is bridge-free. For simplicity we assume that there are no black or white inner vertices connected to monochrome vertices. We assume that there are no dotted cuts, since otherwise the Proposition follows trivially.
If there is a bold cut $[w,w']$ in a bridge-free toile, assuming the vertex $w$ has black neighbor real vertices, then an elementary move of type $\bullet$-in followed by an elementary move of type $\bullet$-out at the vertices $w$ and $w'$, respectively, eliminates the bold cut. This way we can assume there are no bold cuts on the dessin without breaking the bridge-free property nor changing its depth.
Let us start by the case when $D_0$ has singular vertices on the boundary of the disk $\mathbb{D}^2$. Let $v$ be a real nodal {\tv}.

\vskip5pt
{\it Case 1:} the vertex $v$ is isolated, being connected to a white vertex $u$.
If the vertex $u$ is real, the edge $e:=[v,u]$ is dividing. Let $R$ be the region containing $e$ on the connected component of $D_0\setminus e$ with a maximal number of white vertices. Let $w$ be the real neighbor vertex of $u$ in the region $R$. Let $S$ be the bold segment containing $u$.

{\it Case 1.1:} the vertex $w$ is monochrome.
If $w$ is connected to a real black vertex $w_1$, then there are two cases: in the region $R$, the vertex $w_1$ is adjacent to an inner or real solid edge $f$.

If the edge $f$ is inner, let $w_2$ be the real vertex connected to $w_1$ by a solid real edge (see Figure~\ref{fig:toilesii01}).
If $w_2$ is a {\tv}, let $w_3\neq u$ be a white real vertex connected to $w$. 
If $w_2$ is a simple {\tv}, then, the creation of a bridge with the inner dotted edge adjacent to $w_3$ beside $w_2$, followed by elementary moves of type $\circ$-in at $w$ and $\circ$-out at the bridge, produces a dotted axe in a bridge-free dessin (see Figure~\ref{fig:toilesii02}).
If $w_2$ is a nodal {\tv}, up to a monochrome modification it is connected to $w_3$. An elementary move of type $\circ$-out at $w$ produces a {\gc} (see Figure~\ref{fig:toilesii03}).
If $w_2$ is a monochrome vertex, then an elementary move of type $\bullet$-in at $w_2$ followed by an elementary move of type $\bullet$-out at $w$ bring us to a configuration we study within the Case 1.2.

\begin{figure}[h]
\begin{center}
\begin{subfigure}{0.3\textwidth}
\centering\includegraphics[width=2.5in]{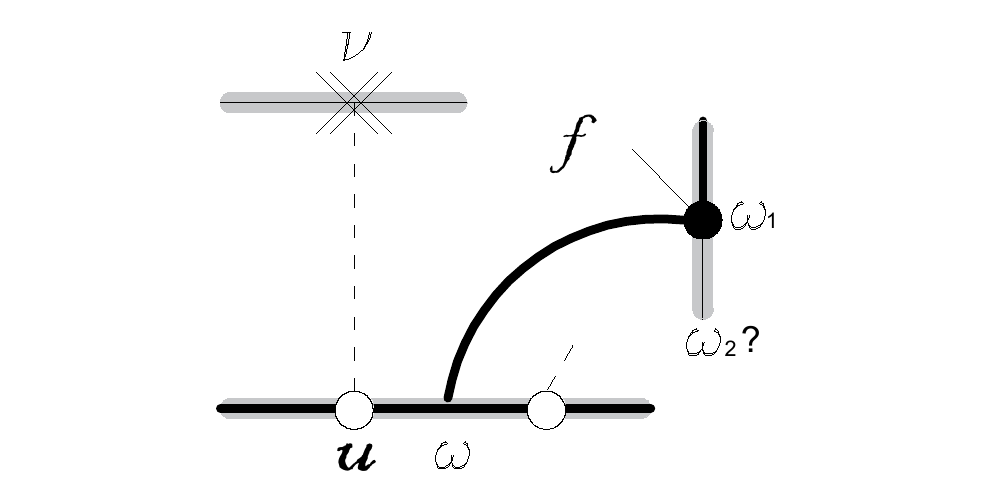}
\caption{\label{fig:toilesii01}}
\end{subfigure}
\begin{subfigure}{0.3\textwidth}
\centering\includegraphics[width=2.5in]{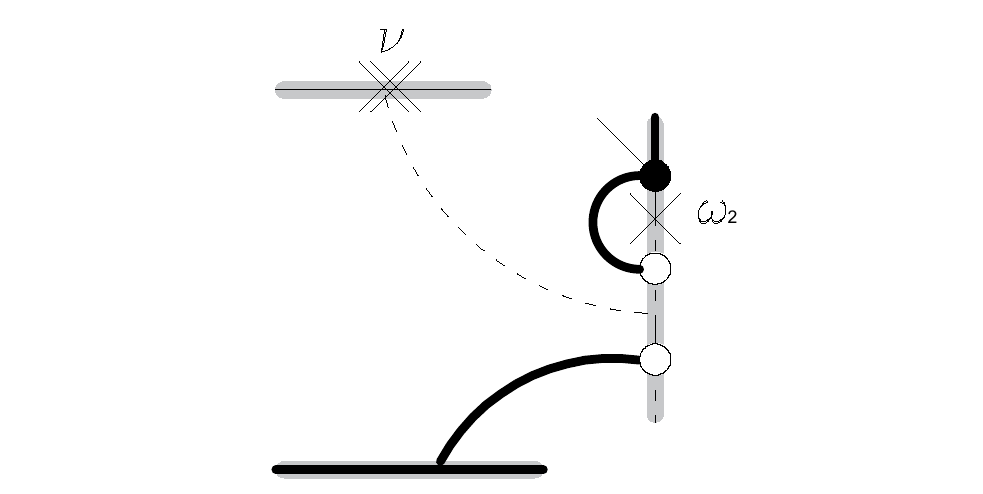}
\caption{\label{fig:toilesii02}}
\end{subfigure}
\begin{subfigure}{0.3\textwidth}
\centering\includegraphics[width=2.5in]{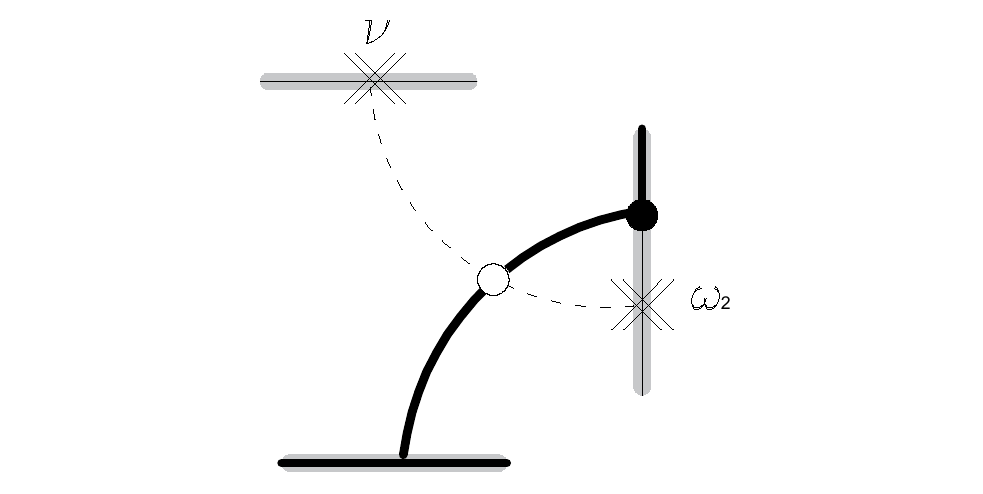}
\caption{\label{fig:toilesii03}}
\end{subfigure}
\end{center}
\caption{}
\end{figure}

If the edge $f$ is real, let $w_2$ be the real vertex connected to $w_1$ by the edge $f$ (see Figure~\ref{fig:toilesii04}).
If $w_2$ is a simple {\tv}, it determines a real dotted segment where the creation of a bridge with the edge $e$ produces a dotted axe (see Figure~\ref{fig:toilesii05}).
If $w_2$ is a monochrome vertex, then an elementary move of type $\bullet$-in at $w_2$ followed by an elementary move of type $\bullet$-out at $w$ bring us to a configuration we study within the Case 1.2.
In the case when $w_2$ is a nodal {\tv} different from $v$, and in this case the creation of an inner monochrome vertex with the edge $e$ and the inner dotted edge adjacent to $w_2$ produces a {\gc} (see Figure~\ref{fig:toilesii06}).

\begin{figure}[h]
\begin{center}
\begin{subfigure}{0.3\textwidth}
\centering\includegraphics[width=2.5in]{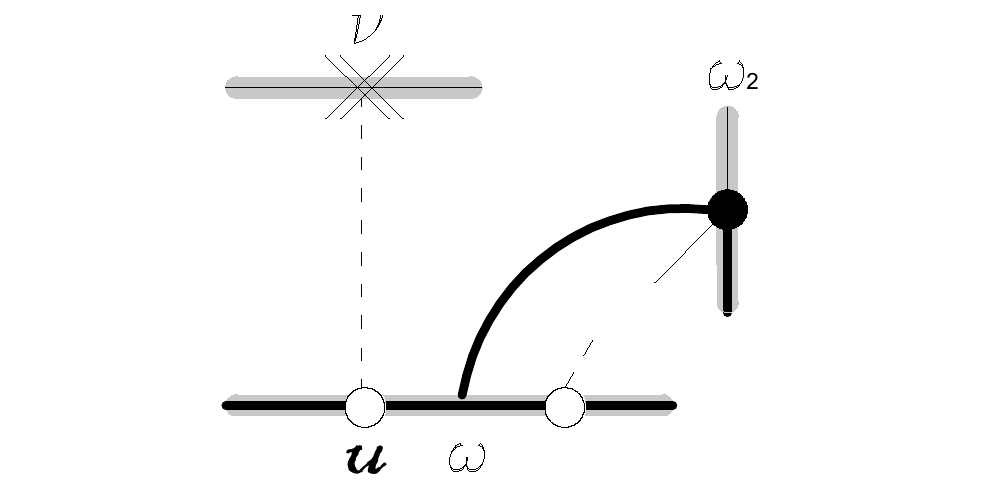}
\caption{\label{fig:toilesii04}}
\end{subfigure}
\begin{subfigure}{0.3\textwidth}
\centering\includegraphics[width=2.5in]{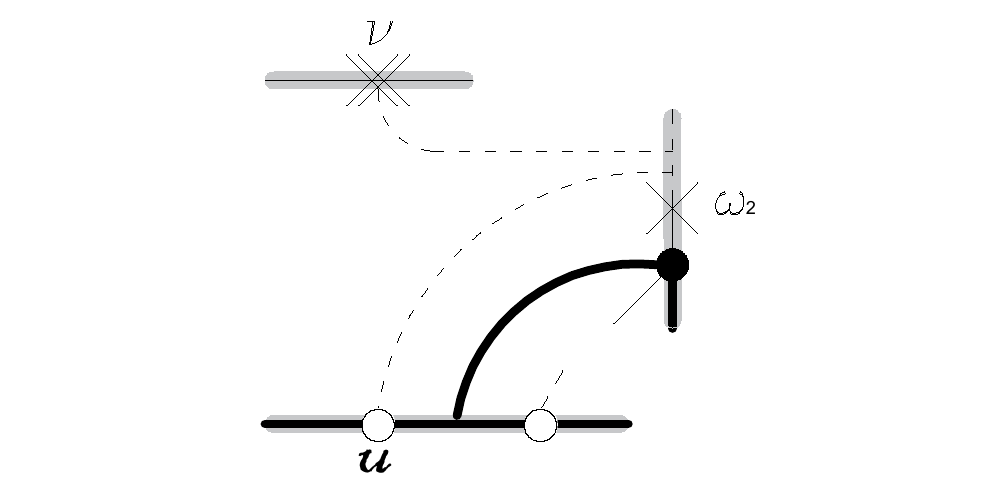}
\caption{\label{fig:toilesii05}}
\end{subfigure}
\begin{subfigure}{0.3\textwidth}
\centering\includegraphics[width=2.5in]{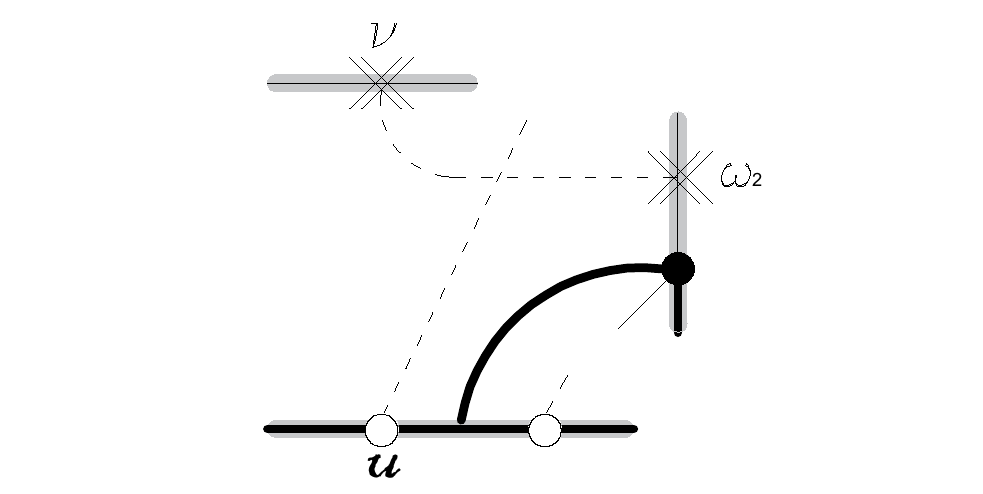}
\caption{\label{fig:toilesii06}}
\end{subfigure}
\end{center}
\caption{}
\end{figure}

A special case is when $w_2=v$. Let $w_3\neq u$ be a white real vertex connected to $w$ and let $w_4$ be the vertex connected to $w_3$ by an inner dotted edge.
If $w_4$ is a monochrome vertex or a real nodal {\tv}, an elementary move of type $\circ$-in produces a {\gc} (see Figure~\ref{fig:toilesii07} and Figure~\ref{fig:toilesii08}).
If $w_4$ is an inner simple {\tv}, it is connected to $w_1$ up to a monochrome modification. An elementary move of type $\circ$-in at $w$ followed by the creation of a bold bridge beside $w_1$ and an elementary move of type $\circ$-out bring us to a configuration where we can create a zigzag, contradicting the maximality assumption (see Figure~\ref{fig:toilesii09}). 

\begin{figure}[h]
\begin{center}
\begin{subfigure}{0.3\textwidth}
\centering\includegraphics[width=2.5in]{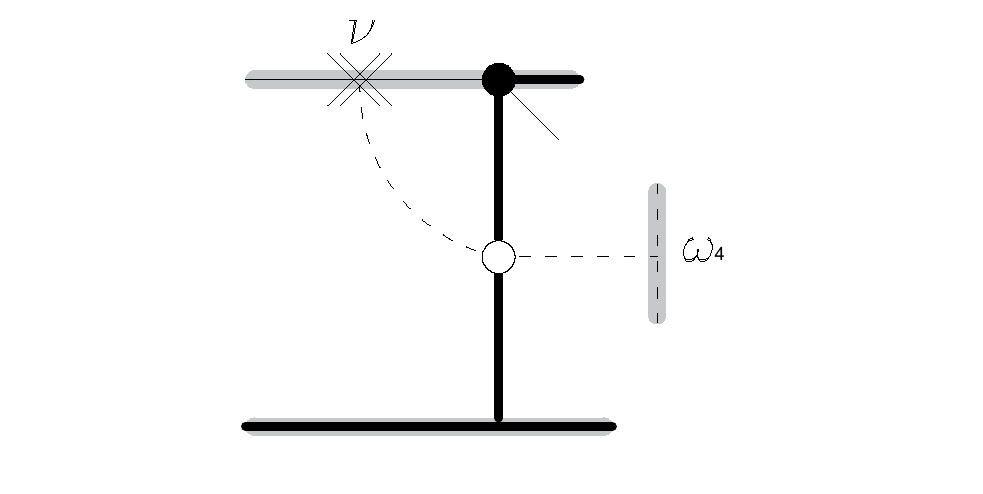}
\caption{\label{fig:toilesii07}}
\end{subfigure}
\begin{subfigure}{0.3\textwidth}
\centering\includegraphics[width=2.5in]{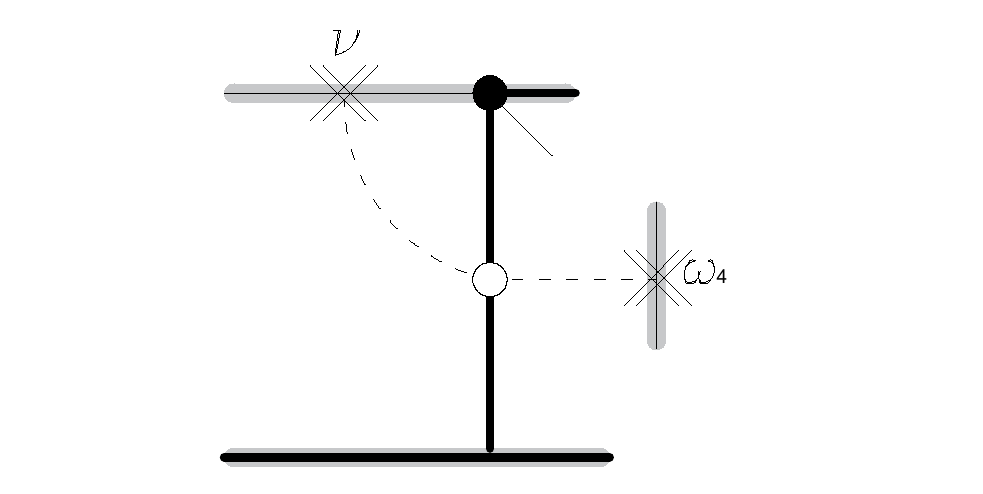}
\caption{\label{fig:toilesii08}}
\end{subfigure}
\begin{subfigure}{0.3\textwidth}
\centering\includegraphics[width=2.5in]{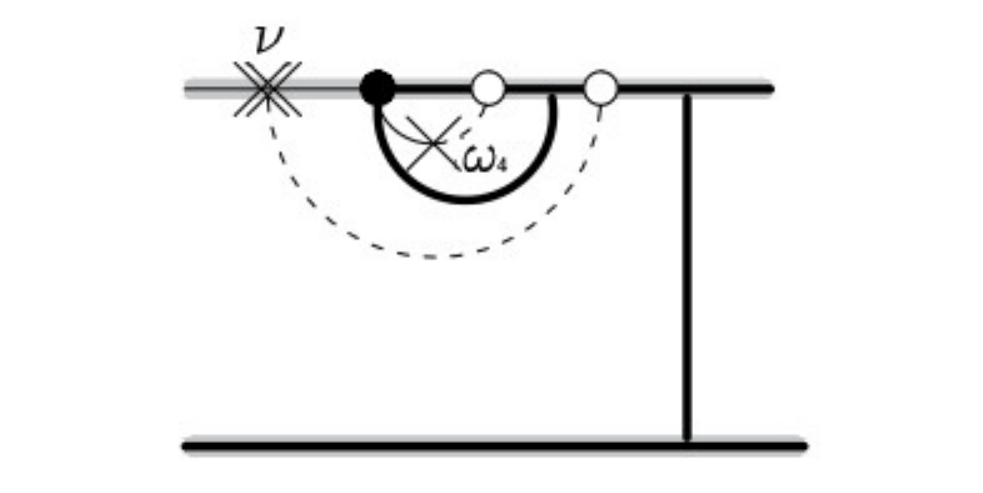}
\caption{\label{fig:toilesii09}}
\end{subfigure}
\end{center}
\caption{}
\end{figure}

If $w_4$ is an inner nodal {\tv}, up to a monochrome modification it is connected to $w_1$.
Let $w_5\neq w_3$ be a vertex connected to $w_4$ by a dotted edge.
If $w_5$ is a monochrome vertex, then an elementary move of type $\circ$-in at $w$ creates an inner white vertex $w'$ such that the chain $v,w',w_4,w_5$ is a {\gc} (see Figure~\ref{fig:toilesii11}).

If $w_5$ is a white inner vertex, the creation of a bold bridge beside $w_1$ with a bold edge of $w_5$ followed by an elementary move of type $\circ$-out allows us to consider $w_5$ as a real white vertex.

If $w_5$ is a white real vertex, let $w_6\neq w_1$ be a vertex connected to $w_4$ by a solid edge.
When $w_6$ is a real black vertex, there are two cases: in the region $R'$ determined by $w_4$, $w_5$ and $w_6$ the bold edge $g$ adjacent to $w_6$ is either real or inner.
We perform an elementary move of type $\circ$-in at $w$ producing a white inner vertex $w'$, and destroy the potential residual bold bridge. 

If $g$ is a real bold edge, let $w_7$ be the vertex connected to $w_6$ by a real solid edge. Up to a monochrome modification the vertex $w'$ is connected to $w_6$. 
If $w_7$ is a simple {\tv}, the creation of a bridge beside $w_7$ with an inner dotted edge incident to $w'$ produces a {\gc} (see Figure~\ref{fig:toilesii13}).
If $w_7$ is a nodal {\tv}, the creation of an inner monochrome vertex with the edge $e$ and the inner dotted edge adjacent to $w_7$ produces a {\gc} (see Figure~\ref{fig:toilesii14}).
If $w_7$ is a monochrome vertex, let $w_8$ be the vertex connected to $w_7$ by an inner solid edge.

\begin{figure}[h]
\begin{center}
\begin{subfigure}{0.3\textwidth}
\centering\includegraphics[width=2.5in]{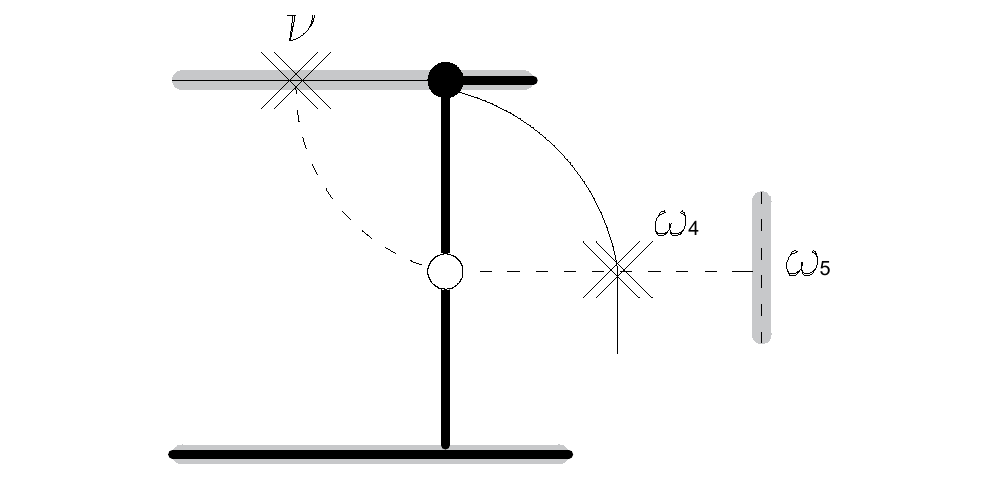}
\caption{\label{fig:toilesii11}}
\end{subfigure}
\begin{subfigure}{0.3\textwidth}
\centering\includegraphics[width=2.5in]{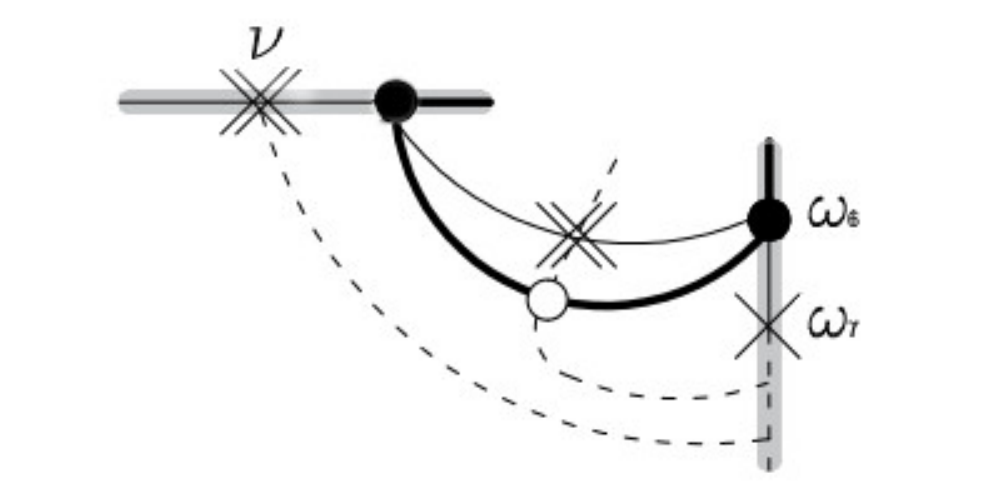}
\caption{\label{fig:toilesii13}}
\end{subfigure}
\begin{subfigure}{0.3\textwidth}
\centering\includegraphics[width=2.5in]{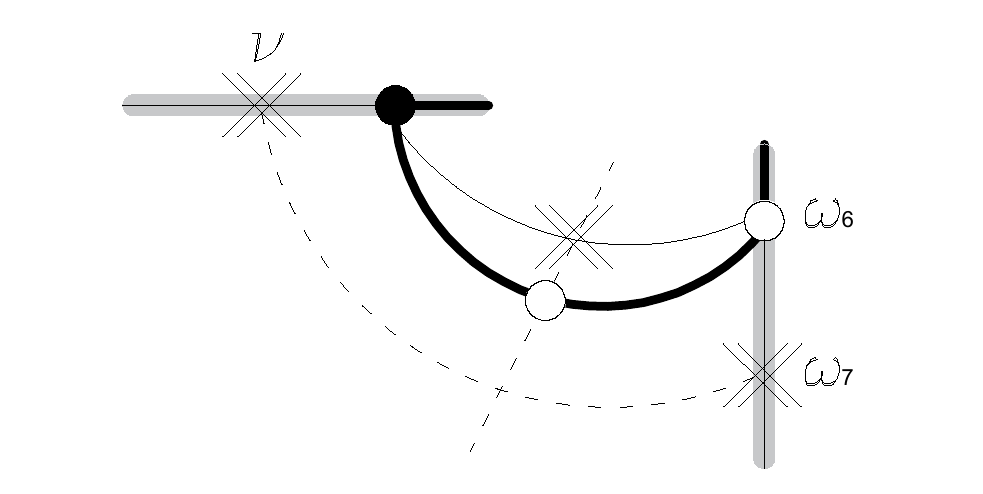}
\caption{\label{fig:toilesii14}}
\end{subfigure}
\end{center}
\caption{}
\end{figure}

If $w_8$ is a monochrome vertex, let $w_9$ be the vertex connected to $w_8$ in the region determined by $w_6$, $w_7$ and $w_8$.
If $w_9$ is a simple {\tv}, 
the creation of a bridge beside $w_9$ with 
the edge $e$ produces an axe (see Figure~\ref{fig:toilesii16}).
Otherwise $w_9$ is a nodal {\tv}, the creation of an inner monochrome vertex with the edge $e$ and the inner dotted edge adjacent to $w_9$ produces a {\gc} (see Figure~\ref{fig:toilesii17}). 
If $w_8$ is a real nodal {\tv}, the creation of a bridge beside it with the edge $e$ produces an axe (see Figure~\ref{fig:toilesii18}).
 
 \begin{figure}[h]
\begin{center}
\begin{subfigure}{0.3\textwidth}
\centering\includegraphics[width=2.5in]{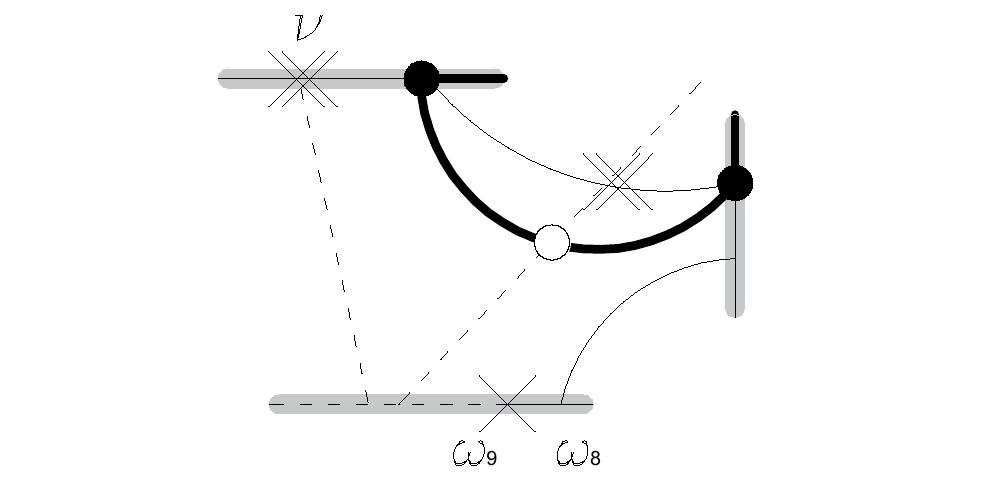}
\caption{\label{fig:toilesii16}}
\end{subfigure}
\begin{subfigure}{0.3\textwidth}
\centering\includegraphics[width=2.5in]{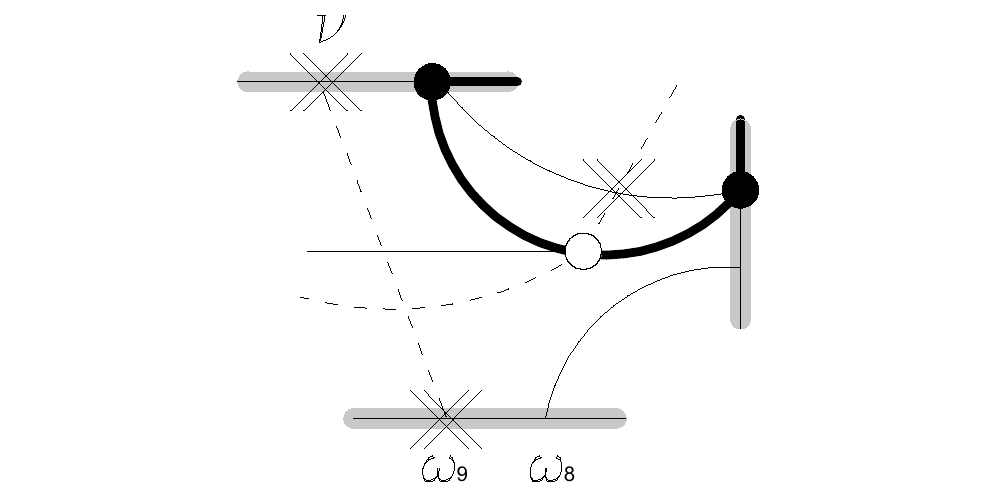}
\caption{\label{fig:toilesii17}}
\end{subfigure}
\begin{subfigure}{0.3\textwidth}
\centering\includegraphics[width=2.5in]{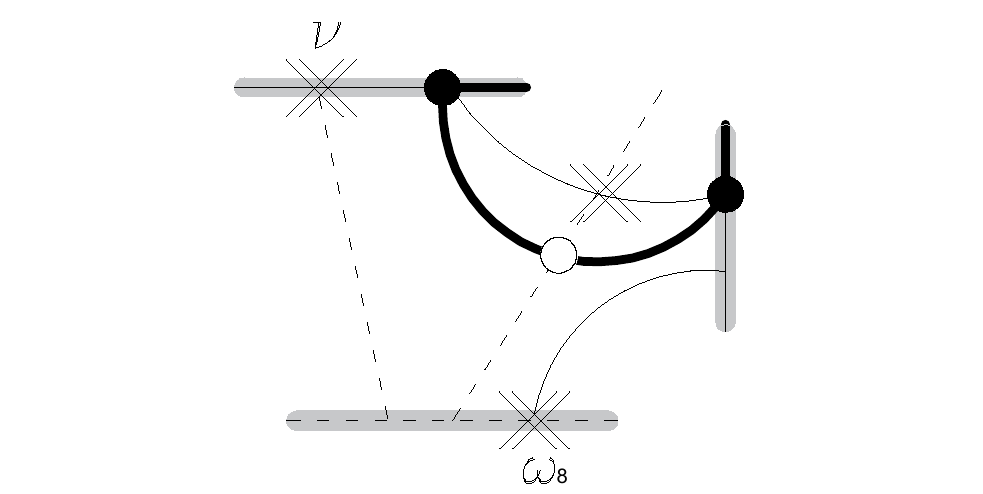}
\caption{\label{fig:toilesii18}}
\end{subfigure}
\end{center}
\caption{}
\end{figure}

If $w_8$ is an inner nodal {\tv}, 
we make monochrome modification in order to connect $w_8$ to $w'$.
Then, the creation of a bride $h$ beside $w_1$ with an inner solid edge adjacent to $w_8$ produces a solid {\gc} and cutting by it produces two different toiles (see Figure~\ref{fig:toilesii20}). Let $D_0'$ be the resulting toile containing $v$.
The toile $D_0'$ is a toile of degree strictly greater than $3$ since there are no nodal cubic toiles having two isolated nodes (cf. Section~\ref{sec:cubics}). Since $\deep(D_0')\leq1$, we can restart the algorithm with the toile $D_0'$ and it does not cycle back to this consideration. A dotted {\gc} in $D'$ having $w_8$ as an end can be extended to a {\gc} in $D_0$ by deleting the solid bridge $h$ and creating an inner monochrome dotted vertex with the edge $e$ and the inner dotted edge of $w_8$ (see Figure~\ref{fig:toilesii21}).

\begin{figure}[h]
\begin{center}
\begin{subfigure}{0.3\textwidth}
\centering\includegraphics[width=2.5in]{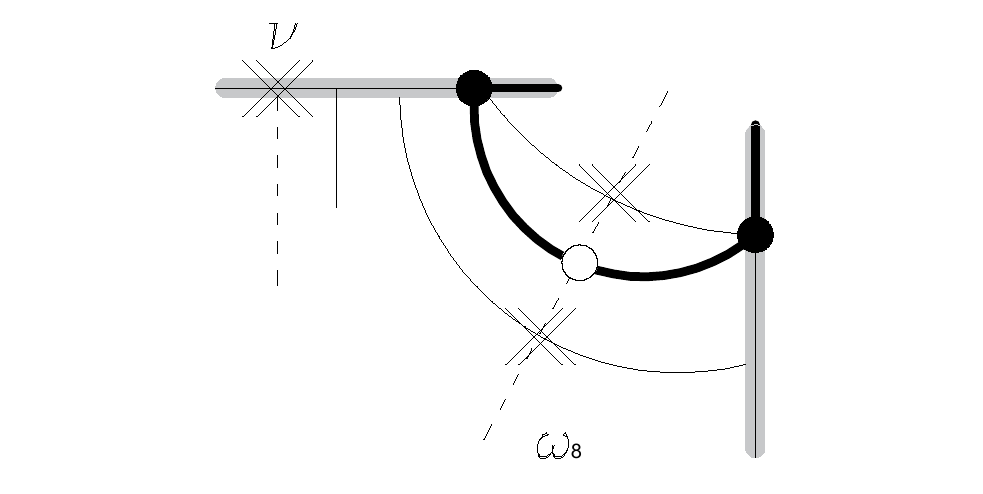}
\caption{\label{fig:toilesii20}}
\end{subfigure}
\begin{subfigure}{0.3\textwidth}
\centering\includegraphics[width=2.5in]{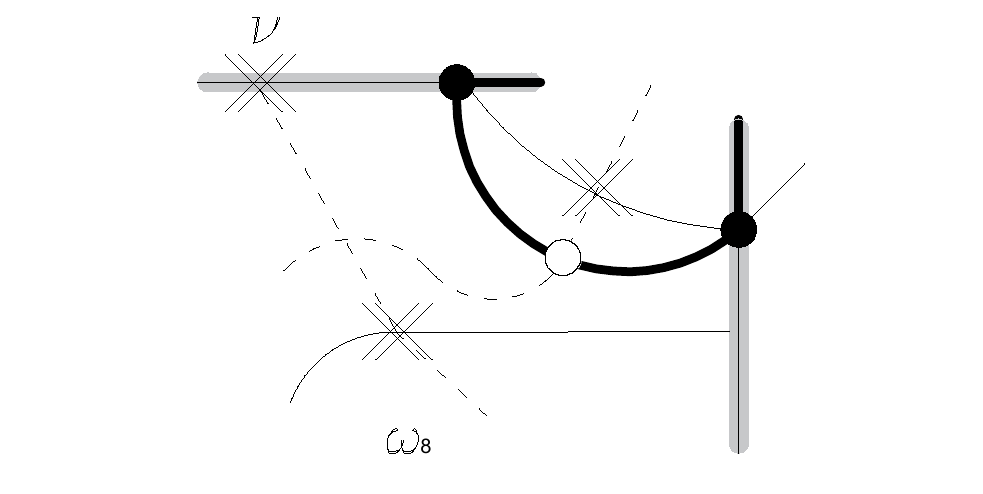}
\caption{\label{fig:toilesii21}}
\end{subfigure}
\end{center}
\caption{}
\end{figure}

If $w_8$ is a simple {\tv}, let $w_9$ be the vertex connected to it by a dotted edge.
If $w_9$ is a monochrome vertex, a monochrome modification between the inner dotted edge adjacent to $w_9$ and the edge $e$ produces an axe (see Figure~\ref{fig:toilesii22}).
If $w_9$ is a real white vertex, then an elementary move of type $\bullet$-in at $w_7$ followed by an elementary move of type $\bullet$-out beside $w_9$ allows us to create a zigzag with $w_8$, contradicting the maximality assumption (see Figure~\ref{fig:toilesii23}).
Lastly, if $w_9$ is an inner white vertex, the creation of a bridge in the segment $S$ followed by an elementary move of type $\circ$-out allows us to consider $w_9$ as a real white vertex.

\begin{figure}[h]
\begin{center}
\begin{subfigure}{0.3\textwidth}
\centering\includegraphics[width=2.5in]{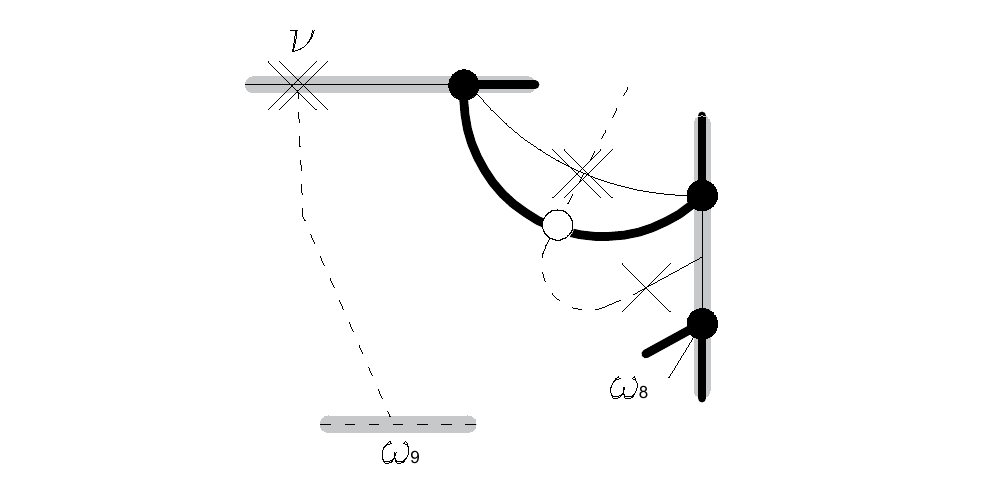}
\caption{\label{fig:toilesii22}}
\end{subfigure}
\begin{subfigure}{0.3\textwidth}
\centering\includegraphics[width=2.5in]{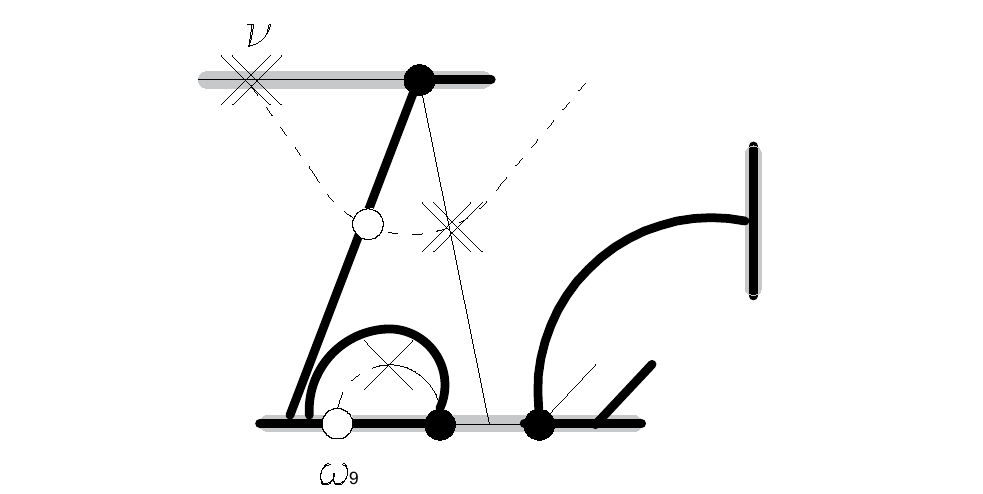}
\caption{\label{fig:toilesii23}}
\end{subfigure}
\end{center}
\caption{}
\end{figure}

If $g$ is an inner bold edge, let $w_7$ be the vertex connected to $w_6$ by an inner bold edge.
If $w_7$ is a real white vertex, the creation of a bridge beside it with an inner dotted edge adjacent to $w_4$ produces a {\gc} (see Figure~\ref{fig:toilesii25}).
If $w_7$ is a monochrome vertex, an elementary move of type $\circ$-in allows us to consider it as an inner white vertex.
If $w_7$ is an inner white vertex, up to a monochrome modification it is connected to $w_4$. Let $w_8$ be the vertex connected to $w_6$ by a real solid edge.
If $w_8$ is a simple {\tv}, the creation of a dotted bridge beside it with an inner dotted edge adjacent to $w_4$ produces a {\gc} (see Figure~\ref{fig:toilesii26}).
If $w_8$ is a nodal {\tv}, up to a monochrome modification it is connected to $w_7$ determining a {\gc} (see Figure~\ref{fig:toilesii27}).
If $w_8$ is a monochrome vertex, the creation of a bold bridge beside $w_1$ with an inner bold edge adjacent to $w_7$ followed by an elementary move of type $\circ$-out at $w_7$, an elementary move of type $\bullet$-in at $w_8$ and an elementary move of type $\bullet$-out at the bridge bring us to the configuration when the edge $g$ was a real bold edge.

\begin{figure}[h]
\begin{center}
\begin{tabular}{lcr}
\begin{subfigure}{0.3\textwidth}
\centering\includegraphics[width=2.5in]{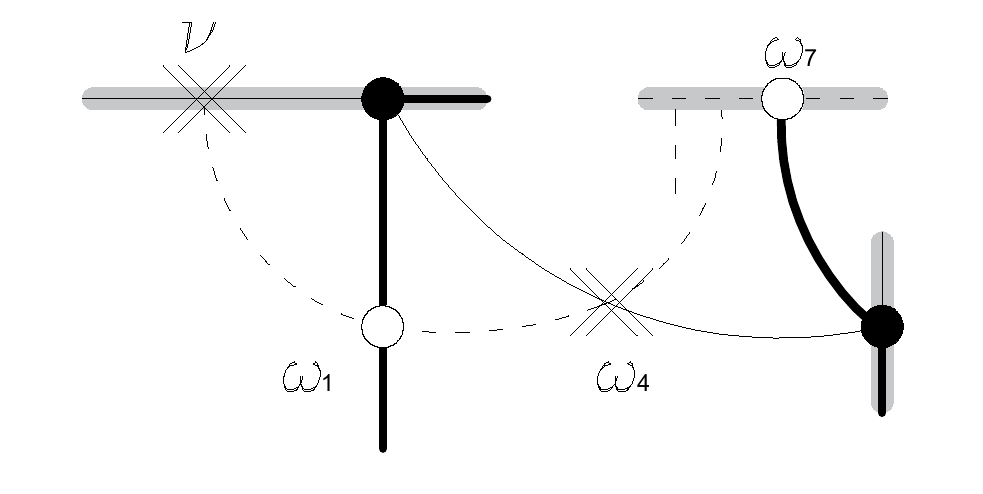}
\caption{\label{fig:toilesii25}}
\end{subfigure}
&\hspace{1cm} &
\begin{subfigure}{0.3\textwidth}
\centering\includegraphics[width=2.5in]{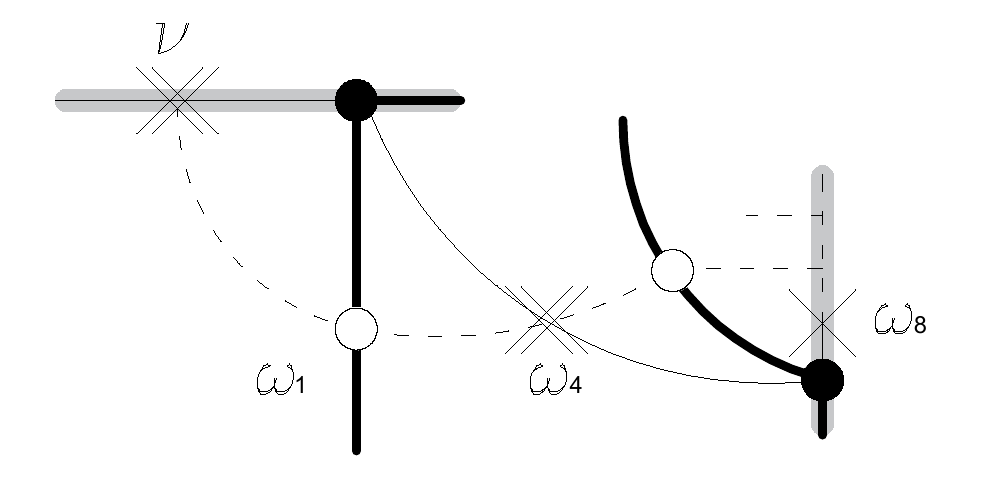}
\caption{\label{fig:toilesii26}}
\end{subfigure}
\end{tabular}

\begin{subfigure}{0.3\textwidth}
\centering\includegraphics[width=2.5in]{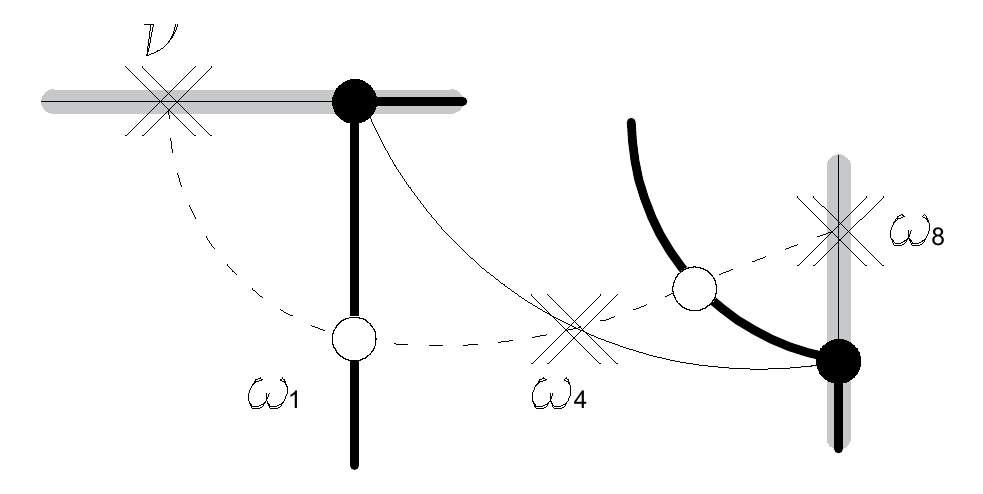}
\caption{\label{fig:toilesii27}}
\end{subfigure}
\end{center}
\caption{}
\end{figure}

In the case when $w_6$ is a monochrome vertex, an elementary move of type $\bullet$-in at $w_6$ produces an inner black vertex $w_6'$. We destroy any possible remaining bridge. Then, the creation of a bridge beside $w_5$ with an inner bold edge adjacent to $w_6'$ followed by an elementary move of type $\bullet$-out at $w_6'$ bring us to the previous consideration when $w_6$ was a black vertex. The same applies to the case when $w_6$ is an inner black vertex.

\begin{figure}[h]
\begin{center}
\begin{subfigure}{0.3\textwidth}
\centering\includegraphics[width=2.5in]{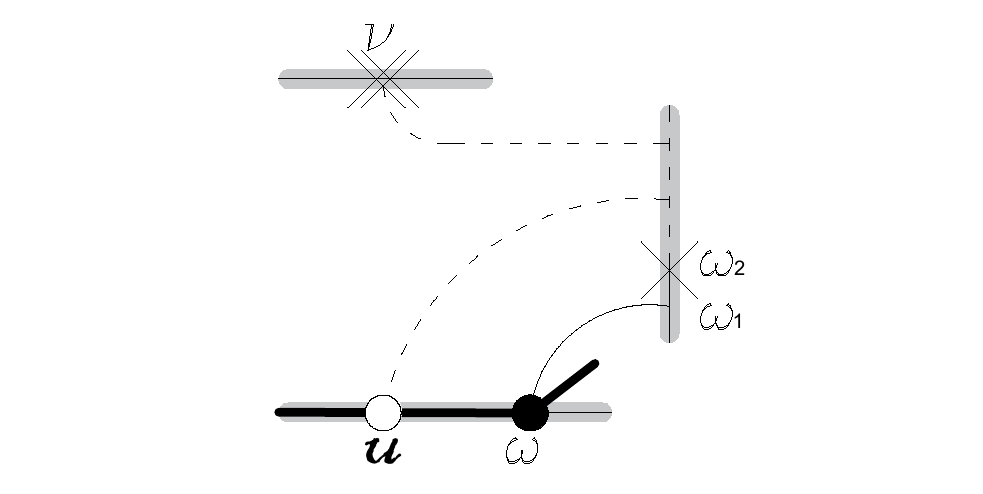}
\caption{\label{fig:toilesii31}}
\end{subfigure}\hspace{1cm}
\begin{subfigure}{0.3\textwidth}
\centering\includegraphics[width=2.5in]{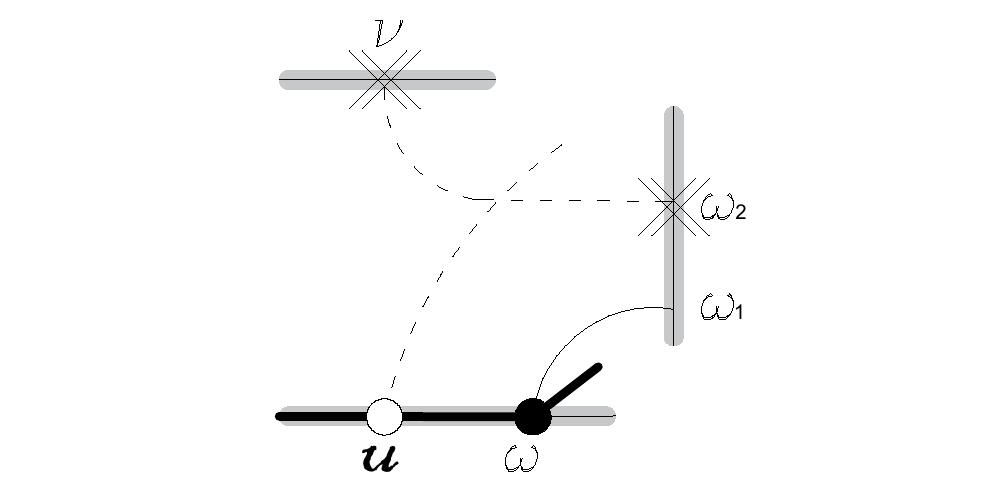}
\caption{\label{fig:toilesii32}}
\end{subfigure}
\end{center}
\caption{}
\end{figure}

{\it Case 1.2:} the vertex $w$ is black.
Let $w_1$ be the vertex connected to $w$ by an inner solid edge.
If $w_1$ is a monochrome vertex in a different solid segment that the one containing $v$, let $w_2$ be the vertex connected to $w_1$ on the region $R$.
If $w_2$ is a simple {\tv}, the creation of a bridge beside it with the edge $e$ produces a {\gc} (see Figure~\ref{fig:toilesii31}). Otherwise, the vertex $w_2$ is a nodal {\tv} and the creation of an inner dotted monochrome vertex with the inner edge of $w_2$ and the edge $e$ produces a {\gc} (see Figure~\ref{fig:toilesii32}).
If $w_1$ is a monochrome vertex connected to $v$, let $w_2$ be the vertex connected to $w$ by a real solid edge.
If $w_2$ is a monochrome vertex, we do an elementary move of type $\bullet$-in at $w_2$ followed by an elementary move $\bullet$-out at $w_1$.
In the case when there is no resulting bold bridge, this configuration has been consider in the Case 1.1.
Otherwise, we destroy the bold bridge and let $w_2'$ be the real black vertex connected to $v$ and let $w_3$ be the vertex connected to it by an inner bold edge.
If $w_3$ is an inner white vertex, the creation of a bridge beside $u$ followed by an elementary move of type $\circ$-out sets the configuration as when there was no bold bridge.
If $w_3$ is a monochrome vertex, an elementary move of type $\circ$-in allows us to consider it as an inner white vertex.
If $w_3$ is a real white vertex, the creation of a bridge beside it with the edge $e$ produces an axe (see Figure~\ref{fig:toilesii36}).

In the case when the vertex $w_2$ is a simple {\tv}, let $w_3$ be the vertex connected to $w_2$ by a dotted real edge.
If $w_3$ is a white vertex, it is connected to $w$ up to a monochrome modification. Let $w_4\neq v$ be the real neighbor vertex of $w_1$.

When the vertex $w_4$ is a simple {\tv}, if it is connected to $w_3$, then the toile would be a cubic. Hence the vertices $w_3$ and $w_4$ are not neighbors.
Let $w_5\neq w_2$ be the real neighbor vertex of $w_3$. 

\begin{figure}[h]
\begin{center}
\begin{subfigure}{0.3\textwidth}
\centering\includegraphics[width=2.5in]{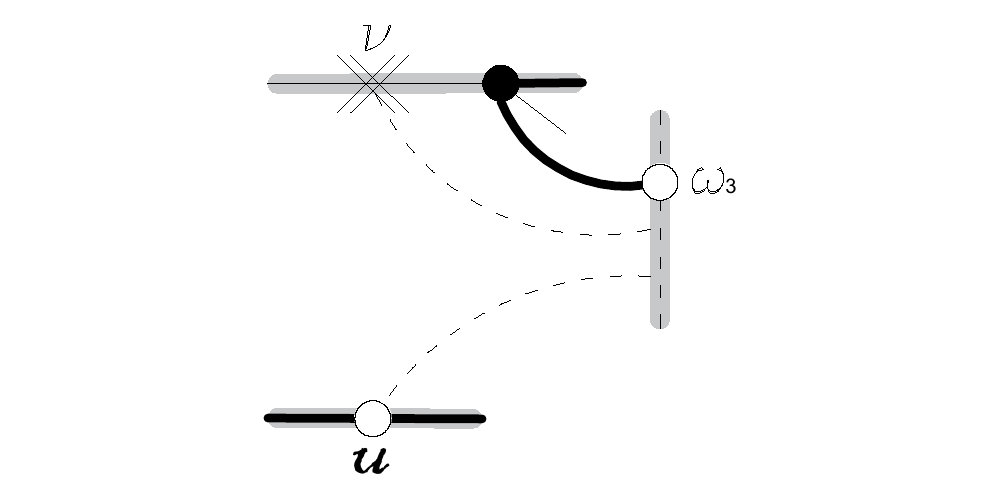}
\caption{\label{fig:toilesii36}}
\end{subfigure}\hspace{1cm}
\begin{subfigure}{0.3\textwidth}
\centering\includegraphics[width=2.5in]{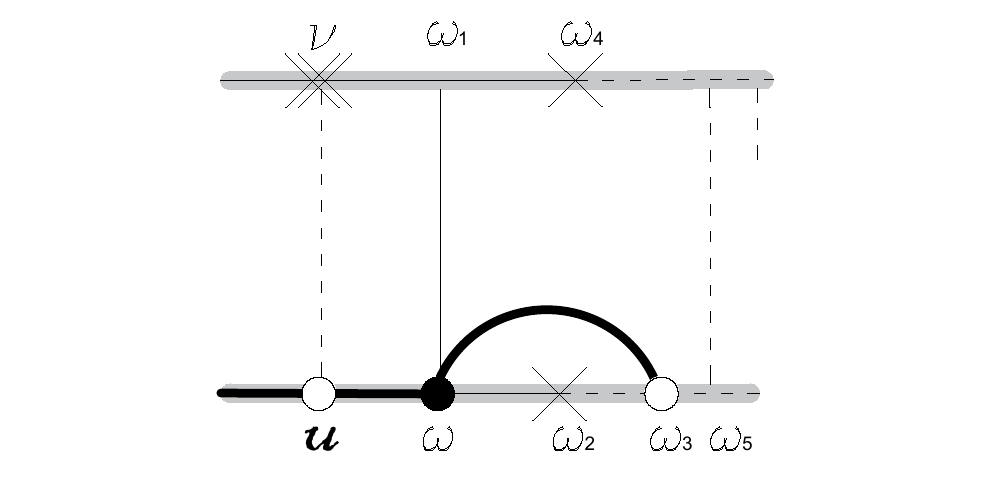}
\caption{\label{fig:toilesii37}}
\end{subfigure}
\end{center}
\caption{}
\end{figure}

When $w_5$ is a monochrome vertex, the creation of a bridge with its inner dotted edge beside $w_4$ produces a cut (see Figure~\ref{fig:toilesii37}).
If $w_5$ is a simple {\tv} having a black neighboring vertex, then we create a bridge beside $w_5$ with the inner solid edge adjacent to $w$ creating a solid cut, we destroy the correspondent zigzag, make an elementary move of type $\bullet$-it followed by an elementary move of type $\bullet$-out, make an elementary move of type $\circ$-in with $u$ and $w_5$ in order to create a bold bridge and perform an elementary move of type $\circ$-out. Then, the creation of a zigzag brings us to a configuration considered in Case~1.1 without breaking the maximality assumption (see Figure~\ref{fig:toilesii38}). 

\begin{figure}[h]
\begin{center}
\begin{subfigure}{0.3\textwidth}
\centering\includegraphics[width=2.5in]{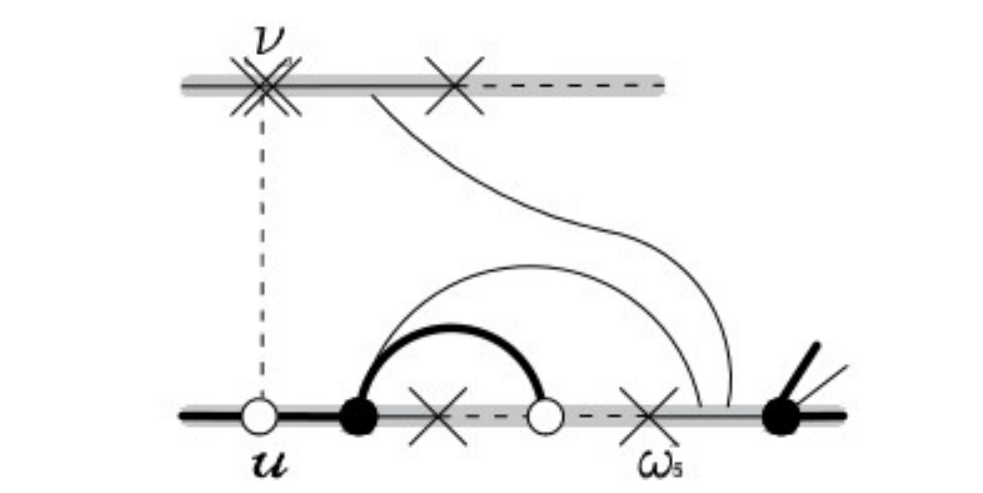}
\end{subfigure}\hspace{1cm}$\cong$
\begin{subfigure}{0.5\textwidth}
\centering\includegraphics[width=2.5in]{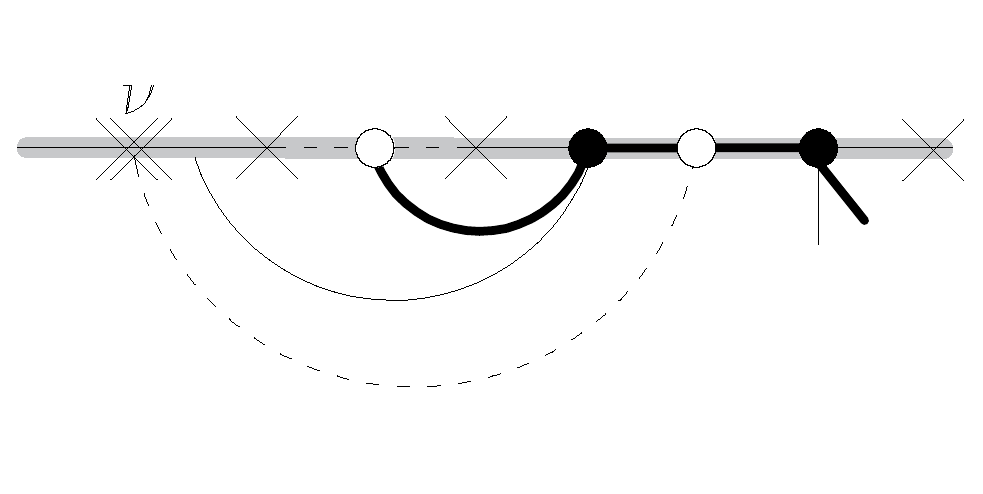}
\end{subfigure}
\end{center}
\caption{\label{fig:toilesii38}}
\end{figure}

If $w_5$ is a simple {\tv} having a solid neighboring monochrome vertex $w_6$, we perform a monochrome modification to connect $w_6$ with $w$. 
Let $w_7\neq w_5$ be the real neighboring vertex of $w_6$.
If $w_7$ is a simple {\tv}, the creation of a dotted bridge beside with the edge $e$ produces an axe (see Figure~\ref{fig:toilesii43}).
If $w_7$ is a nodal {\tv}, the creation of an inner monochrome vertex between the edge $e$ and the inner dotted edge adjacent to $w_7$ produces a {\gc} (see Figure~\ref{fig:toilesii44}).

If $w_5$ is a nodal {\tv}, a monochrome modification connect it to $w$ and then, the creation of a bridge beside $w_5$ with the edge $e$ produces an axe (see Figure~\ref{fig:toilesii45}).

\begin{figure}[h]
\begin{center}
\begin{subfigure}{0.3\textwidth}
\centering\includegraphics[width=2.5in]{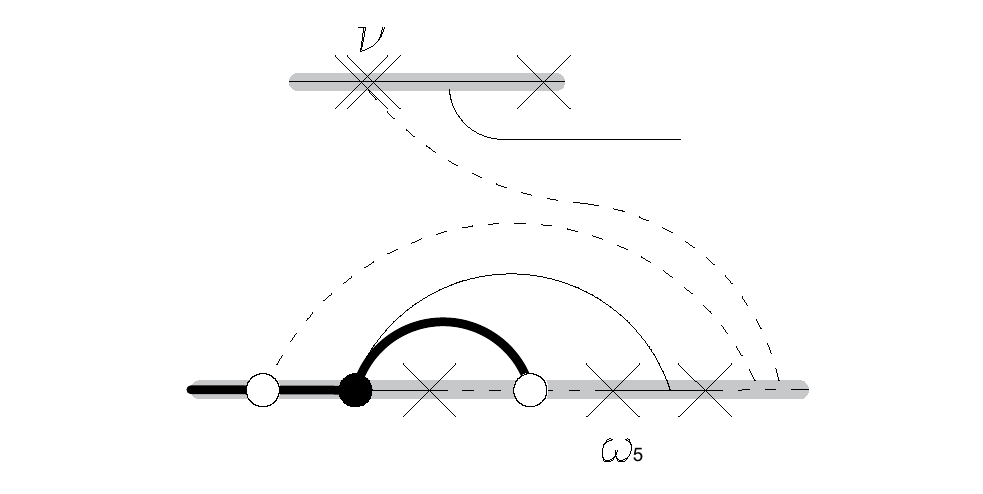}
\caption{\label{fig:toilesii43}}
\end{subfigure}
\hspace{4mm} 
\begin{subfigure}{0.3\textwidth}
\centering\includegraphics[width=2.5in]{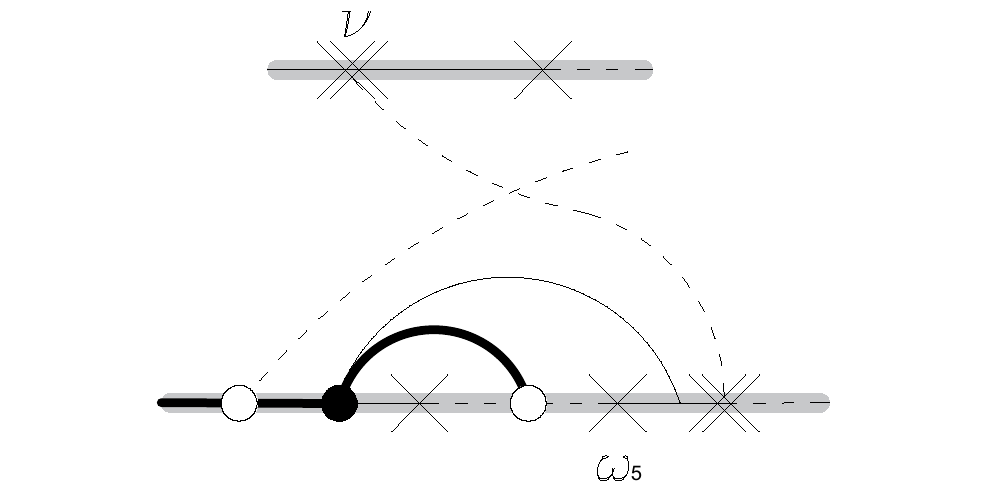}
\caption{\label{fig:toilesii44}}
\end{subfigure}\hspace{4mm} 
\begin{subfigure}{0.3\textwidth}
\centering\includegraphics[width=2.5in]{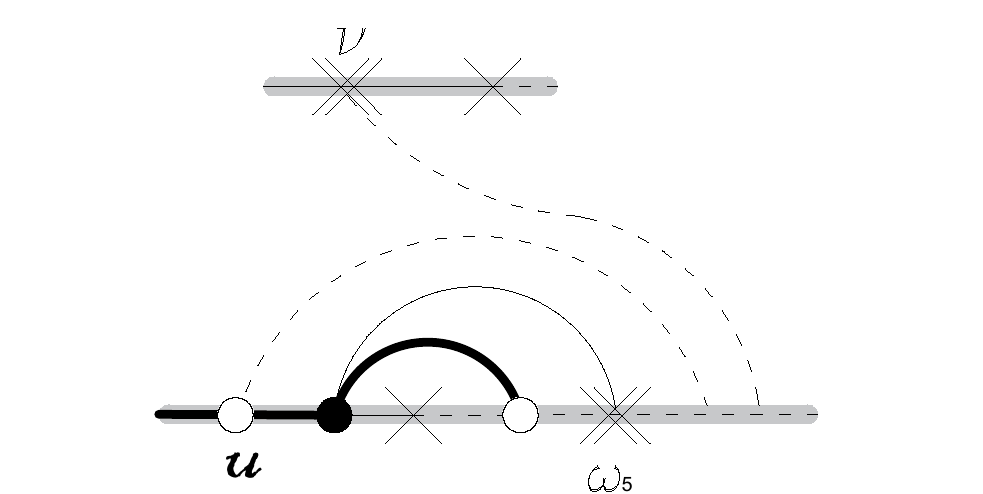}
\caption{\label{fig:toilesii45}}
\end{subfigure}
\end{center}
\caption{}
\end{figure}

If $w_4$ is a nodal {\tv}, the creation of a dotted bridge beside $w_3$ with the inner edge adjacent to $w_4$ produces an axe (see Figure~\ref{fig:toilesii46}).

If $w_3$ is a monochrome vertex, let $w_4$ be the vertex connected to $w$ by an inner bold edge.
If $w_4$ is a real white vertex, then the creation of a bridge beside it with the inner dotted edge adjacent to $w_3$ produces a cut (see Figure~\ref{fig:toilesii47}).
If $w_4$ is a monochrome vertex, an elementary move of type $\circ$-in allows us to consider it as an inner white vertex.
If $w_4$ is an inner white vertex, up to a monochrome modification it is connected to $w_3$ and then an elementary move of type $\circ$-out bring us to the configuration where $w_3$ was a white vertex.

\begin{figure}[h]
\begin{center}
\begin{subfigure}{0.3\textwidth}
\centering\includegraphics[width=2.5in]{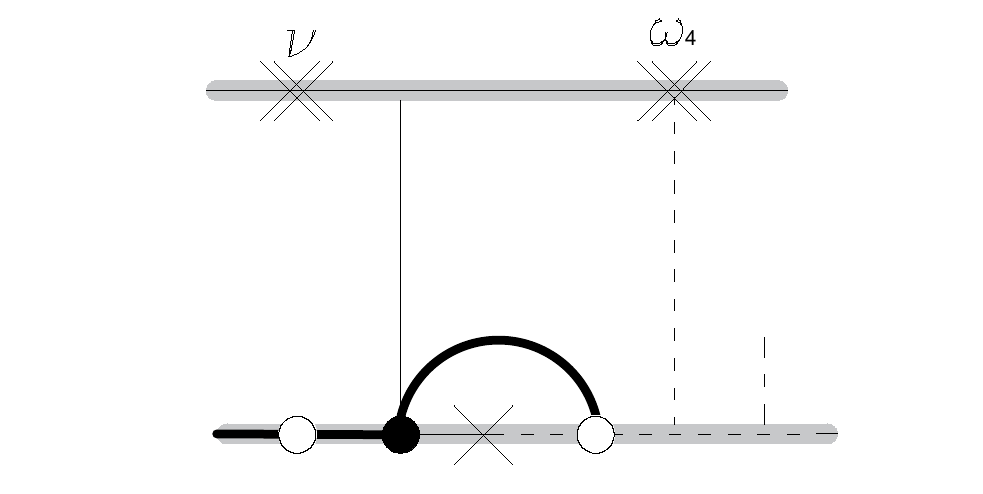}
\caption{\label{fig:toilesii46}}
\end{subfigure}\hspace{4mm} 
\begin{subfigure}{0.3\textwidth}
\centering\includegraphics[width=2.5in]{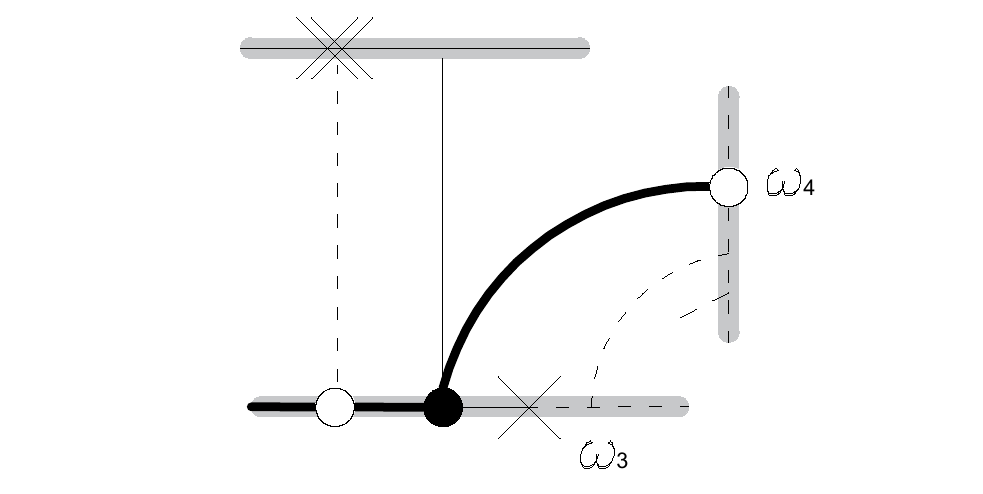}
\caption{\label{fig:toilesii47}}
\end{subfigure}\hspace{4mm} 
\begin{subfigure}{0.3\textwidth}
\centering\includegraphics[width=2.5in]{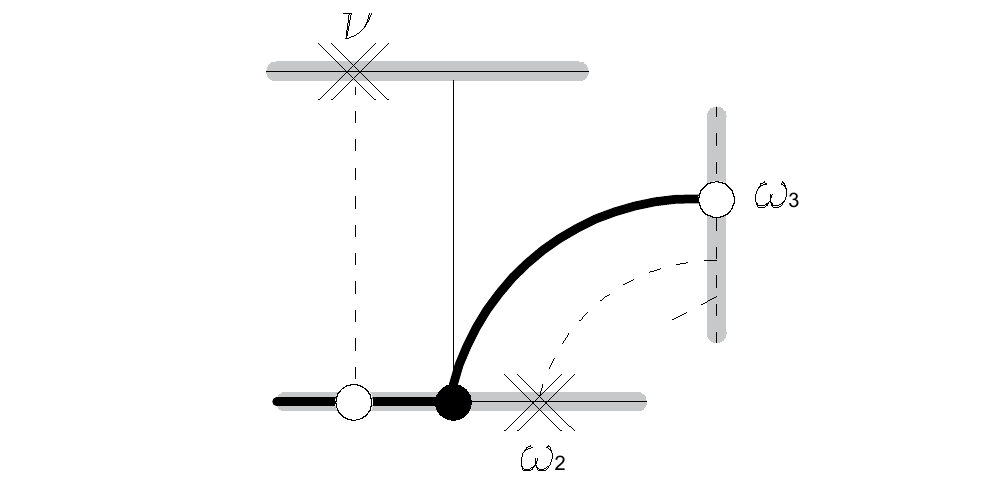}
\caption{\label{fig:toilesii48}}
\end{subfigure}
\end{center}
\caption{}
\end{figure}

Otherwise $w_2$ is a nodal {\tv}. Let $w_3$ be the vertex connected to $w$ by an inner bold edge.
If $w_3$ is a real white vertex, the creation of a bridge with the inner dotted edge adjacent to $w_2$ beside $w_3$ produces an axe (see Figure~\ref{fig:toilesii48}).
If $w_3$ is a monochrome vertex, an elementary move of type $\circ$-in allows us to consider it as an inner white vertex.
Lastly, if $w_3$ is an inner white vertex, up to a monochrome modification it is connected to $w_2$. Let $w_4\neq v$ be the vertex connected to $w_1$ by a real edge.
If $w_4$ is a simple {\tv}, up to the creation of a dotted bridge beside it with an inner dotted edge of $w_3$, we can perform an elementary move of type $\circ$-out producing an axe (see Figure~\ref{fig:toilesii49}).
If $w_4$ is a nodal {\tv}, up to a monochrome modification it is connected to $w_3$ producing a {\gc} (see Figure~\ref{fig:toilesii50}).

\begin{figure}[h]
\begin{center}
\begin{subfigure}{0.3\textwidth}
\centering\includegraphics[width=2.5in]{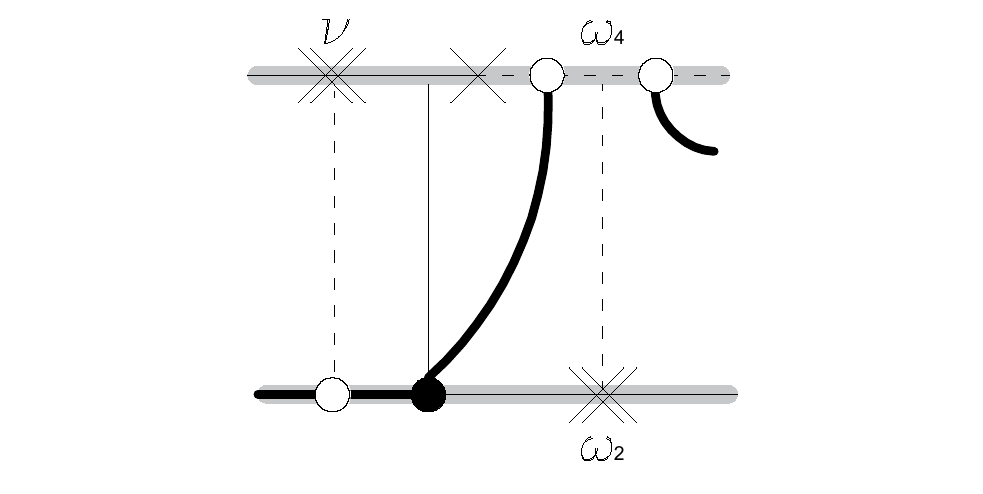}
\caption{\label{fig:toilesii49}}
\end{subfigure}\hspace{2mm} 
\begin{subfigure}{0.3\textwidth}
\centering\includegraphics[width=2.5in]{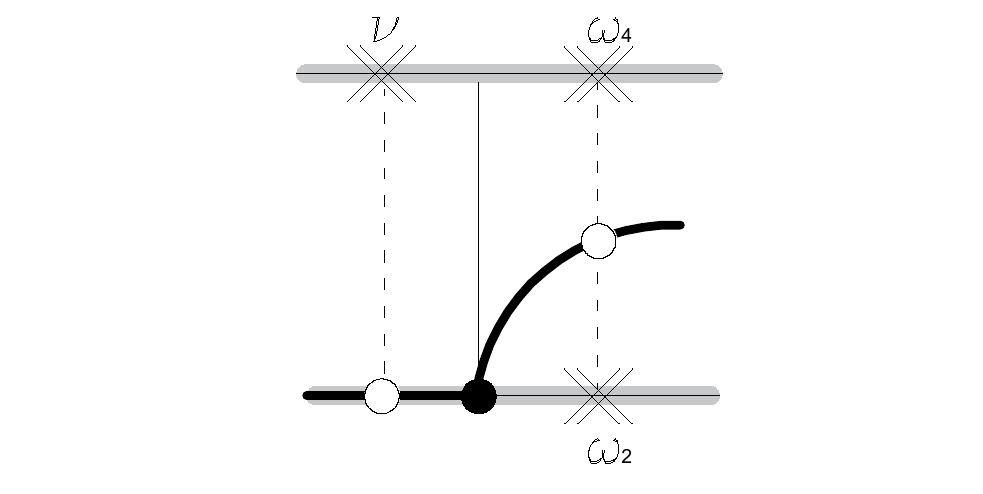}
\caption{\label{fig:toilesii50}}
\end{subfigure}
\end{center}
\caption{}
\end{figure}

If $w_1$ is an inner simple {\tv}, we can performs a monochrome modification so it is connected to $u$ in order to create a zigzag, but this contradicts the maximality assumption.

Otherwise, the vertex $w_1$ is an inner nodal {\tv}.
Let $w_2\notin R$ be the vertex connected to $w_1$ by a dotted edge. 
If $w_2$ is an inner white vertex, it is connected to $w$ up to a monochrome modification. Let $w_3$ be the vertex connected to $w$ by a real solid edge.
If $w_3$ is a simple {\tv}, the creation of a bridge beside $w_3$ with an inner dotted edge adjacent to $w_2$ and a monochrome vertex with the edges $e$ and an inner dotted edge adjacent to $w_1$ produce a {\gc} (see Figure~\ref{fig:toilesii51}).
If $w_3$ is a nodal {\tv}, up to a monochrome modification it is connected to $w_2$. Then, the creation of an inner monochrome vertex with the edges $e$ and an inner dotted edge adjacent to $w_1$ produce a {\gc} (see Figure~\ref{fig:toilesii52}).
Lastly, if $w_3$ is a monochrome vertex, we create a bridge beside $v$ with the edge $[w,w_1]$ and we perform an elementary move of type $\bullet$-in at $w_3$ followed by an elementary move of type $\bullet$-out at the bridge, bringing us to a configuration considered in Case 1.1.

\begin{figure}[h]
\begin{center}
\begin{subfigure}{0.3\textwidth}
\centering\includegraphics[width=2.5in]{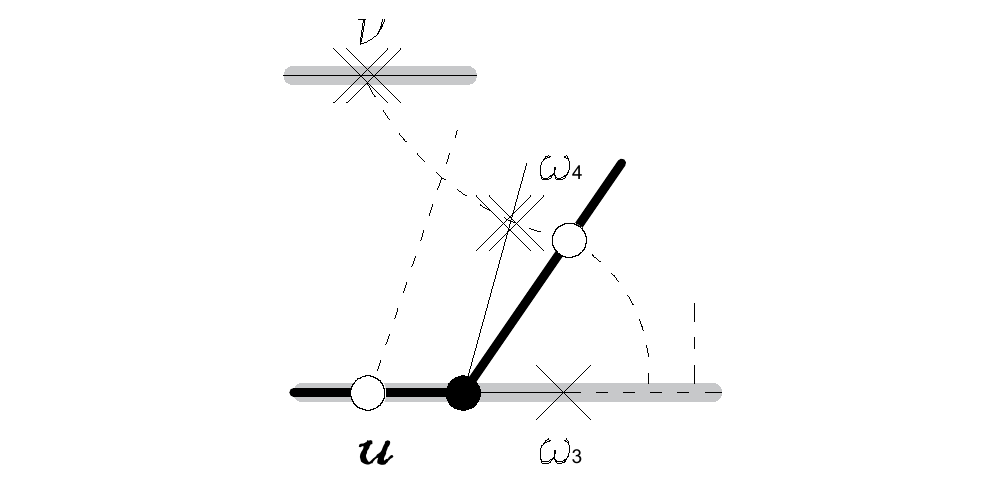}
\caption{\label{fig:toilesii51}}
\end{subfigure}\hspace{2mm} 
\begin{subfigure}{0.3\textwidth}
\centering\includegraphics[width=2.5in]{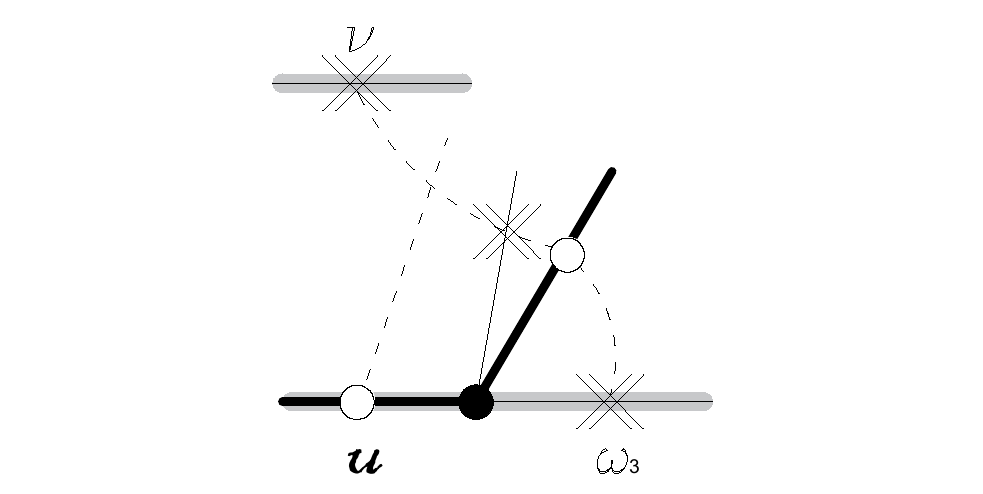}
\caption{\label{fig:toilesii52}}
\end{subfigure}
\end{center}
\caption{}
\end{figure}

If $w_2$ is a real white vertex, let $w_3$ be the vertex connected to $w$ by a bold inner edge.
If $w_3$ is a monochrome vertex, an elementary move of type $\circ$-in allows us to consider it as an inner white vertex.
If $w_3$ is an inner white vertex, a monochrome modification connects it to $w_1$ and that corresponds to the configuration where $w_2$ was an inner white vertex.
Finally, if $w_3$ is a real white vertex, the creation of a bridge beside it with an inner dotted edge adjacent to $w_1$ and a monochrome vertex with the edges $e$ and an inner dotted edge adjacent to $w_1$ produce a {\gc} (see Figure~\ref{fig:toilesii55}).

\begin{figure}[h]
\begin{center}
\begin{subfigure}{0.3\textwidth}
\centering\includegraphics[width=2.5in]{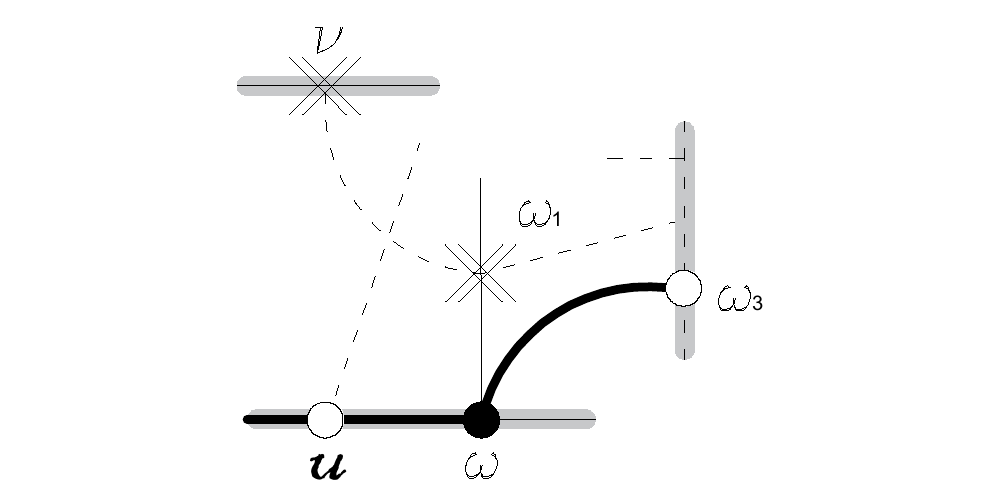}
\caption{\label{fig:toilesii55}}
\end{subfigure}\hspace{2mm} 
\begin{subfigure}{0.3\textwidth}
\includegraphics[width=2.5in]{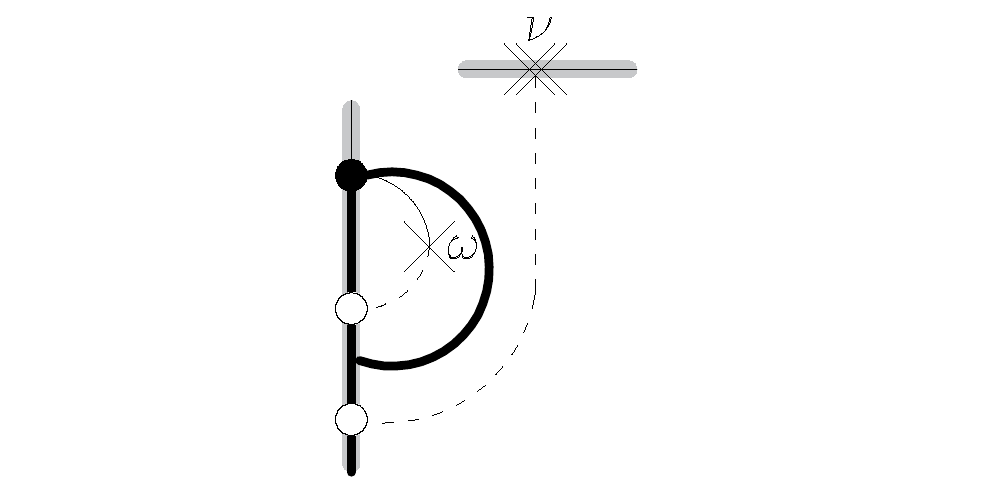}
\caption{\label{fig:toilesii57}}
\end{subfigure}
\end{center}
\caption{}
\end{figure}

{\it Case 1.3:} the vertex $u$ is an inner white vertex. If $u$ is connected to another nodal {\tv} or a monochrome dotted vertex, it determines a {\gc}.
Otherwise $u$ is connected to an inner {\tv} $w$.
Let us assume that $u$ is an inner simple {\tv}.

If $u$ is connected to a monochrome bold vertex, an elementary move of type $\circ$-out bring us to the previous cases.
If $u$ is connected to an inner black vertex, the creation of a bridge beside $v$ with an inner solid edge adjacent to the black vertex and an elementary move of type $\bullet$-out allow us to consider $u$ connected to a real black vertex.
Thus, the vertex $u$ must be connected to a black real vertex $b$.
Let $R$ to be the region determined by $u$, $v$ and $b$, and let $f\neq[u,b]$ be the edge adjacent to $b$ in $R$.
If $f$ is a real edge, then up to a monochrome modification the vertex $b$ and $w$ are connected by an inner solid edge, and then, the creation of a bold bridge beside $b$ with an inner edge adjacent to $u$ followed by an elementary move of type $\circ$-out allows us to create a zigzag, contradicting the maximality assumption (see Figure~\ref{fig:toilesii57}).

If $f$ is an inner edge, let $m$ be the vertex connected to $b$ by a real solid edge.
If $m$ is a {\tv}, then either the creation of a bridge beside $m$ with the edge $[u,w]$ or a monochrome modification connecting $u$ and $m$, if $m$ is simple or nodal, respectively, produces a {\gc}.
If $m$ is a monochrome vertex, it is connected to $w$ and then, an elementary move of type $\bullet$-in at $m$ followed by the creation of a solid bridge beside $v$ with the edge $f$ and an elementary move of type $\bullet$-out at the bridge allows us to create a zigzag, contradicting the maximality assumption (see Figure~\ref{fig:toilesii58}).

\begin{figure}[h]
\begin{center}
\begin{subfigure}{0.3\textwidth}
\centering\includegraphics[width=2.5in]{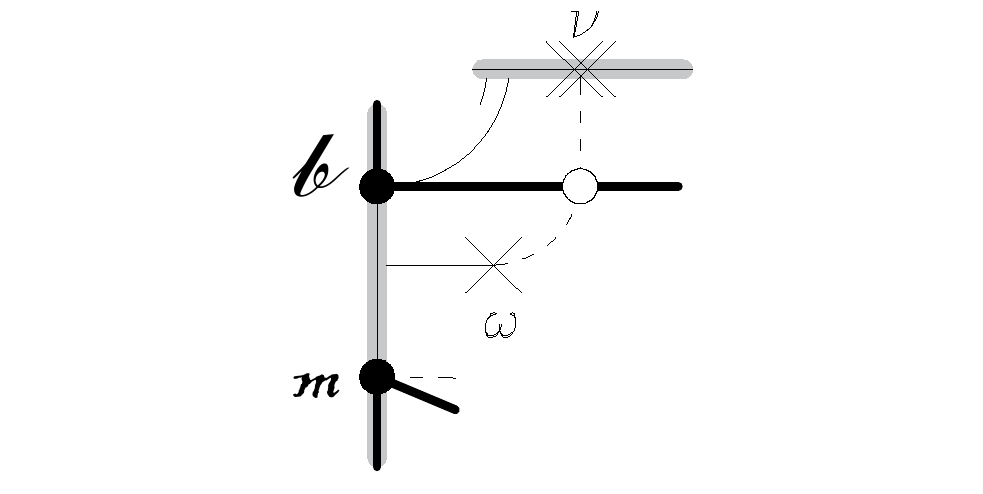}
\caption{\label{fig:toilesii58}}
\end{subfigure}$\cong$
\begin{subfigure}{0.3\textwidth}
\centering\includegraphics[width=2.5in]{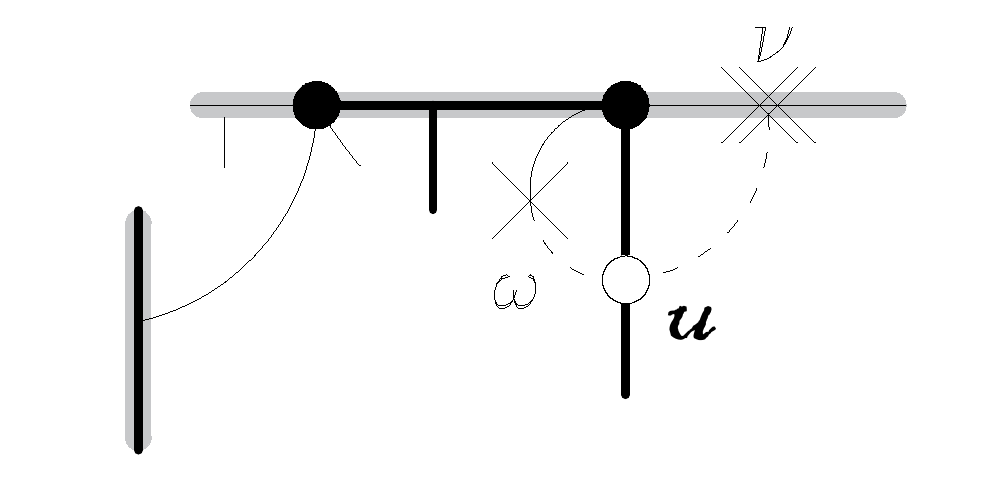}
\caption{\label{fig:toilesii59}}
\end{subfigure}
\end{center}
\caption{}
\end{figure}

The missing case is when the vertex $u$ is connected to an inner nodal {\tv} $w$. Due to the aforementioned considerations, if any of the vertices $u$ or $w$ share a region with a real {\tv}, the creation of a dotted bridge or an inner monochrome vertex produce a {\gc}. If the vertex $u$ is connected to black vertices having neighboring real solid monochrome vertices, then the creation of solid bridges beside $v$ followed by a elementary moves of type $\bullet$-in and $\bullet$-out bring us to the configuration where the vertex $v$ has two neighboring black vertices $b_1$ and $b_2$, which are connected to $u$ and $w$ up to monochrome modifications.
Let $S$ be the bold segments containing $b_1$.
If the segment $S$ has no white vertices, it does have a monochrome vertex connected to a real white vertex determining a dotted segment where the construction of a bridge with the edge $[u,w]$ produces a {\gc} (see Figure~\ref{fig:toilesii60}).
If the segment $S$ has exactly one white vertex $w_1$, it has two neighboring black vertices $b_1$ and $b_1'$. Up to monochrome modifications, the vertices $b_1$ and $b_1'$ are connected to $w$ and $u$. Then, if $b_1'$ has a neighboring real {\tv}, then either the creation of a bridge beside it with the edge $[v,u]$ or the creation of an inner monochrome vertex with the edge $[v,u]$ and the inner dotted edge adjacent to the nodal {\tv} produces a {\gc} (see Figures~\ref{fig:toilesii61} and \ref{fig:toilesii62}).
Otherwise, the vertex $b_1'$ is connected to a solid monochrome vertex $m$, in which case, the creation of a bridge beside $v$ with the inner solid edge adjacent to $m$ produces a solid cut where one of the resulting dessins is a cubic of type $I^{***}$ (a cubic with an inner nodal {\tv} and an isolated real nodal {\tv}) (see Figure~\ref{fig:toilesii63}). 
If the segment $S$ has at least two white vertices, by means of elementary moves of type $\circ$-in and $\circ$-out, we can transfer pairs of white vertices from the segment $S$ to the bold segment containing the vertex $b_2$ bring us to the case when $S$ has none or one white vertex.

\begin{figure}[h]
\begin{center}
\begin{tabular}{lp{1cm}r}
\begin{subfigure}{0.3\textwidth}
\centering\includegraphics[width=2.5in]{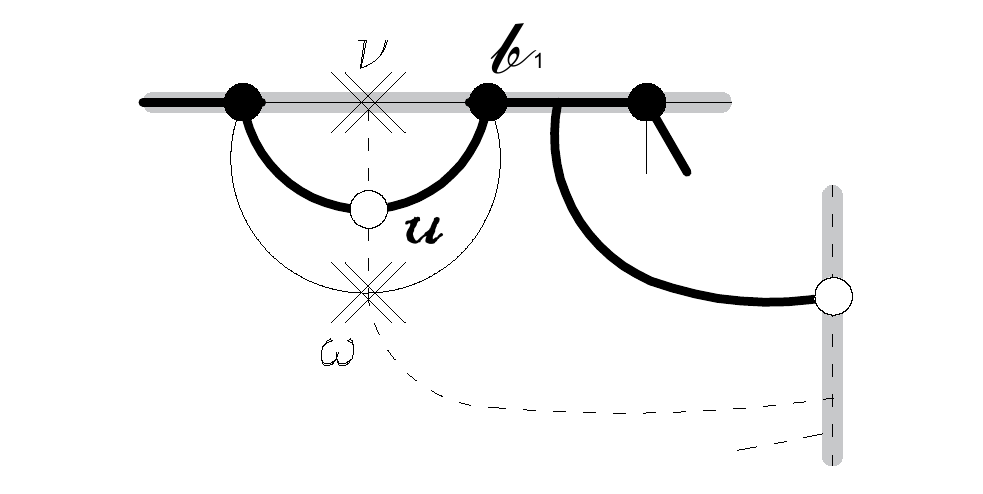}
\caption{\label{fig:toilesii60}}
\end{subfigure}
& &
\begin{subfigure}{0.3\textwidth}
\centering\includegraphics[width=2.5in]{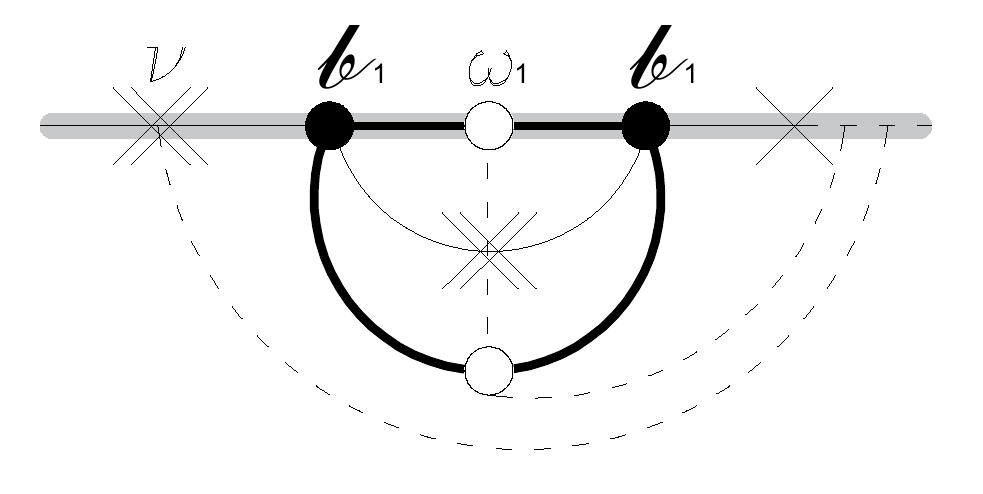}
\caption{\label{fig:toilesii61}}
\end{subfigure}
\\
\begin{subfigure}{0.3\textwidth}
\centering\includegraphics[width=2.5in]{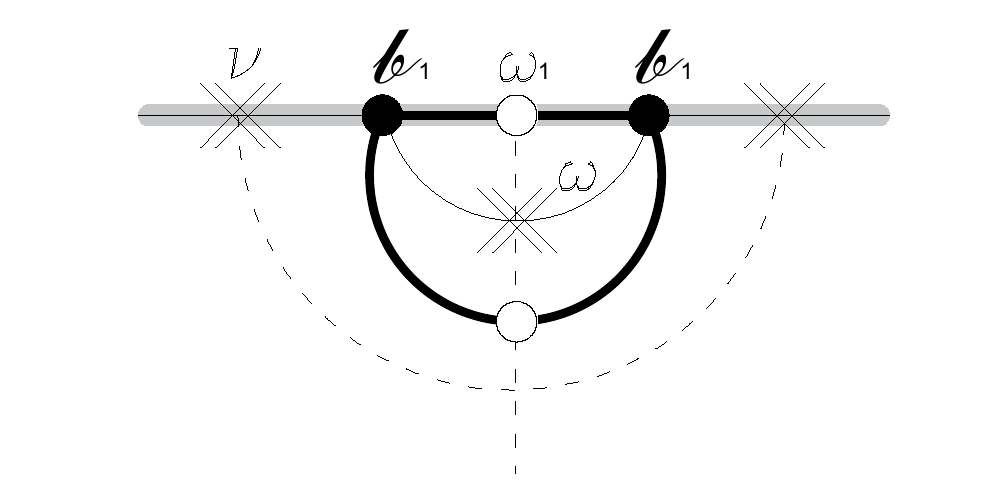}
\caption{\label{fig:toilesii62}}
\end{subfigure}
& &
\begin{subfigure}{0.3\textwidth}
\centering\includegraphics[width=2.5in]{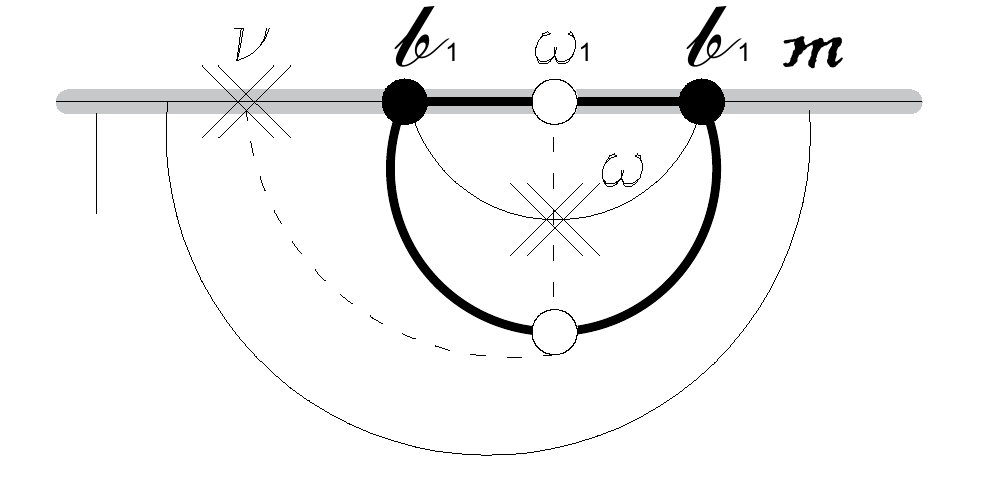}
\caption{\label{fig:toilesii63}}
\end{subfigure}
\end{tabular}
\end{center}
\caption{}
\end{figure}

\vskip5pt
{\it Case 2:} the vertex $v$ is non-isolated, being connected to a black vertex $u$.
Due to Case~1, we can assume that there are no isolated real nodal {\tvs}.
If the vertex $u$ is real, the edge $e:=[v,u]$ is dividing. Let $w$ be vertex connected to $u$ by a bold real edge. Let $R$ be the region determined by $v$, $u$ and $w$.

\begin{figure}[h]
\begin{center}
\begin{subfigure}{0.3\textwidth}
\centering\includegraphics[width=2.5in]{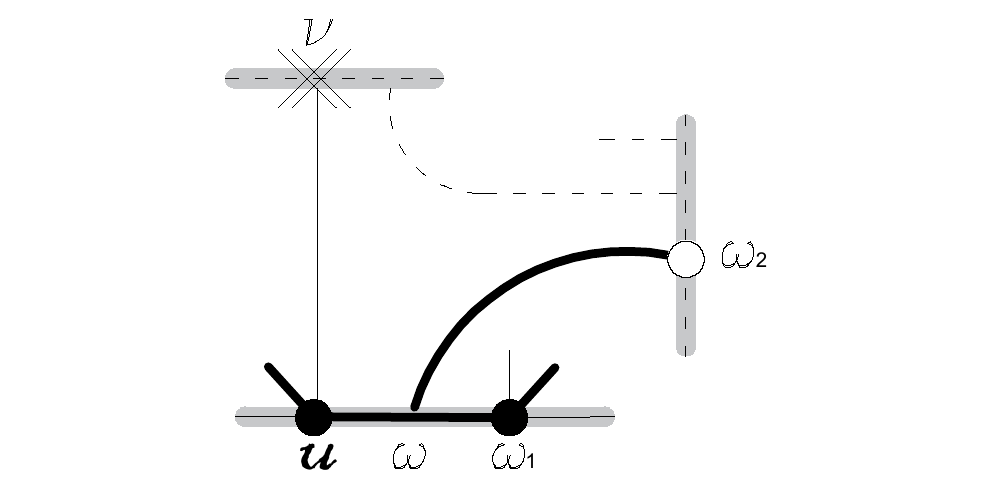}
\caption{\label{fig:toilesii64}}
\end{subfigure}\hspace{2mm} 
\begin{subfigure}{0.3\textwidth}
\centering\includegraphics[width=2.5in]{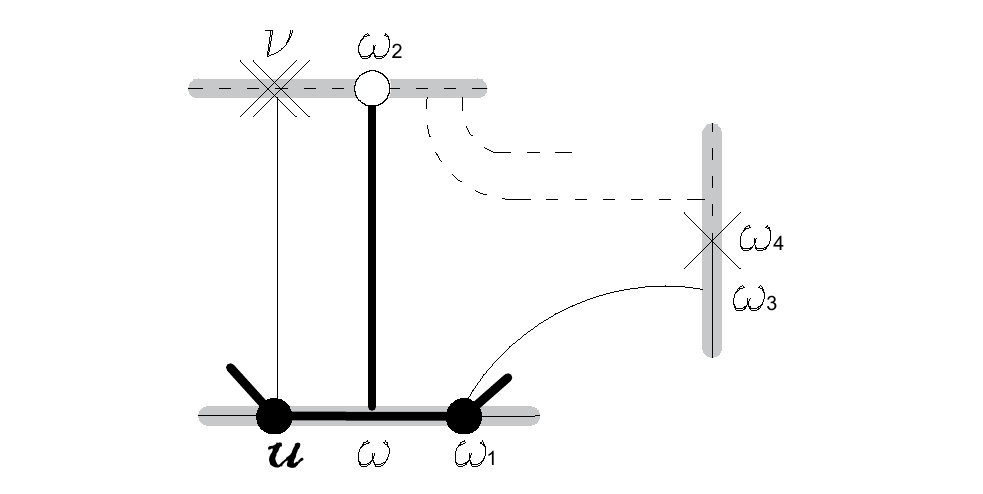}
\caption{\label{fig:toilesii66}}
\end{subfigure}\hspace{2mm} 
\begin{subfigure}{0.3\textwidth}
\includegraphics[width=2.5in]{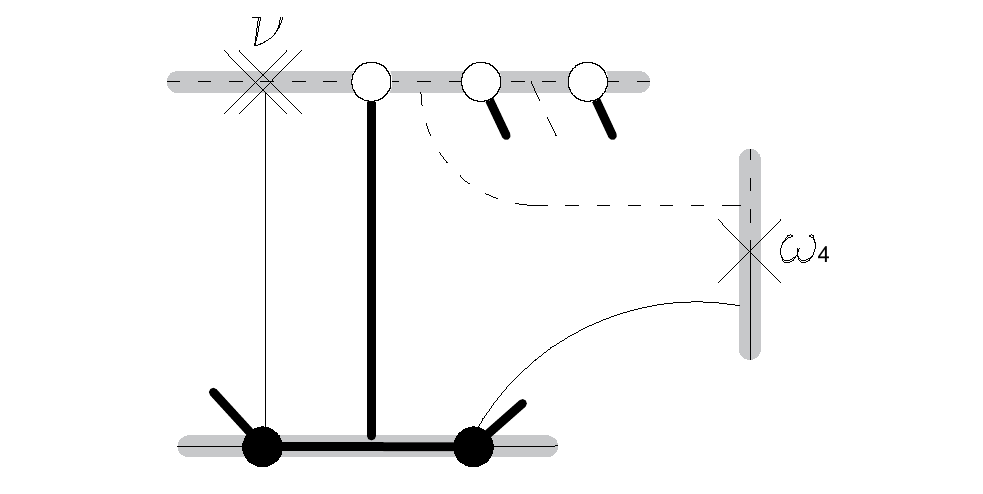}
\caption{\label{fig:toilesii68}}
\end{subfigure}
\end{center}
\caption{}
\end{figure}

{\it Case 2.1:} the vertex $w$ is a monochrome vertex. Since the toile is bridge-free, the vertex $w$ is connected to a real black vertex $w_1\neq u$.
Let $w_2$ be the vertex connected to $w$ by an inner bold edge.
If $w_2$ is a monochrome vertex, an elementary move of type $\circ$-in followed by the possible creation of a bridge beside $v$ and an elementary move of type $\circ$-out allows us to consider $w_2$ as a real white vertex.
In the case when the vertex $w_2$ is a real white vertex, if it is not a neighboring vertex of $v$ and the region $R$ contains an inner dotted edge, up to the creation of bridges beside $v$ and $w_2$ we can produce a dotted cut (see Figure~\ref{fig:toilesii64}).

Otherwise, we can assume the vertices $w_2$ and $v$ to be real neighboring vertices.
Let $w_3$ be the vertex connected to $w_1$ by an inner solid edge.
If $w_3$ is a monochrome vertex, it has two real simple {\tvs} since the toile is bridge-free and there are no isolated nodal {\tvs}.
Let $w_4$ be the simple {\tv} connected to $w_3$ sharing a region with the vertices $w$ and $w_1$. Let $w_4'\neq w_4$ be the other simple {\tv} connected to $w_3$.
All the following considerations apply to the case when $w_3$ is a nodal {\tv}, in that case, the calls to $w_4$ and $w_4'$ all refer to $w_3$.
If $w_4$ is connected to a dotted monochrome vertex $w_5$, then, the creation of a dotted bridge beside $w_2$ with the inner edge adjacent to $w_5$ produces a cut (see Figure~\ref{fig:toilesii66}).

If $w_4$ is connected to a real white vertex $w_5$ having a real neighboring monochrome vertex $w_6$, by applying a monochrome modification we can connect $w_5$ to $w$ and then create a bridge beside $v$ with the inner edge adjacent to $w_6$ in order to produce a dotted cut (see Figure~\ref{fig:toilesii68}).

Otherwise, the vertex $w_4$ is connected to a real white vertex $w_5$ having a real neighboring {\tv} $w_6\neq w_4$.
If $w_6$ is a simple {\tv}, a monochrome modification connecting $w_5$ to $w$ and the creation of a bridge beside $w_6$ with the edge $e$ produces a solid axe (see Figure~\ref{fig:toilesii69}).

If $w_6$ is a nodal {\tv}, monochrome modifications connecting $w_5$ to $w$ and $w_6$ to $u$ bring us to a configuration equivalent to consider the vertices $w_4$ and $w_2$ as real neighbors.
In the case where the vertices $w_4$ and $w_2$ are real neighbors, let us consider $w_5$ the vertex connected to $w_1$ by an inner bold edge. 
If $w_5$ is a monochrome vertex, an elementary move of type $\circ$-in at $w_5$ followed by the creation of a bridge beside $w_4'$ and an elementary move of type $\circ$-out allows to consider the vertex $w_5$ as a white real vertex.
The same applies if $w_5$ is an inner white vertex.
Let us assume the vertex $w_5$ is a white real vertex.
If $w_4'$ has a monochrome dotted neighboring vertex, then the creation of a bridge with the corresponding inner edge beside $w_5$ produces a dotted cut (see Figure~\ref{fig:toilesii70}).
Otherwise, up to a monochrome modification the vertices $w_4'$ and $w_5$ are neighbors.

\begin{figure}[h]
\begin{center}
\begin{subfigure}{0.3\textwidth}
\centering\includegraphics[width=2.5in]{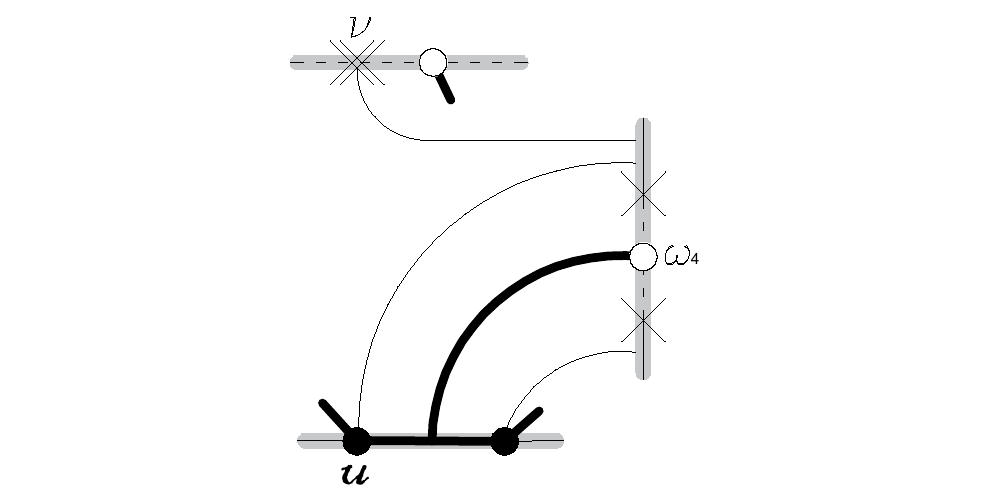}
\caption{\label{fig:toilesii69}}
\end{subfigure}\hspace{2mm} 
\begin{subfigure}{0.3\textwidth}
\includegraphics[width=2.5in]{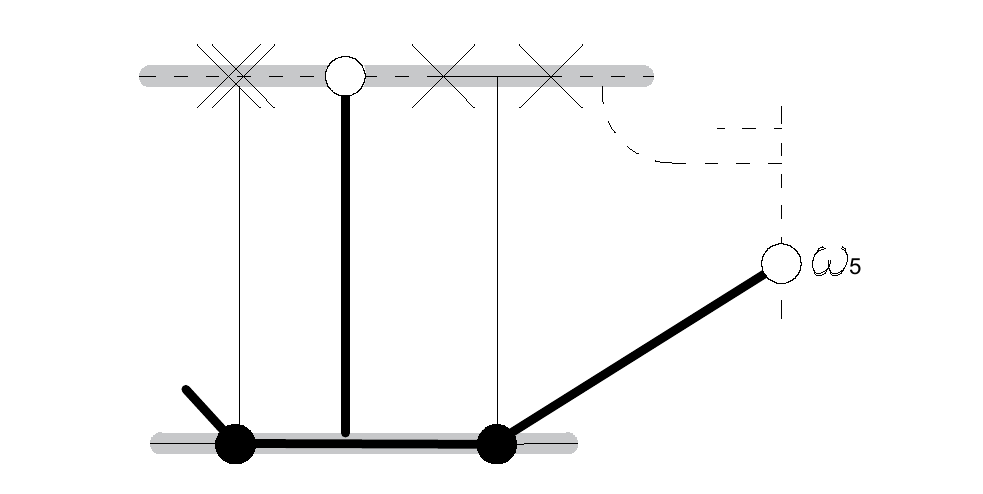}
\caption{\label{fig:toilesii70}}
\end{subfigure}
\end{center}
\caption{}
\end{figure}

Let $w_6$ be the vertex connected to $w_1$ by a real solid edge.
If $w_6$ is monochrome, it is connected to a real black vertex $w_1'\neq w_1$. Let $w_7$ be the vertex connected to $w_6$ by an inner solid edge.
If $w_7$ is a monochrome vertex, it determines a solid cut.
If $w_7$ is a real nodal {\tv}, it determines an axe.
If $w_7$ is an inner {\tv}, 
let $w_8$ 
be the vertex connected to $u$ by an inner bold edge. 
If $w_8$ is a monochrome vertex, an elementary move of type $\circ$-in at it allows us to consider $w_8$ as an inner white vertex.
If $w_8$ is an inner white vertex, up to the creation of a dotted bridge is it connected to a monochrome vertex beside $v$ and then an elementary move of type $\circ$-out bring us to the case when $w_8$ is a real white vertex.
If $w_8$ is a real white vertex, we perform an elementary move of type $\bullet$-in at $w$ and the destruction of the resulting solid bridge.
If $w_7$ is a simple {\tv}, up to the creation of bridges beside the vertices $w_5$ and $w_8$ with the inner dotted edge adjacent to $w_7$ we construct a cut (see Figure~\ref{fig:toilesii72}).
In the case when $w_7$ is an inner nodal {\tv}, up to the construction of bridges beside the vertices $w_5$ and $w_8$ with the inner dotted edges adjacent to $w_7$, respectively, produces a {\gc} (see Figure~\ref{fig:toilesii73}).

\begin{figure}[h]
\begin{center}
\begin{subfigure}{0.45\textwidth}
\centering\includegraphics[width=2.5in]{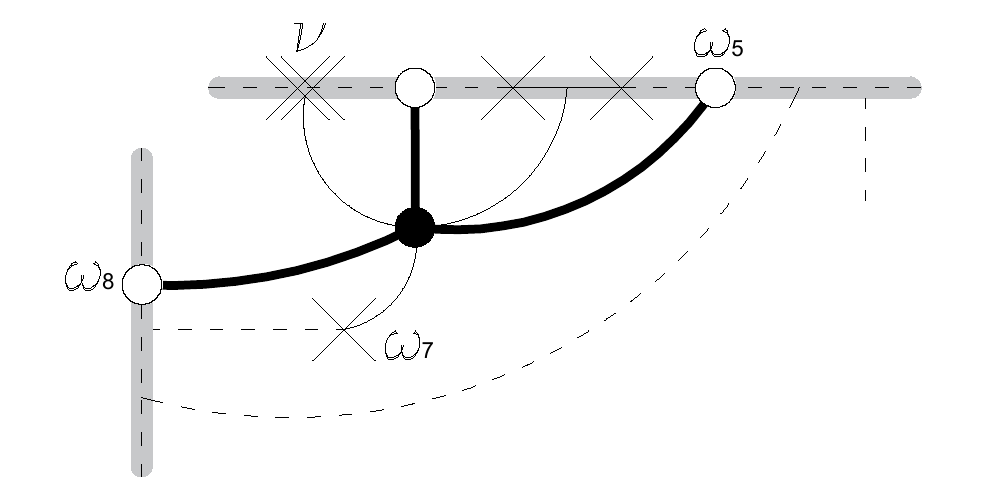}
\caption{\label{fig:toilesii72}}
\end{subfigure}\hspace{6mm} 
\begin{subfigure}{0.45\textwidth}
\centering\includegraphics[width=2.5in]{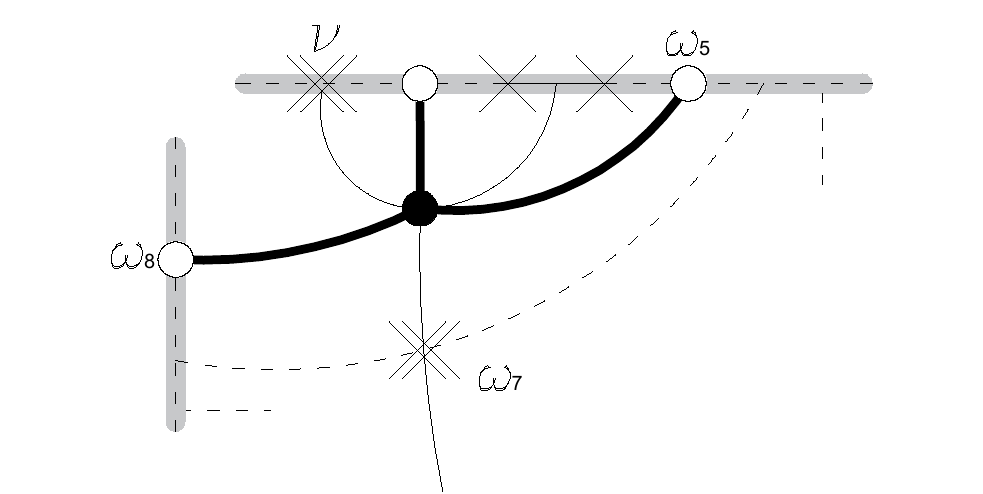}
\caption{\label{fig:toilesii73}}
\end{subfigure}
\end{center}
\caption{}
\end{figure}

If $w_6$ is a real simple {\tv}, let $w_7\neq w_4'$ be the vertex connected to $w_5$.
If $w_7$ is a monochrome vertex, the creation of a bridge with the inner dotted edge adjacent to it beside $w_6$ produces a cut (see Figure~\ref{fig:toilesii74}).
If $w_7$ is a nodal {\tv}, the the creation of a bridge with the inner solid edge adjacent to it beside $w_6$ produces an axe (see Figure~\ref{fig:toilesii75}).
In the case when $w_7\neq w_6$ is a simple {\tv}, let $w_8$ be the vertex connected to $w_6$ by a real dotted edge.
If $w_8$ is a monochrome vertex, the creation of a bridge with the inner dotted edge adjacent to it beside $w_5$ produces a cut (see Figure~\ref{fig:toilesii76}).

\begin{figure}[h]
\begin{center}
\begin{subfigure}{0.3\textwidth}
\centering\includegraphics[width=2.5in]{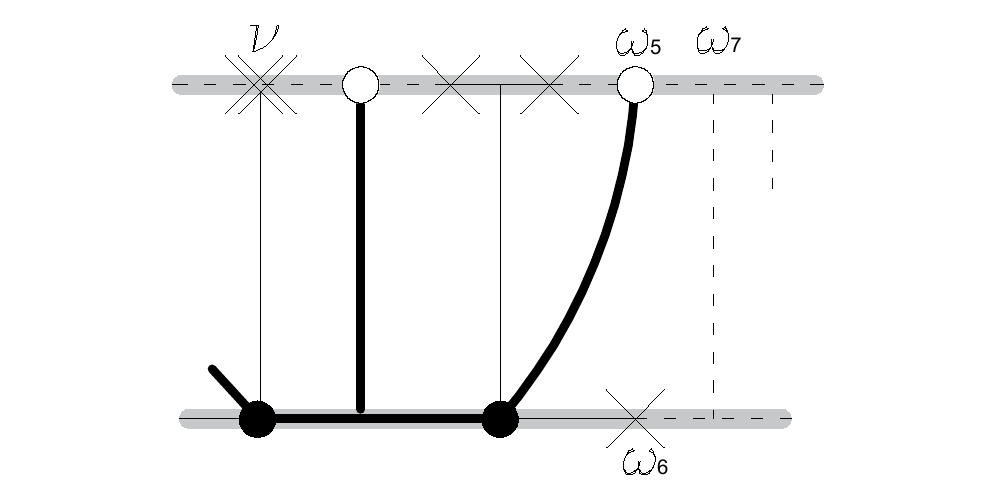}
\caption{\label{fig:toilesii74}}
\end{subfigure}\hspace{2mm} 
\begin{subfigure}{0.3\textwidth}
\centering\includegraphics[width=2.5in]{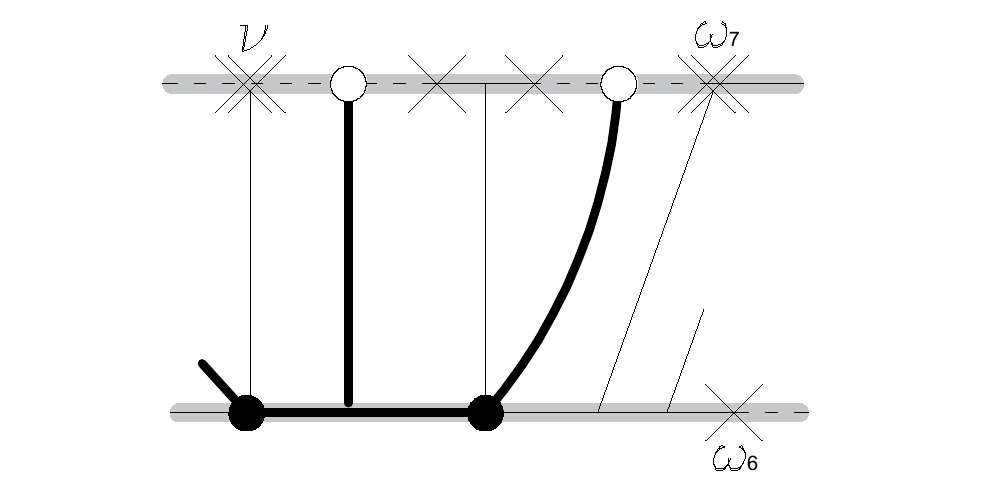}
\caption{\label{fig:toilesii75}}
\end{subfigure}\hspace{2mm}
\begin{subfigure}{0.3\textwidth}
\centering\includegraphics[width=2.5in]{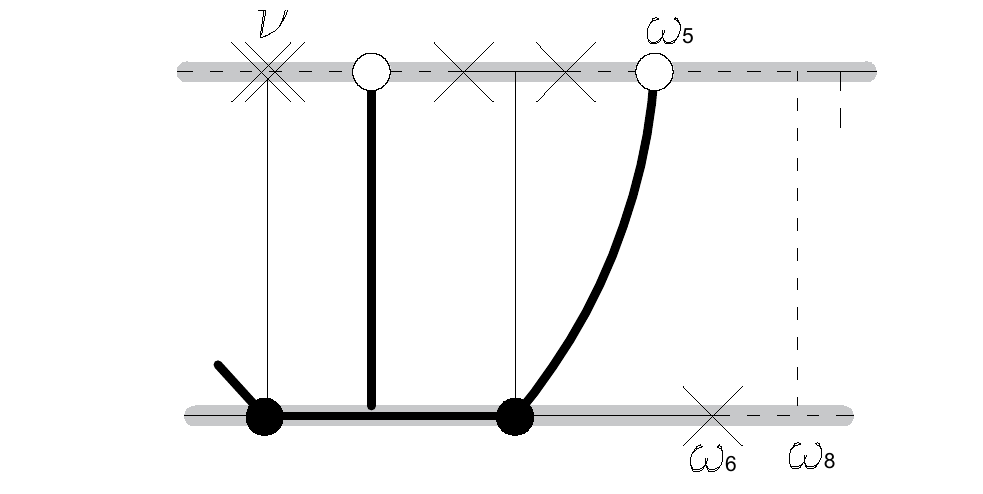}
\caption{\label{fig:toilesii76}}
\end{subfigure}
\end{center}
\caption{}
\end{figure}

If $w_8$ is a real white vertex, it is connected to $w_1$ after a monochrome modification. Let $w_9\neq w_6$ be the vertex connected to $w_8$ by a real dotted edge.
If $w_9$ is a monochrome vertex, 
the creation of a bridge with the inner dotted edge adjacent to $w_9$ beside $w_5$ produce a cut (see Figure~\ref{fig:toilesii77}).
If $w_9$ is a nodal {\tv}, the creation of an inner solid monochrome vertex with the inner edge adjacent to $w_9$ and the edge $[w_1,w_3]$ produces a solid {\gc} (see Figure~\ref{fig:toilesii78}).
If $w_9$ is a simple {\tv}, the creation of a bridge beside $w_9$ with the edge $[w_1,w_3]$ produces a cut (see Figure~\ref{fig:toilesii79}).

\begin{figure}[h]
\begin{center}
\begin{subfigure}{0.45\textwidth}
\centering\includegraphics[width=2.5in]{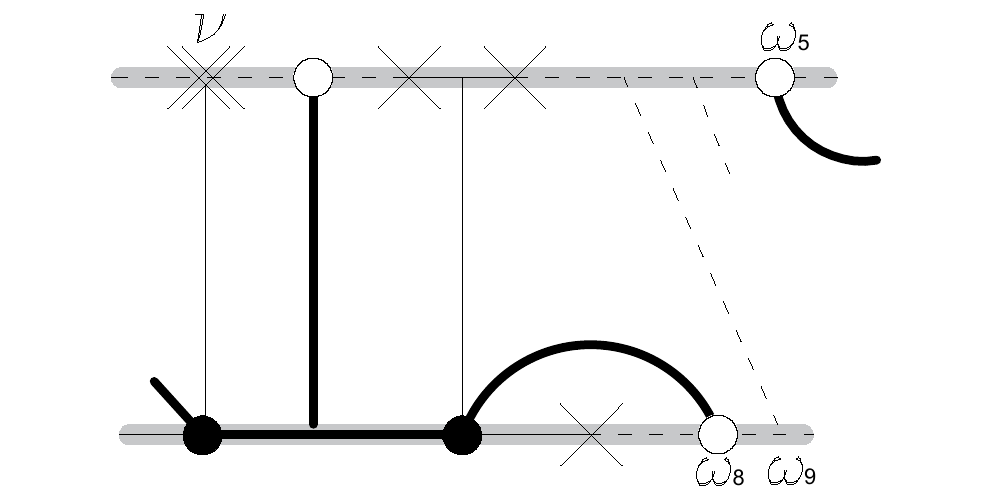}
\caption{\label{fig:toilesii77}}
\end{subfigure}\hspace{2mm} 
\begin{subfigure}{0.45\textwidth}
\centering\includegraphics[width=2.5in]{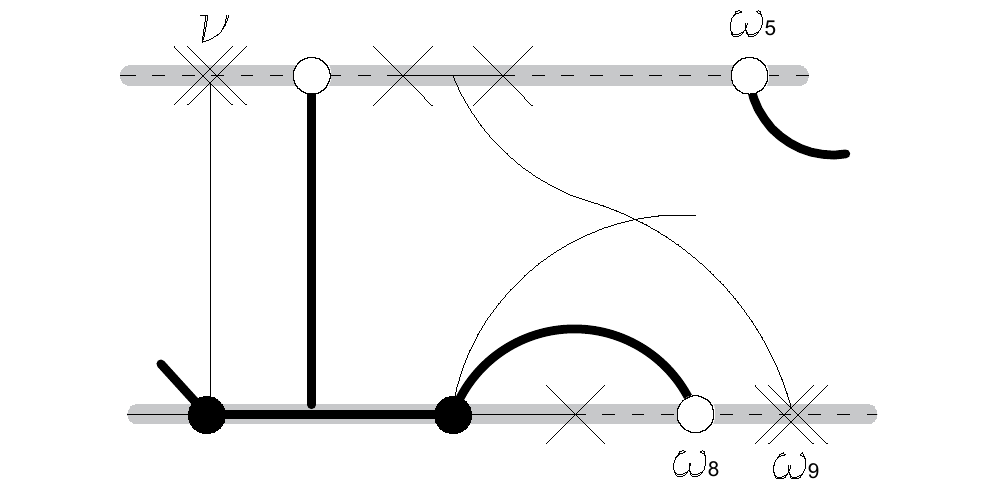}
\caption{\label{fig:toilesii78}}
\end{subfigure}

\begin{subfigure}{0.45\textwidth}
\centering\includegraphics[width=2.5in]{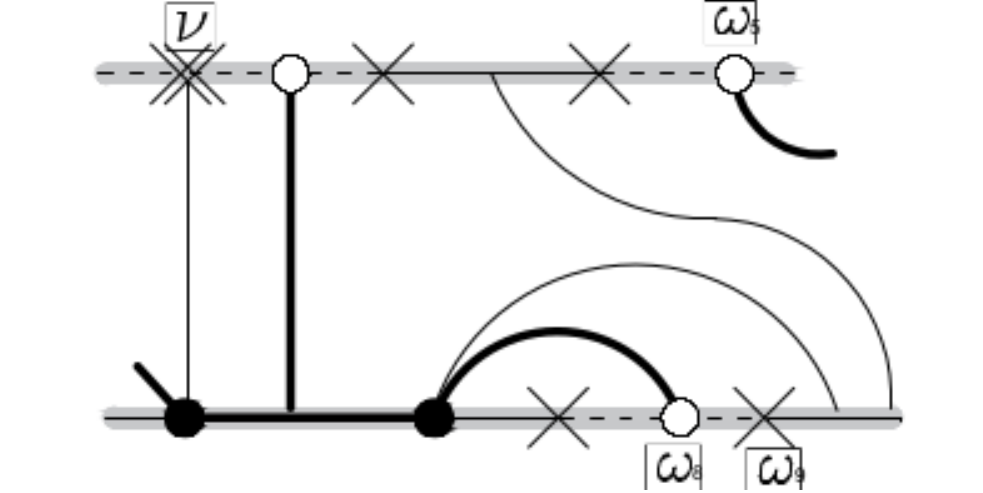}
\caption{\label{fig:toilesii79}}
\end{subfigure}
\end{center}
\caption{}
\end{figure}


A special case is when the vertices $w_7$ and $w_6$ are equal.
In this setting, we can consider the vertex $w_3$ as a nodal {\tv} and made the aforementioned considerations applied to the vertices $w_3$ and $w_1$ as $v$ and $u$, respectively.
Since the degree of the toile is greater than $3$, the process cannot cycle back to this case (see Figure~\ref{fig:toilesii80}).

\begin{figure}[b]
\begin{center}
\begin{subfigure}{0.3\textwidth}
\centering\includegraphics[width=2.5in]{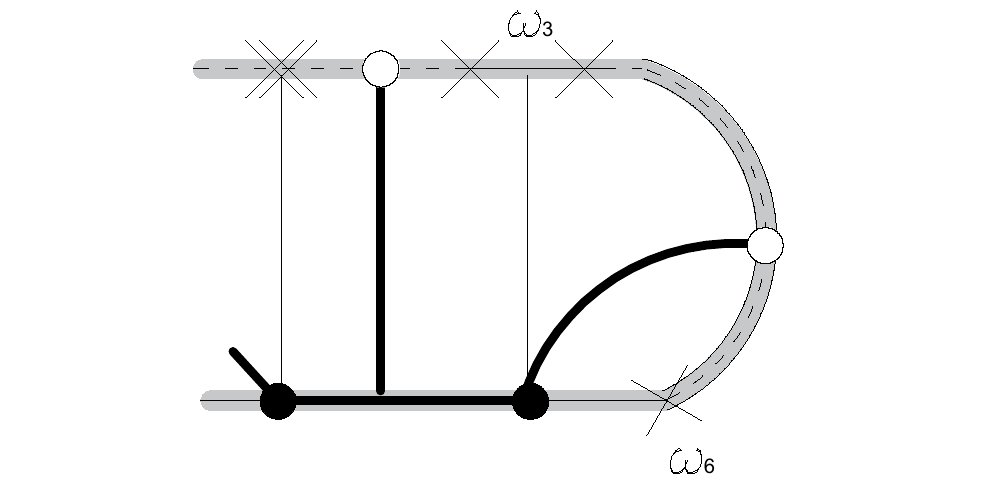}
\caption{\label{fig:toilesii80}}
\end{subfigure}\hspace{2mm} 
\begin{subfigure}{0.3\textwidth}
\centering\includegraphics[width=2.5in]{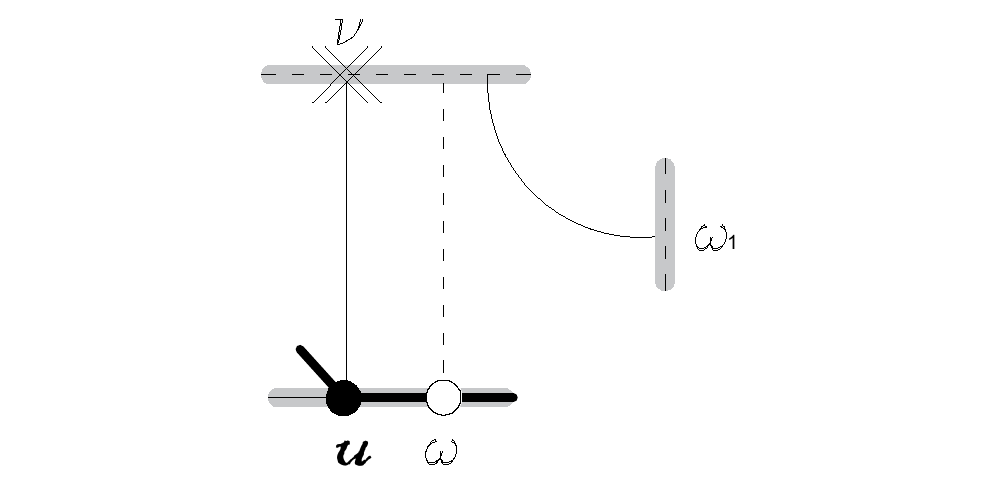}
\caption{\label{fig:toilesii81}}
\end{subfigure}
\end{center}
\caption{}
\end{figure}


\vskip5pt
{\it Case 2.2:} the vertex $w$ is a white vertex.
Let $w_1$ be the vertex connected to $w$ by an inner dotted edge and let $w_2\neq u$ be the vertex connected to $w$ by a real bold edge.
If $w_1$ is a monochrome vertex and it is not a real neighboring vertex of $v$, then the creation of a dotted bridge beside the vertex $v$ produces a cut (see Figure~\ref{fig:toilesii81}).
Otherwise, let us assume the vertex $w_1$ to be monochrome and a real neighboring vertex of $v$.
If $w_2$ is a monochrome vertex, an elementary move of type $\circ$-in at it followed by an elementary move of type $\circ$-out at $w_1$ and the destruction of a possible resulting bold bridge brings us either to Case 2.1 or to a different configuration within this case.
From now on, we assume the vertex $w_2$ to be black. Let $w_3\neq v$ be the vertex connected to $w_1$ by a real dotted edge.

\begin{figure}[h]
\begin{center}
\begin{subfigure}{0.3\textwidth}
\centering\includegraphics[width=2.5in]{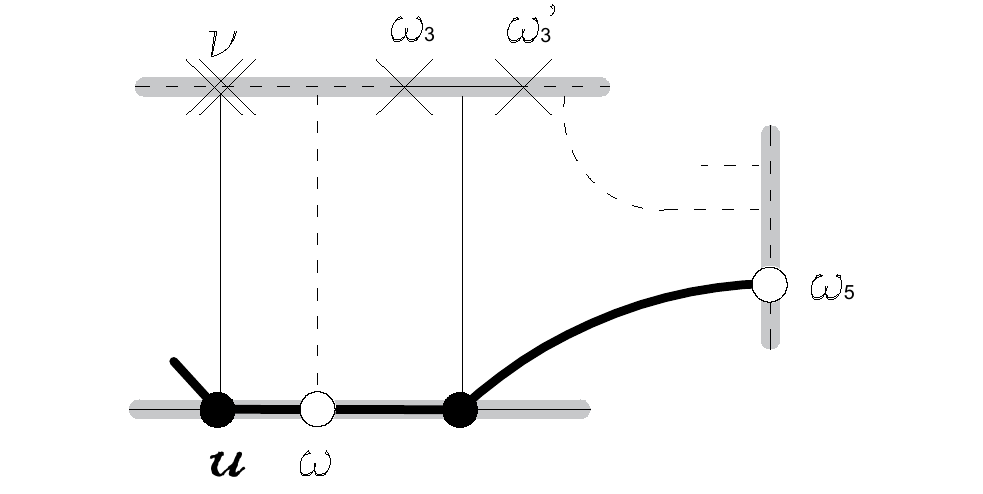}
\caption{\label{fig:toilesii82}}
\end{subfigure}\hspace{2mm} 
\begin{subfigure}{0.3\textwidth}
\centering\includegraphics[width=2.5in]{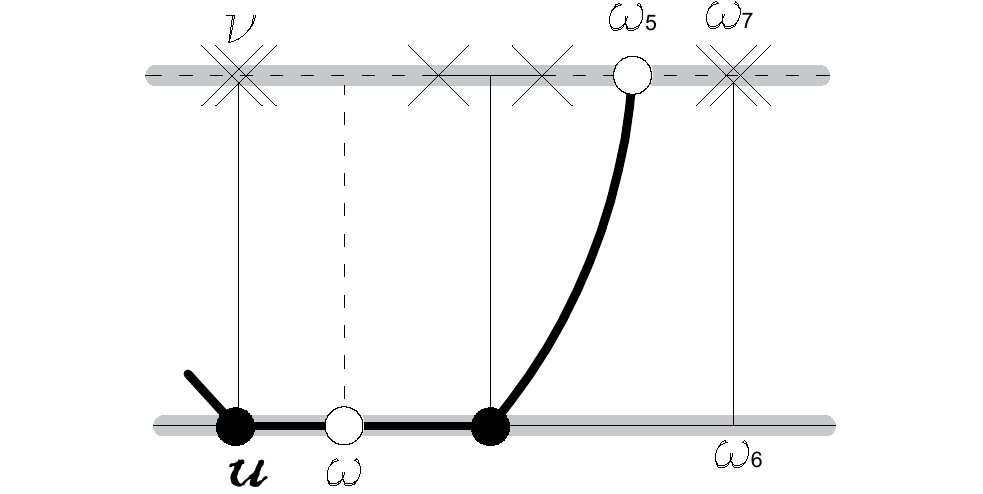}
\caption{\label{fig:toilesii83}}
\end{subfigure}\hspace{2mm}
\begin{subfigure}{0.3\textwidth}
\centering\includegraphics[width=2.5in]{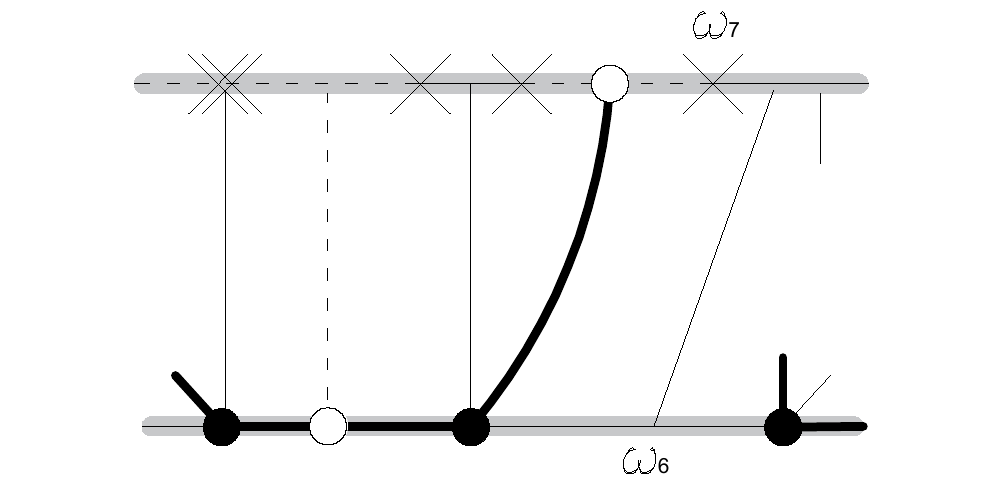}
\caption{\label{fig:toilesii84}}
\end{subfigure}
\end{center}
\caption{}
\end{figure}

If $w_3$ is a simple {\tv} followed by a monochrome vertex $w_4$, which up to a monochrome modification is connected to $w_2$, let $w_3'\neq w_3$ be the simple {\tv} connected to $w_4$ by a real solid edge.
The following considerations apply when the vertex $w_3$ is a nodal {\tv}, in which case we treat the vertices $w_3'$ and $w_4$ equal to $w_3$.
By symmetry of this configuration and since the degree is greater than $3$, we can assume the dividing edge $[w_2,w_4]$ does not divide the toile resulting in a graph with only two triangular regions.
Let $w_5$ be the vertex connected to $w_2$ by an inner bold edge.
If $w_5$ is a monochrome vertex or an inner white vertex, up to an elementary move of type $\circ$-out at it, the creation of a dotted bridge beside $w_3'$ and an elementary move of type $\circ$-out allow us to consider it as a real white vertex.
If $w_5$ is a real white vertex and the vertex $w_3'$ has a real neighboring dotted monochrome vertex, the creation of a bridge with its inner dotted edge beside $w_5$ produces a cut (see Figure~\ref{fig:toilesii82}). Otherwise, up to a monochrome modification the vertices $w_3'$ and $w_5$ are neighbors.
Let $w_6$ be the vertex connected to $w_2$ by a solid real edge.

If $w_6$ is a monochrome vertex, let $w_7\neq w_3'$ be the vertex connected to $w_5$ by a real dotted edge.
If $w_7$ is a real nodal {\tv}, up to a monochrome modification it is connected to $w_6$ defining an axe (see Figure~\ref{fig:toilesii83}).
If $w_7$ is a real simple {\tv}, the creation of a solid bridge beside $w_7$ with the inner solid edge adjacent to $w_6$ defines a solid cut (see Figure~\ref{fig:toilesii84}). 

Otherwise, the vertex $w_7$ is monochrome. Let $w_5'\neq w_5$ be the vertex connected to $w_7$ by a real dotted edge and let $w_2'\neq w_2$ be the vertex connected to $w_6$ by a real solid edge.
Let $w_8$ be the vertex connected to $w_6$ by an inner solid edge, let $w_9$ be the vertex connected to $w_2'$ by a real bold edge and let $w_{10}$ be the vertex connected to $w_2'$ by an inner solid edge (see Figure~\ref{fig:toilesii85}).

\begin{figure}[h]
\begin{center}
\begin{subfigure}{0.45\textwidth}
\centering\includegraphics[width=2.5in]{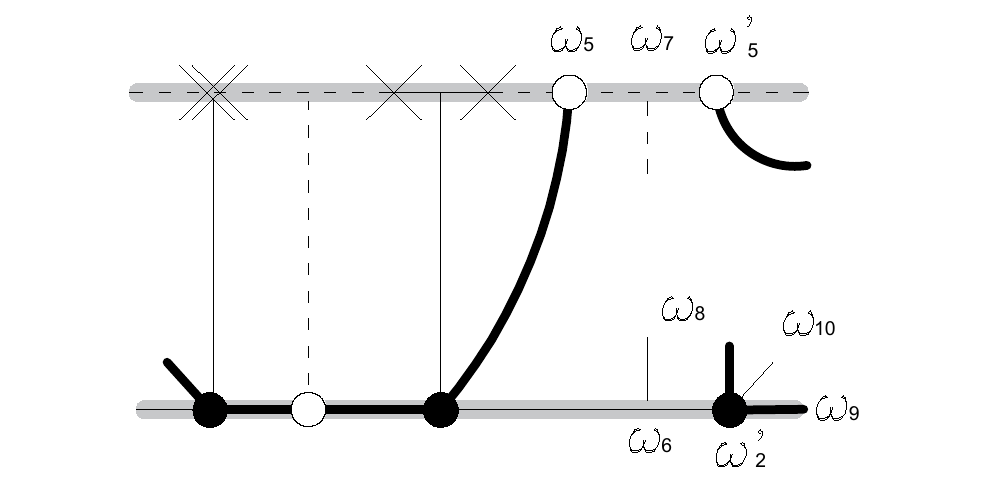}
\caption{\label{fig:toilesii85}}
\end{subfigure}\hspace{6mm} 
\begin{subfigure}{0.45\textwidth}
\centering\includegraphics[width=2.5in]{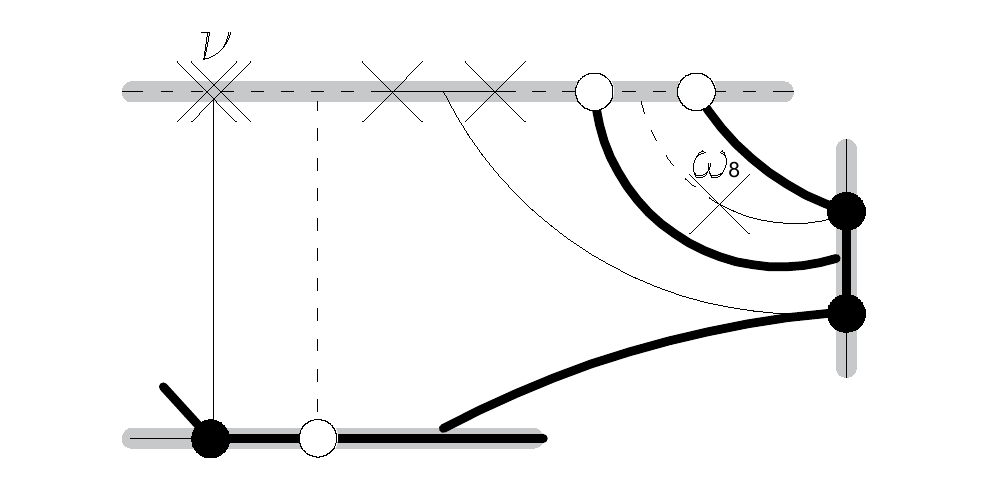}
\caption{\label{fig:toilesii87}}
\end{subfigure}
\end{center}
\caption{}
\end{figure}

If the vertex $w_8$ is monochrome, it determines a cut. 

If the vertex $w_8$ is an inner simple {\tv}, it is connected to $w_7$ up to a monochrome modification. In this setting, the vertices $w_5'$ and $w_2'$ are connected up to a monochrome modification. 
If $w_{10}$ is a monochrome vertex, an elementary move of type $\bullet$-in at $w_6$ followed by an elementary move of type $\bullet$-out at $w_{10}$ and an elementary move of type $\circ$-in at $w_7$ followed by an elementary move of type $\circ$-out allow us to create a zigzag, contradicting the maximality assumption (see Figure~\ref{fig:toilesii87}).
If $w_{10}$ is an inner simple {\tv}, an elementary move of type $\circ$-in at $w_7$ produces an inner white vertex $w'$ which is connected to $w_{10}$ up to a monochrome modification. Then, up to the creation of a bold bridge beside $w_2'$ with an inner bold edge adjacent to $w'$, an elementary move of type $\circ$-out allows us to create a zigzag, contradicting the maximality assumption (see Figure~\ref{fig:toilesii88}).
If $w_{10}$ is a real nodal {\tv}, we consider the vertex $w_9$.
If $w_9$ is a monochrome vertex, the edge $[w_{10}, w_2']$ corresponds to the configuration in the Case 2.1.
Otherwise, the vertex $w_9$ is a real white vertex, then, up to the creation of a dotted bridge beside $w_{10}$ with the inner dotted edge adjacent to $w_9$, an elementary move of type $\bullet$-in at $w_6$ followed by an elementary move of type $\circ$-in produces a {\gc} (see Figure~\ref{fig:toilesii89}).

\begin{figure}[h]
\begin{center}
\begin{subfigure}{0.45\textwidth}
\centering\includegraphics[width=2.5in]{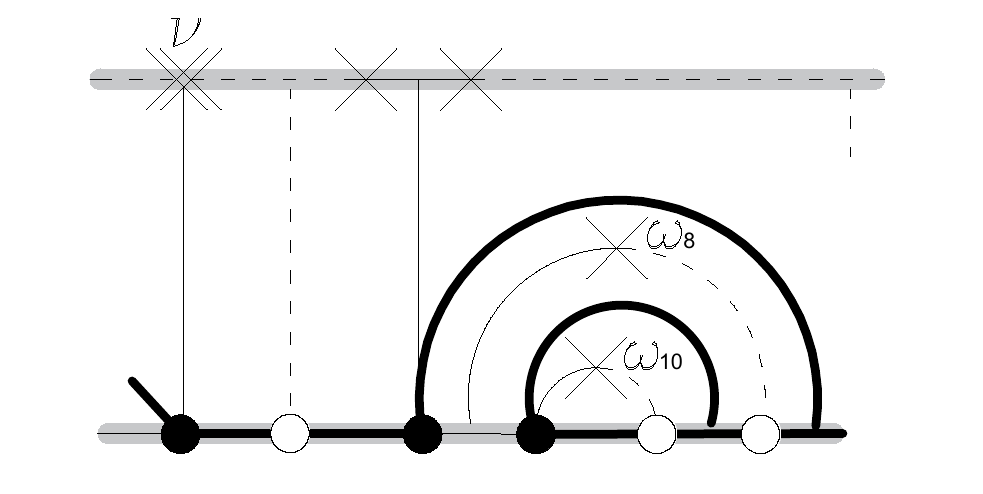}
\caption{\label{fig:toilesii88}}
\end{subfigure}\hspace{1cm} 
\begin{subfigure}{0.45\textwidth}
\centering\includegraphics[width=2.5in]{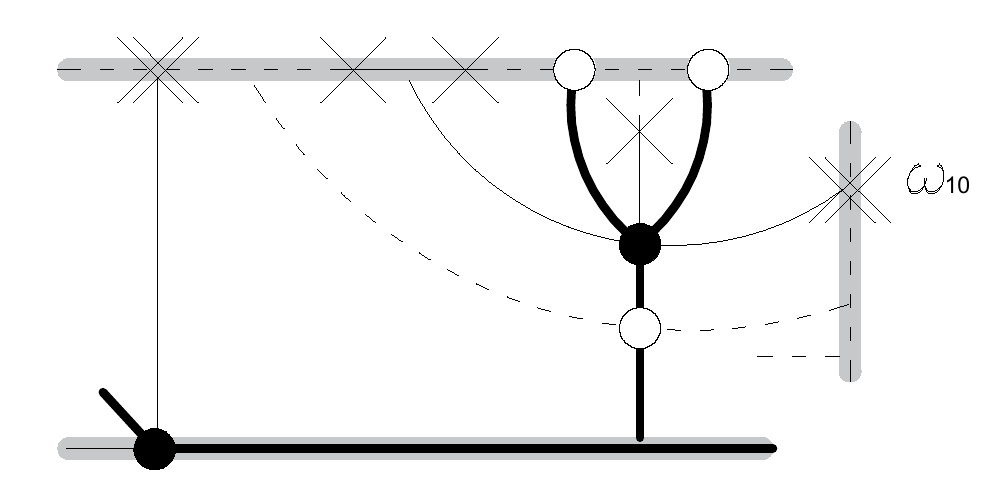}
\caption{\label{fig:toilesii89}}
\end{subfigure}
\end{center}
\caption{}
\end{figure}

Otherwise $w_{10}$ is an inner nodal {\tv}.
If $w_9$ is a real white vertex, up to a monochrome modification it is connected to $w_{10}$. Up to the creation of a bridge beside $w_5'$ with an inner dotted edge adjacent to $w_{10}$, an elementary move of type $\bullet$-in at $w_6$ followed by an elementary move of type $\circ$-in at the resulting bold monochrome vertex produces a {\gc} (see Figure~\ref{fig:toilesii90}).

If $w_9$ is a monochrome vertex, it is connected to a real white vertex $w_{11}$.
Then, the creation of bridges beside the vertices $w_5'$ and $w_{11}$ with the corresponding inner dotted edges adjacent to $w_{10}$ produces a {\gc} (see Figure~\ref{fig:toilesii91}).

\begin{figure}[h]
\begin{center}
\begin{subfigure}{0.4\textwidth}
\centering\includegraphics[width=2.5in]{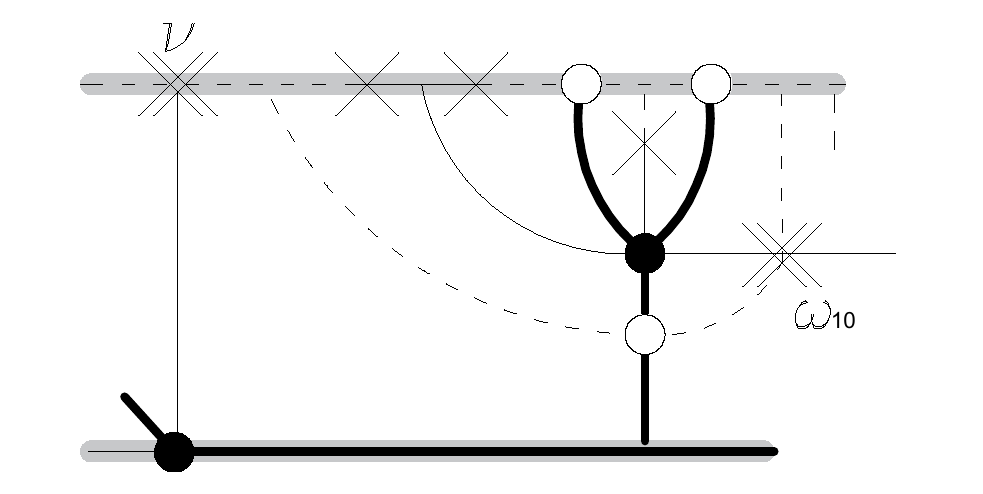}
\caption{\label{fig:toilesii90}}
\end{subfigure}\hspace{1cm} 
\begin{subfigure}{0.4\textwidth}
\centering\includegraphics[width=2.5in]{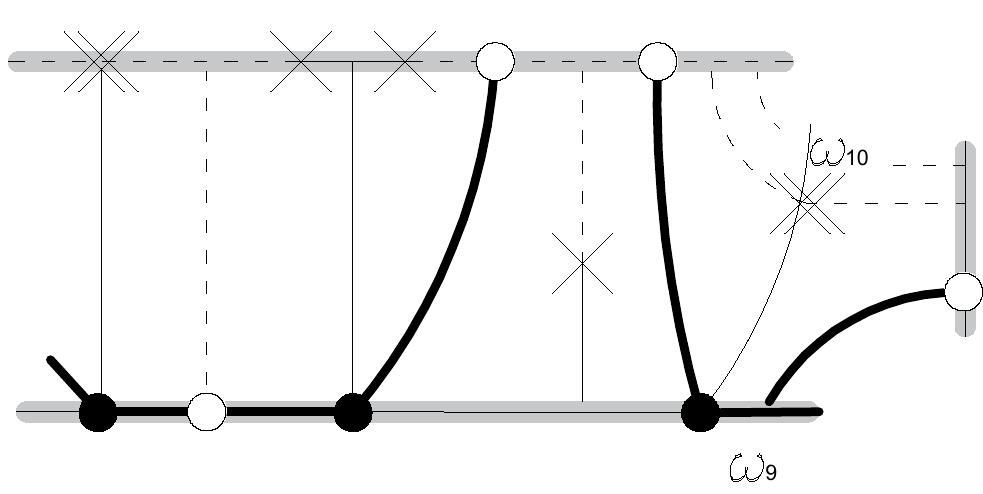}
\caption{\label{fig:toilesii91}}
\end{subfigure}
\end{center}
\caption{}
\end{figure}

If $w_8$ is an inner nodal {\tv}, up to a monochrome modification it is connected to $w_7$.
Let $w_{11}\neq w_7$ be the vertex connected to $w_8$ by an inner dotted edge.
If $w_{11}$ is a monochrome vertex, it determines a {\gc}.
If $w_{11}$ is a real white vertex, up to the creation of a bold bridge beside $w_{11}$ with the inner bold edge adjacent to $w_2'$ an elementary move of type $\bullet$-in at $w_6$ followed by an elementary move of type $\bullet$-out at the bridge and the destruction of a possible resulting bridge
produce a {\gc} (see Figure~\ref{fig:toilesii92}).

If $w_{11}$ is an inner white vertex, it is connected to $w_2'$ up to a monochrome modification. Let $w_{12}\neq w_8$ be the vertex connected to $w_{11}$ by an inner dotted edge.
If $w_{12}$ is a monochrome vertex, it determines a {\gc}.
If $w_{12}$ is an inner simple {\tv}, up to a monochrome modification it is connected to $w_2'$ and up to the creation of a bold bride beside $w_2'$ and an elementary move of type $\circ$-out at the vertex $w_{11}$ we can create a zigzag, contradicting the maximality assumption.
If $w_{12}$ is an inner nodal {\tv}, up to a monochrome modification is it connected to $w_2'$.

If $w_9$ is a monochrome vertex, it is connected to a real white vertex $w_{11}$.
Then, the creation of a bridge beside $w_{11}$ with an inner dotted edge adjacent to $w_{10}$ produces a {\gc} (see Figure~\ref{fig:toilesii93}).

\begin{figure}[h]
\begin{center}
\begin{subfigure}{0.3\textwidth}
\centering\includegraphics[width=2.5in]{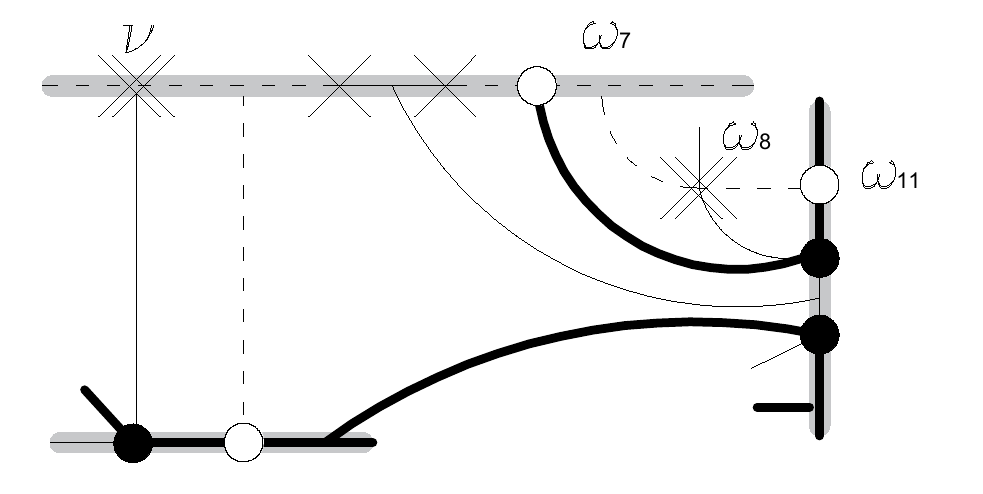}
\caption{\label{fig:toilesii92}}
\end{subfigure}\hspace{2mm} 
\begin{subfigure}{0.3\textwidth}
\centering\includegraphics[width=2.5in]{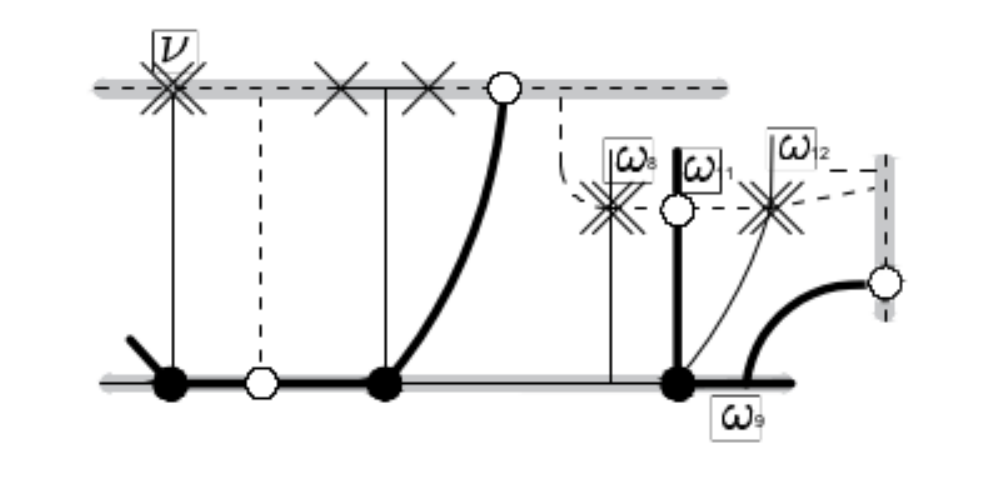}
\caption{\label{fig:toilesii93}}
\end{subfigure}\hspace{2mm}

\begin{subfigure}{0.3\textwidth}
\centering\includegraphics[width=2.5in]{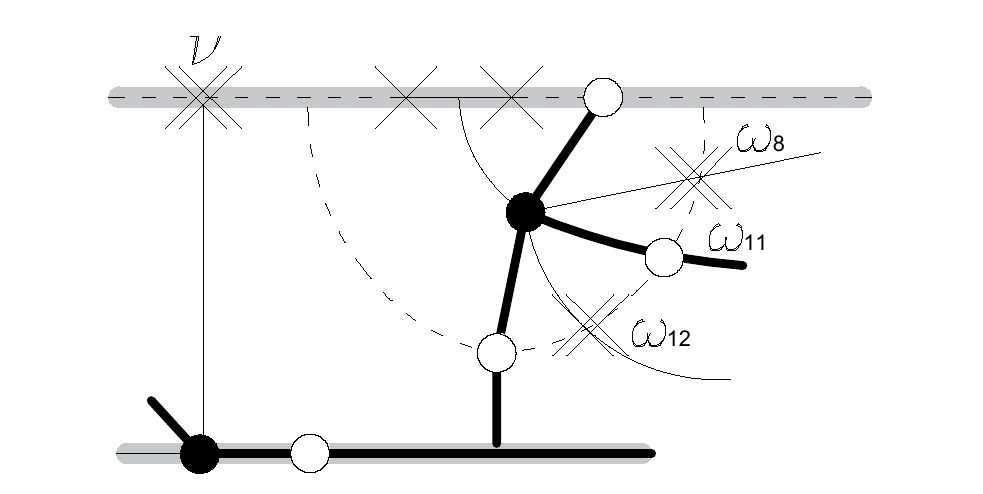}
\caption{\label{fig:toilesii94}}
\end{subfigure}
\end{center}
\caption{}
\end{figure}

If $w_9$ is a real white vertex, up to a monochrome modification it is connected to $w_{10}$. Then, an elementary move of type $\bullet$-in at $w_6$ followed by an elementary move of type $\circ$-in at the resulting bold monochrome vertex produces a {\gc} (see Figure~\ref{fig:toilesii94}).


If $w_6$ is a simple {\tv}, let $w_7\neq w_3'$ be the vertex connected to $w_5$ by a real dotted edge.
If $w_7$ is a monochrome vertex or a real nodal {\tv}, the creation of a bridge beside $w_6$ with the inner edge adjacent to $w_7$ produces a cut or an axe, respectively.
If $w_7$ is a simple {\tv}, let $w_8$ be the vertex connected to $w_6$ by a real dotted edge.
If $w_8$ is a monochrome vertex, the creation of a bridge beside $w_7$ with the inner edge adjacent to $w_8$ produces a cut.
If $w_8$ is a real white vertex, let $w_9$ be the vertex connected to $w_8$ by a real dotted edge.
We perform a monochrome modification connecting $w_2$ to $w_8$.
If $w_9$ is a monochrome vertex or a nodal {\tv}, then either the creation of a bridge beside $w_5$ with the inner edge adjacent to $w_9$ or the creation of an inner solid monochrome vertex with the edge $[w_2,w_4]$ and the inner edge adjacent to $w_9$ produces a {\gc}.
If $w_9$ is a simple {\tv}, the creation of a bridge beside it with the edge $[w_2,w_4]$ produces a {\gc}.


In the case when $w_3$ is a simple {\tv} followed by a black vertex $w_4$, let $w_5$ be the vertex connected to $w_4$ by an inner bold edge.
If $w_5$ is a monochrome vertex, an elementary move of type $\circ$-in at it allows us to consider $w_5$ as an inner white vertex.
If $w_5$ is a real white vertex, the creation of a bridge beside it with the edge $[w,w_1]$ produces a cut.
If $w_5$ is an inner white vertex, the creation of a bridge beside $w_3$ with an inner dotted edge adjacent to $w_3$ followed by an elementary move of type $\circ$-out brings us to a configuration already considered.

\begin{figure}[h]
\begin{center}
\begin{subfigure}{0.3\textwidth}
\centering\includegraphics[width=2.5in]{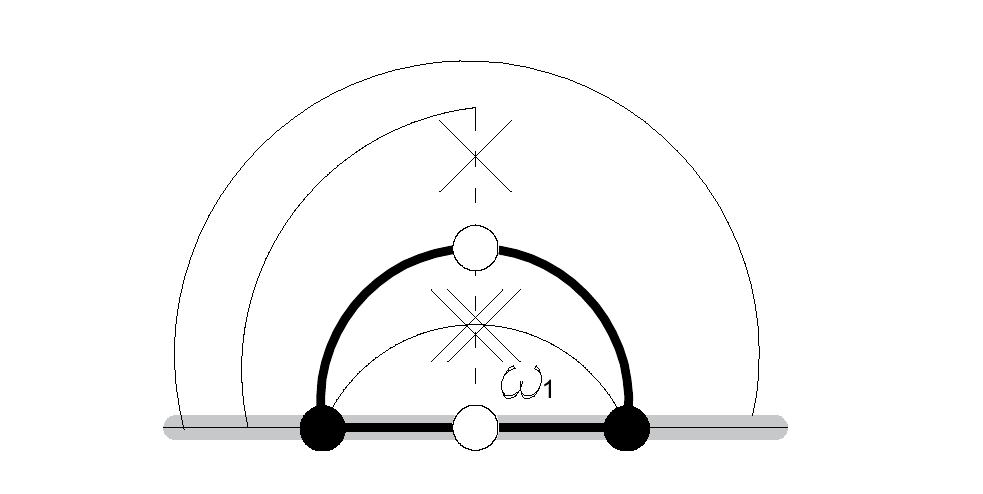}
\caption{\label{fig:toilesii96}}
\end{subfigure}\hspace{2mm} 
\begin{subfigure}{0.3\textwidth}
\centering\includegraphics[width=2.5in]{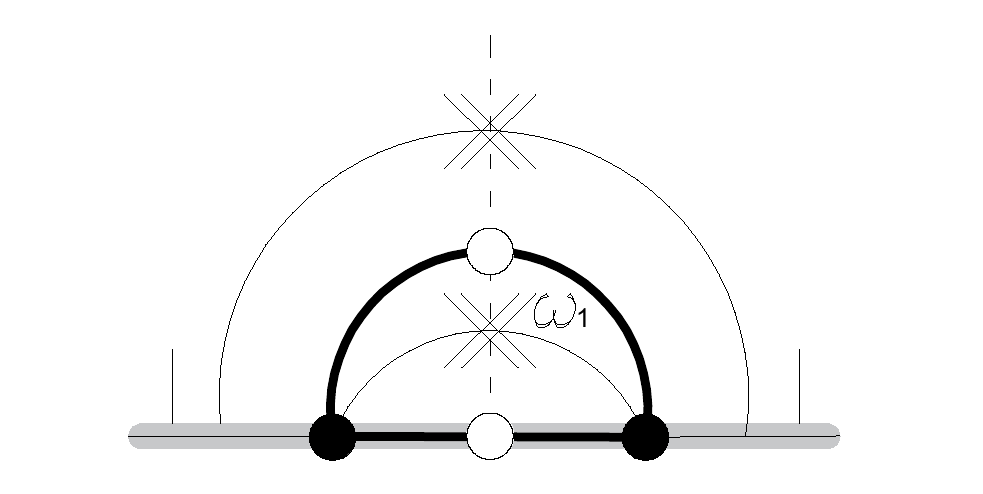}
\caption{\label{fig:toilesii97}}
\end{subfigure}
\end{center}
\caption{}
\end{figure}

If $w_1$ is an inner simple {\tv}, a monochrome modification connecting it to the vertex $u$ allows us to create a zigzag, contradicting the maximality assumption.

In the case when the vertex $w_1$ is an inner nodal {\tv}, it is connected to $w_2$ up to a monochrome modification.
Let $w_3\neq w$ be the vertex connected to $w_1$ by an inner dotted edge.
If $w_3$ is a monochrome vertex, an elementary move of type $\circ$-in allows us to consider it as an inner white vertex.
If $w_3$ is an inner white vertex, it is connected to $w_2$ up to a monochrome modification.
Let $w_4$ be the vertex connected to $w_2$ by a real solid edge.
If $w_4$ is a simple {\tv}, the creation of a dotted bridge beside it with an inner dotted edge adjacent to $w_3$ produces a {\gc}. 
If $w_4$ is a monochrome vertex, let $w_5$ be the vertex connected to it by an inner solid edge.
If $w_5$ is a real vertex, it determines a cut or an axe.
If $w_5$ is an inner {\tv}, it is connected to $w_3$ up to a monochrome modification.
Then, we perform monochrome modifications connecting $u$ to the vertices $w_1$ and $w_3$.
In this setting, the creation of a bridge beside $u$ with an inner solid edge adjacent to $w_5$ produces a {\gc} (see Figures \ref{fig:toilesii96} and \ref{fig:toilesii97} for when $w_5$ is simple or nodal, respectively).
Otherwise, the vertex $w_3$ is a real white vertex.
Let $w_4$ be the vertex connected to $w_2$ by an inner bold edge.
If $w_4$ is a real white vertex, the creation of bridge beside it with the edge $[w_1,w_3]$ and a bridge beside $v$ with the edge $[w,w_1]$ produces a {\gc}.
If $w_4$ is a monochrome vertex, an elementary move of type $\circ$-in allows us to consider it as an inner white vertex.
Lastly, if $w_4$ is an inner white vertex, a monochrome modification connecting it to $w_2$ brings us to a configuration already considered.

\vskip5pt
{\it Case 2.3:} the vertex $u$ is an inner black vertex.
If it is connected to a real monochrome vertex, an elementary move of type $\bullet$-out either produces an axe or bring us to a considered case.
Let $w_1$, $w_1'$ be the vertices connected to $u$ by an inner bold edge sharing a region with $v$.
If $w_1$ or $w_1'$ is an inner white vertex, the creation of a bridge beside $v$ with the inner dotted edge adjacent to it followed by an elementary move allows us to consider it as a real white vertex.
Let $w_2$, $w_2'\neq v$ be the {\tvs} connected to $u$ by an inner solid edge.
Let $w\neq w_1,w_1'$ be the white vertex connected to $u$ (see Figure~\ref{fig:toilesii98}).

\begin{figure}[h]
\begin{center}
\begin{subfigure}{0.3\textwidth}
\centering\includegraphics[width=2.5in]{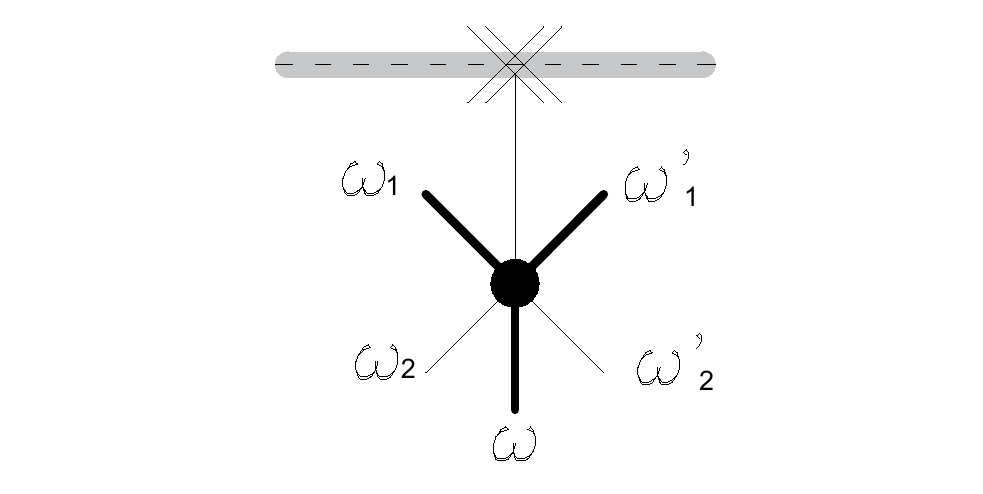}
\caption{\label{fig:toilesii98}}
\end{subfigure}\hspace{2mm} 
\begin{subfigure}{0.3\textwidth}
\centering\includegraphics[width=2.5in]{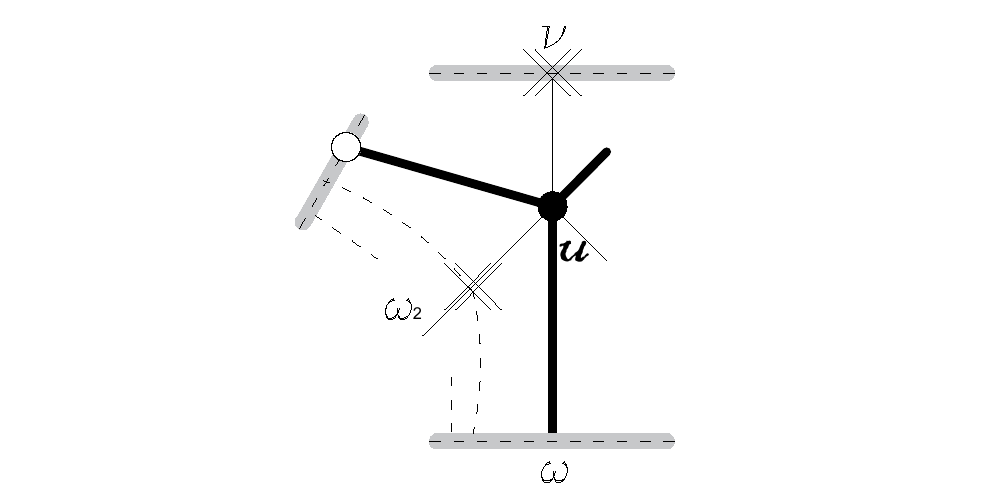}
\caption{\label{fig:toilesii99}}
\end{subfigure}\hspace{2mm}
\begin{subfigure}{0.3\textwidth}
\centering\includegraphics[width=2.5in]{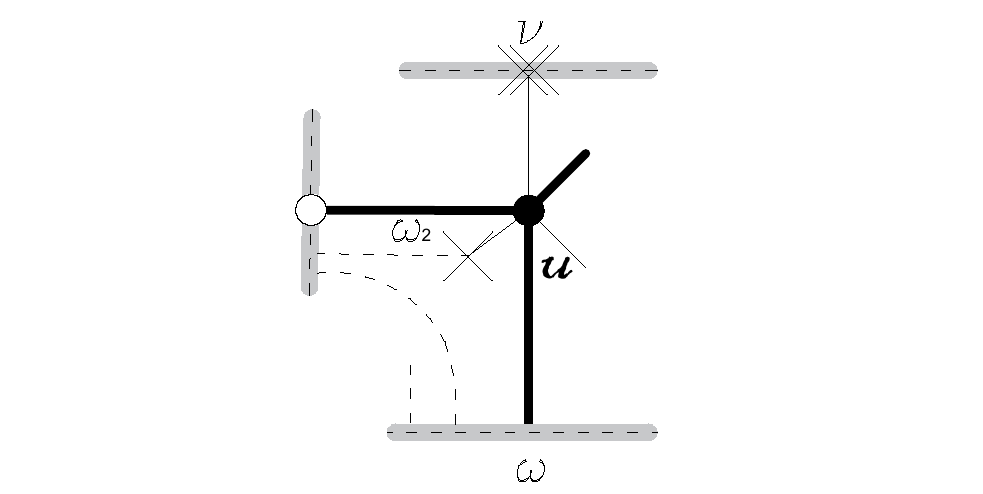}
\caption{\label{fig:toilesii100}}
\end{subfigure}
\end{center}
\caption{}
\end{figure}

In the case when every vertex connected to $u$ is a real vertex, since we assumed the dessin do not have cuts and do have degree greater than $3$, it is not possible that every region containing $u$ is triangular.
Let $R$ be one region containing $u$ which is not triangular.
Put $v'$ and $w'$ the {\tv} and white vertex connected to $u$ in $R$, respectively.
Let $w_3$ be the neighboring vertex to $w'$ in $R$.
If $w_3$ is a monochrome vertex, the creation of a bridge beside $v'$ with the inner edge adjacent to $w_3$ produces a cut.
If $w_3$ is a nodal {\tv}, the creation of an inner solid monochrome vertex with the inner edge adjacent to it and the edge $[v',u]$ produces a {\gc}.
If $w_3$ is a simple {\tv}, the creation of a bridge beside it with the edge $[v',u]$ produces an axe.
Due to these considerations, we can assume the regions determined by $v$, $u$, $w_1$ and $v$, $u$, $w_1'$ are triangular.

In the case when the vertex $w$ is a real white vertex and $w_2$ is an inner {\tv}, we have two different cases.
If $w_2$ is an inner nodal {\tv}, the creation of bridges beside $w_1$ and $w$ with the inner dotted edges adjacent to $w_2$ produces a {\gc} (see Figure~\ref{fig:toilesii99}).
If $w_2$ is a simple {\tv}, the creation of bridges beside $w_1$ and $w$ with the inner dotted edge adjacent to $w_2$ produces a cut (see Figure~\ref{fig:toilesii100}), unless this is an inadmissible elementary move, i.e., unless the vertices $w_1$, $w$ and $w_2$ are connected to the same monochrome vertex.

In this setting, if $w_2'$ it a real nodal {\tv}, one of the regions determined by $w_1'$, $u$, $w_2'$ or $w$, $u$, $w_2'$ is not triangular and the aforementioned considerations apply.
Otherwise, the vertex $w_2'$ is an inner {\tv}.
If $w_2'$ is a nodal {\tv}, the creation of bridges beside $w_1'$ and $w$ with the inner dotted edges adjacent to $w_2$ produces a {\gc}.
If $w_2'$ is a simple {\tv}, the creation of bridges beside $w_1'$ and $w$ with the inner dotted edge adjacent to $w_2'$ produces a cut, and since the degree is greater than $3$, this is an admissible elementary move.

If $w$ is an inner white vertex and at least one the vertices $w_2$ and $w_2'$ is a real nodal {\tv}, the creation of a bridge beside the real {\tv} with an inner dotted edge adjacent to $w$ and an elementary move of type $\circ$-out bring us to the configuration where $w$ is a real vertex.

If $w$ is an inner white vertex and at least one the vertices $w_2$ and $w_2'$ is an inner simple {\tv}, up to monochrome modification we can assume the vertex $w$ is connected to the {\tv} in which case, the creation of a bridge beside $w_1$ or $w_1'$ with the inner dotted edge adjacent to the simple {\tv} and an elementary move of type $\circ$-out bring us to the configuration where $w$ is a real vertex.

Finally, if the vertex $w$ is an inner white vertex and the vertices $w_2$ and $w_2'$ are inner nodal {\tvs}, which up to monochrome modifications are connected to $w$, then the creation of dotted bridges beside $w_1$ and $w_1'$ produces a cut (see Figure~\ref{fig:toilesii101}).

\begin{figure}[h]
\begin{center}
\begin{subfigure}{0.3\textwidth}
\centering\includegraphics[width=2.5in]{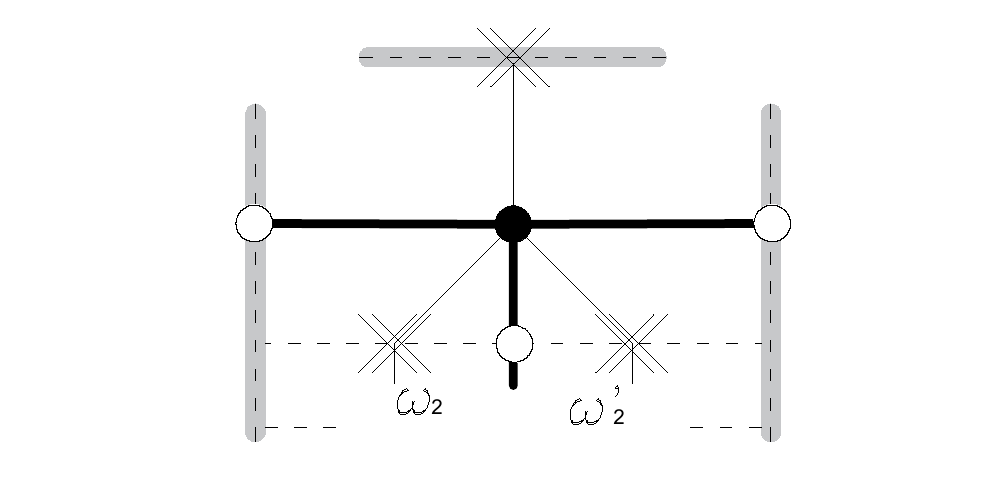}
\caption{\label{fig:toilesii101}}
\end{subfigure}\hspace{2mm} 
\begin{subfigure}{0.3\textwidth}
\centering\includegraphics[width=2.5in]{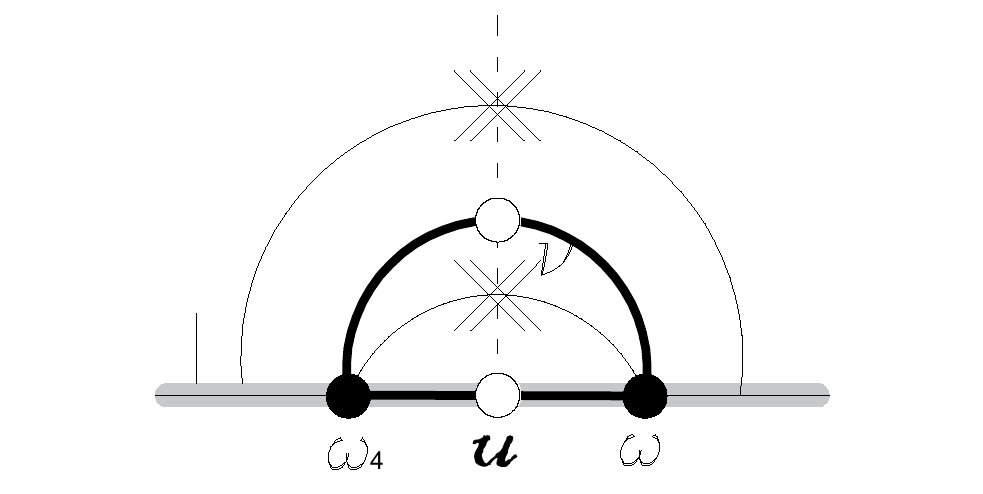}
\caption{\label{fig:toilesii102}}
\end{subfigure}
\end{center}
\caption{}
\end{figure}

\vskip5pt
{\it Case 3:} let $v$ be an inner nodal {\tv}. Due to previous considerations we can assume there are no real nodal {\tvs}. If $v$ is connected to two different dotted monochrome vertices or solid vertices, it defines a {\gc}.
Since $\deep(D)\leq1$, the vertex $v$ is connected to a real vertex $u$. Let us assume that $u$ is a white vertex. If $v$ is connected to inner black vertices, up to the creation of bold bridges beside $u$ and elementary moves of type $\bullet$-out we can assume $v$ is not connected to inner black vertices. Let $w$ be a real vertex connected to $u$.

\vskip5pt
{\it Case 3.1:} the vertex $w$ is a black vertex. Up to a monochrome modification it is connected to the vertex $v$.
Let $w_1$ be the vertex connected to $w$ by an inner bold edge.
If $w_1$ is a monochrome vertex, let $w_2$ be the vertex connected to $w$ by a real solid edge.
If $w_2$ is a monochrome vertex, let $w_3$ be the vertex connected to $w_2$ by an inner solid edge.
If $w_3$ is a monochrome vertex, it determines a cut.
If $w_3$ is an inner simple {\tv}, up to a monochrome modification it is connected to a real white vertex neighboring $w_1$. Then, an elementary move of type $\bullet$-in at $w_2$ followed by an elementary move of type $\bullet$-out at $w_1$ allows us to construct a zigzag, contradicting the maximality assumption.
 
If $w_3$ is an inner nodal {\tv}, let $w_4\neq w$ be the vertex connected to the vertex $u$ by a real bold edge.
If $w_4$ is a black vertex, up to a monochrome modification it is connected to $v$. We perform an elementary move of type $\circ$-in at $w_1$ producing an inner white vertex $w'$ which up to monochrome modifications is connected to the vertices $v$, $w_3$ and $w_4$.
Then, the creation of a bridge beside $w_4$ with an inner solid edge adjacent to $w_3$ produces a {\gc} (see Figure~\ref{fig:toilesii102}).

If $w_4$ is a monochrome vertex, it is connected to a real black vertex $w_5$ by an inner bold edge and to a white vertex $u'\neq u$ by a real bold edge.
If $w_5$ is connected to a solid monochrome vertex, an elementary move of type $\bullet$-in at it followed by an elementary move of type $\bullet$-out at $w_4$ bring us to the consideration where $w_4$ was a black vertex.
Otherwise, the vertex $w_5$ is connected to a simple {\tv} $w_6$. We perform an elementary move of type $\circ$-in at $w_1$.
If $w_6$ shares a region with the vertex $v$, the creation of a bridge beside it with the edge $[v,u]$ connects $v$ to a monochrome vertex by a chain of inner dotted edges (see Figure~\ref{fig:toilesii103}).
If $w_6$ does not share a region with the vertex $v$, the creation of a bridge beside it with the inner dotted edge adjacent to $u'$, and elementary moves of type $\circ$-in at $w_4$ and $w_1$ connects $v$ to a monochrome vertex by a chain of inner dotted edges (see Figure~\ref{fig:toilesii104}).

\begin{figure}[h]
\begin{center}
\begin{subfigure}{0.3\textwidth}
\centering\includegraphics[width=2.5in]{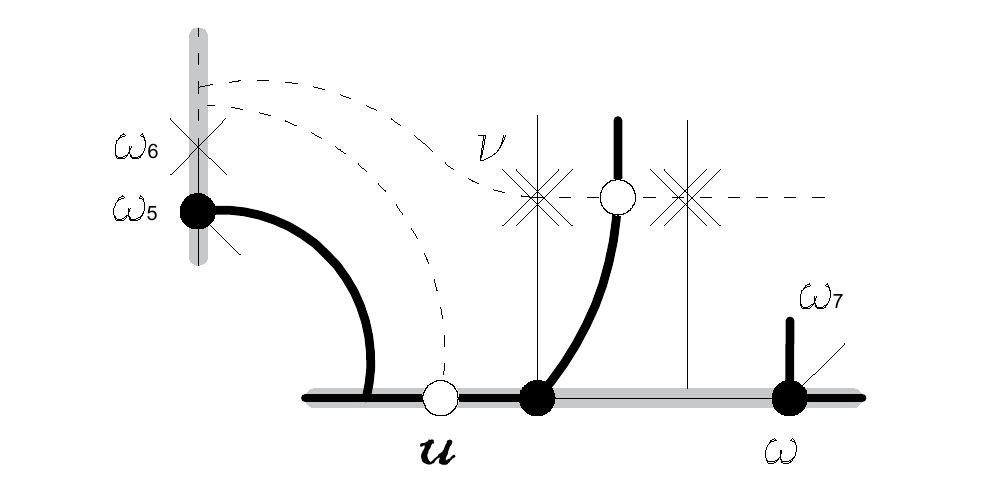}
\caption{\label{fig:toilesii103}}
\end{subfigure}\hspace{1cm} 
\begin{subfigure}{0.3\textwidth}
\centering\includegraphics[width=2.5in]{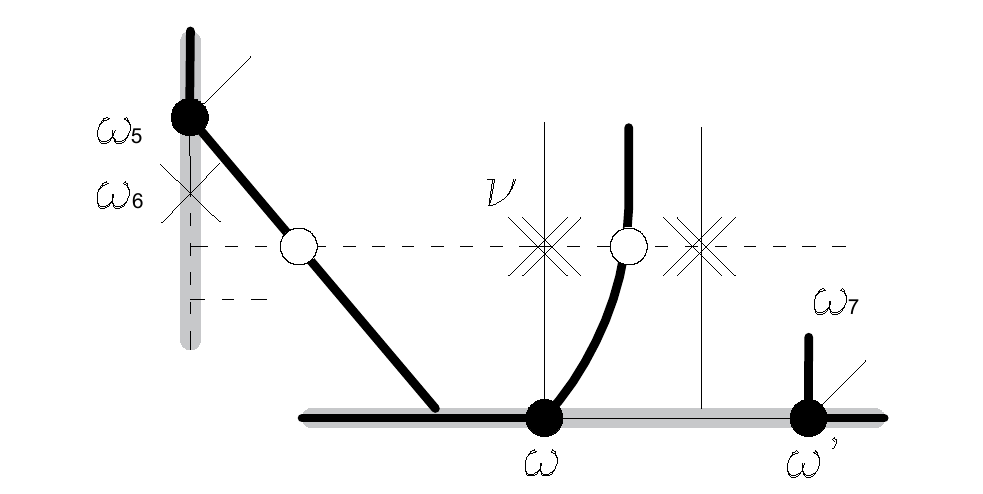}
\caption{\label{fig:toilesii104}}
\end{subfigure}
\end{center}
\caption{}
\end{figure}

Let $w'\neq w$ be the vertex connected to $w_2$ by a real solid edge and let $w_7$ be the vertex connected to $w'$ by an inner bold edge.
If $w_7$ is a real white vertex, the creation of a bridge beside it with an inner dotted edge adjacent to $w_3$ produces a {\gc}.
If $w_7$ is a monochrome vertex, an elementary move of type $\circ$-in allows us to consider it as an inner white vertex.
If $w_7$ is an inner white vertex, up to a monochrome modification it is connected to $w_3$.
Let $w_8$ be the vertex connected to $w'$ by an inner solid edge. 
If $w_8$ is an inner simple {\tv}, it is connected to $w_7$ up to a monochrome modification, then the creation of a bridge beside $u'$ with an inner bold edge adjacent to $w_7$ followed by an elementary move of type $\circ$-out allows us to create a zigzag, contradicting the maximality assumption. 
If $w_8$ is a monochrome vertex, it is connected to a simple {\tv} sharing a region with $w_7$. Then, the creation of a bridge beside this simple {\tv} with a dotted edge adjacent to $w_7$ produces a {\gc} (see Figure~\ref{fig:toilesii105}).
If $w_8$ is a real nodal {\tv}, the creation of a bridge beside it with a dotted edge adjacent to $w_7$ produces a {\gc} (see~Figure~\ref{fig:toilesii106}).
If $w_8$ is an inner nodal {\tv}, it is connected to $w_7$ up to a monochrome modification.
Let $w_9$ be the vertex connected to $w'$ by a real bold edge.
If $w_9$ is a monochrome vertex, it is connected to a real white vertex determining a dotted segment, where the creation of a bridge with an inner dotted edge adjacent to $w_8$ produces a {\gc} (see Figure~\ref{fig:toilesii107}).
If $w_9$ is a white vertex, it is connected to $w_8$ up to a monochrome modification.
We iterate the considerations starting with $w_8$ as the vertex $v$, producing a {\gc}.

\begin{figure}[h]
\begin{center}
\begin{subfigure}{0.3\textwidth}
\centering\includegraphics[width=2.5in]{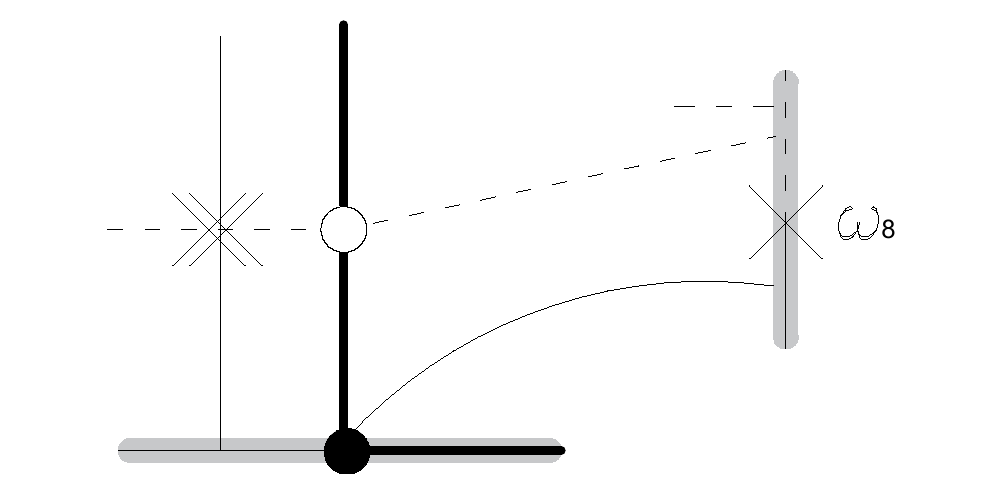}
\caption{\label{fig:toilesii105}}
\end{subfigure}\hspace{2mm} 
\begin{subfigure}{0.3\textwidth}
\centering\includegraphics[width=2.5in]{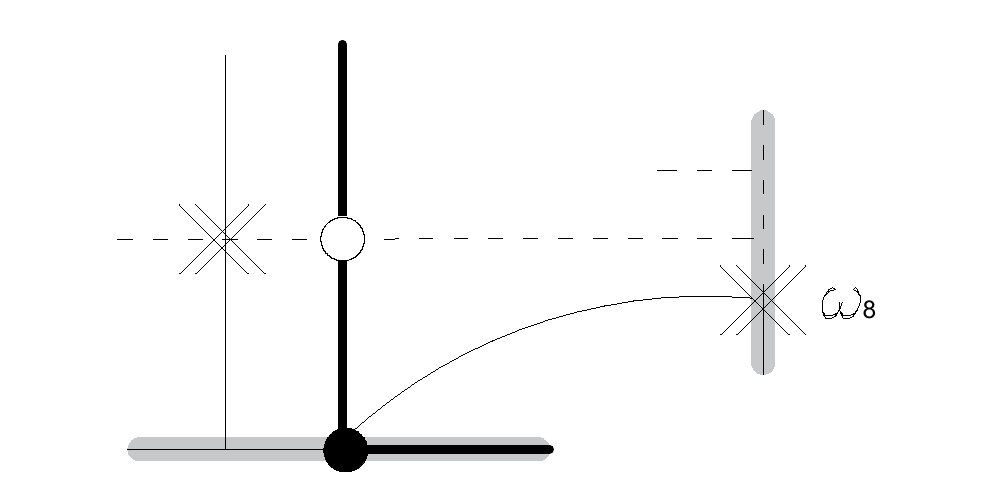}
\caption{\label{fig:toilesii106}}
\end{subfigure}\hspace{2mm}
\begin{subfigure}{0.3\textwidth}
\centering\includegraphics[width=2.5in]{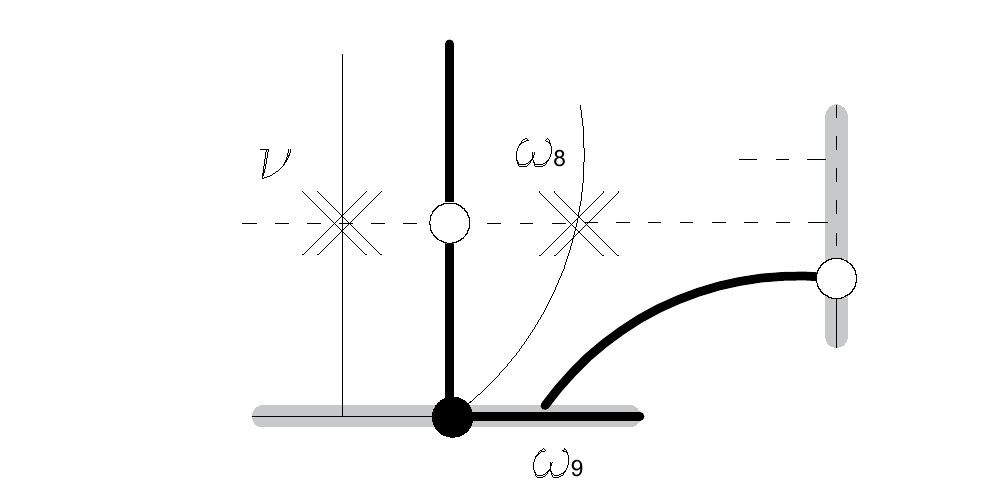}
\caption{\label{fig:toilesii107}}
\end{subfigure}
\end{center}
\caption{}
\end{figure}

If $w_2$ is a simple {\tv}, up to the creation of a bridge beside it with the inner dotted edge of a real white vertex connected to $w_1$, an elementary move of type $\circ$-in at $w_1$ followed by an elementary move of type $\circ$-out beside $w_2$ allow as to consider $w_1$ as a real white vertex.
If $w_1$ is a real white vertex, up to a monochrome modification or the creation of a dotted bridge, the vertex $v$ is connected to a monochrome vertex beside $w_1$. Then, we consider the other real vertex connected to $u$.
If $w_1$ is an inner white vertex, it is connected to $v$ up to a monochrome modification. Let $w_2$ the vertex connected to $w$ by a real solid edge.
If $w_2$ is a simple {\tv}, the creation of a bridge beside it with an inner dotted edge adjacent to $w_1$ followed by an elementary move of type $\circ$-out at the bridge bring us to the configuration when $w_1$ is a real white vertex.
If $w_2$ is a monochrome vertex, let $w'\neq w$ the black vertex connected to $w_2$ by a solid real edge and let $w_3$ be the vertex connected to $w_2$ by an inner solid edge.
If $w_3$ is a monochrome vertex, it determines a cut.
If $w_3$ is an inner nodal {\tv}, up to a monochrome modification it is connected to $w_1$, corresponding to a previous configuration already considered.

If $w_3$ is an inner simple {\tv}, up to monochrome modifications it is connected to~$w_1$ and the vertex $u'$ is connected to the vertices $w_1$ and $v$.
Let $S$ be the bold segment containing $w'$.
If the segment $S$ contains no white vertices, an elementary move of type $\bullet$-in followed by the creation of a bold bridge beside $u$ and an elementary move of type $\bullet$-out produces a {\gc} (see Figure~\ref{fig:toilesii108}).
If the segment $S$ containing exactly one white vertex $w_4$, which is connected to a black vertex $u''\neq u'$. Up to monochrome modification the vertex $w_4$ is connected to the vertex $v$ and the vertex $u''$ is connected to $v$ and $w_1$ corresponding to a configuration already considered.
Up to elementary moves of type $\circ$-in at monochrome vertex in the bold segment $S$ containing $u'$ followed by elementary moves of type $\circ$-out at the bold segment containing $u$ we reduce to the case when the segment $S$ contains one or none white vertices.

\begin{figure}[h]
\begin{center}
\begin{subfigure}{0.3\textwidth}
\centering\includegraphics[width=2.5in]{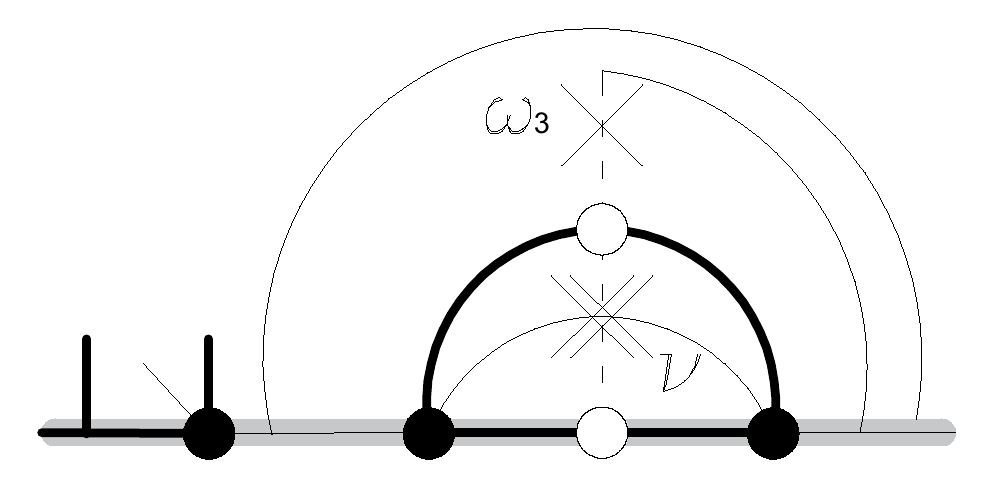}
\caption{\label{fig:toilesii108}}
\end{subfigure}\hspace{2mm} 
\begin{subfigure}{0.3\textwidth}
\centering\includegraphics[width=2.5in]{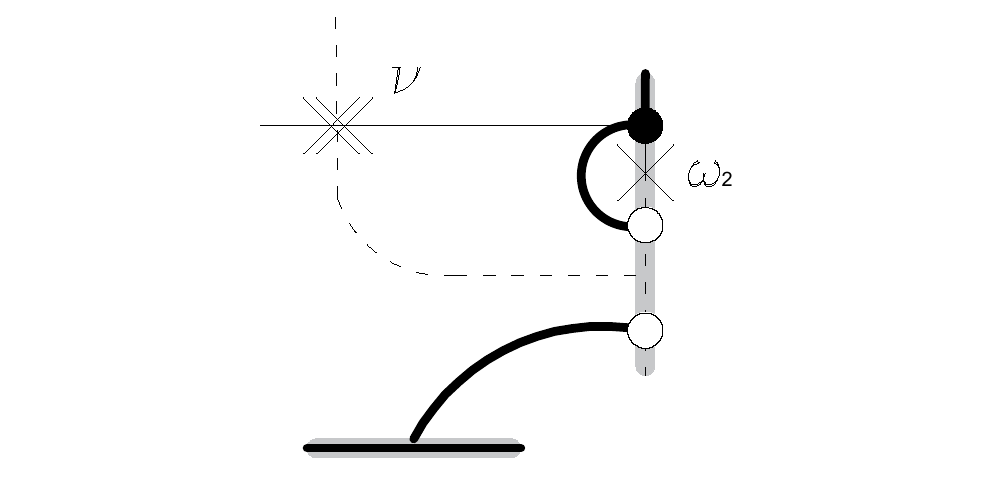}
\caption{\label{fig:toilesii109}}
\end{subfigure}
\end{center}
\caption{}
\end{figure}

\vskip5pt
{\it Case 3.2:} the vertex $w$ is a monochrome vertex. Let $w_1$ be the real black vertex connected to $w$ by an inner bold edge. 
Let $f$ be the solid real edge adjacent to $w_1$ and let $w_2$ be the vertex connected to $w_1$ by $f$.
If $f$ does not share a region with the vertex $v$, up to a monochrome modification $v$ and $w_1$ are connected.
If $w_2$ is a monochrome vertex, an elementary move of type $\bullet$-in at $w_2$ followed by an elementary move of type $\bullet$-out at $w$ brings us to the previous case.
If $w_2$ is a simple {\tv}, up to the creation of a bridge beside it with the inner dotted edge adjacent to a neighboring white vertex of $w$, an elementary move of type $\circ$-in at $w$ followed by an elementary move of type $\circ$-out beside $w_2$ connects the vertex $v$ to a monochrome vertex (see Figure~\ref{fig:toilesii108}). Then, we consider the other vertex connected to $v$ by an inner dotted edge.

Otherwise, the edge $f$ shares a region with the vertex $v$.
If $w_2$ is a monochrome vertex, up to a monochrome modification it is connected to $v$. Then, an elementary move of type $\bullet$-in at $w_2$ followed by an elementary move of type $\bullet$-out at $w$ brings us to Case~3.1.
Lastly, if $w_2$ is a simple {\tv}, the creation of a bridge beside it with the edge $[v,u]$ connects $v$ with a monochrome vertex (see Figure~\ref{fig:toilesii109}). Then, we consider the other real vertex connected to $u$.

\vskip5pt
{\it Case 3.3:} the vertex $u$ is a real black vertex. 
Let $f$ be the inner bold edge adjacent to $u$.
Let $w$ be an inner white vertex connected to $v$.
If the edge $f$ does not share a region with the vertex $w$, up to the creation of a bridge beside $u$ with an inner bold edge adjacent to $w$, an elementary move of type $\circ$-out bring us to the previous case.
If the edge $f$ shares a region with the vertex $w$, up to a monochrome modification it does connects the vertex $w$ to the vertex $u$.
Let $w_1$ be the vertex connected to $u$ by a real solid edge.
If $w_1$ is a monochrome vertex, it is connected to a black vertex $u'\neq u$. Let $w_2$ be the vertex connected to $w_1$ by an inner solid edge.
If $w_2$ is a monochrome vertex, it defines a solid cut.
If $w_2$ is an inner simple {\tv}, up to monochrome modifications the vertex $w_2$ is connected to $w$ and the vertex $u'$ is connected to the vertices $w$ and $v$, bringing us to a configuration already considered.
If $w_2$ is an inner nodal {\tv}, up to a monochrome modification it is connected to the vertex $w$ corresponding to a configuration considered in Case~3.1.
If $w_1$ is a simple {\tv}, up to the creation of a bridge beside $w_1$ with an inner dotted edge adjacent to $w$, an elementary move of type $\circ$-out connects $v$ to a monochrome vertex.

\end{proof}


\section{Curves in $\RPP$ and Hirzebruch
surfaces}
\label{ch:hir}

\subsection{Hirzebruch surfaces}

From now on we consider ruled surfaces with the Riemann sphere $B\cong\CP$ as base curve. The Hirzebruch surfaces $\Sigma_n$ are toric surfaces geometrically ruled over the Riemann sphere, determined up to isomorphism of complex surfaces by a parameter $n\in\N$. They are minimal except for $\Sigma_1$, which is isomorphic to $\CPP$ blown up in a point. The surface $\Sigma_n$ is defined by the local charts $U_0\times\CP$ and $U_1\times\CP$ where $U_0=\{[z_0:z_1]\in\CP\mid z_0\neq0\}$, $U_1=\{[z_0:z_1]\in\CP\mid z_1\neq0\}$ glued along $\C^*$ via the map 
\[
\displaystyle \begin{array}{ccc} \C^*\times\CP & \lra & \C^*\times\CP \\ {(z,w)} & \lmt & \left(\frac{1}{z}, \frac{w}{z^n}\right) \end{array}.
\]

The exceptional section is the section at infinity $E$ such that $S_{\infty}^2=-n\,(n\geq0)$. The second homology group $H_2 (\Sigma_n,\ZZ)$ of the Hirzebruch surface $Sigma_n$ is generated by the homology classes $[Z]$ and $[f]$ of the null section $Z$ and of a fiber $f$, respectively. The intersection form is determined by the Gram matrix $$\left( \begin{array}{cc} n & 1 \\   1 &  0 \end{array}  \right)$$ with respect to the base $\{[Z],[f]\}$. The homology class of the exceptional section is given by $[E]=[Z]-n[f].$
Performing a positive Nagata transformation on $\Sigma_n$ results in a geometrically ruled surface isomorphic to $\Sigma_{n+1}$ (since the exceptional divisor decreases its self-intersection by one). Likewise, a negative Nagata transformation on $\Sigma_n\,(n>0)$ results in a geometrically ruled surface isomorphic to $\Sigma_{n-1}$.

Setting $(\C^*)^2\subset\Sigma$, the trigonal curve $C$ can be described by a polynomial in two variables $f(z,w)=q_0(z)w^3+q_1(z)w^2+q_2(z)w+q_3(z)$ in which $q_0$ determines the intersection with the exceptional fiber. If the trigonal curve $C$ is proper, then $q_0$ must be constant. We can suppose $q_0$ equal to $1$. Up to affine transformations of $\C$, we can set the sum of the roots of $f(z,\cdot)$ equal to $0$, resulting in the Weiertra{\ss} equation $$w^3+q_2(z)w+q_3(z).$$

Since $C$ is a trigonal curve, then $[C]=3[Z]+0[f]$. Therefore, the intersection product $[C]\cdot[Z]=3n$ equals the degree of $q_3(z)$. Since this explicit description must be invariant by the coordinate change ${(z,w)} \lmt (\frac{1}{z}, \frac{w}{z^n})$, the degree of $q_2$ must be $2n$. Hence, the $j$-invariant $$\displaystyle j=\frac{-4q_2^3}{\Delta}, \;\Delta=-4q_2^3-27q_3^2$$ is a rational function of degree $6n$ if the curve is generic (i.e., the polynomials $q_2$ and $q_3$ have no zeros in common).

\subsubsection{Relation with plane curves}

Let $A\subset\CPP$ be a reduced algebraic curve with a distinguished point of multiplicity $\deg(A)-3$ such that $A$ does not have linear components passing through $P$. The blow-up of~$\CPP$ at~$P$ is isomorphic to $\Sigma_1$. The strict transform of $A$ is a trigonal curve $C_{A}:=\widetilde{A}\subset\Sigma_1$, called the \emph{trigonal model} of the curve $A$. A \emph{minimal proper model} of $A$ consists of a proper model of $C_{A}$ and markings corresponding to the images of the improper fibers of $C_{A}$ by the Nagata transformations.

\subsubsection{Real algebraic plane curves of degree 3}\label{sec:cubics}

\begin{figure}
\centering
\begin{tabular}{cc}
\includegraphics[width=2in]{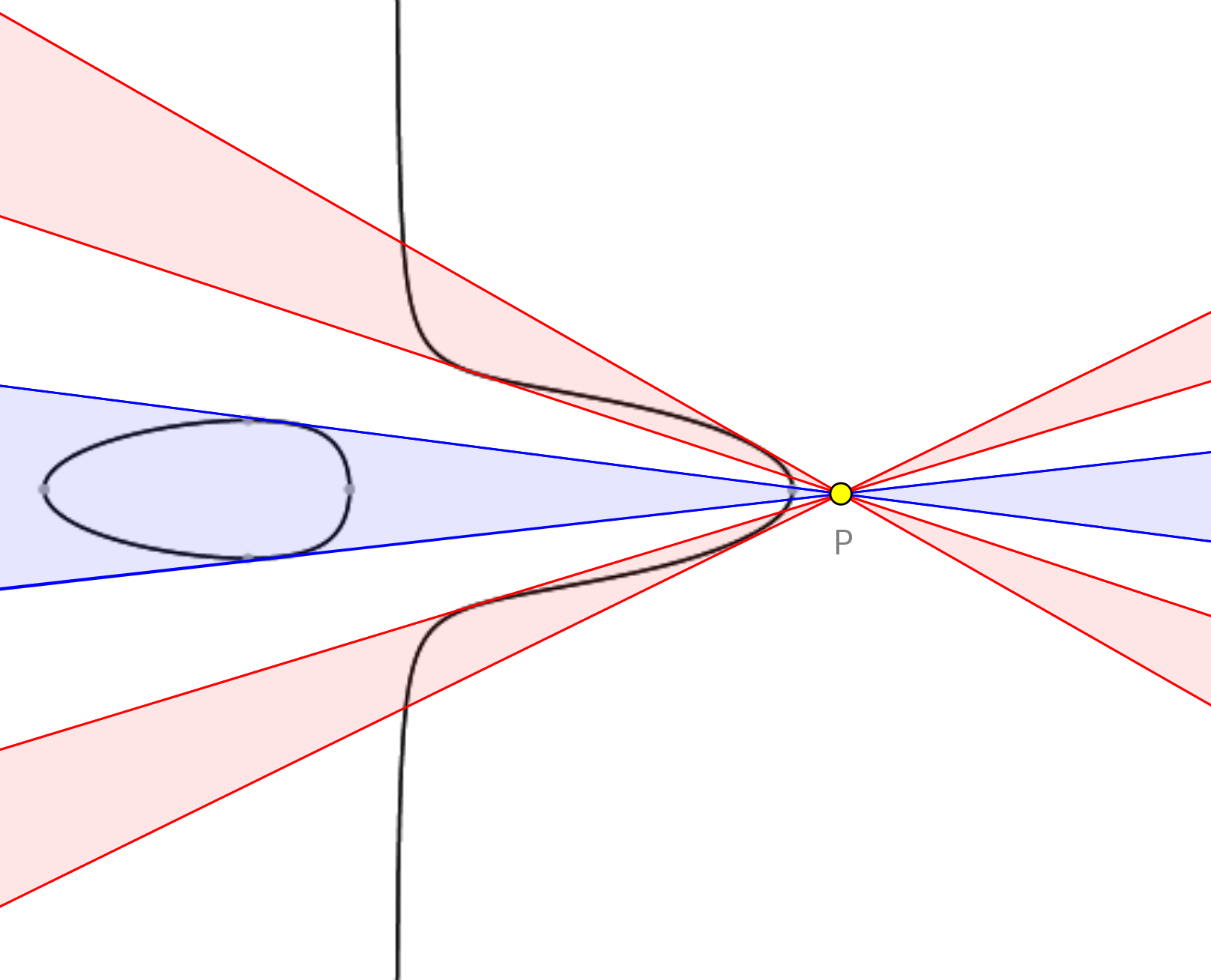}&
\includegraphics[width=3in]{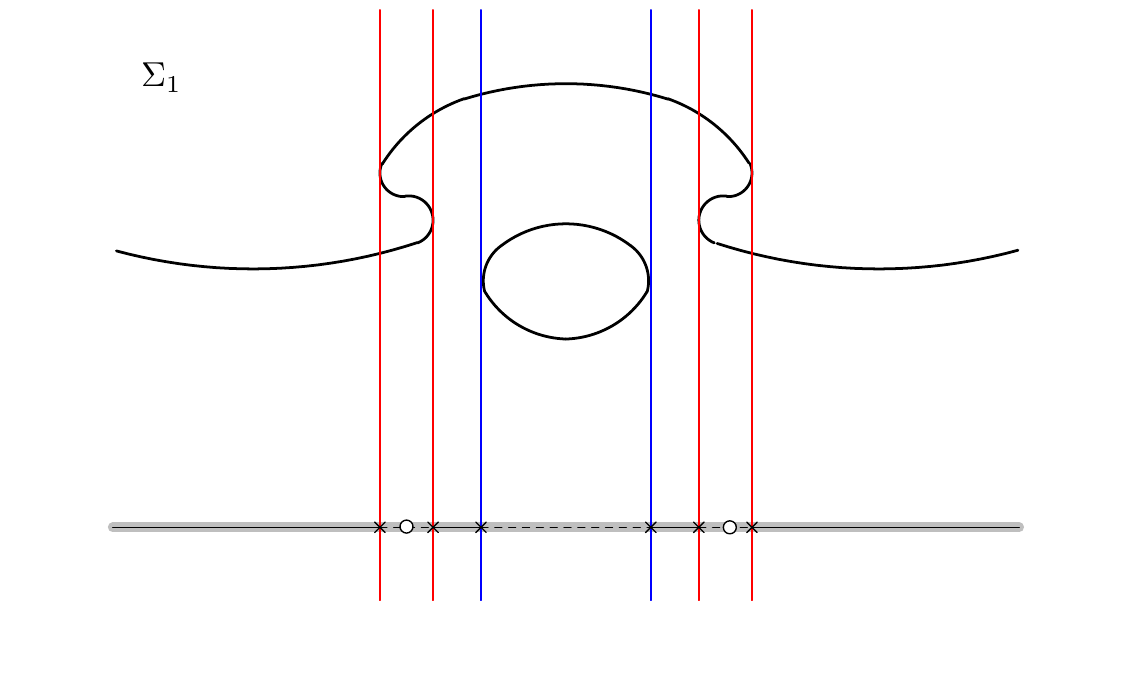}
\end{tabular}
\caption{A real plane cubic with two zigzags and one oval, and its pencil of lines seen in $\Sigma_1$.\label{fig:planeCubic}}
\end{figure}

Let $A\subset\CPP$ be a real smooth cubic. Let $p\in\CPP$ a real point which does not belong to~$A$ and let $B\cong\CP\subset\CPP$ be a real line which does not pass through $p$. The pencil of lines passing through $p$ can be parametrized by $B$, mapping every line $L\ni p$ to $L\cap B$. The blow-up of $\CPP$ at~$p$ is isomorphic to a real geometrically ruled surface over $B$. The strict transform of~$A$ is a real proper trigonal curve $C\subset\Sigma_1$ (since it is proper, it is already a minimal proper model for $A$). Since the real structures are naturally compatible, we associate to $C$ its real dessin $\Dssn(C)_{c}$ on the quotient of $B\cong\CP$ by the complex conjugation. 
Up to elementary equivalence, all the possible dessins are shown on Figure~\ref{fig:cubiques}. They are named by either their type (cf. Definition~\ref{df:type}) and the number of zigzags they possess (in the non-hyperbolic case, cf. Definition~\ref{df:ovzz}) or by $H$ in the case of the \emph{hyperbolic cubic}. 
Up to weak equivalence there are only three classes of cubics, namely the ones of type $\mathrm{I}$, type $\mathrm{II}$ and the hyperbolic one, corresponding to the rigid isotopy classification of couples $(A,p)$ of real cubic curves and one additional point of the plane outside $A$.

\begin{figure} 
\centering
\includegraphics[width=5in]{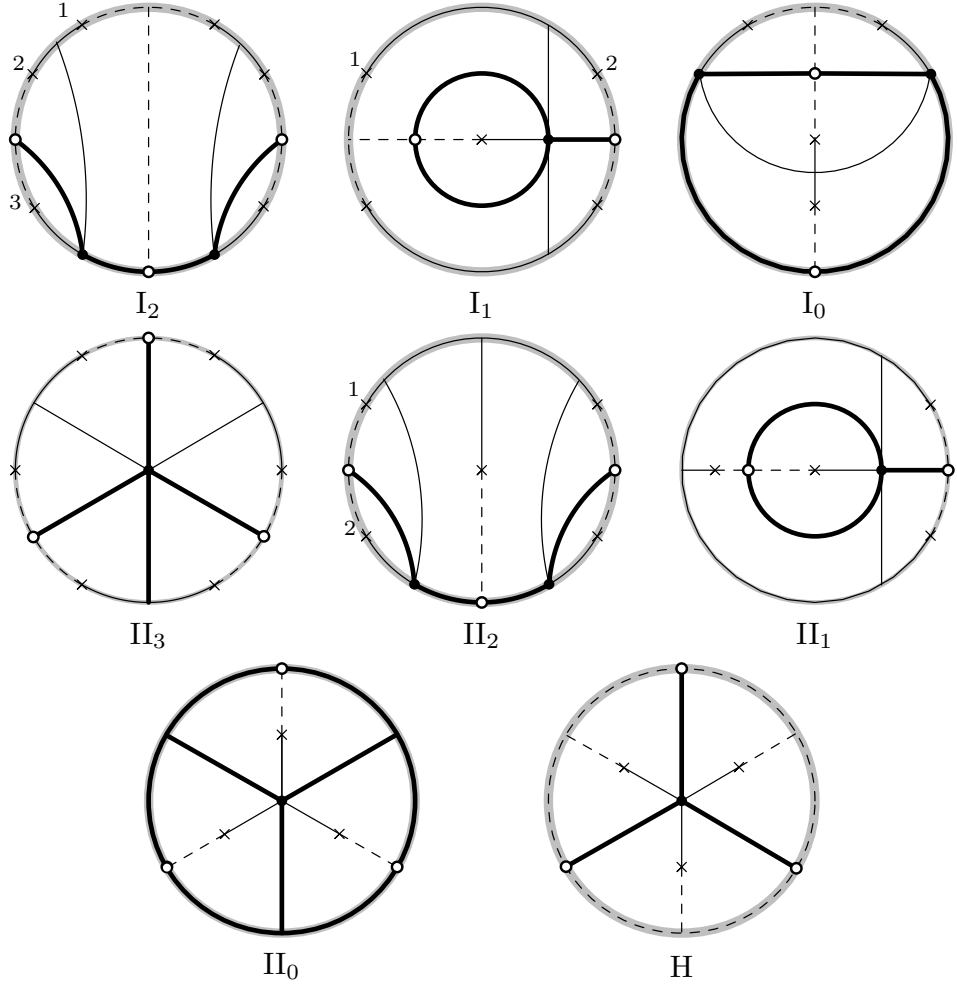}
\caption{Classes of cubic dessins up to elementary equivalence.\label{fig:cubiques}}
\end{figure}

\subsection{Positive and negative inner nodal {\tvs}}

Given an irreducible real nodal curve $X$ of type $\mathrm{I}$, we denote by $c$ the number of nodal points in~${\C X\setminus\R X}$. 
Let $\displaystyle\widehat{X}$ be the normalization of~$X$ and let~$\widehat{X}_+$ and~$\widehat{X}_-$ the connected components of $\displaystyle\C \widehat{X}\setminus\R \widehat{X}$.
We put $\sigma(X)=\#(X_+\cap X_-)$, where~$X_{\pm}\subset\C X$ is the image of~$\widehat{X}_{\pm}$ under the normalization map $\C\widehat{X}\longrightarrow\C X$.
An orientation of the real part $\R X$ induced from $X_{\pm}$ is called a \emph{complex orientation}.
A double point of $X$ is called \emph{elliptic} (resp., \emph{hyperbolic}) if it is given in local coordinates $(x,y)$ by the equation $x^2+y^2=0$ (resp.,~$x^2-y^2=0$).
A \emph{type~$\mathrm{I}$ perturbation} of~$X$ consists of a curve $X_0$ having a non-singular real part $\R X_0$ where every elliptic node has been perturbed to an oval and every hyperbolic node has been perturbed according to a complex orientation of the crossing branches of~$\R X$. 
Perturbing the real nodal points of the curve in any other way does not produce a type~$\mathrm{I}$ curve.
Moreover, if the curve $X$ is an $M$-curve, its type~$\mathrm{I}$ perturbation is non-singular.

Let $X$ be an irreducible real nodal curve $X$ of type $\mathrm{I}$ of odd degree in $\RPP$ having non-singular real part $\R X$.
Consider~$\R X$ endowed with a complex orientation.
An oval of $\R X$ is called \emph{positive} (resp. \emph{negative}) if it defines in its interior the orientation opposite to (resp., the orientation coinciding with) the one given by the orientation of $\RPP\setminus J$, where $J$ is the pseudoline of $\R X$.
A pair of ovals of $\R X$ is called \emph{injective} if one of these ovals is contained in the interior of the other one. An injective pair of ovals is called \emph{positive} if there exists an orientation of the annulus bounded by these ovals inducing a complex orientation of the ovals. Otherwise, the injective pair is called \emph{negative}.

We define the \emph{index} $i_{\mathbb{R}X}(x)$ of a point $x\in\RPP\setminus\R X$ with respect to the curve~$X$ as the absolute value of the image of~$[\R X]$ under an isomorphism between the group~$H_1(\RPP\setminus\{x\},\Z)$ and $\frac{1}{2}\Z$.

Let $X$ be an irreducible type~$\mathrm{I}$ nodal curve of odd degree in $\RPP$.
We denote by $\Lambda^+(X)$ (resp.,~$\Lambda^-(X)$) the number of positive (resp., negative) ovals and by $\Pi^+(X)$ (resp.,~$\Pi^-(X)$) the number of positive (resp., negative) injective pairs of a type~$\mathrm{I}$ perturbation of~$X$.

\begin{thm}[Rokhlin’s complex orientation formula for nodal plane curves in $\RPP$ of odd degree \cite{IMR}]
Let $X$ be a nodal type $\mathrm{I}$ plane curve of degree $d=2k+1$. Then $$\Lambda^+(X)-\Lambda^-(X)+2(\Pi^+(X)-\Pi^-(X))=l-k(k+1)+\sigma(X),$$ where $l$ is the number of ovals in a type~$\mathrm{I}$ perturbation of~$X$. 
\end{thm}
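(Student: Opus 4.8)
The plan is to adapt Rokhlin's self-intersection argument for the complex orientation formula to the nodal setting, carrying out the computation on the normalization so that the nodal points of $X$ appear as the double points of an immersion $\nu\colon\C\widehat{X}\to\CPP$. Since $X$ is of type $\mathrm{I}$, the surface $\C\widehat{X}$ splits as $\widehat{X}_+\cup\widehat{X}_-$ along the common boundary $\R\widehat{X}$, interchanged by the complex conjugation, and the induced orientation of $\R\widehat{X}$ maps onto the chosen complex orientation of $\R X$. I would work with the half-surface $\nu(\widehat{X}_+)$, push its boundary off $\R X$ along the normal framing prescribed by the complex orientation to obtain a $2$-cycle $F_+$ in $\CPP$, and set $F_-:=\overline{F_+}$, its conjugate. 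The whole formula will come from evaluating the intersection number $F_+\cdot F_-$ in two different ways.

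On the \emph{homological} side, $[\C X]=(2k+1)[\text{line}]$ and $[F_+]+[F_-]=[\C X]$ in an appropriate relative sense, so $F_+\cdot F_-$ is governed by $(2k+1)^2$ together with the self-intersection of the push-off membrane. After halving and subtracting the contribution of the pseudoline, this produces the arithmetic term $-k(k+1)=-\tfrac14\left((2k+1)^2-1\right)$, exactly as in the nonsingular case. On the \emph{geometric} side, the actual intersection points of $F_+$ with $F_-$ are located and signed: the points lying over $\R X$ record the mutual positions of the components, giving the signed counts $\Lambda^+-\Lambda^-$ of ovals against the pseudoline, the factor $2(\Pi^+-\Pi^-)$ for injective pairs, and the term $+l$ for the ovals themselves, thereby reproducing the left-hand side of the identity.

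The genuinely new contribution, absent in the smooth case, comes from the double points of $\nu$. At a node of $X$ whose two local branches lift to the \emph{same} half, the corresponding crossing is internal to $F_+$ (or to $F_-$) and does not enter $F_+\cdot F_-$; at a node whose branches lift to \emph{different} halves $\widehat{X}_\pm$, the immersion produces one transverse intersection point of $F_+$ with $F_-$. By definition, such nodes are exactly the points of $X_+\cap X_-$, so they contribute precisely $\sigma(X)$ to the geometric count. Equating the two evaluations of $F_+\cdot F_-$ and rearranging then yields
$$\Lambda^+(X)-\Lambda^-(X)+2(\Pi^+(X)-\Pi^-(X))=l-k(k+1)+\sigma(X).$$
Equivalently, one can phrase the argument as a reduction to the classical Rokhlin formula: a type $\mathrm{I}$ perturbation $X_0$ following the complex orientation of $X$ replaces each relevant crossing by a handle or a vanishing cycle, and bookkeeping these local replacements against the smooth identity again isolates the correction $\sigma(X)$, with $\sigma(X)=0$ recovering the nonsingular formula.

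The main obstacle I anticipate is the local analysis at the nodes together with the sign conventions for the boundary push-off. One must verify, case by case (elliptic versus hyperbolic real nodes, and conjugate pairs of imaginary nodes), that a node with branches in different halves contributes $+1$ rather than $-1$ to $F_+\cdot F_-$, and that the normal framing used to displace $\partial\nu(\widehat{X}_+)$ off $\R X$ is coherent with the complex orientation near each node. The elliptic real nodes deserve particular care: each is simultaneously counted in $\sigma(X)$ and perturbs to an oval counted in $l$, so one must confirm that these two appearances are genuinely independent and that no contribution is double-counted. Once the local signs are pinned down, the global identity follows by comparison with the degree-$d$ smooth case.
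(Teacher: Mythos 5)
The first thing to say is that the paper does not actually prove this statement: it is imported from \cite{IMR} and used as a black box, so there is no internal proof to measure your attempt against. The closest thing in the paper is the proof of the analogous complex orientation formula for nodal trigonal curves in $\Sigma_{2k+1}$ in Section~\ref{ch:hir}, and your plan is essentially that proof transplanted to the plane: split the normalization into halves $\widehat{X}_{\pm}$, push the boundary of one half off $\R X$ along a vector field compatible with the complex orientation, cap with disks, and evaluate the conjugation-symmetric intersection number once homologically and once geometrically, with the nodes of $X_+\cap X_-$ supplying extra transverse intersection points of local index $+1$ that produce $\sigma(X)$. So the strategy is the right (and standard) one, but what you have is a plan rather than a proof: the local sign computations are deferred, as is the odd-degree normalization --- the relation $2[F_+]=d\,[L]$ ($L$ a line) shows $[F_+]$ is not an integral class for odd $d$, so the pseudoline must be capped by an auxiliary membrane or the computation moved to a double cover, and this correction is precisely where $k(k+1)=\tfrac{1}{4}\bigl((2k+1)^2-1\bigr)$ comes from.

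The one issue you flag that is a genuine trap rather than bookkeeping is the treatment of real elliptic nodes. With the literal definition $\sigma(X)=\#(X_+\cap X_-)$, every real elliptic node lies in $X_+\cap X_-$ (its two branches are complex conjugate, hence lift to opposite halves of $\widehat{X}$), while the oval it produces is also counted in $l$ and in $\Lambda^{\pm}$; running the membrane argument on the unperturbed curve and keeping both contributions double-counts, and the formula then fails already for a cubic with one isolated real node (there $l=1$, the oval of the perturbation is negative, so the left-hand side is $-1$, whereas $1-k(k+1)+1=0$). The step your sketch is missing is to run the entire argument on the type~$\mathrm{I}$ perturbation $X_0$ of $X$ --- exactly what the paper does in its trigonal proof (``Considering instead of the curve $X$ its type $\mathrm{I}$ perturbation, we can assume that the real point set $\R X$ of $X$ is non-singular''). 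After that perturbation the only remaining nodes are imaginary, the real elliptic nodes have become honest ovals entering $l$, $\Lambda^{\pm}$ and $\Pi^{\pm}$, and $\sigma$ must be understood as counting only the imaginary nodes whose branches lie in different halves, which is how the paper actually uses it (cf.\ Proposition~\ref{prop:sigmatrig}, where $\sigma(X)$ is identified with twice the number of negative inner nodal vertices). With that convention fixed and the perturbation done first, your case analysis at the nodes collapses to the single observation that two transverse conjugate complex branches, one in $F_+$ and one in $F_-$, meet with local intersection number $+1$, and the rest is the smooth odd-degree Rokhlin computation.
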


We say that an elliptic nodal point of a type $\mathrm{I}$ plane curve $X$ is \emph{positive} (resp., \emph{negative}) if its corresponding oval in the type $\mathrm{I}$ perturbation of $X$ is positive (respectively, negative).
In the same manner, we say that an isolated nodal {\tv} of a type~$\mathrm{I}$ dessin $D$ is \emph{positive} (resp., \emph{negative}) if its corresponding oval in the type~$\mathrm{I}$ perturbation of~$D$ is positive (resp., negative).

Motivated by Proposition \ref{prop:maxell} we make the following definition.

\begin{df}
Let $v$ be an inner nodal {\tv} of a type $\mathrm{I}$ dessin. We say that $v$ is {\it positive} if all the regions containing $v$ are labeled $1$. Otherwise, we say that $v$ is {\it negative}.

Given a dessin $D$, we denote by $c(D)$ twice the number of the inner nodal {\tvs} of $D$ and by $\sigma(D)$ twice the number of the negative inner nodal {\tvs} of $D$.
\end{df}

\begin{prop}\label{prop:sigmatrig}
If $X\subset\Sigma$ is a type $\mathrm{I}$ real trigonal curve with associated real dessin~$D$, then \[\sigma(X)=\sigma(D).\]
\end{prop}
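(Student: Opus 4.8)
The plan is to compute $\sigma(X)=\#(X_+\cap X_-)$ by localizing the intersection $X_+\cap X_-$ at the singular points of $X$ and then matching each local contribution with the combinatorics of the corresponding nodal \tv\ of $D$. Write $\nu\colon\C\widehat{X}\lra\C X$ for the normalization. Since $\widehat{X}_+$ and $\widehat{X}_-$ are disjoint open subsets of $\C\widehat{X}$, their images $X_\pm=\nu(\widehat{X}_\pm)$ can meet only where $\nu$ fails to be injective, that is, at the nodes of $X$. A node $p$ lies in $X_+\cap X_-$ precisely when its two local branches have one $\nu$-preimage in $\widehat{X}_+$ and the other in $\widehat{X}_-$; if both preimages lie in the same half, or both lie in $\R\widehat{X}$, then $p\notin X_+\cap X_-$. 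Thus the first step is to determine, node by node, how the two branches distribute among $\widehat{X}_+$, $\widehat{X}_-$ and $\R\widehat{X}$.

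Next I would treat the non-real nodes, which occur in complex conjugate pairs $\{p,\bar p\}$ and correspond bijectively to the inner nodal \tvs\ of $D$ (a node is an index-$2$ $\times$-vertex, and an interior vertex of the quotient $\CP/(z\mapsto\bar z)$ lifts to a conjugate pair). Since the real structure swaps $\widehat{X}_+$ and $\widehat{X}_-$, the two branches of $p$ lie in different halves if and only if the same holds at $\bar p$; hence each such pair contributes either $0$ or $2$ to $\sigma(X)$. To decide which, I would use the type~$\mathrm{I}$ surface decomposition $\C X=C_+\cup C_-$ together with the region labelling of $D$: identifying $\widehat{X}_\pm$ with the two halves $C_\pm$, over a region the single sheet belonging to $C_+$ (the one with $m_+\equiv1$) carries the label of its vertex in the triangle $\pi|_C^{-1}(b)$, ranked $1,2,3$ by increasing length of the opposite side. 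In these terms a conjugate pair contributes $2$ exactly when the $C_+$-branch is one of the two colliding branches at the node.

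The heart of the argument is the following local computation at an inner nodal \tv\ $v$. At the node two of the three fibre points collide, so the side of the triangle joining them degenerates to length zero; consequently the non-colliding (far) sheet is the vertex opposite the shortest side and therefore carries the label $1$ in every region adjacent to $v$, while each colliding sheet is opposite a side of positive length and carries a label in $\{2,3\}$. Two cases result. If the $C_+$-sheet is the far sheet, both colliding branches lie in $C_-$, the pair contributes $0$, and every region around $v$ is labelled $1$, so $v$ is positive. If the $C_+$-sheet is one of the colliding branches, the two branches split between $C_+$ and $C_-$, the pair contributes $2$, and the $C_+$-sheet is labelled $2$ or $3$, so some region around $v$ is not labelled $1$ and $v$ is negative. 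Hence a conjugate pair contributes $2$ to $\sigma(X)$ if and only if the associated inner nodal \tv\ is negative, giving $2\cdot\#\{\text{negative inner nodal }\tvs\}=\sigma(D)$ as the total contribution of the non-real nodes.

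Finally I would treat the real nodes. A non-isolated (hyperbolic) real node has two real branches whose $\nu$-preimages both lie in $\R\widehat{X}$, so it contributes $0$ to $\sigma(X)$, matching the fact that boundary \tvs\ are not counted by $\sigma(D)$. I expect the main obstacle to be twofold. First, converting the global surface decomposition $C_+\cup C_-$ into the purely local statement about the labels of the finitely many regions touching $v$: one must verify that the $C_+$-branch is unambiguous on a punctured neighbourhood of the node—here it is essential that the two transverse branches of an ordinary node are \emph{not} interchanged by monodromy, so that $C_+$ selects one fixed branch throughout—and that its label is read off correctly from the degenerating triangle in each adjacent region. Second, the isolated (elliptic) real nodes, whose two conjugate branches are swapped by the real structure and hence split between $\widehat{X}_+$ and $\widehat{X}_-$, require separate attention by the same degenerating-triangle analysis; confirming that their presence is compatible with the equality $\sigma(X)=\sigma(D)$ for the class of curves under consideration is the delicate point of the proof, and I would verify it against the way $\sigma(D)$ is defined for these dessins.
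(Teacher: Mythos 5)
Your proof is correct and reaches the paper's conclusion, but the decisive step is handled by a genuinely different mechanism. Both arguments share the same skeleton: conjugate pairs of imaginary nodes correspond bijectively to inner nodal \tvs, conjugation swaps the two halves so each pair contributes $0$ or $2$ to $\sigma(X)$, and one must match ``contributes $2$'' with ``the vertex is negative''. For that matching the paper argues by perturbation: a non-contributing node admits a local type~$\mathrm{I}$ perturbation, the perturbed vertex becomes an inner simple \tv{} of a type~$\mathrm{I}$ dessin and hence is labelled $1$, so all regions around the original nodal vertex are labelled $1$ and the vertex is positive (the converse direction is left implicit). You instead compute directly at the degenerate fibre: near the node the shortest side of the triangle is the one joining the two colliding points, so the non-colliding sheet carries label $1$ and the colliding sheets carry labels in $\{2,3\}$ in every adjacent region; since $m_+\equiv 1$ forces $C_+$ to consist of exactly one local branch over a small neighbourhood of the node, the dichotomy ``$C_+$ is the far branch'' versus ``$C_+$ is a colliding branch'' coincides on one side with ``all adjacent regions labelled $1$'' versus ``some region labelled $2$ or $3$'', and on the other with ``contributes $0$'' versus ``contributes $2$''. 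This yields both implications at once and is more self-contained than the paper's appeal to perturbations, at the price of having to check (as you do, correctly, via the absence of $\partial C_{\pm}$ over a small non-real neighbourhood) that each of the three local branches lies entirely in a single half. Your closing hesitation about isolated real nodes is not a genuine gap: $\sigma(X)$ is to be read as counting only nodes lying in $\C X\setminus\R X$ --- this is exactly how the paper's own proof interprets the definition, consistently with the complex orientation formula in which $\sigma$ is used --- so elliptic real nodes are excluded from $\sigma(X)$ just as real nodal \tvs{} are excluded from $\sigma(D)$.
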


\begin{proof}
Every pair $n,\bar{n}$ of complex conjugated nodal points of $X$ corresponds to an inner nodal {\tv} $v_n$, and vice versa.
By definition, the number $\sigma(X)$ counts the nodal points of the curve $X$ lying in $\C X\setminus\R X$ such that they belong to both the images of the halves $\C\widehat{X}_{\pm}$ of the normalization of $X$ on $X$, each imposing an obstruction for $X$ to be perturbed to a type $\mathrm{I}$ non-singular curve. 
In the case when the nodal point $n$ does not contribute to $\sigma(X)$, it has a local type~$\mathrm{I}$ perturbation and so does its corresponding vertex $v_n$. Since every inner simple {\tv} of type~$\mathrm{I}$ is labeled by $1$, so are all the regions containing $v_n$ and hence, it is positive.
\end{proof}

\begin{coro}\label{coro:nodalpairs}
Let $C$ be a nodal rational curve of degree $5$ whose type $\mathrm{I}$ perturbation has $l$ ovals. Assume that $C$ has an elliptic nodal point $p\in\R C$. Then, the type $\mathrm{I}$ perturbation of the marked toile $(D_C,v_{T_1},v_{T_2})$ associated to $(C,p)$ has $l-1$ ovals and $c(D_C)=c(C)+2$.
\end{coro}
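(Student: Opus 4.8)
The plan is to read off both equalities from the construction of the trigonal model of $(C,p)$ together with the node--vertex dictionary contained in the proof of Proposition~\ref{prop:sigmatrig}. Recall that $(D_C,v_{T_1},v_{T_2})$ is the real dessin of the proper trigonal curve $C_A\subset\Sigma_3$ obtained from $(C,p)$ as in Section~\ref{ch:hir}: one blows up $\CPP$ at the marked node $p$ (the distinguished point of multiplicity $\deg C-3=2$), giving the strict transform $\widetilde A\subset\Sigma_1$, and then passes to the proper model in $\Sigma_3$ by positive Nagata transformations, the markings $v_{T_1},v_{T_2}$ recording the branches of $p$. Since $C_A$ is a type~$\mathrm{I}$ trigonal curve, the proof of Proposition~\ref{prop:sigmatrig} gives a dictionary for $D_C$: each conjugate pair of complex nodes of $C_A$ is an inner nodal \tv, each real node is a real nodal \tv, and the oval structure of a type~$\mathrm{I}$ perturbation mirrors that of $C_A$. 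Writing the nodes of $C$ as $p$ together with $r$ further real nodes and $m$ conjugate pairs (so $1+r+2m=6$ and $c(C)=2m$), I would prove both claims by comparing the nodes and the perturbations of $C$ and $C_A$.

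First I would establish $c(D_C)=c(C)+2$ by counting the complex nodes of $C_A$. The blow-up at the real point $p$ resolves $p$ and is a real isomorphism elsewhere, so the $r$ real nodes and the $m$ conjugate pairs of $C$ descend, reality preserved, to nodes of $\widetilde A$; thus $\widetilde A$ has exactly $r+2m=5$ nodes, all at finite points. On the other hand $C_A$ is proper of class $3[Z]$ in $\Sigma_3$, and a curve in this class has arithmetic genus $7$; as $C_A$ is rational it therefore carries $7$ nodes. The two nodes beyond the five inherited from $C$ are created over the fibres that were made proper, namely over the tangent directions of the two branches of $p$. Here the hypothesis that $p$ is \emph{elliptic} is essential: the two branches are complex conjugate, so these two base points form a conjugate pair and the two new nodes are exchanged by the real structure, i.e.\ they constitute a single conjugate pair. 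Hence $C_A$ has $m+1$ conjugate pairs of nodes, so $D_C$ has $m+1$ inner nodal \tvs\ and $c(D_C)=2(m+1)=c(C)+2$.

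For the oval count I would compare the two type~$\mathrm{I}$ perturbations. By definition every elliptic node of $C$ is perturbed to an oval, so among the $l$ ovals of the type~$\mathrm{I}$ perturbation of $C$ exactly one is produced by $p$. In $D_C$, however, $p$ is no longer a real elliptic node but the extra \emph{inner} nodal pair found above; in a type~$\mathrm{I}$ perturbation an inner nodal \tv\ splits into a conjugate pair of interior \tvs\ of index $1$, contributing no oval. Every other elliptic node and every pre-existing oval of $\R C$ is, by contrast, seen by the pencil of lines through $p$ and corresponds to an oval of the type~$\mathrm{I}$ perturbation of $D_C$, so these $l-1$ ovals are in bijection. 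Consequently the type~$\mathrm{I}$ perturbation of $(D_C,v_{T_1},v_{T_2})$ has $l-1$ ovals.

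The main obstacle is making the two bookkeeping steps exact rather than heuristic. For the node count one must confirm that the five nodes of $\widetilde A$ survive the two Nagata transformations (they lie away from the modified fibres) and that the genus count $7-5=2$ is realised precisely by the conjugate pair over the marked fibres; the arithmetic-genus computation for $3[Z]$ in $\Sigma_3$ makes this rigorous, and reality of the construction forces the conjugacy. For the oval count the delicate point is the claimed bijection on the remaining $l-1$ ovals, which rests on the precise correspondence of Section~\ref{ch:hir} between ovals of a plane curve projected from $p$ and dotted even segments of its dessin, together with the fact that the isolated real point $p$ lies on none of the other ovals. Verifying these geometric facts about the elliptic node under blow-up and the proper-model construction is where the real work lies.
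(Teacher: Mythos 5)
Your proof is correct and follows essentially the same route as the paper's: the oval arising from $p$ is the only one lost, and the inner nodal vertices of $D_C$ are exactly the conjugate node pairs of $C$ together with the conjugate pair $\{n_T,n_{\overline T}\}$ coming from the two complex conjugate tangent lines at the elliptic node $p$. The only addition is your arithmetic-genus count ($p_a=7$ for class $3[Z]$ in $\Sigma_3$) confirming that precisely two new nodes appear over the marked fibres, which the paper takes directly from the construction of the proper model.
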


\begin{proof}
The elliptic nodal point $p$ is perturbed to an oval in the type $\mathrm{I}$ perturbation of $C$, and every other oval different from the one arisen from $p$ appears in the type~$\mathrm{I}$ perturbation of $D_C$. Every inner nodal {\tv} of $D_C$ represents either a pair of complex conjugated nodal points of the curve $C$ or the pair of complex conjugated tangent lines of $C$ at $p$.
\end{proof}

Recall that for a proper non-isotrivial trigonal curve $X\subset\Sigma_n\longrightarrow\CP$, 
a real white vertex $v$ in the associated toile corresponds to an intersection of the curve $X$ 
with the zero section $Z\subset\Sigma_n$.
When $n$ is even, the real point set $\R\Sigma_n$ is homeomorphic to a torus and the long component of $X$ together with the exceptional divisor divide $\R\Sigma_n$ into two connected components.
When $n$ is odd, the real point set $\R\Sigma_n$ is homeomorphic to a Klein bottle,
and the dividing property does not hold.
Assume that $n$ is odd, the curve $X$ is of type $\mathrm{I}$, the real point set $\R X$ is non-singular and has a zigzag $z$ delimitated by vertices $u_1$ and $u_2$ in $D_X$.
 
Denote by $s=z\setminus\{u_1,u_2\}$ the interior of the zigzag.
We say that a simple {\tv} in $D_X$ is {\it on the same side} as the zigzag $z$ 
if the number of white real vertices in the connected components of $\partial D_X\setminus(s\cup \{v\})$ is even. We say that an oval is {\it on the same side} as the zigzag $z$ if its delimiting vertices are on the same side 
of the zigzag $z$. 
We denote by $\Lambda_{z}^+(X)$ (resp., $\Lambda_{z}^-(X)$ ) 
the number of positive (resp., negative) ovals on the same side 
as the zigzag $z$. 

\begin{thm}
Let $X$ be a type $\mathrm{I}$ nodal proper trigonal curve in the Hirzebruch surface $\Sigma_n\overset{\pi}{ \longrightarrow}\CP$, $n=2k+1$, and let $D_X$ be its associated toile. Assume that the real point set $\R X$ has a zigzag $z$. Then,
$$2(\Lambda_{z}^-(X)-\Lambda_{z}^+(X))+l+\sigma(X)=2n+2,$$
where $l$ is the number of connected components in the real point set of the type $\mathrm{I}$ perturbation of $X$.
Furthermore, we have
 \[\Lambda^-(X)-\Lambda^+(X)+(l-1)+\sigma(X)=2n.\]
\end{thm}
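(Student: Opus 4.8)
The plan is to fix a complex orientation of $X$, reduce the nodal statement to a nonsingular one via the type~$\mathrm{I}$ perturbation, prove the zigzag identity by a complex-orientation intersection count on the Klein bottle $\R\Si_n$, and finally deduce the global identity from the zigzag one.

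Since $X$ is of type~$\mathrm{I}$, its normalization $\widehat X$ has $\C\widehat X\setminus\R\widehat X$ disconnected, so I obtain oriented halves $X_+$ and $X_-=\overline{X_+}$ with common boundary $\R X$; this is exactly the decomposition underlying the region labels $1,2,3$ of $D_X$, where the label records which of the three sheets over a point of $B^{\#}$ lies on $X_+$. First I would pass to the type~$\mathrm{I}$ perturbation $X_0$: by definition each elliptic node opens into an oval (counted both in $l$ and among the $\Lambda^{\pm}$), each hyperbolic node is smoothed compatibly with the chosen complex orientation, and each pair of complex conjugate nodes (equivalently, each inner nodal \tv\ of $D_X$) produces the obstruction recorded by $\sigma(X)$, which by Proposition~\ref{prop:sigmatrig} equals $\sigma(D_X)$ and is twice the number of negative inner nodal \tvs. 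Thus it suffices to prove the identities for the nonsingular proper trigonal curve $X_0$ while carrying these node contributions through the count.

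For $X_0$ the core is an intersection computation in $\R\Si_n$, which is a Klein bottle because $n$ is odd. Working with the oriented half $X_+$ and its boundary $\R X_0\subset\R\Si_n$ endowed with the complex orientation, I would compute the class $[\R X_0]\in H_1(\R\Si_n;\Z)$ together with the relative self-intersection of $[X_+]$; because $X_0$ is proper it avoids the exceptional section $E$, and the computation is controlled by $Z^2=n$ and $[X_0]=3[Z]$, which is the source of the right-hand side $2n+2$. The zigzag $z$ is what makes this precise in the non-orientable case: its delimiting vertices $u_1,u_2$ cut $\partial D_X$, and the parity of white real vertices in the resulting arcs is exactly the rule defining when an oval lies on the same side as $z$; translating the geometric intersection count through the region labels turns the contribution of each oval into $\pm1$ according to whether it is positive or negative, and into $\Lambda_{z}^{-}-\Lambda_{z}^{+}$ after grouping by side. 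Reinserting the node corrections through $\sigma(X)$ and the perturbed ovals in $l$ then yields the first identity.

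Finally I would derive the second identity from the first. Writing $\Lambda^{\pm}=\Lambda_{z}^{\pm}+\Lambda_{\bar z}^{\pm}$ for the splitting of the ovals into those on the same side as $z$ and the rest, subtracting the two displayed identities shows they are equivalent, given the first, to the single relation $(\Lambda_{z}^{-}-\Lambda_{z}^{+})-(\Lambda_{\bar z}^{-}-\Lambda_{\bar z}^{+})=1$; I would establish this by tracking how the complex orientation, read off from the labels, distributes signed ovals across the zigzag, the surplus $1$ being precisely the defect introduced by the odd dotted segment $z$ on the Klein bottle. The main obstacle I anticipate is the sign and orientation bookkeeping in the non-orientable setting: fixing coherent orientations of $X_+$ and of the arcs of $\partial D_X$ so that the zigzag's parity rule matches the intersection signs, and accounting exactly for the discrepancy $2n+2$ versus $2n$ and the shift $l$ versus $l-1$, is where the delicate work lies.
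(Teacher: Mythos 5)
Your overall strategy---perturb to the nonsingular case, run a Rokhlin-style complex-orientation intersection count on $\R\Sigma_n$, and recover the second identity from the first---is the right family of ideas, but the two key steps are missing. The central gap is that the intersection computation as you set it up does not close. You propose to compute a relative self-intersection of the half $X_+$ controlled by $[X_0]=3[Z]$ and $Z^2=n$; but $\tfrac12[X_0]=\tfrac32[Z]$ is not an integral class, and, more importantly, the zigzag never enters such a computation, so no mechanism distinguishes $\Lambda_z^{\pm}$ from $\Lambda^{\pm}$. The paper's proof caps off $\C X_+$ with a closed hemisphere $\C E_+$ of the exceptional section \emph{and} a closed hemisphere $\C F_+$ of the fiber $F=\pi^{-1}(v)$ over a white vertex $v$ of the zigzag, chosen so that $L\cup\R E\cup\R F$ admits a type~$\mathrm{I}$ perturbation with a single component. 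Then $w_+=w_-=\tfrac12([X]+[E]+[F])=\tfrac12(4[E]+(3n+1)[F])$, whose square is $2n+2$; the geometric evaluation of the same product (via a tangent vector field, as in Rokhlin's argument) contributes $1$ for each of the $l$ boundary disks, $\mp2$ per positive/negative injective pair, and $\sigma(X)$ from the conjugate nodes---and it is precisely the presence of $\C F_+$ in the cycle that makes an oval enter an injective pair exactly when it lies on the same side as $z$. Without $E$ and $F$ in the cycle, neither the constant $2n+2$ nor the side-of-the-zigzag dichotomy can appear.

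The second gap concerns the ``furthermore'' identity. Your reduction of it to the relation $(\Lambda_{z}^{-}-\Lambda_{z}^{+})-(\Lambda_{\bar z}^{-}-\Lambda_{\bar z}^{+})=1$ is arithmetically correct, but you offer no proof of that relation; ``tracking how the complex orientation distributes signed ovals across the zigzag'' restates the claim rather than establishing it, and I see no direct combinatorial argument that avoids redoing the intersection count. The paper proves the equivalent statement by running the entire computation a second time with the opposite hemisphere $\C F_-$ of the fiber: the homological product is unchanged, but the boundary now has $l+2$ components and the injective pairs are formed by the ovals \emph{not} on the same side as $z$, giving $2[(\Lambda^{-}-\Lambda_{z}^{-})-(\Lambda^{+}-\Lambda_{z}^{+})]+l+2+\sigma(X)=2n+2$; adding this to the first identity yields the final formula. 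You would need to supply an argument of comparable substance for your surplus-$1$ relation.
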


\begin{proof}
The proof follows the standard scheme of the proof of Rokhlin's formulas of complex orientations. 
Considering instead of the curve $X$ its type $\mathrm{I}$ perturbation, we can assume that the real point set $\R X$ of $X$ is non-singular.
Since $X$ is a type $\mathrm{I}$ curve, the set $\C \widehat{X}\setminus\R \widehat{X}$ has two connected components.
Choose one of the connected components of $\C \widehat{X}\setminus\R \widehat{X}$ and denote by $\C X_+\subset\C \widehat{X}$ its closure.
The orientation of $\C X_+$ induces an orientation of $\R \widehat{X}=\R X$, since $\R X$ is non-singular. 
Then, the long component $L$ of $\R X$ produces {\it via} $\pi$ an orientation of $\RP$.
Let $\C E_+$ be the closed hemisphere of the exceptional section $E\simeq\CP$ such that the induced orientation of $\R E$ produces {\it via} $\pi$ the orientation of $\RP$ opposite  to the one produced by the long component of $\R X$. 
Pick a white vertex $v$ belonging to the zigzag $z$ and consider the corresponding fiber $F=\pi^{-1}(v)$. 
Let $\C F_+$ be the closed hemisphere of $F\simeq\CP$ such that $L\cup \R E\cup\R F$ endowed with the described orientations of $L$ and $\R E$ has a type $\mathrm{I}$ perturbation with a unique connected component.
Note that if we equip $\R F$ with the opposite orientation, the set $L\cup \R E\cup\R F$ has a type $\mathrm{I}$ perturbation with three connected components.

Let $Y_+$ be the type $\mathrm{I}$ perturbation of $\C X_+\cup\C E_+\cup\C F_+$. Then, the boundary $\partial Y_+$ is a collection $I$ of contractible circles in $\R\Sigma_n$.
Put 
\begin{align*}
 w_+ & = [Y_+\cup\bigcup_{i\in I}U_i]\in H_2(\Sigma_n), \\
 w_- & = [Y_-\cup\bigcup_{i\in I}\overline{U}_i]\in H_2(\Sigma_n),
\end{align*}
where $U_i \subset \R\Sigma_n$, $i\in I$, are the disks whose oriented boundaries coincide with the connected components of $\partial Y_+$,
the set $Y_-$ is $\operatorname{conj}(Y_+)$, and $\overline{U_i}$ is the disk $U_i$ with the opposite orientation. 

In order to calculate $w_+\cdot w_-$, let us notice that $w_++w_-=
[X] + [E] + [F]$ and $\operatorname{conj}_*(w_+)=-w_-$.
Since $\operatorname{conj}_*$ acts in $H_2(\Sigma_n)$ as multiplication by $-1$, we have $w_++w_- = w_+-\operatorname{conj}_*(w_+) = 2w_+$. 
Hence, $w_+ = w_- = \frac{1}{2}
([X] + [E] + [F])$.

Since $X\subset\Sigma_n$ is a proper trigonal curve, we have 
$[X]=3[E]+
3n[F]$.
Then, 
\begin{align*}
w_+\cdot w_- & =\left(\frac{
[X] + [E] + [F]}{2}\right)^2\\
&=\left(\frac{
4[E] + (3n+1)[F]}{2}\right)^2\\
&=-4n+2(3n+1)\\
&=2n+2. 
\end{align*} 

On the other hand, we can calculate the product $w_+\cdot w_-$ in a geometrical way. 
We proceed using the arguments 
proposed in \cite{Rok}. 
Choose a smooth tangent vector field $V$ on $\R\Sigma_n$ such that 
the vector field has only nondegenerate singular points, the singular points are outside 
of $\partial Y_+$, and on $\partial Y_+$ the field is tangent to $\partial Y_+$ and directed according 
to the orientation 
induced from $Y_+$.
Extend smoothly the vector field to $Y_+$ such that it is supported on a tubular neighborhood of $\partial Y_+$.
Shift $\partial Y_+$ inside $Y_+$ along 
$\varepsilon\sqrt{-1}V$, where $\varepsilon$ is a sufficiently small positive real number, 
and extend this shift to a shift of the disks $D_i$, $i\in I$, along 
$\varepsilon\sqrt{-1}V$. 
Denote by 
$\widetilde Y$ 
the result of the shift of $Y_+\cup\bigcup_{i\in I}D_i$. By continuity of the shift, we have that 
$[{\widetilde Y}]=w_+$.
Then, we can calculate $w_+\cdot w_-$ as the intersection of the cycles 
$\widetilde Y$ 
and $Y_-\cup\bigcup_{i\in I}\overline{D}_i$, which intersect at the singular points of $V$ and the complex nodes of $X$ that contribute to $\sigma(X)$. 
At a singular point $x$ they are smooth transversal two-dimensional submanifolds, each taken with multiplicity $ -i_{\mathbb{R}X}(x)$. 
The local intersection number at $ x$ is equal to $(i_{\mathbb{R}X}(x))^2$ multiplied by the local intersection number of the submanifolds supporting the cycles. 
The latter is equal to the index of the vector field $ V$ at $ x$ multiplied by $ -1$ due to the fact that multiplication by $ \sqrt{-1}$ induces an isomorphism between the tangent and the normal fibrations of $ \mathbb{R}X$ in 
$ \mathbb{C}X$ reversing orientation.
Calculating the local intersection numbers we have that every disk $U_i$, $i\in I$, contributes $1$.
For every positive (resp., negative) injective pair the contribution is $-2$ (resp., $2$).
The cardinality of $I$ is the number $l$ of connected components of $\R X$.
The number of positive (resp., negative) injective pairs in $\partial Y_+$ correspond to the number $\Lambda_{z}^+(X)$ (resp., $\Lambda_{z}^-(X)$ ) of positive (resp., negative) ovals of $X$ on the same side as the zigzag $z$. Therefore, 

\begin{equation}\label{eq:zzigzag}
2(\Lambda_{z}^-(X)-\Lambda_{z}^+(X))+l+\sigma(X)=2n+2.
\end{equation}

Now, let us consider the other closed hemisphere $\C F_-$ of $F$.
Let $Z_+$ be the type~$\mathrm{I}$ perturbation of $\C X_+\cup\C E_+\cup\C F_-$. Then, the boundary $\partial Z_+$ is a collection $J$ of contractible circles in $\R\Sigma_n$.
Put 
\begin{align*} 
u_+ & = [Z_+\cup\bigcup_{i\in J}U'_i]\in H_2(\Sigma_n),\\
u_- & = [Z_-\cup\bigcup_{i\in J}\overline{U}'_i]\in H_2(\Sigma_n),
\end{align*}
where $U'_i \subset \R\Sigma_n$, $i\in J$, are the disks whose oriented boundaries coincide with the connected components of $\partial Z_+$, the set $Z_-$ is $\operatorname{conj}(Z_+)$, and $\overline{U'_i}$ is the disk $U'_i$ with the opposite orientation. 

In order to calculate the homological product $u_+\cdot u_-$ we proceed as before. We have that $u_+ + u_- = [X] + [E] + [F]$ and $\operatorname{conj}_*(w_+)=-w_-$, which lead us to the fact that 
$u_+ =  u_- = \frac{1}{2}([X] + [E] + [F]).$ 
Therefore, we have $u_+\cdot u_-=w_+\cdot w_-=2n+2$.

The product $u_+\cdot u_-$ can be calculated geometrically exactly in the same way as above.
The cardinality of $J$ is $l+2$.
The number of positive (resp., negative) injective pairs in $\partial Z_+$ correspond to the number $\Lambda^+(X)-\Lambda_{z}^+(X)$ (resp., $\Lambda^-(X)-\Lambda_{z}^-(X)$ ) of positive (resp., negative) ovals of $X$ which are not on the same side as the zigzag $z$. Therefore, 

\begin{equation}\label{eq:nzzigzag}
2[(\Lambda^-(X)-\Lambda_{z}^-(X))-(\Lambda^+(X)-\Lambda_{z}^+(X))]+l+2+\sigma(X)=2n+2.
\end{equation}

Hence, from Equations \eqref{eq:zzigzag} and \eqref{eq:nzzigzag}
we obtain that 
$$\Lambda^-(X)-\Lambda^+(X)+(l-1)+\sigma(X)=2n.\qedhere$$
\end{proof}

\section{Generic quintic rational curves in $\RPP$}\label{ch:deg5}

A generic rational curve in $\RPP$ has only nodal singular points.
Let $X$ be a generic real rational curve of degree~$5$ in~$\RPP$.
A double point of $X$ is called \emph{imaginary} if it belongs to~$\C X\setminus \R X$.
Since $X$ is real, the imaginary nodal points come in pairs of complex conjugated points.
Topologically, the real point set ${\mathbb R} X \subset {\mathbb R}P^2$ of the curve~$X$ is the disjoint union of a circle generically immersed in $\RPP$ and a finite set of elliptic nodes (cf. \cite{IMR}). 

Let us denote by $e$ the number of elliptic nodal points and by $h$ the number of hyperbolic nodal points of the curve $X$. Then, we have $b_0(\R X)=1+e$ and~$b_1(\R X)=1+h$.
For the complex point set of a generic rational curve, the Betti numbers depend only on the degree of the curve, namely $b_0(\C X)=1=b_2(\C X)$ and $ b_1(\C X)=\frac{1}{2}(d-1)(d-2).$ Since $X$ is of degree~$5$, applying the Smith-Thom theorem we obtain that 
$b_{*}(\R X)=2+e+h\leq b_{*}(\C X)=2+6$. The number of real nodal points of the curve $X$ is an even number less or equal than $6$.

Assume that the curve $X$ has at least one real nodal point $p\in\R X$. Such a pair $(X,p)$ is called a {\it marked real rational plane curve}. Since the point~$p$ has multiplicity $\deg(X)-3$, after blowing up the point $p\in\CPP$, the pencil of lines passing through~$p$ determines a trigonal curve $C_1\subset \Sigma_1$.
The curve $C_1$ intersects the exceptional divisor in two different points $T_1$ and $T_2$, each one corresponding to a tangent line of the curve $X$ at the point~$p$ in one of the crossing branches. If the curve is generic, each tangent line $T_i$ intersects $X\setminus\{p\}$, the complement of the point~$p$, in two different points. Then, applying positive Nagata transformations at the points~$T_i$ produces a proper trigonal curve $C_X\subset\Sigma_3$ with two additional nodal points~$n_{T_1}$ and~$n_{T_2}$.

If the nodal point $p\in\R X$ is hyperbolic, the tangent lines to the curve $X$ at the point~$p$ are real, and they transform into real nodes. Since the point~$p$ is nodal, a real tangent $T_i$ intersects the curve $X$ at the point~$p$ with multiplicity $3$. If the line~$\R T_i$ intersects $\R X\setminus\{p\}$ in two different real points, the corresponding nodal point~$n_{T_i}\in C_X$ is a hyperbolic nodal point. 
Otherwise, the line $ T_i$ intersects ${\mathbb C}X$ at two non-real points and the corresponding nodal point $n_{T_i}$ is elliptic. 

If the base nodal point $p\in\R X$ is elliptic, the tangent lines to the curve $X$ at the point~$p$ are a couple of complex conjugated lines $T$ and $\overline{T}$, each one intersecting $\C X\setminus\R X$ in two different points. Hence, the corresponding nodal points~$n_{T}$ and~$n_{\overline{T}}$ are a pair of complex conjugated nodal points and their associated vertex~$v_{T}$ is an inner nodal {\tv}. 

\begin{df}
Given a marked real rational plane curve $(X,p)$ of degree~$5$, we call the {\it marked} dessin of $(X,p)$ the toile associated to the real proper trigonal curve $C_X\subset\Sigma_3$ endowed with two nodal {\tvs} $v_{T_1}$, $v_{T_2}$ corresponding to the nodal points $n_{T_1}$ and $n_{T_2}$, respectively.
\end{df}

\begin{figure}
\begin{center}
\begin{tabular}{cc}
\includegraphics[width=2.5in]{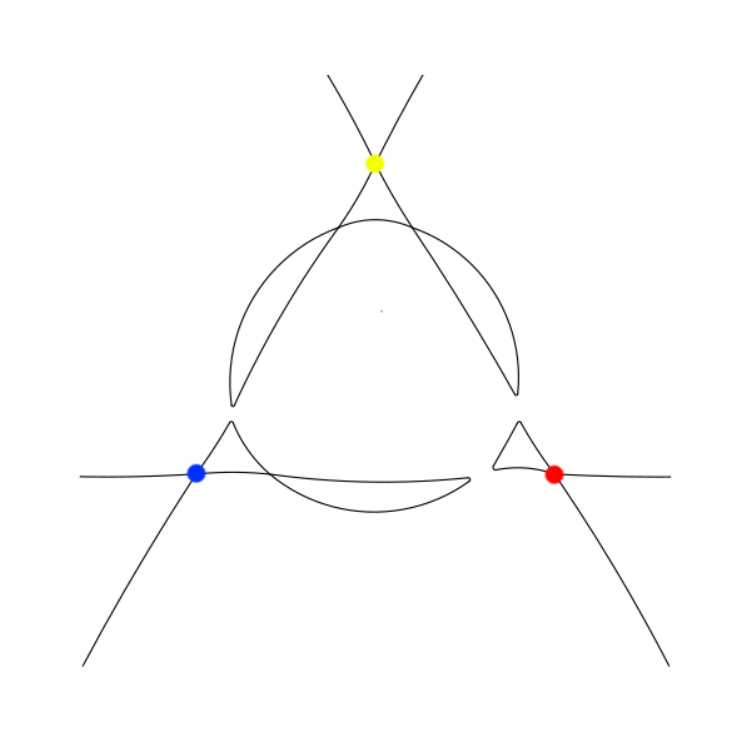}
&
\includegraphics[width=2.5in]{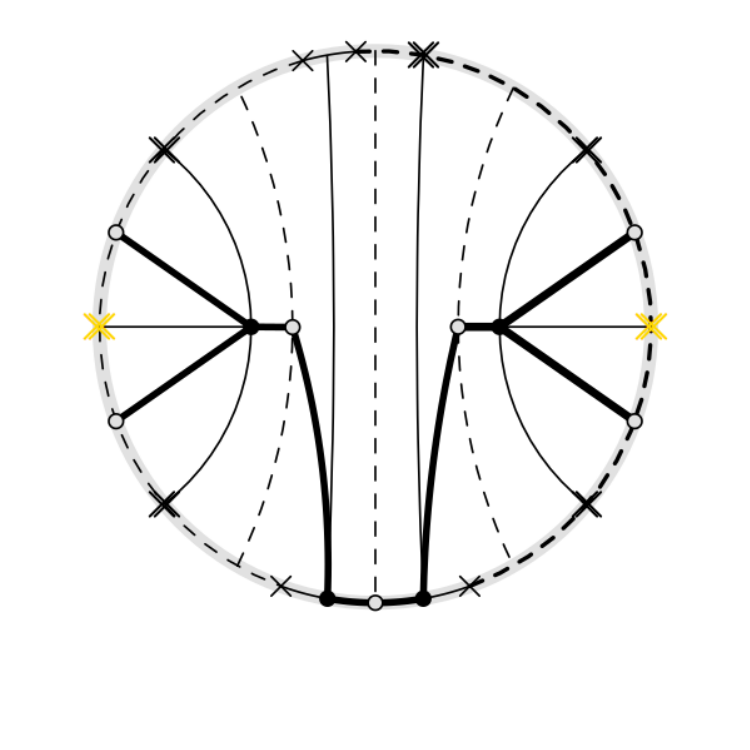}
\\
\includegraphics[width=2.5in]{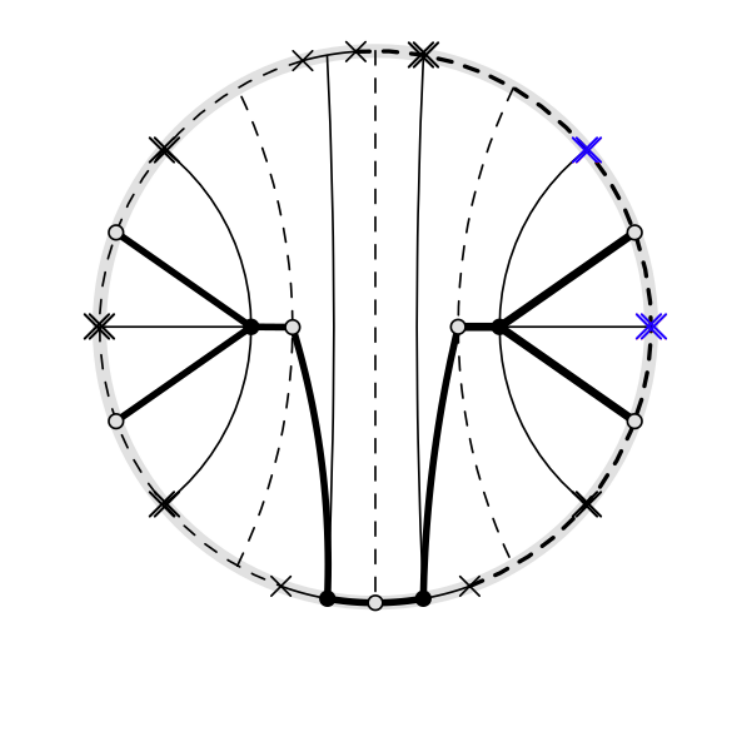}
&
\includegraphics[width=2.5in]{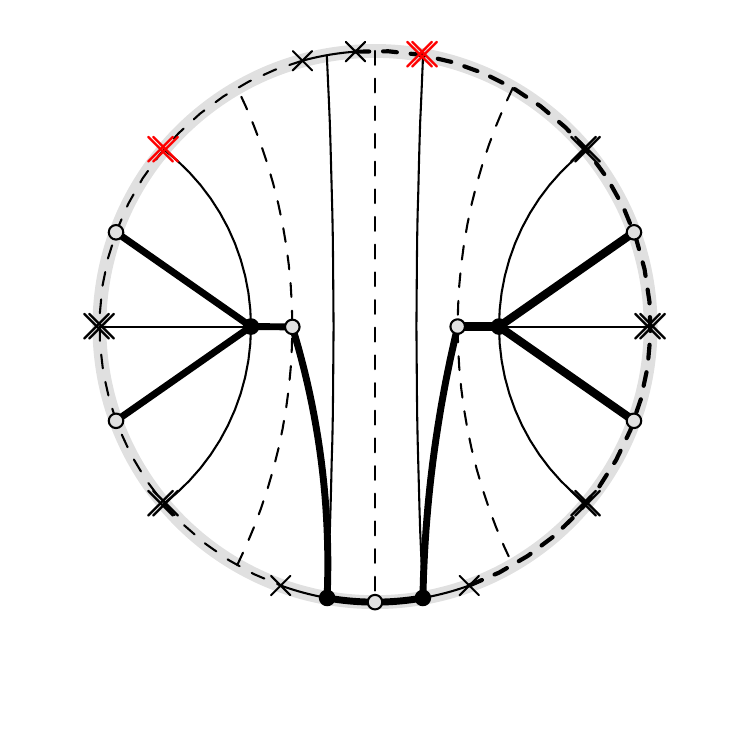}
\end{tabular} 
\end{center}
\caption{Example of a nodal rational curve in $\RPP$ with three different markings and their associated marked dessins.}
\end{figure}

On the other hand, given a marked degree~$9$ toile $(D,v_{T_1},v_{T_2})$, performing negative Nagata transformations of the associated trigonal curve $C_X\subset\Sigma_3$ at the nodes corresponding to the vertices $v_{T_1}$ and $v_{T_2}$ followed by the blow-down of the exceptional section of $\Sigma_1$, gives rise to a real degree~$5$ plane curve $X_D$ endowed with a nodal point $p\in\R X_D$.

\begin{df}
Let $D\subset S$ be a real dessin. Let us assume there is a subset of $S$ in which $D$ has a configuration of vertices and edges as in Figure~\ref{fig:xzigzag}. Replacing this configuration with the alternative one defines another dessin $D'\subset S$.
We say that~$D'$ is obtained from $D$ by {\it passing a nodal point through an inflexion point}.
Two dessins $D$, $D'$ are {\it very weakly equivalent} if there exists a finite sequence of dessins $D=D_0,D_1,\dots, D_n=D'$ such that $D_{i+1}$ is either weakly equivalent to $D_{i}$ or it is obtained from $D_i$ by passing a nodal point through an inflexion point.
\end{df}

\begin{figure}[h]
\begin{center}
\begin{tabular}{cc}
\includegraphics[width=1.5in]{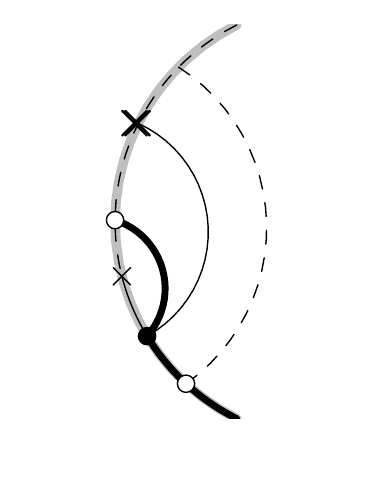}
&
\includegraphics[width=1.5in]{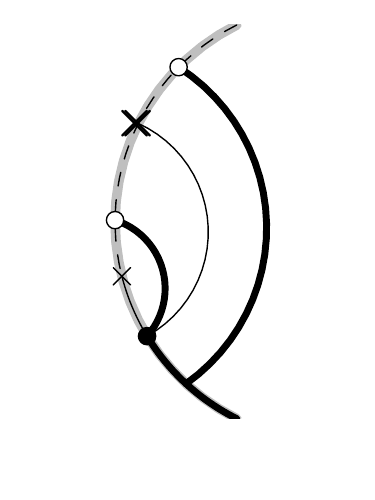}
\\
\includegraphics[width=2.5in]{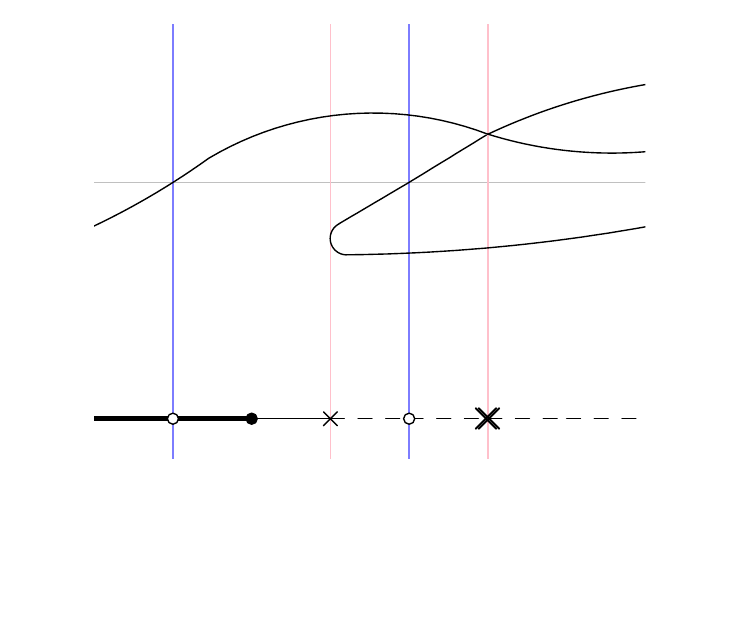}
&
\includegraphics[width=2.5in]{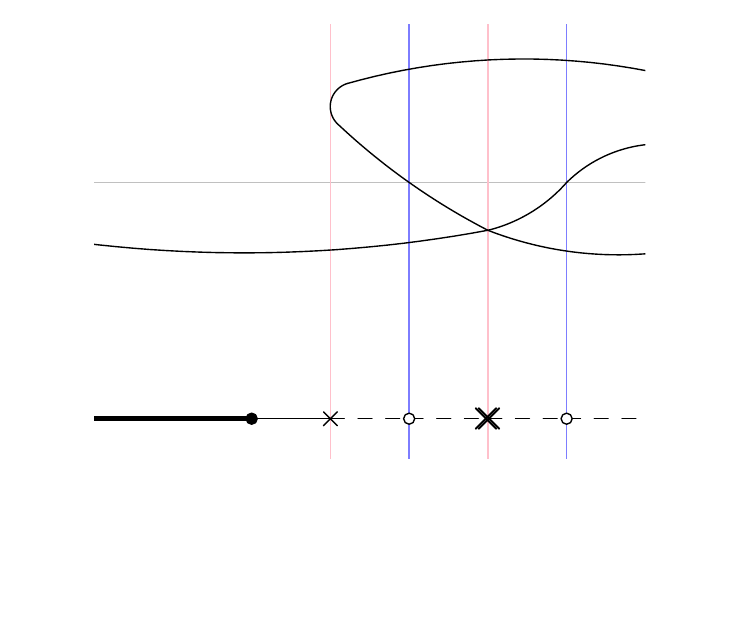}
\end{tabular} 
\end{center}
\caption{Passing a nodal point through an inflexion point. \label{fig:xzigzag}}
\end{figure}

\begin{df}
Let $D\subset S$ be a real dessin and let $v_{T}$ a marked nodal {\tv} of $D$.
Assume there is a neighborhood of $v_{T}$ in which $D$ has a configuration of vertices and edges as one in Figure~\ref{fig:marking}. Then, replacing this configuration with the corresponding alternative one defines another dessin $D'\subset S$.
We say that $D'$ is obtained from $D$ by {\it passing a singular fiber through a tangent line}.
Two marked dessins $(D,v_{T_1},v_{T_2})$, $(D',v_{T_1}',v_{T_2}')$ are {\it equivalent} if there exists a finite sequence of dessins $D=D_0,D_1,\dots, D_n=D'$ such that $D_{i+1}$ is either very weakly equivalent to $D_{i}$ or it is obtained from $D_i$ by passing a singular fiber through a tangent line.
\end{df}

\begin{figure}[h]
\begin{center}
\begin{subfigure}{\linewidth}
\centering
\begin{tabular}{cc}
\includegraphics[width=1.5in]{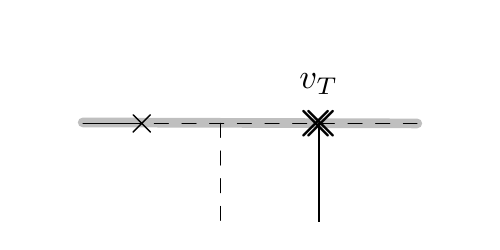}
&
\includegraphics[width=1.5in]{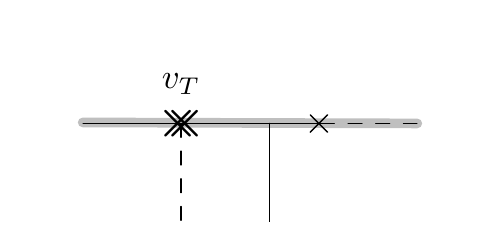}
\\
\includegraphics[width=2in]{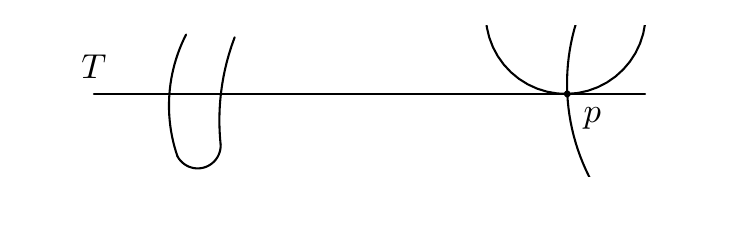}
&
\includegraphics[width=2in]{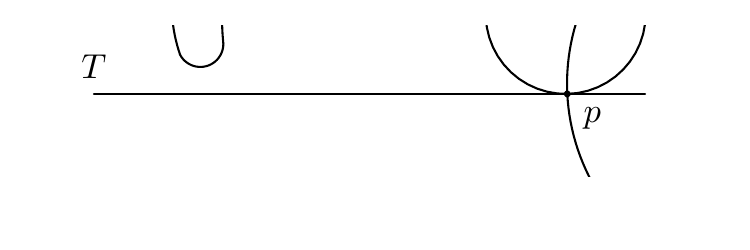}
\end{tabular} 
\caption{Passing a simple tangent through the tangent line $T$.}
\end{subfigure}

\begin{subfigure}{\linewidth}
\centering
\begin{tabular}{cc}
\includegraphics[width=1.5in]{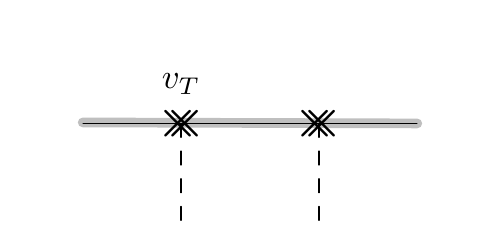}
&
\includegraphics[width=1.5in]{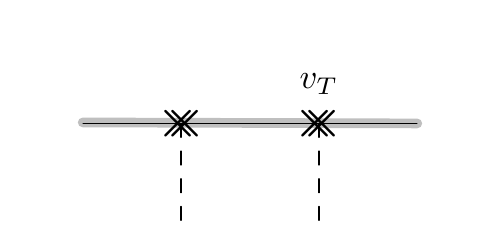}
\\
\includegraphics[width=2in]{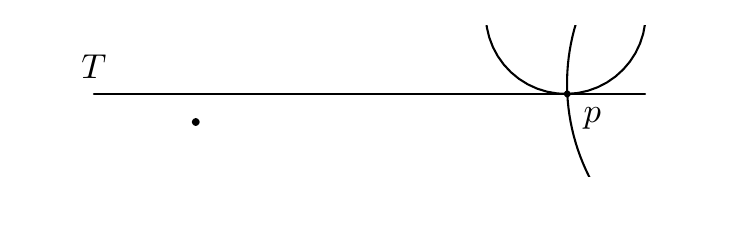}
&
\includegraphics[width=2in]{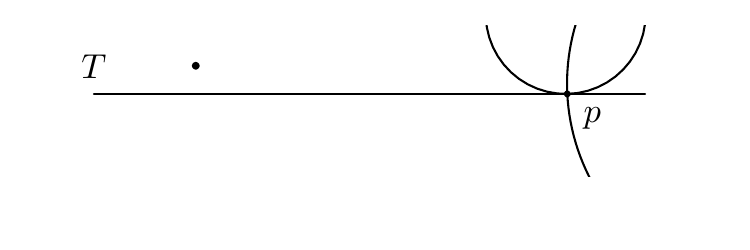}
\end{tabular} 
\caption{Passing an elliptic nodal point of the curve through the tangent line $T$.}
\end{subfigure}

\begin{subfigure}{\linewidth}
\centering
\begin{tabular}{cc}
\includegraphics[width=1.5in]{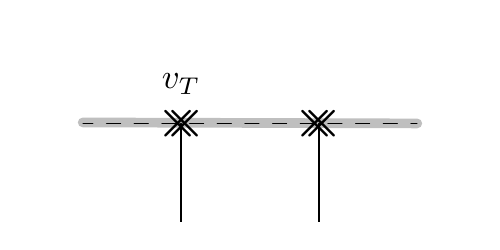}
&
\includegraphics[width=1.5in]{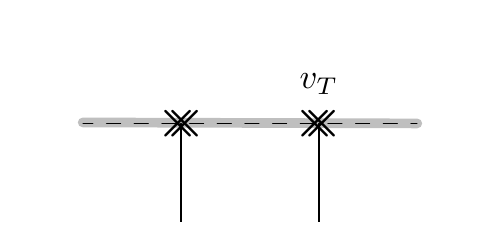}
\\
\includegraphics[width=2in]{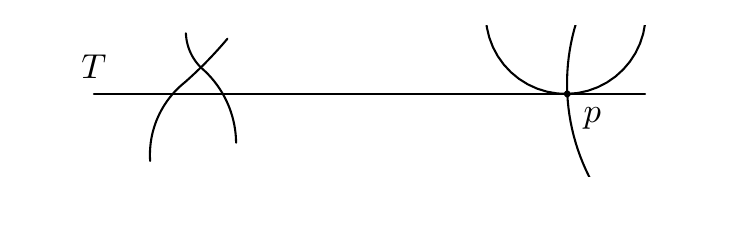}
&
\includegraphics[width=2in]{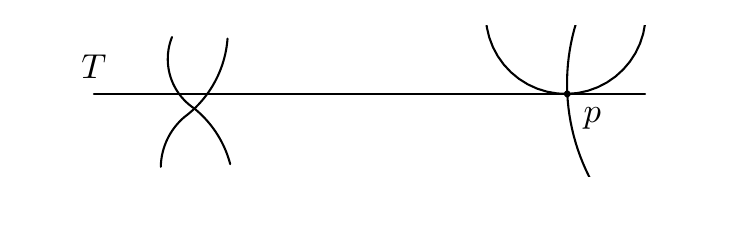}
\end{tabular} 
\caption{Passing a hyperbolic nodal point of the curve through the tangent line $T$.}
\end{subfigure}
\end{center}
\caption{Passing a singular fiber through a tangent line.\label{fig:marking}}
\end{figure}

In Figure~\ref{fig:marking} we have the possible equisingular deformations of the curve when the tangent line $T$ is a bitangent of $X$ or when the tangent line $T$ intersects one additional nodal point (hyperbolic or elliptic).

When the base point $p\in X$ is an inflection point of one branch of $\R X$,
passing a marked nodal point through the inflexion point $p$ defines a relation corresponding to the equisingular perturbations of the curve, as shown in Figure~\ref{fig:xmark}.

\begin{figure}[h]
\begin{center}
\begin{tabular}{cc}
\includegraphics[width=1.5in]{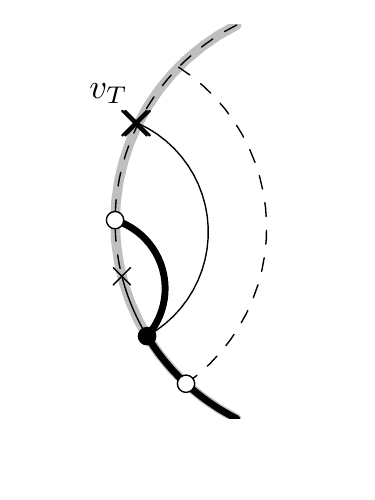}
&
\includegraphics[width=1.5in]{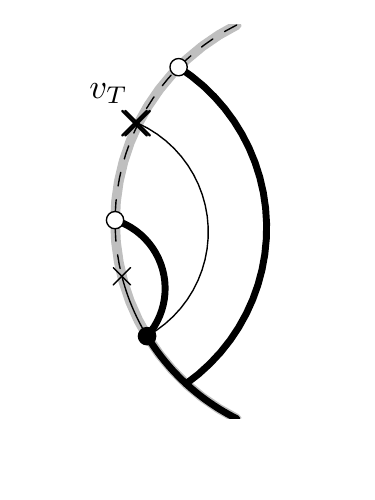}
\\
\includegraphics[width=2.5in]{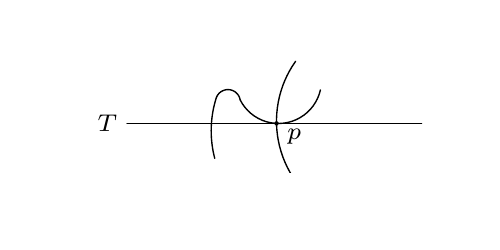}
&
\includegraphics[width=2.5in]{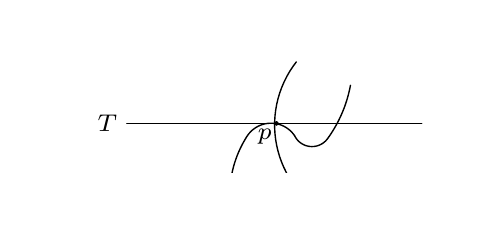}
\end{tabular} 
\end{center}
\caption{Passing a marked nodal point through an inflexion point. \label{fig:xmark}}
\end{figure}

\begin{thm}
There is a one-to-one correspondence between the set of rigid isotopy classes of marked nodal rational curves of degree~$5$ in $\RPP$ and the set of equivalence classes of marked toiles of degree~$9$ having~$6$ nodal {\tvs} and exactly one set of type~I labelling.
\end{thm}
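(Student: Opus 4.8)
The plan is to exhibit the correspondence as a pair of mutually inverse maps and to check that each descends to the quotient by the respective equivalence relations. In the forward direction I would send a marked curve $(X,p)$ to the marked toile $\Dssn_c(C_X)$ produced by the construction already described: blowing up $p$ yields a trigonal curve $C_1\subset\Sigma_1$, and positive Nagata transformations at the two points $T_1,T_2$ where $C_1$ meets the exceptional section produce a proper trigonal curve $C_X\subset\Sigma_3$, whose associated real dessin, marked at the vertices $v_{T_1},v_{T_2}$ coming from the Nagata nodes, is the image object. Its degree is $-3E^2=9$ since $C_X\subset\Sigma_3$. Because the normalization of a real rational curve with real points is $\CP$, which $\RP$ separates, $X$ and hence $C_X$ is of type~$\mathrm{I}$; by Theorem~\ref{df:type} the regions of $\Dssn_c(C_X)$ then admit a type~$\mathrm{I}$ labelling, and the essential uniqueness of the complex orientation of a rational curve (two choices swapped by conjugation) is what yields \emph{exactly one} such labelling.

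Well-definedness of the forward map is where the correspondence theorems do the work. A rigid isotopy of $(X,p)$ is a path in the space of marked nodal quintics avoiding all non-nodal degenerations; blowing up and transforming along the path gives an equivariant equisingular deformation of the quintuple $(\pi\colon\Sigma_3\to\CP,E,C_X)$. By the extension of Theorem~\ref{th:correpondance2} (and of its almost-generic refinement Theorem~\ref{th:correpondance3}) to nodal trigonal curves, together with Corollary~\ref{coro:defR}, such a deformation corresponds to a sequence of isotopies, equisingular perturbations and degenerations of the associated toiles, i.e.\ to elementary and weak equivalences. The only extra phenomena are the codimension-one events in which a tangent line $T_i$ meets a further singular fiber, or the base node $p$ crosses an inflection of its branch: these are exactly the situations packaged into the moves \emph{passing a singular fiber through a tangent line} and \emph{passing a nodal point through an inflexion point}, so the image is well defined on equivalence classes of marked toiles.

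For the backward map I would start from a marked degree~$9$ toile $(D,v_{T_1},v_{T_2})$ with the prescribed number of nodal vertices and a unique type~$\mathrm{I}$ labelling, invoke Corollary~\ref{coro:EjR} and Corollary~\ref{coro:Etc} to realize $D$ as $\Dssn_c$ of a real proper trigonal curve $C_X\subset\Sigma_3$, choose the type~$\mathrm{I}$ model (possible precisely because of the labelling, cf.\ Proposition~\ref{prop:sigmatrig}), perform negative Nagata transformations at the nodes $n_{T_1},n_{T_2}$ to descend to $C_1\subset\Sigma_1$, and blow down the exceptional section to obtain a plane quintic $X_D$ with a distinguished real node $p$. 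The genus count forces $X_D$ to be rational, and the uniqueness of the labelling guarantees it is irreducible of type~$\mathrm{I}$ rather than a spurious reducible configuration. Corollary~\ref{coro:defR} again shows that equivalent toiles give equivariantly equisingularly deformation-equivalent curves $C_X$, hence rigidly isotopic marked quintics, so the backward map descends to equivalence classes.

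That the two maps are mutually inverse follows because the geometric operations involved are inverse to one another: positive and negative Nagata transformations undo each other, as do the blow-up at $p$ and the blow-down of the exceptional section of $\Sigma_1$, while the trigonal-curve/toile dictionary of Theorems~\ref{th:correpondance2}--\ref{th:correpondance3} is a bijection on the relevant deformation classes. I expect the main obstacle to be the precise \emph{wall-crossing analysis} underlying well-definedness: one must enumerate every codimension-one degeneration that a rigid isotopy of the marked quintic can traverse---bitangencies, a tangent line acquiring an extra node, inflections passing the marking, collisions of the two Nagata fibers---and match each bijectively with one of the admissible combinatorial moves on marked toiles, checking in particular that the ambiguity in the marking is resolved by exactly the move \emph{passing a singular fiber through a tangent line}. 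The bookkeeping relating the nodal vertices of the marked toile to the six nodes of the quintic---with the base node $p$ accounted for by the marking data---and the verification that the type~$\mathrm{I}$ labelling survives each move, are the delicate points that make this step the heart of the argument.
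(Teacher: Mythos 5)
Your proposal is correct and follows essentially the same route as the paper: the forward and backward maps via blow-up at $p$ and Nagata transformations, the reduction to the trigonal-curve/dessin correspondence of Theorem~\ref{th:correpondance2}, and the identification of the two extra moves (passing a nodal point through an inflexion point, passing a singular fiber through a tangent line) with the equivariant equisingular deformations that do not change the rigid isotopy class. The paper's own proof is just a terse summary of exactly these ingredients, so your more detailed write-up is an expansion of the same argument rather than a different one.
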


\begin{proof}
The combinatorial restrictions on the dessins imply they represent irreducible curves of genus~$0$ in~$\RPP$.
The correspondence between equivalence classes 
 is a consequence of Theorem \ref{th:correpondance2}
and the fact that passing a nodal point through an inflexion point and passing a singular fiber through a tangent line are equivariant equisingular deformations that do not change the rigid isotopy class.
\end{proof}

\begin{df}
Given a nodal type~$\mathrm{I}$ dessin $D$, we call the type~$\mathrm{I}$ {\it perturbation} of $D$ the dessin obtained by perturbing every non-marked isolated nodal {\tv} into an oval and every non-isolated nodal vertex either into an inner simple {\tv} if its labeling is $1$ or into two real simple {\tvs} if its label is $\overline{1}$.
\end{df}

\subsection{Maximally perturbable curves}

\begin{df} 
A real algebraic curve $C$ in ${\mathbb R}P^2$ is \emph{maximally perturbable} if there is a real perturbation $C_{0}$ of~$C$ such that $C_0$ is an $M$-curve.
\end{df}

Recall that a non-singular $M$-quintic in $\RPP$ has six ovals in a convex position (see Figure~\ref{fig:nonsingM}). They form a hexagon in the following way:
given two points $p_1, p_2\in\RPP\setminus \R X$, we can define the \emph{segment} $[p_1,p_2]$ between $p_1$ and $p_2$ as the connected component of $L\setminus\{p_1,p_2\}$ with even intersection with $\R X$, where $L$ is the line passing through $p_1$ and $p_2$.
As a Corollary of B\'ezout theorem, there do not exist points $p_1, p_2, \dots, p_6$, each at the interior of a different oval, such that 
$[p_1,p_2]\cup[p_2,p_3]\cup\dots\cup[p_5,p_1]$ forms a contractible circle in $\RPP$ whose interior contain $p_6$.

If we fix a point~$p$ at the interior of an oval, the pencil of lines passing through~$p$ induces an order on the other five ovals. Varying the fix point among the ovals, we can assign to every oval two neighboring ones, defining a reversible cyclic order on the six ovals.
If a type~$\mathrm{I}$ maximally perturbable curve~$C$ has an elliptic nodal point, the oval produced by it in a type~$\mathrm{I}$ perturbation of~$C$ must respect its relative position with respect to its neighboring ovals, and so does the elliptic nodal point of the original curve.

\begin{figure}[h] 
\centering
\includegraphics[width=2in]{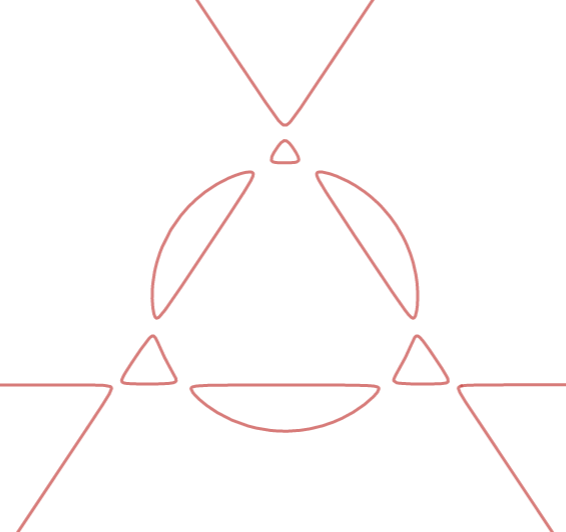}
\caption{The rigid isotopy class of non-singular $M$-quintic curves.}
\label{fig:nonsingM}
\end{figure}

\begin{lm} \label{lm:maxnodes}
If a nodal rational curve in $\RPP$ is maximally perturbable, then it is maximal.
\end{lm}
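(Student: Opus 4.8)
The plan is to reduce the statement to a bookkeeping of total $\Z/2\Z$-Betti numbers under perturbation of the nodes, using the Smith--Thom inequality (Theorem~\ref{thm:sti}) as the only nontrivial input. Write $e$ and $h$ for the numbers of elliptic and hyperbolic real nodes of $X$, and let $c$ be the number of its imaginary nodes. Since a rational quintic has exactly six nodes and the imaginary ones come in conjugate pairs, $e+h+c=6$ with $c$ even, so in particular $e+h\le 6$. Recall from the beginning of Section~\ref{ch:deg5} that $b_{*}(\R X)=2+e+h$ and $b_{*}(\C X)=8$; thus $X$ is \emph{maximal}, i.e.\ $b_{*}(\R X)=b_{*}(\C X)$, precisely when $e+h=6$ (equivalently $c=0$). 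On the other end, a non-singular $M$-quintic $C_0$ is a genus $6$ curve, so $b_{*}(\R C_0)=b_{*}(\C C_0)=2+2\cdot 6=14$.

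The key step is a local estimate for the change of $b_{*}(\R\,\cdot\,)$ when a single node is smoothed. Because a perturbation of $X$ to a non-singular curve smooths all six nodes and is supported in disjoint neighbourhoods of them, I would compute the effect one node at a time. Smoothing an imaginary node together with its conjugate takes place in neighbourhoods disjoint from $\R X$, hence leaves the real point set unchanged and contributes $0$ to $b_{*}(\R\,\cdot\,)$. Smoothing a hyperbolic node resolves one transverse self-crossing of the immersed circle $\R X$, so exactly one of $b_0$ or $b_1$ changes by one; smoothing an elliptic node replaces an isolated real point by a small oval or by the empty set, again changing exactly one of $b_0$, $b_1$ by one. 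In either real case the contribution to $b_{*}(\R\,\cdot\,)$ is $\pm 1$. Summing over the six nodes yields, for \emph{every} perturbation $C_0$ of $X$, the upper bound
\begin{equation*}
b_{*}(\R C_0)\ \le\ (2+e+h)+(e+h)\cdot 1+c\cdot 0\ =\ 2+2(e+h).
\end{equation*}

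I would then combine the two ends. If $X$ is maximally perturbable, choose a perturbation $C_0$ that is an $M$-curve; then $14=b_{*}(\R C_0)\le 2+2(e+h)$ forces $e+h\ge 6$, and together with $e+h\le 6$ this gives $e+h=6$, hence $c=0$. Therefore $b_{*}(\R X)=2+6=8=b_{*}(\C X)$, so $X$ is maximal. Equivalently, the estimate says that the defects of the corresponding $(M-a)$-curves satisfy $a(C_0)\ge 2\,a(X)\ge 0$, so an $M$-perturbation ($a(C_0)=0$) forces $a(X)=0$.

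The main obstacle is making the local count rigorous rather than the arithmetic, which is immediate. One must justify that a real perturbation to a non-singular curve is, up to isotopy, an independent smoothing at each node realized by the stated local models, and in particular that each of the two admissible real smoothings of a hyperbolic or an elliptic node changes $b_{*}(\R\,\cdot\,)$ by exactly $\pm1$, while an imaginary conjugate pair, being smoothed in a region avoiding $\R X$, changes it by $0$. This uses only the standard description of the real morsifications $xy=\pm\varepsilon$ and $x^2+y^2=\pm\varepsilon$ of a node together with the fact that the perturbation is supported near the nodes, so the remainder of the argument is the displayed inequality above.
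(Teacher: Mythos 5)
Your proof is correct and follows essentially the same strategy as the paper's: both bound the topology of the $M$-perturbation in terms of $e+h$ by localizing the smoothing at the nodes (your node-by-node estimate $b_{*}(\R C_0)\le 2+2(e+h)$ is exactly the paper's count $l\le e+h+1$ of connected components, written in terms of total Betti numbers), and then conclude that $e+h$ is forced to its maximal value. The only minor differences are that you specialize to degree $5$ whereas the paper's proof is written for arbitrary degree $d$ (your argument generalizes verbatim with $6$ replaced by $\frac{(d-1)(d-2)}{2}$), and that you close the argument with the node count $e+h+c=6$ where the paper instead invokes the Smith--Thom inequality to upgrade $b_{*}(\R C)\ge b_{*}(\C C)$ to an equality.
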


\begin{proof}
Let~$C$ be a real nodal rational plane curve of degre $d$. 
In the type~$\mathrm{I}$ perturbation~$\widetilde{C}$ of the curve~$C$ every oval comes either from an elliptic nodal point or from an oval attached by a hyperbolic nodal point to a chain of ovals, which in the case when $d$ is odd, is subsequently attached to the pseudoline.
Therefore, the number $l$ of connected components of~$\widetilde{C}$ is at most the sum $e+h+1$. Since~$\widetilde{C}$ is maximal, it has $ \frac{(d-1)(d-2)}{2}+1$ connected components of the real point set. Thus, the total Betti number $$\displaystyle b_{*}(\R C)=2+e+h\geq 2+\frac{(d-1)(d-2)}{2}=b_{*}(\C C).$$ By the Smith-Thom inequality these quantities are in fact equal and the curve~$C$ is maximal.
\end{proof}

\begin{coro}
Let~$C$ be a maximal nodal rational curve of degree~$5$ in $\RPP$. Assume~$C$ has a hyperbolic nodal point $p\in\R C$. Then, the corresponding proper trigonal curve~$\overline{C}\subset\Sigma_3$ is maximal.
\end{coro}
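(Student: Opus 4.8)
The plan is to establish maximality of $\overline{C}$ by a direct count of total Betti numbers, exactly in the spirit of the proof of Lemma~\ref{lm:maxnodes}. The crux is to track what happens to the singular points of $C$ as one passes to the proper trigonal model $\overline{C}=C_X\subset\Sigma_3$, and to check that \emph{all} nodes of $\overline{C}$ are real.

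First I would recall that a generic rational quintic has exactly $\frac{(5-1)(5-2)}{2}=6$ nodes, and that the maximality of $C$ forces each of them to be real: indeed $b_{*}(\R C)=2+e+h\le 2+6=b_{*}(\C C)$, so equality in Smith--Thom means $e+h=6$. Next I would follow the construction recalled at the start of Section~\ref{ch:deg5}. Blowing up the marked point $p$ resolves the node at $p$, whereas the five remaining nodes of $C$ lie on generic fibers and survive unchanged in the strict transform $C_1\subset\Sigma_1$, remaining real. The two positive Nagata transformations at $T_1,T_2$ then introduce the two additional nodal points $n_{T_1},n_{T_2}$; because $p$ is \emph{hyperbolic}, the tangent lines $T_1,T_2$ are real, hence so are $n_{T_1}$ and $n_{T_2}$. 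Thus $\overline{C}$ is a rational curve, real-birational to $C$ with normalization $\CP$, carrying exactly $5+2=7$ nodes, all of them real.

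Once this bookkeeping is in place, maximality is immediate. As $\overline{C}$ is rational with $7$ nodes, $b_{*}(\C\overline{C})=2+7=9$. For the real part, since $\overline{C}$ is real-birational to the odd-degree curve $C$ --- which possesses a pseudoline and therefore has $\R\widehat{C}=\RP=S^1$ --- one has $\R\widehat{\overline{C}}=S^1$ as well. Writing $h'$ and $e'$ for the numbers of hyperbolic and elliptic nodes, the same computation as for plane curves (the hyperbolic nodes identify pairs of points of $S^1$, the elliptic ones contribute isolated points) yields $b_{0}(\R\overline{C})=1+e'$ and $b_{1}(\R\overline{C})=1+h'$, so $b_{*}(\R\overline{C})=2+e'+h'=2+7=9$. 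The equality $b_{*}(\R\overline{C})=b_{*}(\C\overline{C})$, together with Theorem~\ref{thm:sti}, shows that $\overline{C}$ is maximal.

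The step demanding the most care --- and the main obstacle --- is the node bookkeeping and the accompanying realness claim: one must confirm that the blow-up removes precisely the single node at $p$ while leaving the other five untouched, that the Nagata transformations contribute exactly two new nodes, and that the hyperbolicity of $p$ is exactly what makes those two nodes real (in the elliptic case they would instead form a complex-conjugate pair, as in Corollary~\ref{coro:nodalpairs}). A secondary point to spell out is that the formula $b_{*}(\R\overline{C})=2+e'+h'$, derived in Section~\ref{ch:deg5} for plane quintics, depends only on the normalization and the nodes and so carries over verbatim to a curve lying in $\Sigma_3$.
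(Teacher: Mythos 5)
Your argument is correct and follows essentially the same route as the paper: both proofs reduce to the bookkeeping $e'+h'=e+(h-1)+2=7$ (the node at $p$ is resolved by the blow-up, the two Nagata centres contribute two real nodes because $p$ is hyperbolic), and then compare $b_{*}(\R\overline{C})=2+7$ with $b_{*}(\C\overline{C})=9$. The only cosmetic difference is that you obtain $b_{*}(\C\overline{C})$ from the count of nodes of the rational curve, while the paper cites the genus $7$ of a smooth trigonal curve on $\Sigma_3$ — the same number either way.
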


\begin{proof}
Since~$C$ is a maximal nodal rational curve of degree~$5$, we have that $ e+h=6$.
Let $e'$ and $h'$ the numbers of elliptic and hyperbolic nodal points of the curve $\R \overline{C}$.
Due to the fact that the base point~$p$ is hyperbolic, the nodal points $n_{T_1}$ and $n_{T_2}$ corresponding to the tangent lines of~$C$ at $p$ are real. The base point~$p$ disappears on the strict transform, hence $e'+h'=e+(h-1)+2=7$.
Since $\overline{C}$ is obtained from~$C$ by birational transformations, it is rational. Thus, the total Betti number of the real point set $b_{*}(\R C)=2+e'+h'=9$. On the other hand, since a non-singular trigonal curve lying on $\Sigma_3$ has genus $7$, the total Betti number of the complex point set $b_{*}(\C C)$ is equal to $9$.
\end{proof}

\begin{lm}\label{lm:maxpert}
Let~$C$ be a maximally perturbable nodal rational curve of degree~$5$ in $\RPP$. Assume that~$C$ has a hyperbolic nodal point $p\in\R C$ belonging to the pseudoline of a type~$\mathrm{I}$ perturbation of~$C$. Then, the type~$\mathrm{I}$ perturbation of the marked toile~$(D_C,v_{T_1},v_{T_2})$ associated to $(C,p)$ is a non-singular maximal toile.
\end{lm}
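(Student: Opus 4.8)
The plan is to transfer everything to the trigonal curve $C_X\subset\Sigma_3$ associated with $(C,p)$ and to control separately the non-singularity and the maximality of its type $\mathrm I$ perturbation. First I would record the structure forced by the hypotheses. By Lemma~\ref{lm:maxnodes} the maximally perturbable curve $C$ is maximal, so $b_*(\R C)=2+e+h=8=b_*(\C C)$; hence $e+h=6$ and $C$ has no imaginary node. As $C$ is rational, its normalization is $\CP$ and $\C\widehat C\setminus\R\widehat C\cong\CP\setminus\RP$ is disconnected, so $C$, and therefore the trigonal curve $C_X$, is of type $\mathrm I$; in particular $D_C$ is a type $\mathrm I$ toile and its type $\mathrm I$ perturbation is defined. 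By the Corollary preceding this lemma, the hyperbolic node $p$ makes $C_X\subset\Sigma_3$ maximal, with $e'+h'=7$ real nodes; five of the corresponding nodal $\times$-vertices come from the unmarked nodes of $C$ and the remaining two are the marked vertices $v_{T_1},v_{T_2}$.

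Next I would analyze the type $\mathrm I$ perturbation of $(D_C,v_{T_1},v_{T_2})$. The five unmarked real nodal vertices are perturbed automatically (each isolated one into an oval, each non-isolated one into an inner simple or a pair of real simple $\times$-vertices), so the only vertices that could survive as singular are the marked ones; by definition a marked nodal vertex is perturbed exactly when it is non-isolated. Hence the perturbation is non-singular if and only if both $v_{T_1}$ and $v_{T_2}$ are non-isolated, i.e. both $n_{T_1},n_{T_2}$ are hyperbolic nodes of $C_X$. Proving this is the main obstacle. I would argue through the pencil of lines based at $p$, which realizes the trigonal projection: a generic line through the double point $p$ meets $C\setminus\{p\}$ in three points, while the tangent line $T_i$ to a branch at $p$ meets $C$ there with multiplicity three and so meets $C\setminus\{p\}$ in only two points, which coalesce into $n_{T_i}$. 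Thus $n_{T_i}$ is hyperbolic precisely when these two points are real. Using that $p$ lies on the non-contractible pseudoline together with the convex ($M$-quintic) position of the six ovals of a maximal perturbation, I would show that neither tangent direction sends the third sheet off into a complex-conjugate pair, so each $T_i$ is a real secant meeting $\R C$ in two further real points and both $n_{T_i}$ are hyperbolic.

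Granting non-singularity, the type $\mathrm I$ perturbation of the marked toile coincides with the ordinary type $\mathrm I$ perturbation $\widetilde{C_X}$ of $C_X$, a smooth trigonal curve in $\Sigma_3$ of genus $7$, so $b_*(\C\widetilde{C_X})=16$. For maximality I would count the real components from the normalization, exactly as in the proof of Lemma~\ref{lm:maxnodes}: the real normalization is a single circle $\RP$, the $e'$ elliptic nodes open into $e'$ ovals, and each of the $h'$ hyperbolic crossings is resolved, along the complex orientation carried by the type $\mathrm I$ curve, by the separating smoothing that splits off one new component. This bounds, and for the maximal curve $C_X$ attains, the count $N=e'+h'+1=8$; since $8=g(\widetilde{C_X})+1$ this means $b_*(\R\widetilde{C_X})=16=b_*(\C\widetilde{C_X})$, so by Smith--Thom $\widetilde{C_X}$ is an $M$-curve. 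Translating back through the correspondence of Theorem~\ref{th:correpondance2}, the type $\mathrm I$ perturbation of $(D_C,v_{T_1},v_{T_2})$ is a non-singular maximal toile. I expect the verification that both marked nodes are hyperbolic to be the genuine difficulty: the parity of the intersection of $T_i$ with the pseudoline is inconclusive, so the convex configuration of the ovals of a maximal perturbation must be used.
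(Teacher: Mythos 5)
Your overall strategy (pass to the trigonal curve, count components, invoke Smith--Thom) is close to the paper's, but the step you yourself flag as ``the main obstacle'' is a genuine gap, and moreover it is the wrong thing to try to prove. You reduce non-singularity of the type~$\mathrm{I}$ perturbation to the claim that both tangent lines $T_1$, $T_2$ at $p$ meet $\C C\setminus\{p\}$ in a pair of \emph{real} points, i.e.\ that both $n_{T_1}$, $n_{T_2}$ are hyperbolic and both marked vertices non-isolated. You never carry this out (``I would show\dots'', ``I expect the verification\dots to be the genuine difficulty''), and it is false in general: since $T_i\cdot C=5$ with multiplicity $3$ concentrated at $p$, the residual intersection consists of two points that may perfectly well be complex conjugate, and the paper's proof explicitly treats this case --- ``if one of the tangents $T_i$ produces an isolated nodal $\times$-vertex $v_{T_i}$, it is perturbed to an oval'' --- so the author does not believe, and does not need, your dichotomy. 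The paper's route is different at exactly this point: it perturbs the five unmarked nodes of $C$ to get five ovals, then either (a) a marked vertex is isolated, it becomes an oval, and a parity argument for type~$\mathrm{I}$ curves forces seven ovals in total, or (b) both marked vertices are non-isolated, they lie on the long component (rationality of the trigonal curve), and a circle with two crossings perturbs into three components, again giving seven ovals. Non-singularity is then immediate from the fact that all nodes of $C$ and the base point are real, so $D_C$ has no \emph{inner} singular vertices --- no statement about isolatedness of $v_{T_1}$, $v_{T_2}$ is needed.

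A secondary weakness: your maximality count asserts that the type~$\mathrm{I}$ smoothing of the $h'$ hyperbolic crossings ``attains'' the bound $e'+h'+1=8$ components without justification; this attainment is exactly what the paper's case analysis of the long component establishes. So the proposal as written does not close either of its two main steps, and the first one is aimed at a claim that fails for some of the curves covered by the lemma.
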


\begin{proof}
Let us consider the curve $C_0$ obtained by a type~$\mathrm{I}$ perturbation of every nodal point of~$C$ except for the base point~$p$. The curve $C_0$ has five ovals and such ovals appears in the type~$\mathrm{I}$ perturbation of the marked toile $(D_C,v_{T_1},v_{T_2})$ associated to $(C,p)$.
If one of the tangent $T_i$ produces an isolated nodal {\tv} $v_{T_i}$, it is perturbed to an oval. Since the number of ovals of a type~$\mathrm{I}$ curve has the same parity as a maximal curve, the number of ovals of $(D_C,v_{T_1},v_{T_2})$ is seven.
Otherwise the tangent lines $T_1$ and $T_2$ produce non-isolated nodal {\tvs}.
Since the original trigonal curve $\overline{C}\subset\Sigma_3$ is rational, the hyperbolic nodes $n_{T_1}$ and $n_{T_2}$ belong to the long component. Topologically, the long component is a circle with two simple crossings, whose type~$\mathrm{I}$ perturbation has $3$ components. Therefore, the type~$\mathrm{I}$ perturbation of the marked toile $(D_C,v_{T_1},v_{T_2})$ has one long component and seven ovals.
Since all singularities of the original curve~$C$ are real and the base point~$p$ is real, the toile~$D_C$ has no inner singular vertices.
\end{proof}

\begin{df} 
Given a type~$\mathrm{I}$ marked toile $(D,v_1,v_2)$, we call a {\it true oval} a dotted segment of $D$ such that after a type~$\mathrm{I}$ perturbation of every vertex other than $v_1$ and $v_2$, the connected components of $S\setminus\{v_1,v_2\}$ have an even number of white vertices.
\end{df}

\begin{coro} \label{coro:lpert}
Let~$C$ be a nodal real rational curve of degree~$5$ in $\RPP$ whose type~$\mathrm{I}$ perturbation has $l$ ovals. Assume that~$C$ has a hyperbolic nodal point $p\in\R C$ belonging to the pseudoline of the type~$\mathrm{I}$ perturbation of~$C$. Then, the type~$\mathrm{I}$ perturbation of the marked toile $(D_C,v_{T_1},v_{T_2})$ associated to $(C,p)$ has $l+1$ ovals, among which $l-1$ are true ovals.
\end{coro}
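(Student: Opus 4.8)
The plan is to follow the scheme of the proof of Lemma~\ref{lm:maxpert}, dropping the maximality hypothesis and keeping track of which ovals of the perturbed toile descend to genuine ovals of the quintic. First I would consider the curve $C_0$ obtained from $C$ by a type~$\mathrm{I}$ perturbation of every nodal point except the marked point~$p$. Since $p$ is a hyperbolic node on the pseudoline, the two branches meeting at $p$ lie, in $C_0$, on the single component carrying the pseudoline; the complex-orientation resolution of such a self-crossing splits that component in two, one piece remaining non-contractible and the other being an oval, so it creates exactly one extra oval. Hence passing from $C_0$ to the full type~$\mathrm{I}$ perturbation of $C$ (which has $l$ ovals) raises the oval count by one, so that $C_0$ has $l-1$ ovals. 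These $l-1$ ovals are untouched by the blow-up at $p$ and the positive Nagata transformations at the tangent directions, so they reappear, lying away from the marked vertices, as $l-1$ ovals in the type~$\mathrm{I}$ perturbation of the marked toile $(D_C,v_{T_1},v_{T_2})$; I would then check that each of them satisfies the even white-vertex condition, so that they are true ovals.

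Next I would account for the marking. Blowing up $p$ and performing positive Nagata transformations at the two tangent directions (which are real, since $p$ is hyperbolic) yields the proper trigonal curve $C_X\subset\Sigma_3$ with two real nodal \tvs\ $v_{T_1}$ and $v_{T_2}$. As in Lemma~\ref{lm:maxpert}, there are two cases. If some $v_{T_i}$ is isolated, it perturbs to an oval; if both $v_{T_i}$ are non-isolated then, $\overline{C}$ being rational, both corresponding nodes lie on the long component, which is topologically a circle with two simple crossings and whose type~$\mathrm{I}$ perturbation therefore has three components, namely one long component and two ovals. In either case the marking contributes exactly two ovals, giving a total of $(l-1)+2=l+1$ ovals in the type~$\mathrm{I}$ perturbation of $(D_C,v_{T_1},v_{T_2})$.

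Finally I would show that the two ovals born from the marking are not true ovals, so that exactly $l-1$ of the $l+1$ ovals are true. Both of these ovals come from the single node $p$ of $C$ and are located between the marked vertices $v_{T_1}$ and $v_{T_2}$; the number of white vertices separating such an oval from the marked vertices along $\partial\mathbb{D}^2$ is odd, so the even white-vertex condition fails for them, whereas it holds for the $l-1$ ovals inherited from $C_0$. Combining the two counts yields the claim: the type~$\mathrm{I}$ perturbation of the marked toile has $l+1$ ovals, of which $l-1$ are true.

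The step I expect to be the main obstacle is this last parity bookkeeping: verifying precisely that the two ovals created by the marking fail the even white-vertex count while the $l-1$ inherited ovals satisfy it. This demands a careful local description of the cyclic arrangement of white vertices and the marked \tvs\ along $\partial\mathbb{D}^2$ near $p$, using that white vertices record the intersections with the zero section and that the blow-down at $p$ reassembles the marking data into the single node $p$.
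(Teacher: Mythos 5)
Your argument follows the paper's proof essentially verbatim: the paper likewise passes to the curve $C_0$ obtained by a type~$\mathrm{I}$ perturbation of every node except $p$, obtains $l-1$ ovals from it, and attributes the remaining two ovals to the marked nodes $n_{T_1}$, $n_{T_2}$, exactly as in Lemma~\ref{lm:maxpert}. The only divergences are minor: in this general (not necessarily maximal) setting the paper also allows a non-isolated marked node to be a self-intersection of an \emph{oval} rather than of the long component --- which still contributes one oval upon resolution, so your count is unaffected --- and the paper leaves the ``true oval'' verification implicit, whereas you correctly identify it as the step requiring the parity bookkeeping along $\partial\mathbb{D}^2$.
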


\begin{proof}
As in the proof of Lemma~\ref{lm:maxpert}, there are $l-1$ ovals in the type~$\mathrm{I}$ perturbation of the marked dessin $D_C$ arising from ovals of the the curve $C_0$ obtained by realizing a type~$\mathrm{I}$ perturbation of every nodal point of~$C$ except for the base point~$p$. The remaining two ovals come either from an isolated nodal point $v_i$ or from a non-isolated nodal point $v_i$ representing the self-intersection of the long component or the self-intersection of an oval.
\end{proof}

Due to the previous lemma, we use the classification of smooth trigonal $M$-curves. We state a version of the theorem suitable for the content of this work. The theorem can be found in full statement in \cite{DIK}.

\begin{thm}[\cite{DIK}]
Any smooth real trigonal $M$-curve $X\subset\Sigma_3$ has a corresponding dessin $D\subset\mathbb{D}^2$ which has a canonical decomposition as the gluing of three cubic dessins of type~$\mathrm{I}$ along dotted generalized cuts corresponding to zigzags on the cubics.
\end{thm}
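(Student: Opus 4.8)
The plan is to run the block-decomposition machinery of Section~\ref{ch:toi} in the non-singular setting, for which the analogues of Lemma~\ref{lm:bf} and Proposition~\ref{prop:deep} are available in \cite{DIK}. First I would record the numerics. Since $X\subset\Sigma_3$ we have $d=-E^2=3$, hence $\deg(D)=-3E^2=9$ and $\deg(j_X)=6\cdot 3=18$. A non-singular trigonal curve on $\Sigma_3$ has genus $7$, so an $M$-curve satisfies $b_*(\R X)=b_*(\C X)=16$ and its real part has $8$ components. By maximality and the Smith--Thom equality (Theorem~\ref{thm:sti}), $D$ is a \emph{non-singular} dessin; moreover every maximal curve is of type~$\mathrm{I}$, so by Theorem~\ref{df:type} the regions of $D$ admit a type~$\mathrm{I}$ labeling, and the number of zigzags is maximal within the weak-equivalence class of $D$.

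Next I would apply the non-singular analogue of Proposition~\ref{prop:deep} to replace $D$, within its weak-equivalence class, by a bridge-free dessin of depth at most $1$ carrying the maximal number of zigzags, and then the non-singular analogue of Proposition~\ref{prop:decomptoi} to produce a dividing generalized cut $T$. Because $X$ is smooth there are no nodal $\times$-vertices, so every generalized cut joins monochrome vertices; the type~$\mathrm{I}$ labeling together with maximality should rule out solid cuts (a solid cut separates regions labeled $2$ and $3$ and would obstruct the maximal zigzag count), forcing $T$ to be dotted. I would then check that the dotted segment realizing $T$ is odd, i.e. a zigzag, by comparing the parity of the number of $\circ$-vertices on the two sides of $T$, using the straightening/creating-a-zigzag move (Figure~\ref{fig:zigzag}).

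Cutting $D$ along the dividing cut $T$ yields two dessins $D_1,D_2$ on disks with $\deg D_1+\deg D_2=9$. The key auxiliary lemma I would prove is that cutting along a dotted zigzag cut preserves maximality, so each block is again an $M$-dessin of its own degree; combined with the type~$\mathrm{I}$ labeling this forces the split $3+6$ (a cubic and a sextic block) rather than a degenerate or non-maximal splitting. Re-running the procedure on the degree-$6$ block (still $\geq 6$, so Proposition~\ref{prop:decomptoi} applies) produces a second dotted zigzag cut splitting it into two cubics, after which the degree-$3$ blocks terminate the recursion. At this stage I would invoke the classification of cubic dessins up to elementary equivalence (Section~\ref{sec:cubics}, Figure~\ref{fig:cubiques}): among the type~$\mathrm{I}$, type~$\mathrm{II}$ and hyperbolic cubics, maximality of each block selects the type~$\mathrm{I}$ cubic, which indeed carries the zigzags along which the gluing was performed.

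Finally I would reassemble the three type~$\mathrm{I}$ cubic blocks and the two dotted cuts to exhibit $D$ as their gluing, and argue canonicity by showing that two such decompositions differ only by the weak equivalences (zigzag straightening/creation) that leave the blocks unchanged up to elementary equivalence. The main obstacle, I expect, is precisely the maximality-preservation lemma for cuts together with the exclusion of the type~$\mathrm{II}$ and hyperbolic blocks: this is where the Rokhlin-type orientation formula for curves on $\Sigma_{2k+1}$ and the identity $\sigma(X)=\sigma(D)$ of Proposition~\ref{prop:sigmatrig} must be used to control how the $8$ real components and the zigzags of $X$ distribute among the blocks, guaranteeing that no block is forced to be a deficient cubic.
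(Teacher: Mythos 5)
First, a point of order: the paper does not prove this theorem. It is imported from \cite{DIK} (``We state a version of the theorem suitable for the content of this work. The theorem can be found in full statement in \cite{DIK}.''), so there is no internal proof to compare your attempt against. Your sketch does follow the right general strategy --- the depth-reduction and cut-production machinery of Section~\ref{ch:toi} is precisely the nodal extension of this kind of block decomposition --- but as written it has concrete gaps.

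The step forcing the generalized cut to be dotted is wrong as stated: by the labeling rules of Section~\ref{ch:tri}, an inner solid edge may be of type~$1$, with \emph{both} adjacent regions labeled $1$, so a solid cut does not necessarily ``separate regions labeled $2$ and $3$,'' and the type~$\mathrm{I}$ labeling alone does not exclude solid cuts. (Even in the nodal setting, Proposition~\ref{prop:decomptoi} only guarantees a dotted cut or a solid axe under an extra hypothesis on isolated nodes; the non-singular analogue needs its own argument.) Second, ``$X$ is an $M$-curve, hence the number of zigzags is maximal in the weak-equivalence class'' is a non sequitur: one must \emph{choose} a representative maximizing the zigzag count, as is done at the start of the proof of Proposition~\ref{prop:decomptoi}; maximality of the curve says nothing about which representative you start from. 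Third, your key ``maximality-preservation lemma'' --- that cutting along a dotted cut leaves two $M$-blocks --- is exactly the hard content of the cited theorem and is not supplied; moreover the tools you propose for it are inert here, since $\sigma(X)=0$ for a smooth curve, so Proposition~\ref{prop:sigmatrig} and the complex orientation formula of Section~\ref{ch:hir} carry no information, and what actually controls how the eight real components distribute among the blocks is a count of ovals and zigzags against the degree of each block (a type~$\mathrm{I}$ cubic block carries at most one oval and at most two zigzags), which you do not carry out. Finally, the $3+6$ split needs no maximality argument at all: the degree of any block is a positive multiple of $3$, so $9=3+6$ is automatic once both sides of the cut contain essential vertices.
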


The previous theorem states that any non-singular maximal  toile $D$ of degree~$9$ is decomposed as the gluing of three type~$\mathrm{I}$ cubics $A$, $B$ and $A'$ along edges belonging to a zigzag. If the decomposition takes the form $A-B-A'$, where $D_1 - D_2$ indicates that the dessin $D_1$ is glued to the dessin $D_2$, then the cubic $B$ is the cubic $\mathrm{I}_2$ and the cubics $A$ and $A'$ are either $\mathrm{I}_1$ or $\mathrm{I}_2$.

\begin{lm}\label{lm:maxdec}
Let $(D_C,v_{T_1},v_{T_2})$ be the marked toile associated to a maximally perturbable nodal rational curve~$C$ of degree~$5$ in $\RPP$ with a hyperbolic nodal point $p\in\R C$ belonging to the pseudoline.
Then, the marked toile $(D_C,v_{T_1},v_{T_2})$ is equivalent to a marked toile $(D',v_1',v_2')$ whose type~$\mathrm{I}$ perturbation is the gluing of cubics $\mathrm{I}_1- \mathrm{I}_2- \mathrm{I}_1$.
\end{lm}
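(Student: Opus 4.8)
The plan is to reduce the statement to a combinatorial reshaping of the non-singular maximal toile produced by perturbing $(D_C,v_{T_1},v_{T_2})$, exploiting the classification of trigonal $M$-toiles together with the freedom afforded by straightening zigzags. First I would invoke Lemma~\ref{lm:maxpert}: since $C$ is maximally perturbable and $p\in\R C$ is a hyperbolic node lying on the pseudoline, the type~$\mathrm{I}$ perturbation $D_0$ of the marked toile $(D_C,v_{T_1},v_{T_2})$ is a non-singular maximal toile of degree~$9$, with one long component and seven ovals. By the theorem of \cite{DIK} quoted above, $D_0$ is therefore elementary equivalent to a gluing $A-B-A'$ of three type~$\mathrm{I}$ cubic dessins along dotted generalized cuts coming from zigzags, where necessarily $B=\mathrm{I}_2$ while $A,A'\in\{\mathrm{I}_1,\mathrm{I}_2\}$. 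It then remains only to pass from such an arbitrary decomposition to one with $A=A'=\mathrm{I}_1$.

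Next I would eliminate the excessive zigzags of the two end blocks. A block equal to $\mathrm{I}_2$ carries two zigzags; in the decomposition $A-B-A'$ exactly one zigzag of each end block is consumed by the gluing to the central block $B$, while the second zigzag lies on the free part of $\partial\mathbb{D}^2$ and participates in no gluing. Straightening such a free zigzag is a move of weak equivalence (Figure~\ref{fig:zigzag}): it keeps the lifted complex dessin elementary equivalent, hence preserves the complex type, the number of ovals and the $M$-property, and it turns $\mathrm{I}_2$ into $\mathrm{I}_1$ while leaving the central gluing cut and the opposite block untouched. Applying this to $A$ and to $A'$ whenever they equal $\mathrm{I}_2$ produces a dessin whose decomposition is exactly $\mathrm{I}_1-\mathrm{I}_2-\mathrm{I}_1$; the central block must stay $\mathrm{I}_2$, since both of its zigzags are gluing cuts and cannot be straightened without destroying the decomposition.

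Finally I would check that these reshapings are genuine equivalences of the \emph{marked} toile. The two marked vertices $v_{T_1},v_{T_2}$ record the self-intersections of the long component of $\overline{C}\subset\Sigma_3$, so under the perturbation they, together with their neighbourhoods, determine the central portion of $D_0$ governed by $B=\mathrm{I}_2$, and they are disjoint from the free zigzags of the end blocks that are being straightened. Since the equivalence of marked toiles contains weak equivalence performed away from the marked vertices, each straightening lifts to an equivalence of $(D_C,v_{T_1},v_{T_2})$ with $v_{T_1},v_{T_2}$ held fixed; carrying out the same moves on the unperturbed toile and then perturbing yields the required $(D',v_1',v_2')$. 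I expect the main obstacle to be precisely this last bookkeeping: one must verify that the decomposition of $D_0$ can be chosen so that $v_{T_1}$ and $v_{T_2}$ sit inside the central $\mathrm{I}_2$ block (equivalently, that the two free zigzags to be straightened meet neither the gluing cuts nor the marked nodes), so that the zigzag straightenings are admissible marked equivalences and do not alter the positions of $v_{T_1},v_{T_2}$ relative to the ovals.
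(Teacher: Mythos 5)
Your overall strategy coincides with the paper's: invoke Lemma~\ref{lm:maxpert} to get a non-singular maximal degree~$9$ toile, apply the decomposition theorem of \cite{DIK} to write its type~$\mathrm{I}$ perturbation as $A-\mathrm{I}_2-A'$, and then kill the free zigzag of each end block equal to $\mathrm{I}_2$. However, there is a genuine gap in the step where you claim that ``carrying out the same moves on the unperturbed toile and then perturbing yields the required $(D',v_1',v_2')$.'' The free zigzag $Z$ of an end block of the \emph{perturbed} dessin $D_0$ need not be a zigzag of $D_C$ itself: by construction of the type~$\mathrm{I}$ perturbation, a non-isolated nodal \tv{} of $D_C$ lying on the long component is perturbed into a configuration containing two vertical tangencies (see Figure~\ref{fig:deformsnodal}), and this is precisely how a zigzag of $D_0$ can appear where $D_C$ has only a node. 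In that situation there is nothing in $D_C$ to straighten, so the straightening/creating-a-zigzag move of weak equivalence is simply not available on the unperturbed toile, and your argument does not produce the required equivalence of $(D_C,v_{T_1},v_{T_2})$.

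The paper's proof makes exactly this case distinction: if $Z$ comes from an honest zigzag of $D_C$, one destroys it and follows with an elementary move of type $\bullet$-in (the $\bullet$-in move is what actually converts the $\mathrm{I}_2$ block into $\mathrm{I}_1$, a step you also leave implicit); if instead $Z$ arises from perturbing a non-isolated nodal \tv{}, one must use the move ``passing a nodal point through an inflexion point'' (Figure~\ref{fig:xzigzag}), which belongs to very weak equivalence --- hence to the equivalence of marked toiles --- but not to weak equivalence. Since the lemma asserts equivalence of \emph{marked} toiles, this stronger move is legitimate, but it is a different move from the one you propose, and without it your argument fails exactly in the case where the extra self-intersection of the long component sits in an end block. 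Your final paragraph worries about the markings colliding with the zigzags, which is a reasonable concern but not where the real difficulty lies; the markings $v_{T_1},v_{T_2}$ are among the nodal \tvs{} that may themselves generate zigzags under perturbation, which is one more reason the case analysis on the origin of $Z$ cannot be avoided.
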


\begin{proof} 
Due to Lemma~\ref{lm:maxpert}, the type~$\mathrm{I}$ perturbation of the toile $D_C$ is $A-\mathrm{I}_2 - A'$ where $A$ and $A'$ are either $\mathrm{I}_1$ or $\mathrm{I}_2$.
If the cubic $A$ equals $\mathrm{I}_2$, let $Z$ be the zigzag other than the one where the gluing takes place.
If the zigzag $Z$ comes from a zigzag belonging to the toile $D_C$, then the destruction of the zigzag $Z$ followed by an elementary move of type $\bullet$-in produces a marked toile whose type~$\mathrm{I}$ perturbation has a cubic $\mathrm{I}_1$ in the place of $A$.
If the zigzag $Z$ comes from perturbing a non-isolated nodal {\tv} $v$, then passing a nodal point through an inflexion point followed by an elementary move of type $\bullet$-in produces a marked toile whose type~$\mathrm{I}$ perturbation has a cubic $\mathrm{I}_1$ in the place of $A$.
A symmetric argument applied to the cubic $A'$ proves the statement.
\end{proof}


Henceforth, let $v_e$ and $v_h$ be the numbers of isolated and non-isolated marked nodal {\tvs} of a marked dessin $(D,v_1,v_2)$.

\begin{prop} \label{prop:maxhyp}
 Let~$C$ be a maximally perturbable nodal rational curve of degree~$5$ in $\RPP$ such that the curve~$C$ has at least one hyperbolic nodal point. Then, the curve~$C$ belongs to one of the different rigid isotopy classes represented in Figures~\ref{fig:b01} to~\ref{fig:b06}.
\end{prop}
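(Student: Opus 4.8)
The plan is to reduce the classification to the fixed maximal skeleton $\mathrm{I}_1 - \mathrm{I}_2 - \mathrm{I}_1$ and then enumerate its degenerations. First I would fix as marking a hyperbolic nodal point $p \in \R C$ lying on the pseudoline. Such a point exists: since $C$ is maximally perturbable, Lemma~\ref{lm:maxnodes} shows it is maximal, so $e + h = 6$ and all six nodes are real. Moreover, as recorded in the proof of that lemma, every oval of a type~$\mathrm{I}$ perturbation arises either from an elliptic node or from a chain of ovals attached (since $d = 5$ is odd) to the pseudoline by a hyperbolic node; hence the hypothesis that $C$ has a hyperbolic node, together with $l = 6 > e$ whenever $h \geq 1$, forces at least one hyperbolic node to be incident to the pseudoline. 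Marking such a $p$ supplies exactly the input required by Lemma~\ref{lm:maxpert}.

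With $p$ on the pseudoline, Lemma~\ref{lm:maxpert} gives that the type~$\mathrm{I}$ perturbation of the marked toile $(D_C, v_{T_1}, v_{T_2})$ is a non-singular maximal toile of degree~$9$, while Corollary~\ref{coro:lpert} controls the number of its ovals and true ovals. By the theorem of \cite{DIK} combined with Lemma~\ref{lm:maxdec}, this perturbation is equivalent to the gluing of three type~$\mathrm{I}$ cubic dessins $\mathrm{I}_1 - \mathrm{I}_2 - \mathrm{I}_1$ along dotted generalized cuts corresponding to zigzags. Thus the problem reduces to recovering each marked toile from this single combinatorial skeleton, that is, to listing all the ways of degenerating $\mathrm{I}_1 - \mathrm{I}_2 - \mathrm{I}_1$ back to a marked nodal toile carrying the marked pair $(v_{T_1}, v_{T_2})$ together with the remaining nodal $\times$-vertices.

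The core of the proof is then this finite combinatorial enumeration. I would record, for each position of the marked pair inside $\mathrm{I}_1 - \mathrm{I}_2 - \mathrm{I}_1$, whether $T_1$ and $T_2$ produce isolated (elliptic) or non-isolated (hyperbolic) nodal vertices, and likewise the positions of the other nodal vertices; I would then collapse the resulting list modulo the available equivalences (weak and very weak equivalence, passing a singular fiber through a tangent line, and passing a nodal point through an inflexion point). By the correspondence between rigid isotopy classes of marked nodal rational quintics and equivalence classes of marked toiles, each surviving combinatorial type is a single rigid isotopy class. To prove the survivors are pairwise distinct and exhaust precisely the classes of Figures~\ref{fig:b01} to~\ref{fig:b06}, I would separate them using the rigid-isotopy invariants already at hand: the total Betti numbers with $\ZZ$-coefficients of $C$ and of its type~$\mathrm{I}$ perturbation, refined by the complex-orientation data $\sigma$ and $\Lambda^{\pm}$ subject to the Rokhlin-type relation $\Lambda^-(X) - \Lambda^+(X) + (l-1) + \sigma(X) = 2n$ on $\Sigma_3$ (using $\sigma(X) = \sigma(D)$ from Proposition~\ref{prop:sigmatrig}).

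The main obstacle I anticipate is the enumeration itself: keeping the casework simultaneously exhaustive and non-redundant. Concretely, the delicate part is showing that two apparently different degenerations of $\mathrm{I}_1 - \mathrm{I}_2 - \mathrm{I}_1$ are in fact equivalent (to avoid over-counting), while genuinely inequivalent ones are indeed separated by the invariants above. A secondary difficulty lies in the first step, namely in handling the configurations where the only hyperbolic nodes available for marking attach oval chains in nontrivial ways, so that one must verify carefully that some admissible marking still lands on the pseudoline, as claimed.
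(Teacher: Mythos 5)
Your proposal follows essentially the same route as the paper: reduce via Lemma~\ref{lm:maxnodes} and Lemma~\ref{lm:maxdec} to a marked toile whose type~$\mathrm{I}$ perturbation is the gluing $\mathrm{I}_1-\mathrm{I}_2-\mathrm{I}_1$, then enumerate the admissible degenerations (contracting ovals to isolated nodal vertices and attaching the rest into a single component) and pass back to plane curves by the birational correspondence. The extra detail you supply — justifying that some hyperbolic node is incident to the pseudoline, and separating the resulting classes by $\sigma$ and the complex-orientation invariants — is consistent with, and a reasonable unpacking of, the paper's terser argument.
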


\begin{proof} 

Since~$C$ is a maximally perturbable nodal rational curve, by Lemma~\ref{lm:maxnodes} it is maximal, hence $e+h=6$. Since~$C$ is a rational curve with hyperbolic nodal points, we can choose $p\in\R C$ a hyperbolic nodal point of~$C$ belonging to the pseudoline.
Then, by Lemma~\ref{lm:maxdec}, the marked toile $(D_C,v_{T_1},v_{T_2})$ is equivalent to a marked toile $(D',v_1',v_2')$ whose type~$\mathrm{I}$ perturbation is the gluing of cubics $\mathrm{I}_1- \mathrm{I}_2- \mathrm{I}_1$, with associated trigonal curve~$C_M$.
In the real point set of the trigonal curve $\R C_M$ there is a set of ovals and possible hyperbolic nodes that can be created indicated from the dessin. 
We search to enumerate all equivalence classes of marked dessin $(D',v_1',v_2')$. For that, let us choose $e+v_e$ ovals of the curve $C_M$ to contract, producing that amount of isolated nodal {\tvs} among which $v_e$ are marked. Then, all the remaining ovals must be attached among them or to the long component in a way that there is only one irreducible component with a number of non-isolated {\tvs} equals to $h+v_h-1$, among which $v_h$ are marked.
Then, enumerating all equivalence classes of marked dessins satisfying the aforementioned restrictions and realizing the birational transformations in order to recover the associated curves $\R C\subset\RPP$ lead to the plane curves shown on Figures~\ref{fig:b01} to~\ref{fig:b06}.
\end{proof}

\begin{figure}[h] 
\centering
\begin{tabular}{ccc}
\includegraphics[width=1.2in]{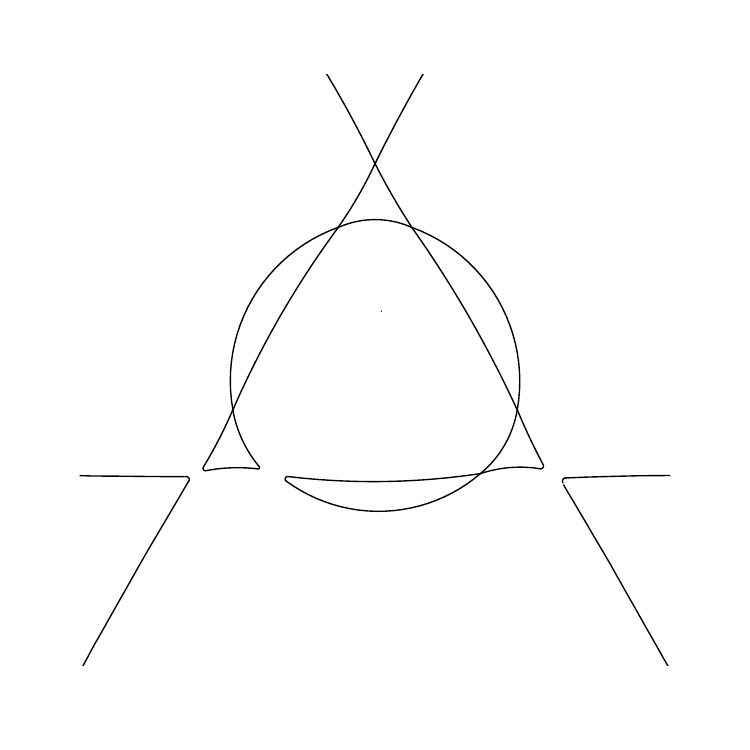}
&
\includegraphics[width=1.2in]{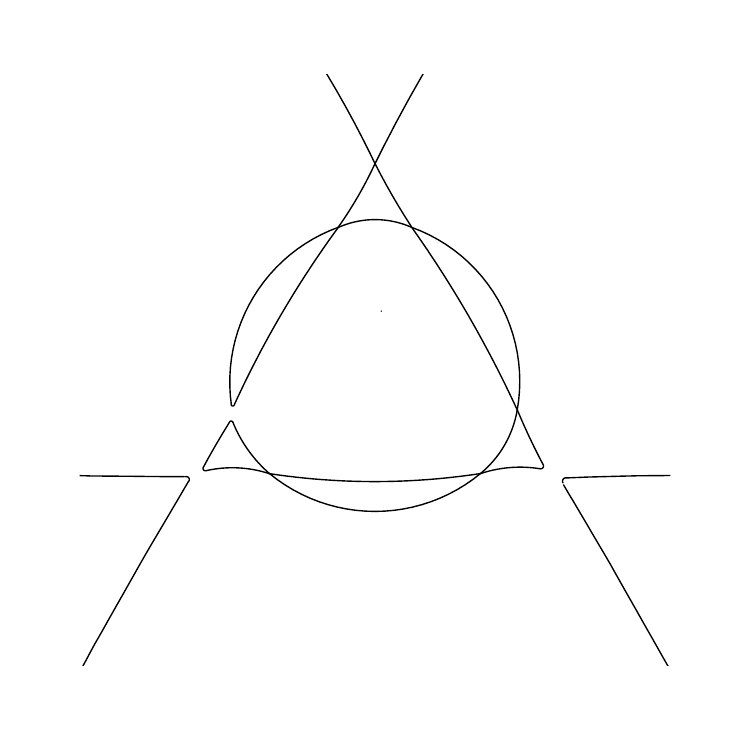}
&
\includegraphics[width=1.2in]{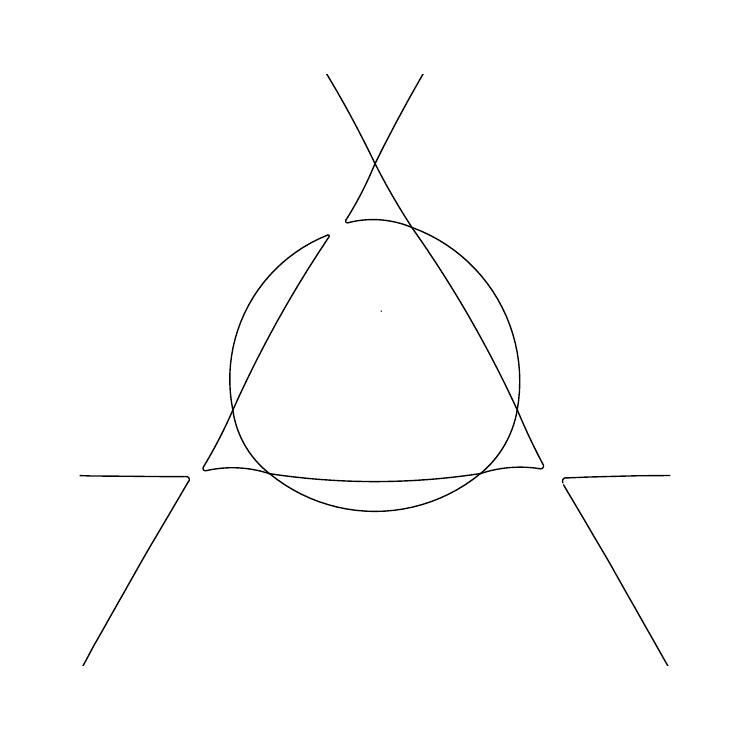}
\\
\includegraphics[width=1.2in]{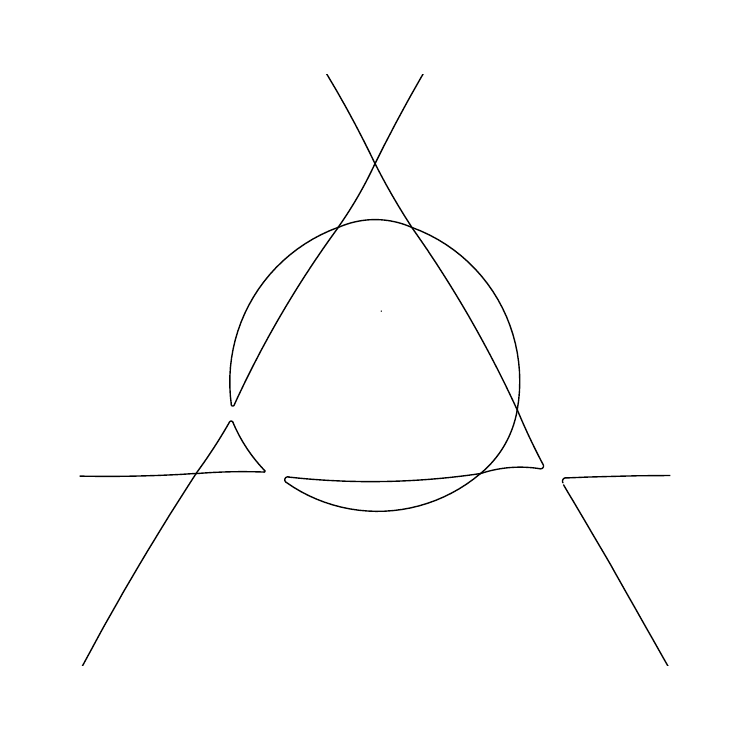}
&
\includegraphics[width=1.2in]{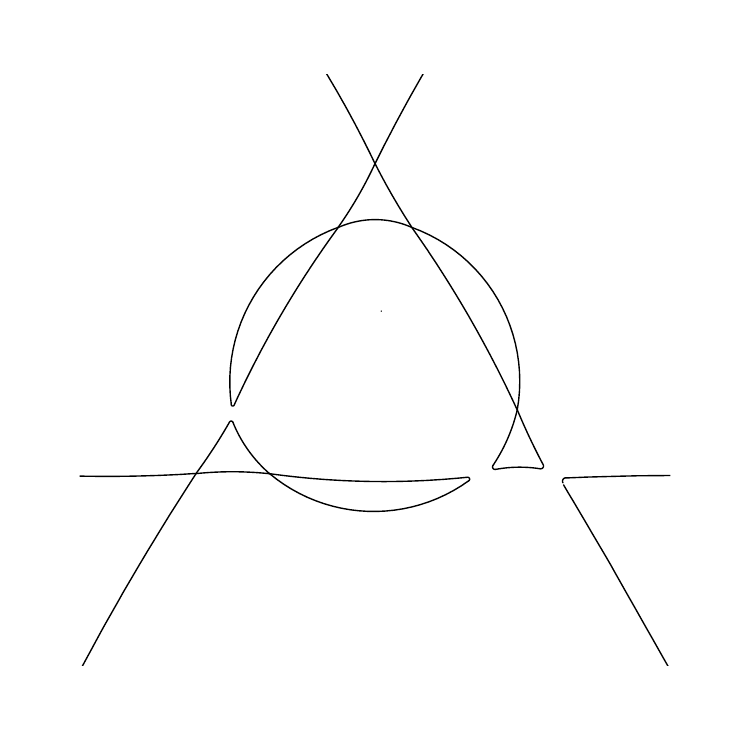}
&
\includegraphics[width=1.2in]{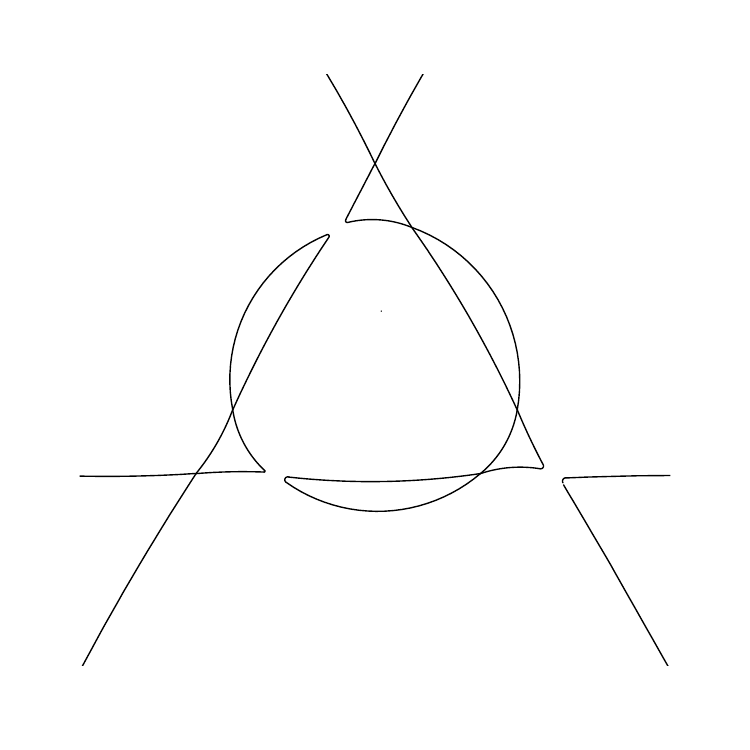}
\\
\includegraphics[width=1.2in]{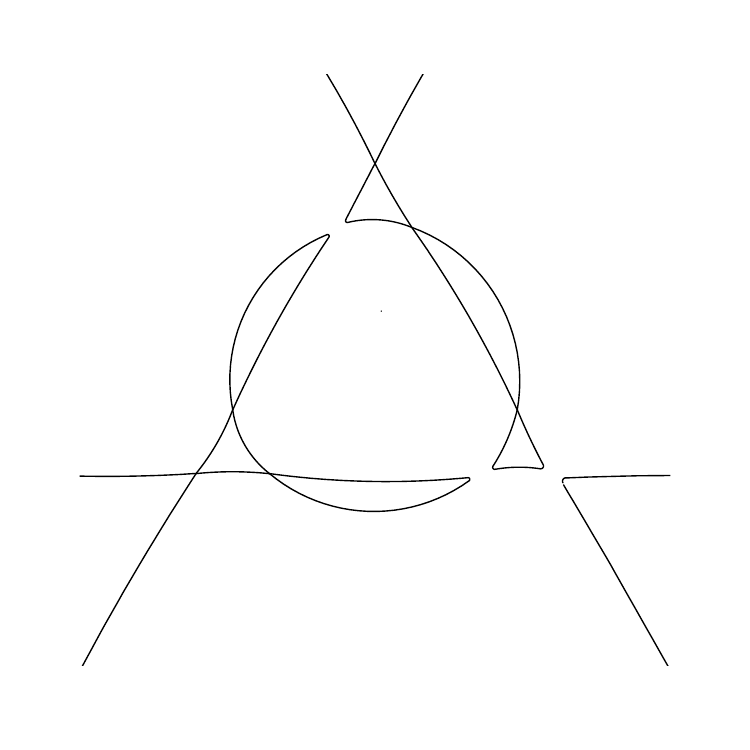}
&
\includegraphics[width=1.2in]{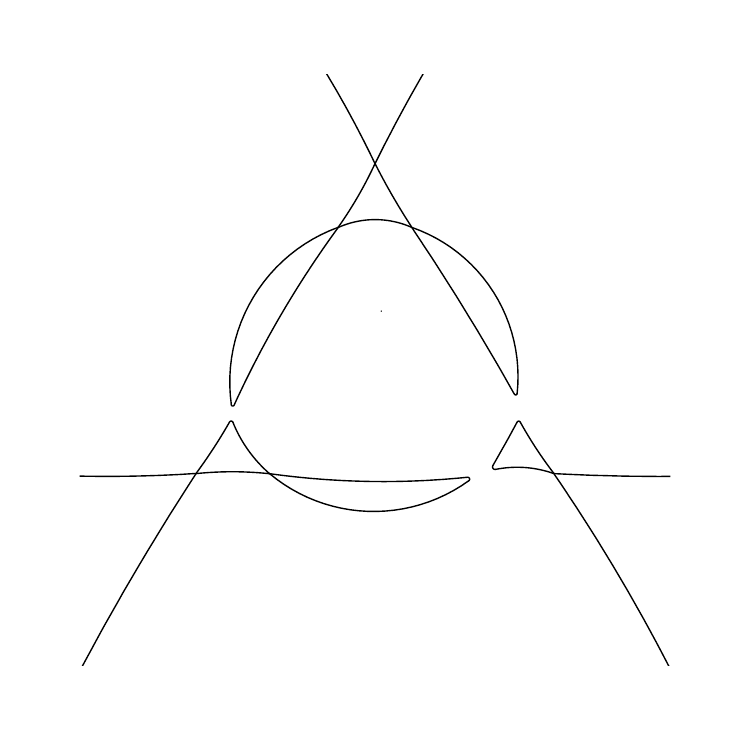}
&
\includegraphics[width=1.2in]{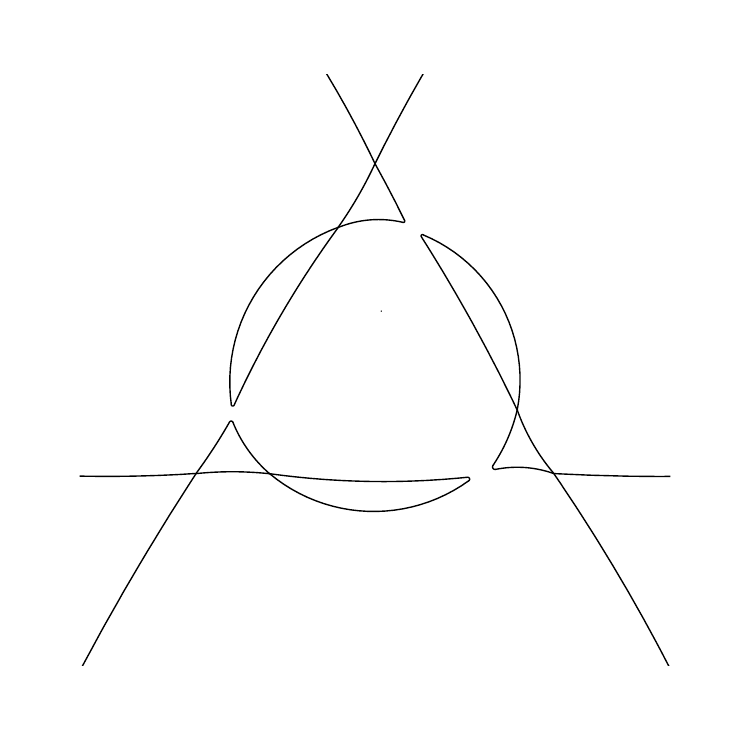}
\\
\end{tabular}
\caption{Rigid isotopy classes of maximally perturbable nodal rational curves of degree~$5$ in $\RPP$ without isolated nodes.}
\label{fig:b01}
\end{figure}

\begin{figure}[h] 
\centering
\begin{tabular}{ccc}
\includegraphics[width=1.1in]{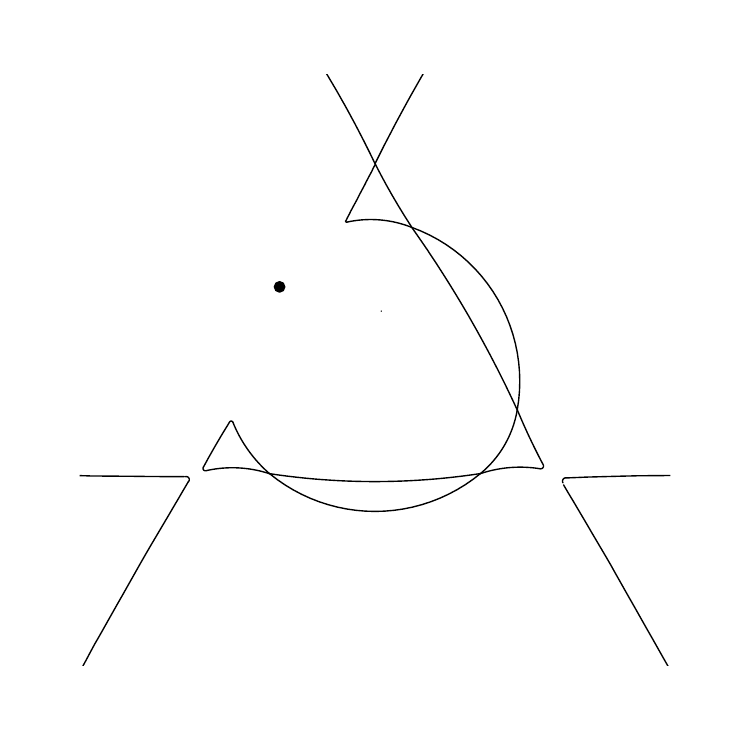}
&
\includegraphics[width=1.1in]{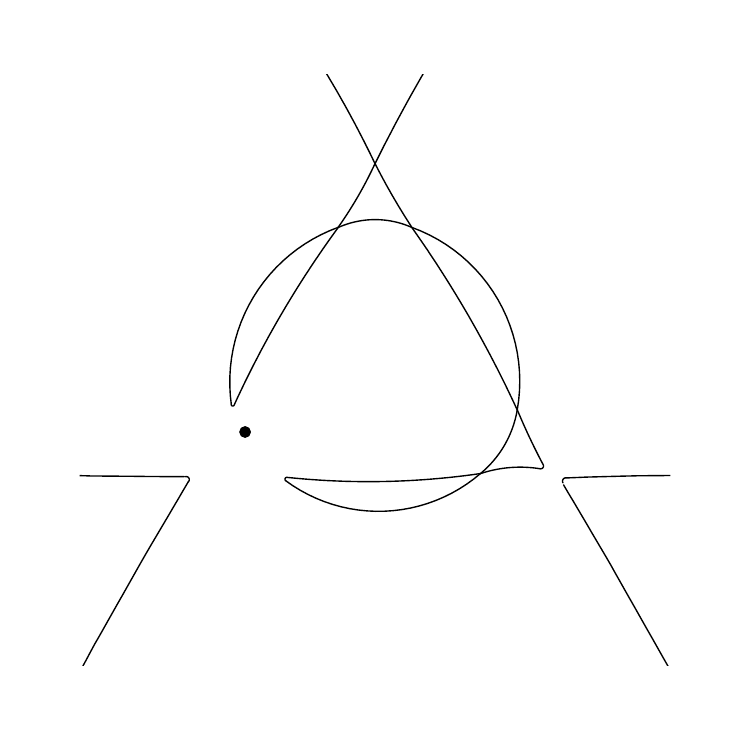}
&
\includegraphics[width=1.1in]{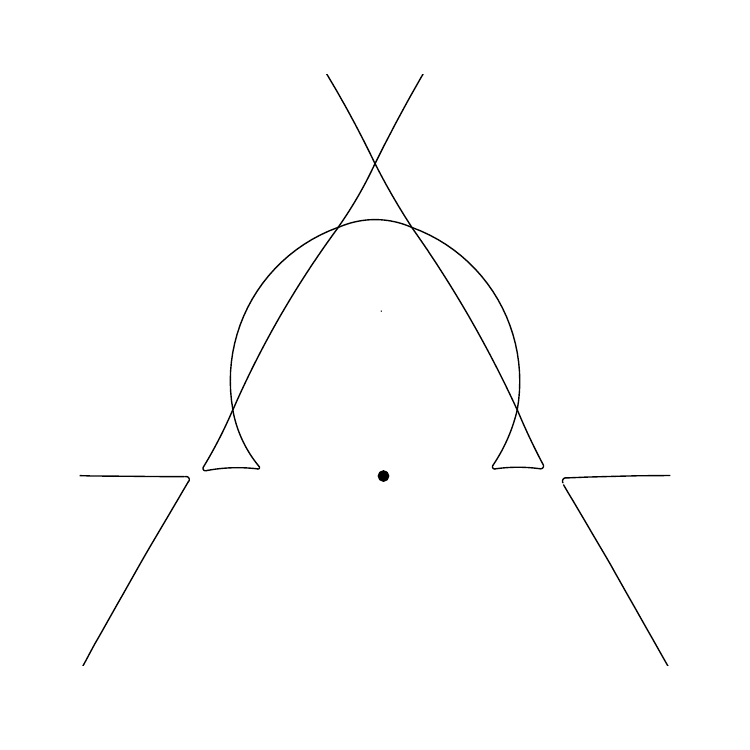}
\end{tabular}\\
\begin{tabular}{cccc}
\includegraphics[width=1.1in]{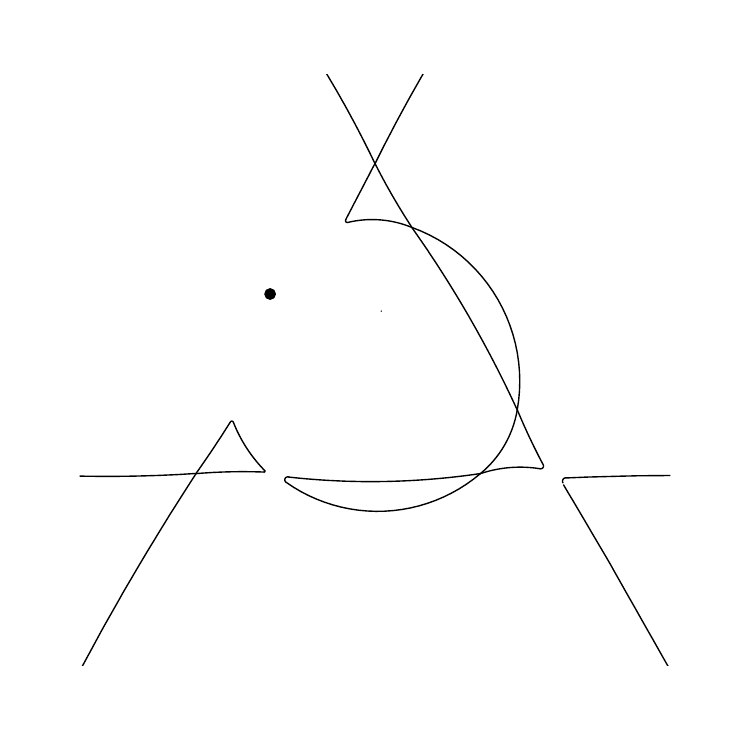}
&
\includegraphics[width=1.1in]{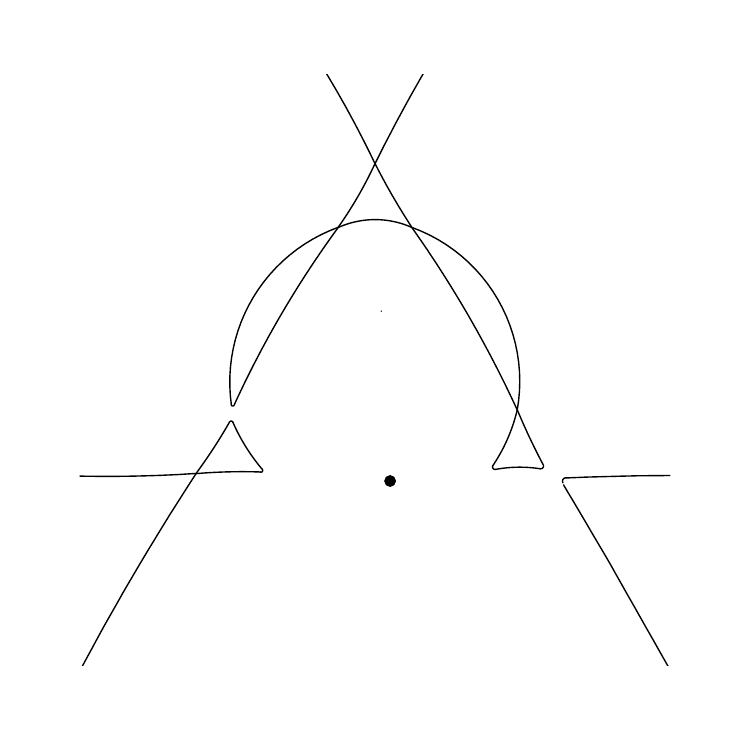}
&
\includegraphics[width=1.1in]{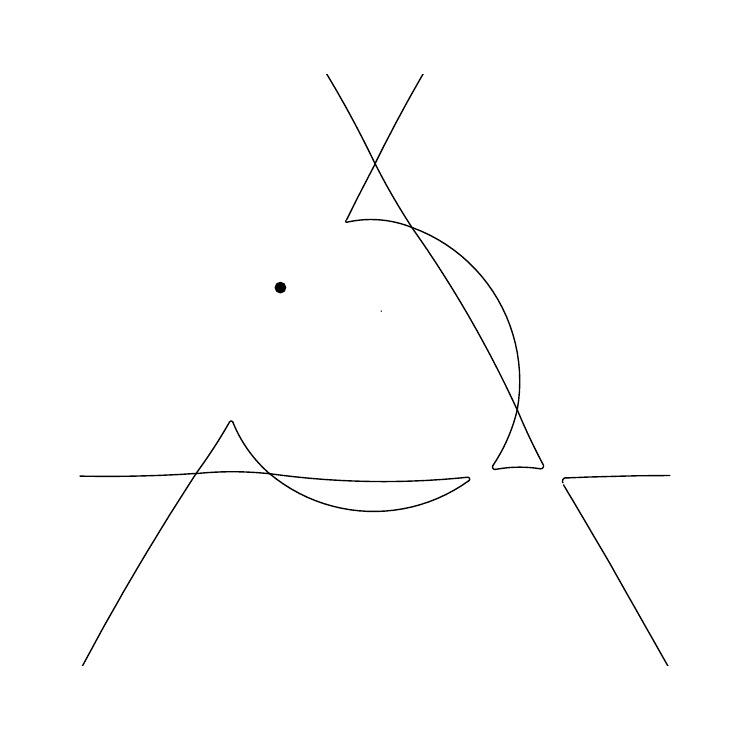}
&
\includegraphics[width=1.1in]{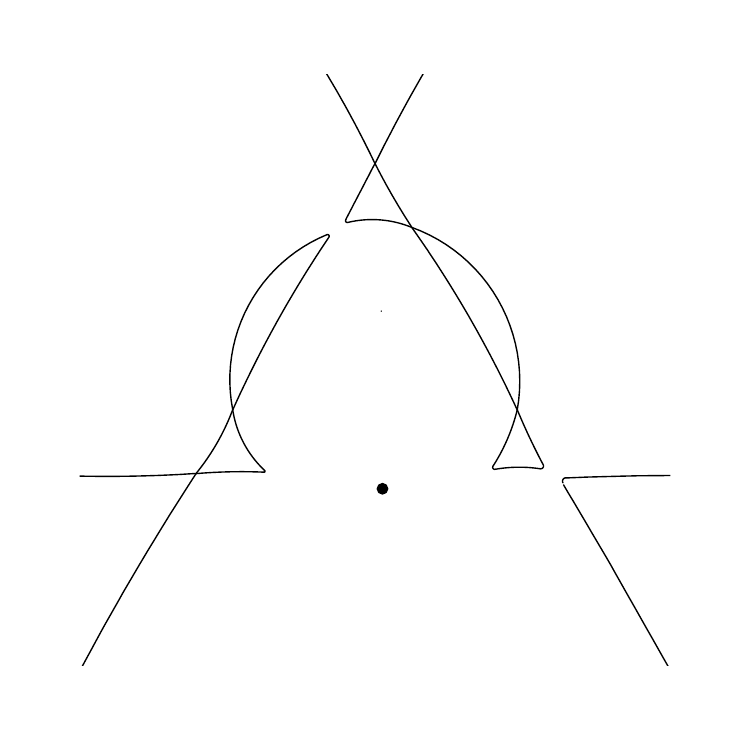}
\\
\includegraphics[width=1.1in]{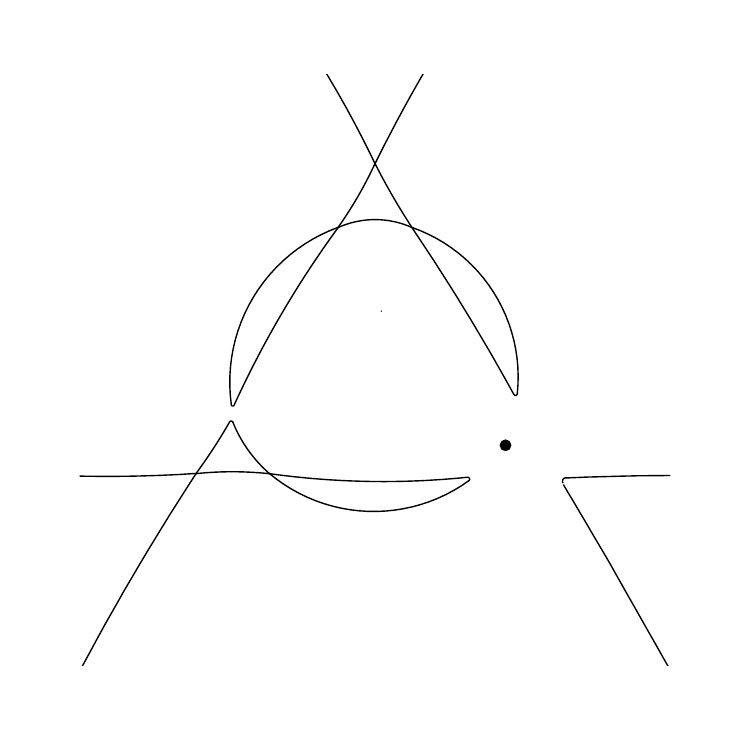}
&
\includegraphics[width=1.1in]{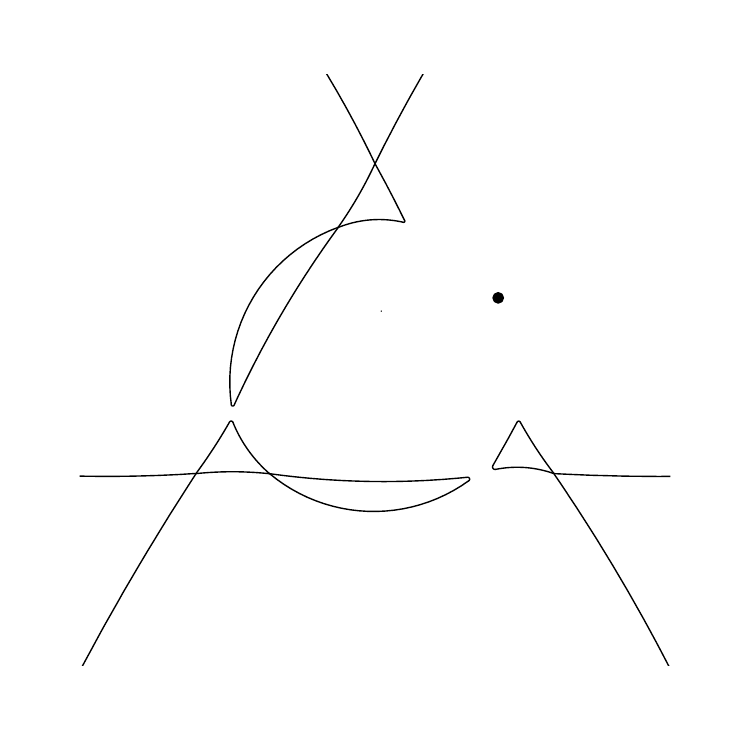}
&
\includegraphics[width=1.1in]{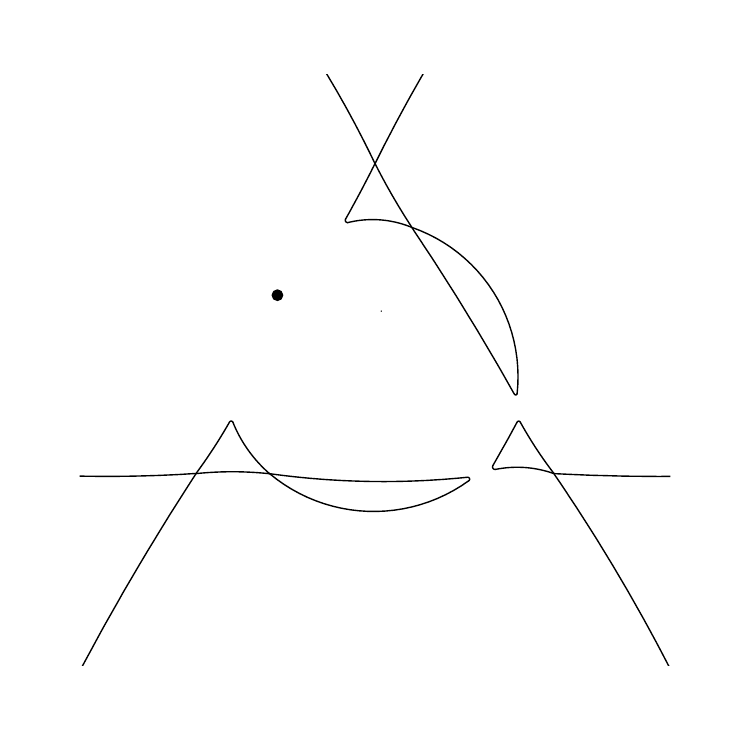}
&
\includegraphics[width=1.1in]{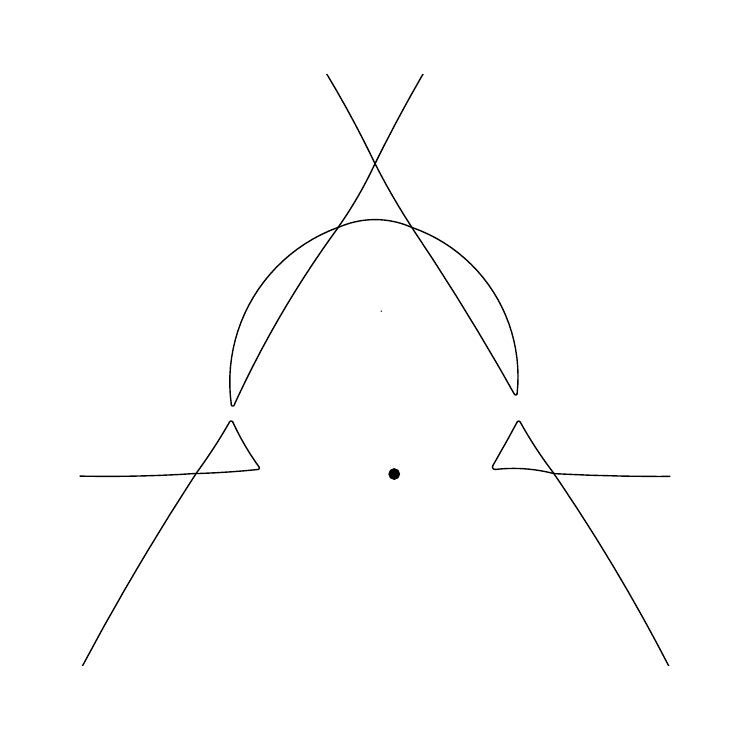}
\\
\end{tabular}
\caption{Rigid isotopy classes of maximally perturbable nodal rational curves of degree~$5$ in~$\RPP$ with exactly one isolated node.}
\label{fig:b02}
\end{figure}

\begin{figure}[h] 
\centering
\begin{tabular}{ccc}
\includegraphics[width=1.1in]{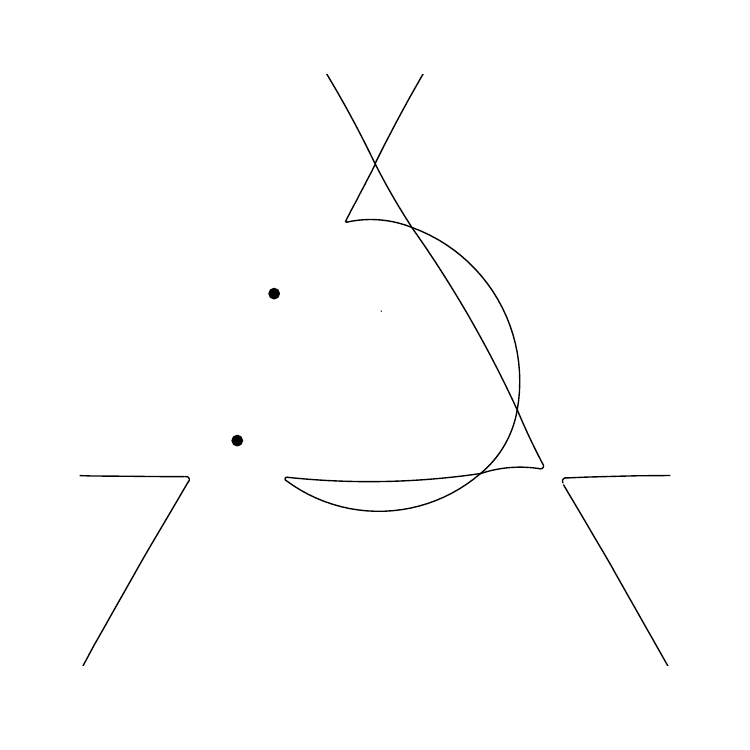}
&
\includegraphics[width=1.1in]{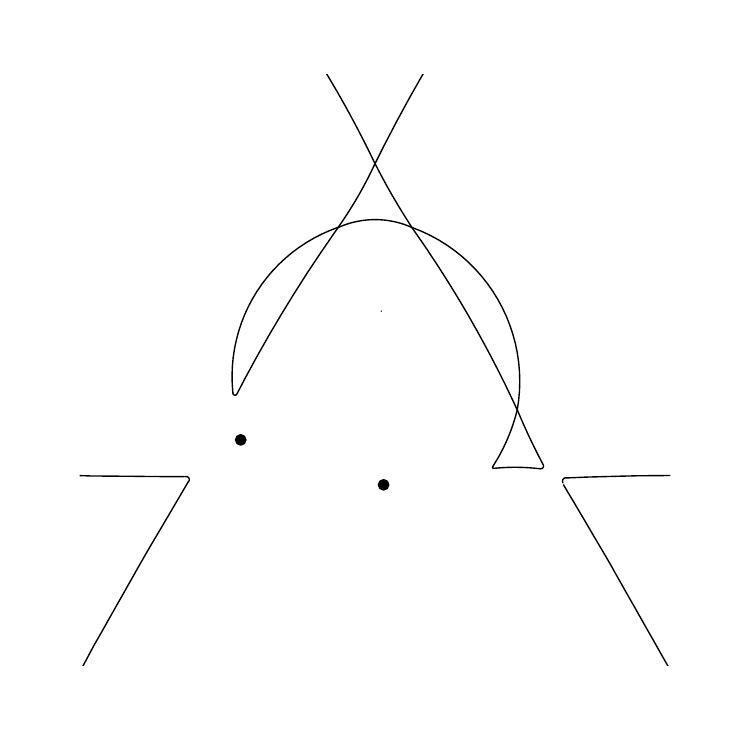}
&
\includegraphics[width=1.1in]{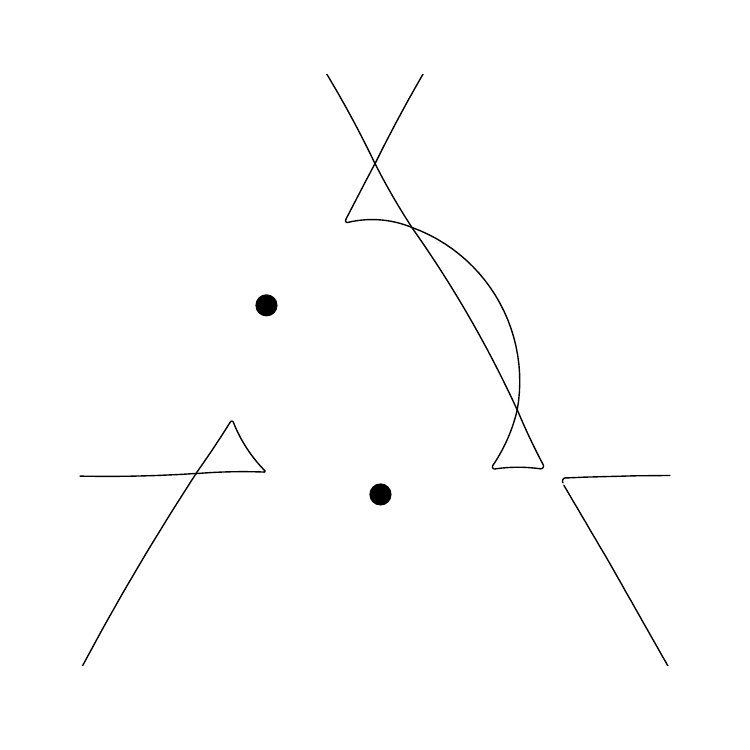}
\end{tabular}\\
\begin{tabular}{cccc}
\includegraphics[width=1.1in]{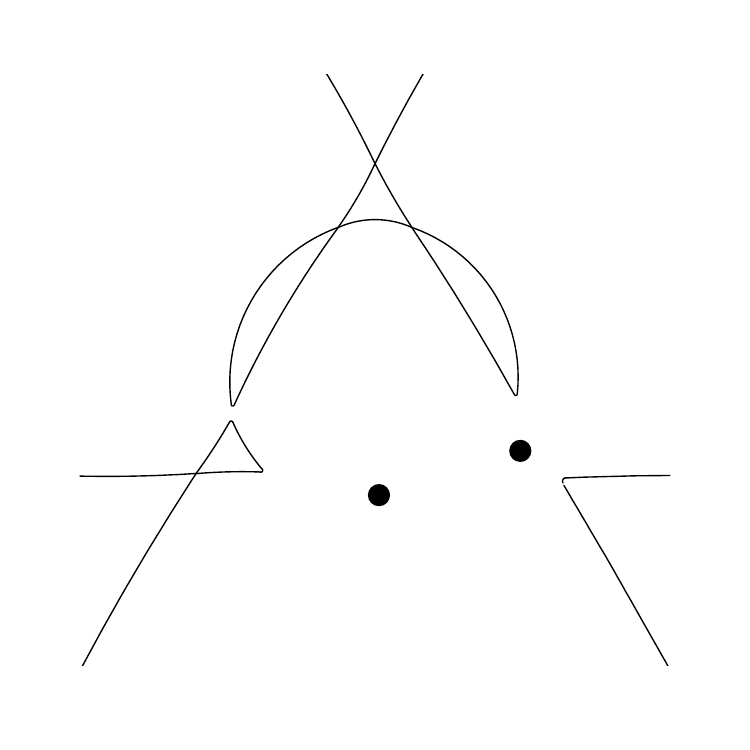}
&
\includegraphics[width=1.1in]{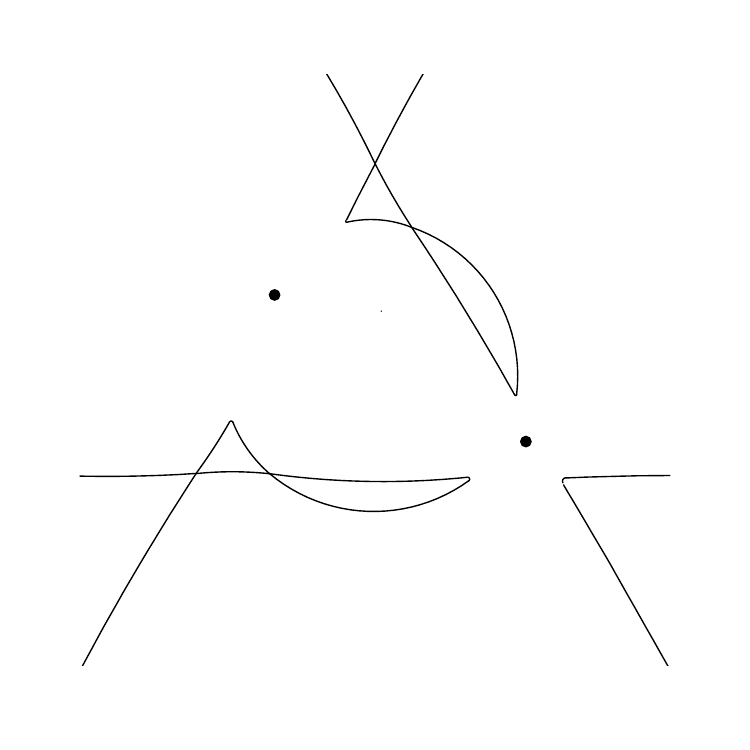}
&
\includegraphics[width=1.1in]{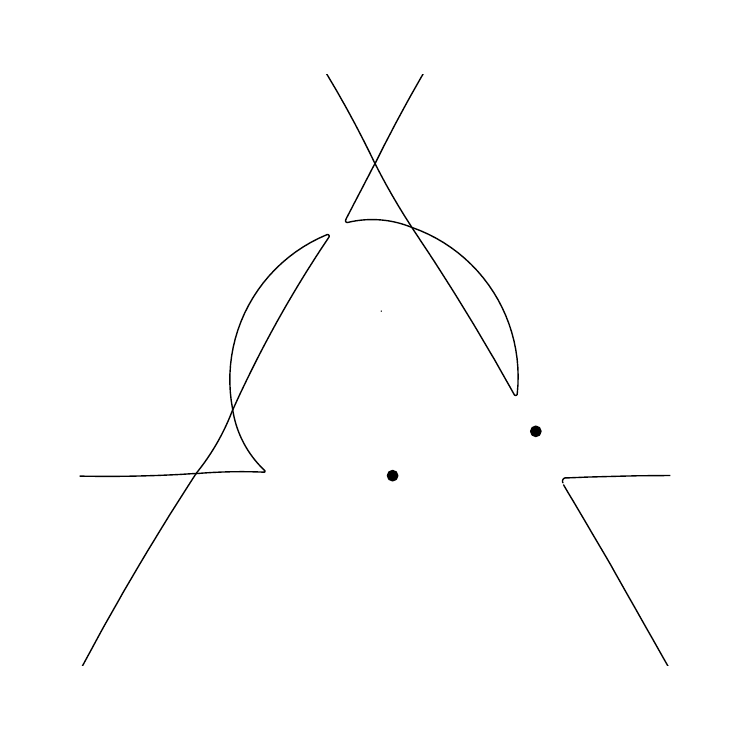}
&
\includegraphics[width=1.1in]{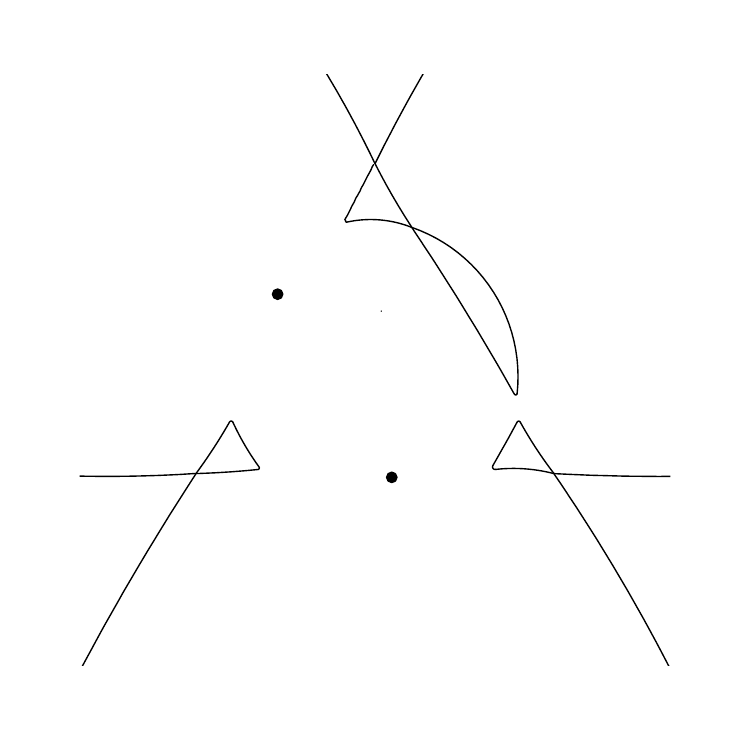}
\\
\end{tabular}
\caption{Rigid isotopy classes of maximally perturbable nodal rational curves of degree~$5$ in~$\RPP$ with exactly two isolated nodes.}
\label{fig:b03}
\end{figure}

\begin{figure}[h] 
\centering
\begin{tabular}{cc}
\includegraphics[width=1.2in]{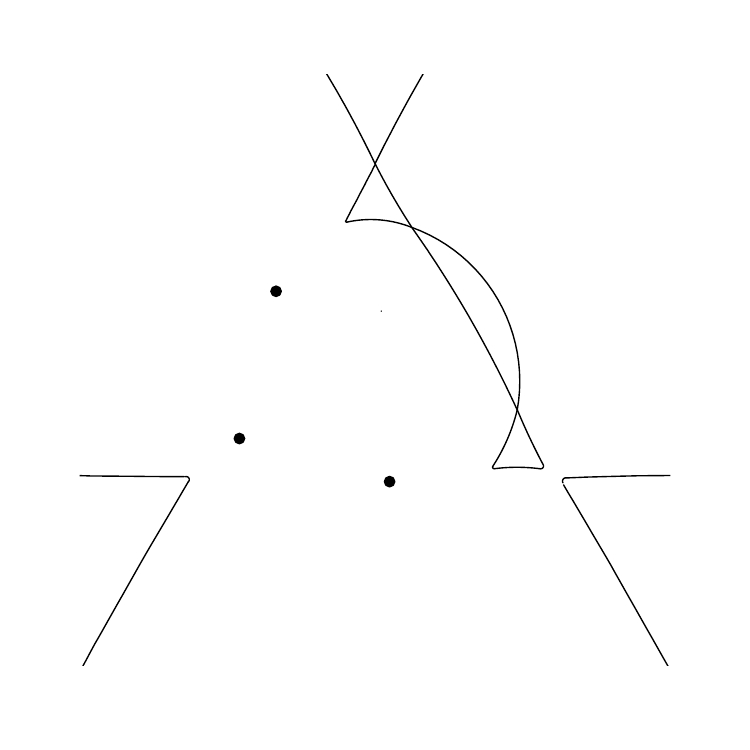}
&
\includegraphics[width=1.2in]{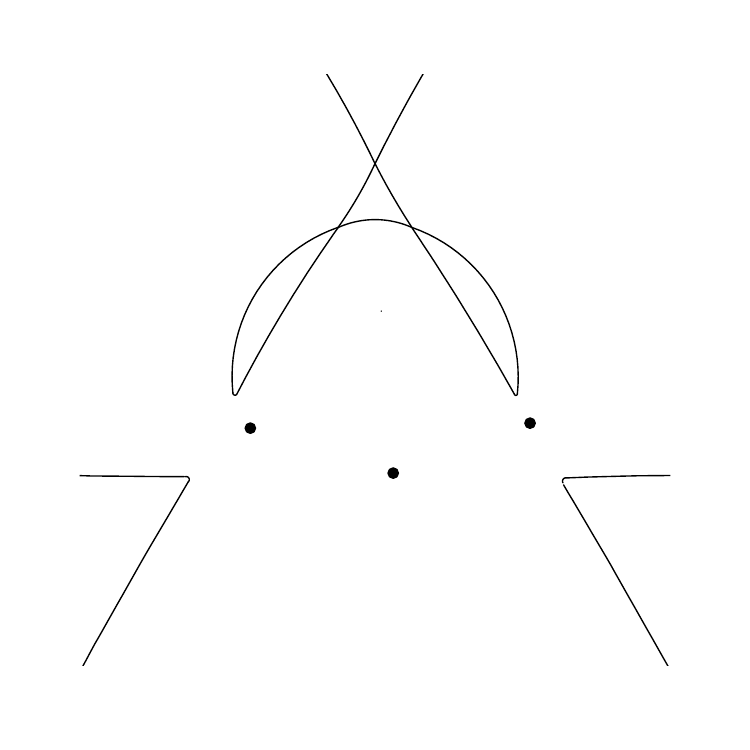}
\\
\end{tabular}\\
\begin{tabular}{ccc}
\includegraphics[width=1.2in]{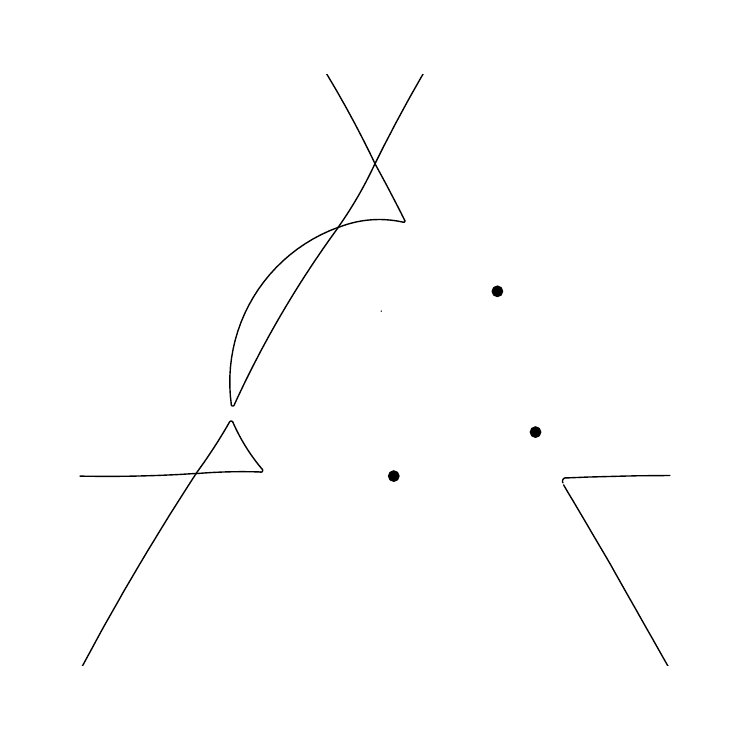}
&
\includegraphics[width=1.2in]{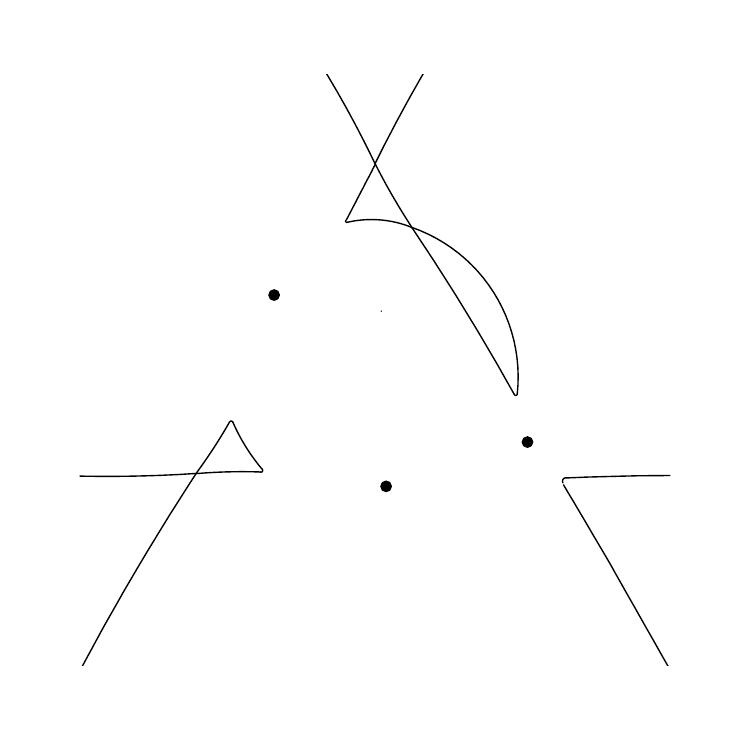}
&
\includegraphics[width=1.2in]{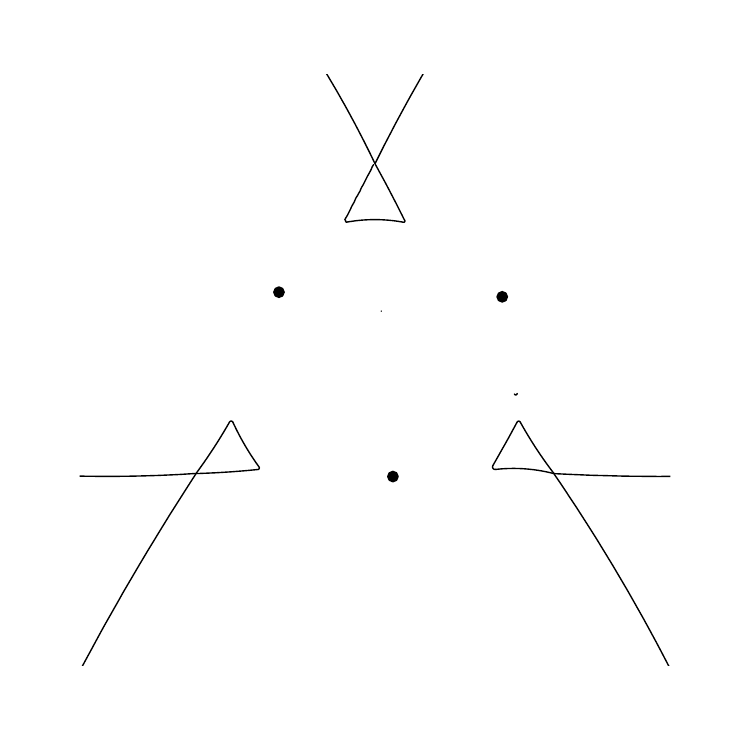}
\\
\end{tabular}
\caption{Rigid isotopy classes of maximally perturbable nodal rational curves of degree~$5$ in~$\RPP$ with exactly three isolated nodes.}
\label{fig:b04}
\end{figure}

\begin{figure}[h] 
\centering
\begin{tabular}{cc}
\includegraphics[width=1.2in]{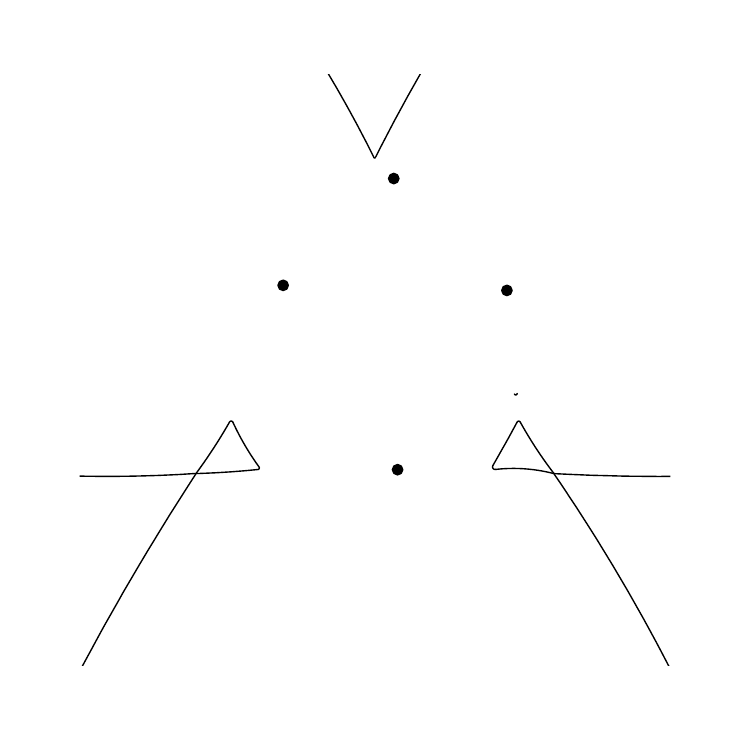}
&
\includegraphics[width=1.2in]{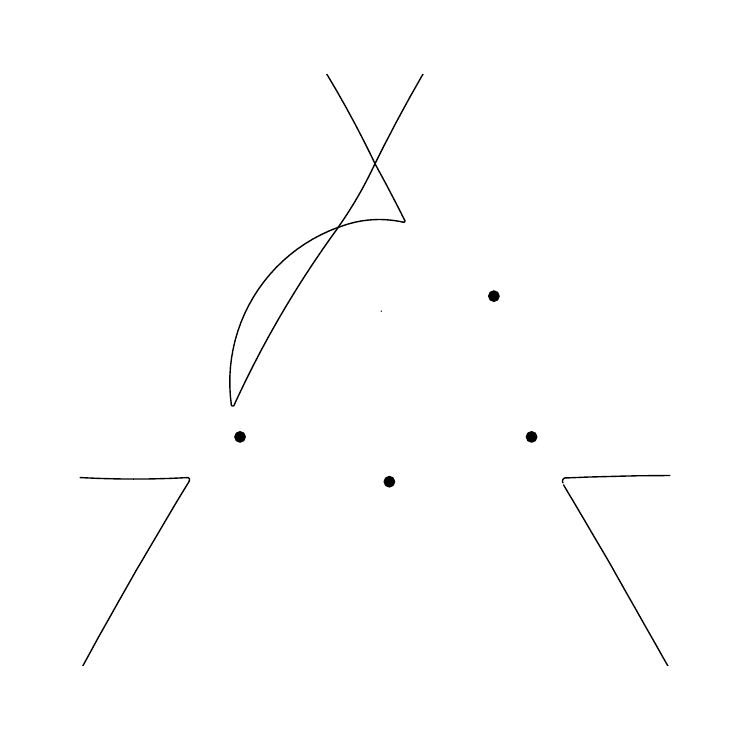}
\\
\end{tabular}
\caption{Rigid isotopy classes of maximally perturbable nodal rational curves of degree~$5$ in~$\RPP$ with exactly four isolated nodes.}
\label{fig:b05}
\end{figure}

\begin{figure}[h] 
\centering
\includegraphics[width=1.2in]{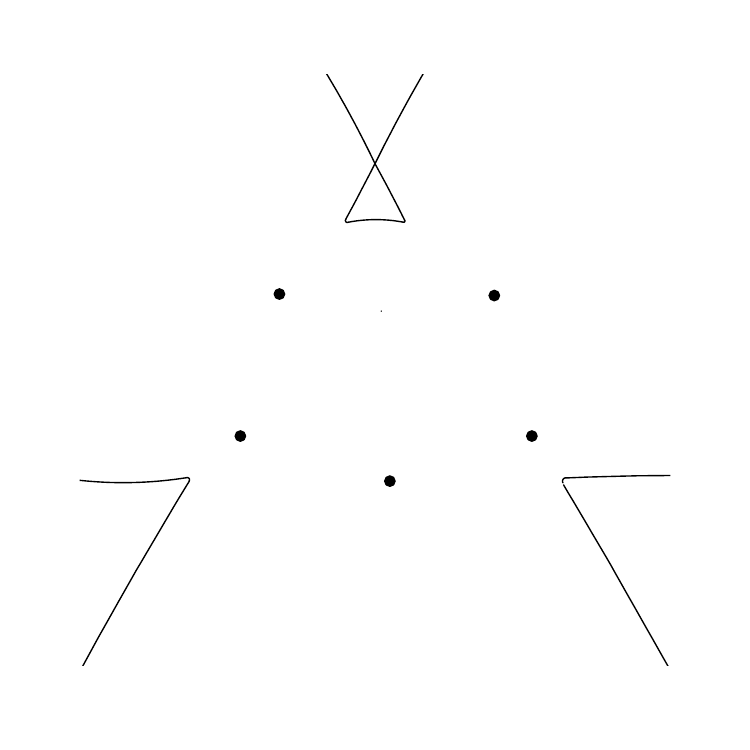}
\caption{The rigid isotopy class of nodal rational maximally perturbable curves of degree~$5$ in~$\RPP$ with exactly five isolated nodes.}
\label{fig:b06}
\end{figure}

\begin{prop}\label{prop:maxell}
 Let~$C$ be a maximally perturbable nodal rational curve of degree~$5$ in $\RPP$ such that the curve~$C$ has no hyperbolic nodal points. Then, there is only one rigid isotopy class of such a curve, represented in Figure~\ref{fig:maxellip}.
\end{prop}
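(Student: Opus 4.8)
The plan is to reduce the statement to a combinatorial uniqueness assertion about marked toiles and then to pin down the single arrangement that can occur. First I would invoke Lemma~\ref{lm:maxnodes}: since $C$ is maximally perturbable it is maximal, so $e+h=6$; the hypothesis that $C$ has no hyperbolic node forces $h=0$ and hence $e=6$. Thus $C$ has exactly six elliptic nodes, and its type~$\mathrm{I}$ perturbation is a non-singular $M$-quintic whose real point set consists of six ovals together with the pseudoline. By the rigid isotopy classification of non-singular quintics (Figure~\ref{fig:nonsingM}) this $M$-quintic is unique up to rigid isotopy, and by B\'ezout its six ovals lie in convex position, carrying the reversible cyclic (hexagonal) order described just before Lemma~\ref{lm:maxnodes}; by Rokhlin's formula applied with $l=6$, $k=2$, $\sigma=0$ and no injective pairs, these ovals split as three positive and three negative.

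Next I would mark one elliptic node $p$ and pass to the marked toile $(D_C,v_{T_1},v_{T_2})$ via the correspondence between rigid isotopy classes of marked nodal rational quintics and equivalence classes of marked toiles of degree~$9$ with six nodal \tvs\ and a unique type~$\mathrm{I}$ labeling. By Corollary~\ref{coro:nodalpairs} the type~$\mathrm{I}$ perturbation of $D_C$ has $l-1=5$ ovals and $c(D_C)=c(C)+2=2$. Since every node of $C$ is real and elliptic, this means $D_C$ carries exactly five isolated real nodal \tvs\ (one per unmarked elliptic node), one inner marked nodal \tv\ $v_T$ coming from the pair of complex conjugate tangent lines at $p$, and no non-isolated nodal \tvs. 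The type~$\mathrm{I}$ labeling, whose existence and uniqueness is built into the correspondence, is then controlled by Proposition~\ref{prop:sigmatrig} together with $\sigma(C)=0$.

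I would then enumerate the equivalence classes of such marked toiles exactly as in the proof of Proposition~\ref{prop:maxhyp}, but with the simplification afforded by $h=0$: there are no non-isolated \tvs\ to attach to a long component, so the only data are the five isolated real \tvs\ and the single inner marked vertex. Realizing these on the canonical decomposition $\mathrm{I}_1-\mathrm{I}_2-\mathrm{I}_1$ of the underlying $M$-toile provided by \cite{DIK}, the convex cyclic order of the six contracted ovals leaves no combinatorial freedom: any two marked toiles carrying this data should be connected by weak equivalence, by passing a nodal point through an inflexion point, and by passing a singular fiber through a tangent line. This would yield a single equivalence class, and performing the inverse birational transformations (negative Nagata transformations at $v_{T_1},v_{T_2}$ followed by the blow-down of the exceptional section) recovers the unique curve drawn in Figure~\ref{fig:maxellip}.

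The hard part will be the uniqueness step: showing that every marked toile with five isolated real \tvs, one inner marked \tv, degree~$9$ and a unique type~$\mathrm{I}$ labeling is equivalent to one normal form. This is where the rigidity of the convex hexagon of ovals has to be converted into dessin language; in particular one must verify that the two a priori positions of the inner marked vertex, corresponding to whether the marked elliptic node is a positive or a negative oval of the $M$-quintic (there being three of each), are identified by the marked-dessin moves, so that the auxiliary choice of marking never produces inequivalent classes.
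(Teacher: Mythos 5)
Your overall strategy (reduce via Lemma~\ref{lm:maxnodes} to $e=6$, mark an elliptic node, pass to the marked toile, decompose, enumerate) follows the paper's outline, but there are two genuine problems with the way you propose to close the argument.

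First, and most seriously, your final step is to show that the two a priori positions of the marked inner nodal \tv{} --- marking a positive versus a negative elliptic node --- ``are identified by the marked-dessin moves, so that the auxiliary choice of marking never produces inequivalent classes.'' This is false, and the attempt would fail. The paper's enumeration produces \emph{two} inequivalent classes of marked toiles, $D_+$ and $D_-$: their type~$\mathrm{I}$ perturbations have $3$ negative and $2$ positive ovals, respectively $2$ negative and $3$ positive ovals, and these counts are invariants of the equivalence class (they are read off from the type~$\mathrm{I}$ labeling, cf.\ Proposition~\ref{prop:sigmatrig}), so no sequence of moves can merge them. The correct resolution is not to identify the two marked classes but to observe that they are both realized by the \emph{same} unmarked curve: since the six elliptic nodes of $C$ split into positive and negative ones, marking a positive node yields $D_+$ and marking a negative node yields $D_-$, so the two marked classes collapse to a single rigid isotopy class only after forgetting the marking. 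Your proof as written would get stuck trying to prove an equivalence that does not hold.

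Second, the decomposition tool you invoke is the wrong one for this case. The canonical $\mathrm{I}_1-\mathrm{I}_2-\mathrm{I}_1$ decomposition of \cite{DIK} is used in the paper (via Lemmas~\ref{lm:maxpert} and~\ref{lm:maxdec}) only when the base point is a \emph{hyperbolic} node on the pseudoline, because only then does the type~$\mathrm{I}$ perturbation of the marked toile become a non-singular maximal toile of degree~$9$ to which that theorem applies. With an elliptic base point the marked vertex is inner, the perturbed toile has $5$ ovals rather than $7$, and the relevant tool is Proposition~\ref{prop:decomptoi}: since $D_C$ has only isolated real nodal \tvs, it decomposes as a gluing of three cubic toiles along dotted generalized cuts or a solid axe, with the cuts constrained to be coherent with the (unique) type~$\mathrm{I}$ labeling. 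That is the enumeration you need to carry out.
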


\begin{proof}
Since~$C$ is a maximally perturbable nodal rational curve, by Lemma~\ref{lm:maxnodes} it is maximal, hence $e=6$.
Let $p\in\R C$ an elliptic nodal point of the curve~$C$ and let $(D_C,v_1,v_2)$ be the marked toile associated to $(C,p)$.
Since the base point~$p$ is elliptic, the nodes $n_{T}$ and $n_{\overline{T}}$ corresponding to the tangent lines of~$C$ at $p$ are a pair or complex conjugated nodal points. Therefore, the markings $v_1$ and $v_2$ are an inner nodal {\tvs} of $D_C$.
The toile $D_C$ has $5$ isolated nodal {\tvs} corresponding to the elliptic nodal points different from $p$.
Due to Proposition~\ref{prop:decomptoi}, since the toile $D_C$ has only isolated nodal {\tvs} it can be decomposed as the gluing of three cubic toiles by dotted {\gc} or by a solid axe. Since the toile $D_C$ is of type~$\mathrm{I}$, so are the glued cubic and the corresponding {\gc s} must be coherent with the labeling of the regions.
Then, after enumerating the equivalence classes of marked toiles with the aforementioned restrictions, it turns out that there are two equivalence classes having representatives~$D_+$ and~$D_-$, respectively (see Figure~\ref{fig:maxellip}).
The toile~$D_+$ has a type~$\mathrm{I}$ perturbation having~$3$ negative ovals and~$2$ positive ovals. Moreover, the toile~$D_+$ has a type~$\mathrm{I}$ non-singular perturbation.
The toile~$D_-$ has a type~$\mathrm{I}$ perturbation having~$2$ negative ovals and~$3$ positive ovals.
Therefore, we can conclude that the toile~$D_+$ (resp., $D_-$) corresponds to a pair $(C,p)$ such that the elliptic nodal point~$p$ can be perturbed to a positive (resp., negative) oval.
Hence, there is only one class of rigid isotopy of the curve~$C$. 
\end{proof}

\begin{figure}[h] 
\centering
\begin{subfigure}{\linewidth}
\centering 
\includegraphics[scale=0.9]{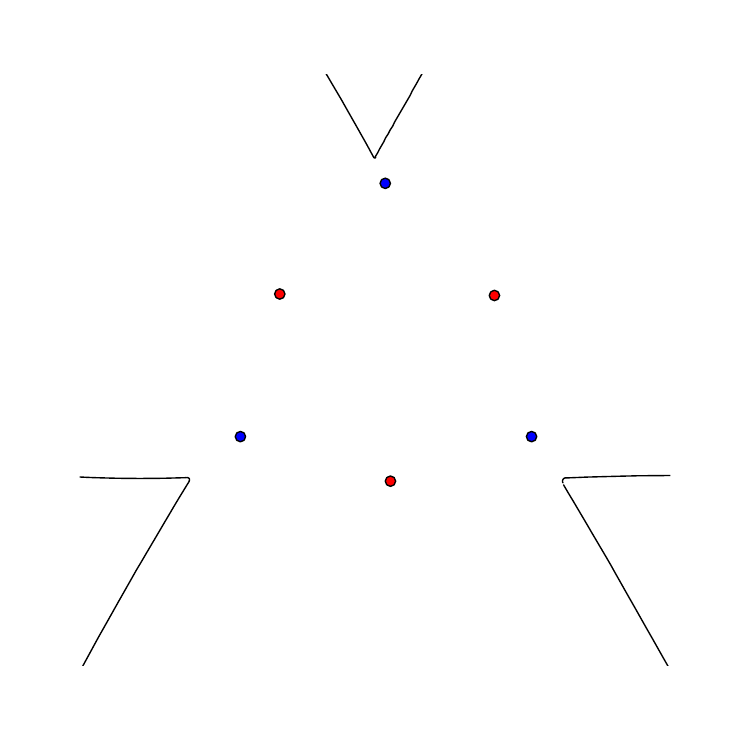}
\caption{The rigid isotopy class of maximally perturbable nodal rational curves of degree~$5$ in~$\RPP$ with exactly six isolated nodes.}
\label{fig:maxellip}
\end{subfigure}
\begin{subfigure}{0.4\linewidth} 
\centering
\includegraphics[width=1.7in]{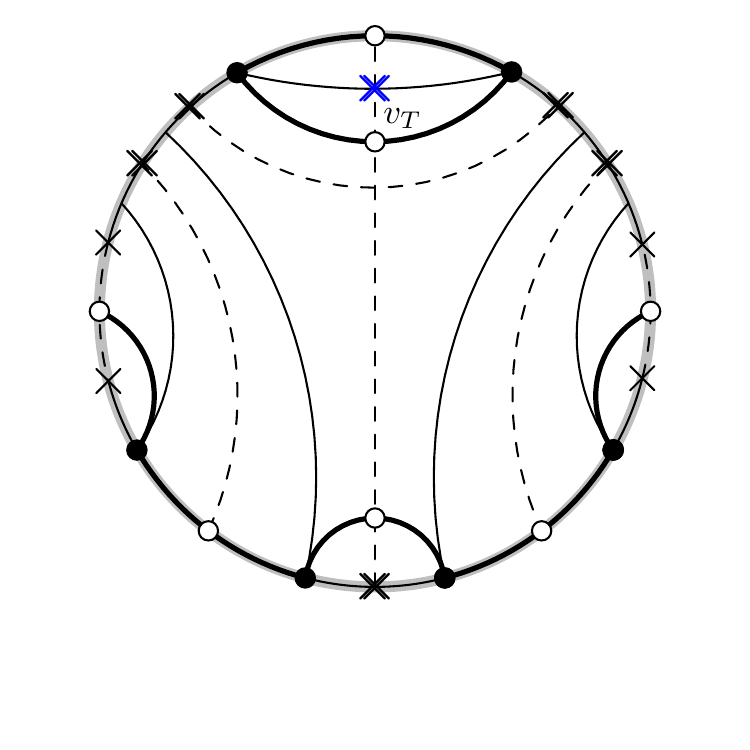}
\caption{The marked dessin $D_-$, associated to a negative elliptic nodal point as the base point.}
\end{subfigure}
\begin{subfigure}{0.4\linewidth} 
\centering
\includegraphics[width=1.7in]{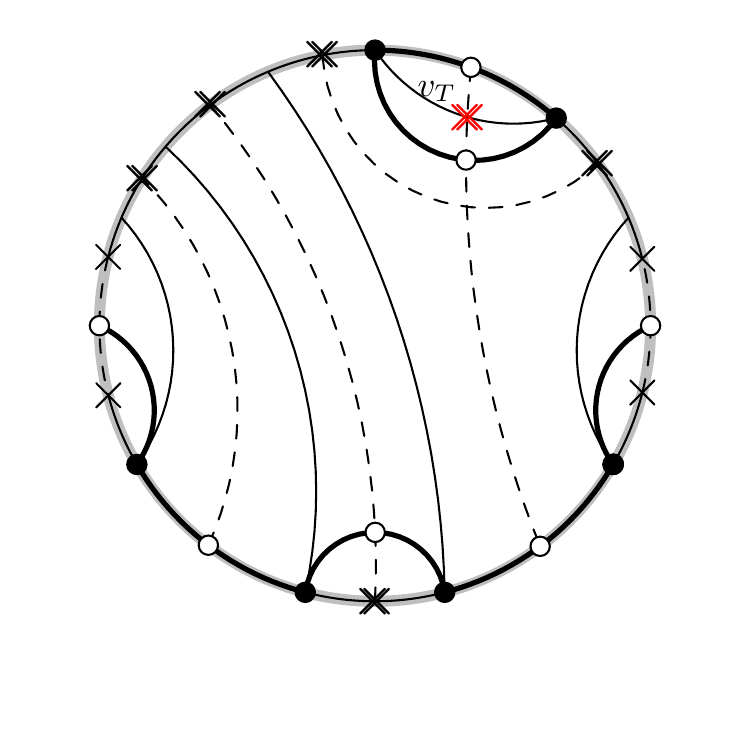}
\caption{The marked dessin $D_+$, associated to a positive elliptic nodal point as the base point.}
\end{subfigure}
\caption{}
\end{figure}

\begin{thm}[Rigid isotopy classification of maximally perturbable real quintic rational curves]
 If~$C$ is a maximally perturbable nodal rational curve of degree~$5$ in~$\RPP$, then, the curve~$C$ belongs to one of the different rigid isotopy classes represented in Figures~\ref{fig:b01} to~\ref{fig:maxellip}.
\end{thm}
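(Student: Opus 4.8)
The plan is to reduce the statement to the two preceding propositions by a dichotomy on the nature of the real nodal points. First I would invoke Lemma~\ref{lm:maxnodes}: since $C$ is maximally perturbable it is maximal, so writing $e$ and $h$ for the numbers of elliptic and hyperbolic nodal points one has $e+h=6$. In particular $C$ has at least one real nodal point, so the marking construction of Section~\ref{ch:deg5} applies and $C$ may be studied through a marked toile of degree~$9$ via Theorem~\ref{th:correpondance2}.

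Next I would split according to whether $h\geq 1$ or $h=0$. If $C$ carries a hyperbolic nodal point, then Proposition~\ref{prop:maxhyp} applies and places $C$ in one of the rigid isotopy classes depicted in Figures~\ref{fig:b01} to~\ref{fig:b06}. If instead $C$ has no hyperbolic node, then $e=6$ and Proposition~\ref{prop:maxell} shows that there is a single class, the one of Figure~\ref{fig:maxellip}. Since these two cases are mutually exclusive and jointly exhaustive, every maximally perturbable nodal rational quintic lies in one of the classes listed in Figures~\ref{fig:b01} to~\ref{fig:maxellip}, which is the assertion.

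To make the list genuinely a classification rather than a mere enumeration of representatives, I would record that the displayed classes are pairwise distinct. Here the two rigid isotopy invariants highlighted in the Introduction do the separating work: the total Betti number with $\Z/2\Z$-coefficients of the rational curve itself, controlled by $e$ and $h$ through $b_{*}(\R C)=2+e+h$, together with the total Betti number of a type~$\mathrm{I}$ perturbation bounded by the Smith--Thom inequality (Theorem~\ref{thm:sti}), and the combinatorial position of the ovals recorded by the marked toile, distinguish the figures within each of the two families.

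The main obstacle is not in this final assembly --- which is essentially a case check --- but lies entirely inside the two propositions it cites: Proposition~\ref{prop:maxhyp} rests on the decomposition of maximal toiles into cubic blocks (Lemma~\ref{lm:maxdec}) and a careful enumeration of the admissible gluings and markings, while Proposition~\ref{prop:maxell} similarly depends on the decomposition result Proposition~\ref{prop:decomptoi}. Thus the only genuine care needed at this stage is to confirm that the dichotomy $h\geq 1$ versus $h=0$ induces no overlap or omission among the figures, and that the invariants above indeed separate the enumerated representatives.
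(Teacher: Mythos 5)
Your proposal is correct and follows exactly the paper's own argument: the proof there is precisely the dichotomy $h\geq 1$ versus $h=0$, handled by Proposition~\ref{prop:maxhyp} and Proposition~\ref{prop:maxell} respectively. The additional remarks on separating the classes by invariants are consistent with the paper's framework but are not part of its (very short) proof of this theorem.
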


\begin{proof}
The case when $h\geq1$ is considered in Proposition~\ref{prop:maxhyp} and the case when $h=0$ is considered in Proposition~\ref{prop:maxell}.
\end{proof}

\subsection{$(M-2)$-perturbable curves}

\begin{df} \label{df:Mdpert}
A real type~$\mathrm{I}$ nodal plane curve~$C$ of degree~$d$ in $\RPP$ is \emph{$(M-s)$-perturbable} if its type~$\mathrm{I}$ perturbation 
has $\frac{(d-1)(d-2)}{2}+1-s$ connected components of the real point set. 
\end{df}

Recall that a non-singular $(M-2)$-quintic $C_0$ has four ovals, which may or not be in a convex position, depending on whether $\sigma(C_0)=2$ or $\sigma(C_0)=0$, respectively. 
In the case when $\sigma(C_0)=2$, the curve $C_0$ has four ovals forming a quadrangle. As before, if we fix a point~$p$ at the interior of an oval, the pencil of lines passing for~$p$ induces an order on the ovals. Varying the fix point, we can assign to every oval two neighboring ones.
In the case when $\sigma(C_0)=0$, the curve $C_0$ has four ovals which are not in convex position, namely, three of the ovals form a triangle which does not intersect the pseudoline in which lies the forth oval.
If we fix a point~$p$ at the interior of the central oval and an orientation of the central oval, the pencil of lines passing through~$p$ induces a cyclic order on the three exterior ovals depending on the orientation given to the central oval.
In both cases, if a type~$\mathrm{I}$ $(M-2)$-perturbable curve~$C$ has an elliptic nodal point, the oval produce by it in a type~$\mathrm{I}$ perturbation of~$C$ must respect its relative position with respect to its neighboring ovals, and so does the elliptic nodal point of the original curve (see Figure~\ref{fig:nonsingM2}).

\begin{figure}[h] 
\centering
\begin{tabular}{cc}
\includegraphics[width=1.8in]{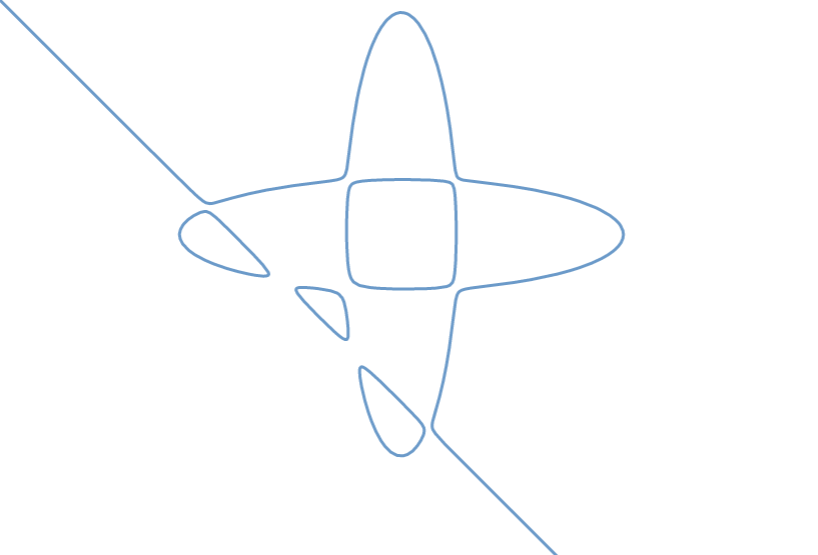}
&
\includegraphics[width=1.2in]{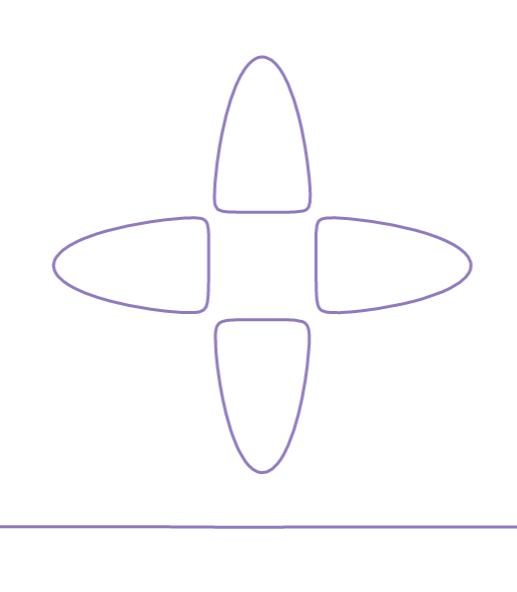}
\end{tabular}
\caption{Rigid isotopy classes of non-singular $(M-2)$-quintic curves in $\RPP$.}
\label{fig:nonsingM2}
\end{figure}

\begin{lm}\label{lm:h3}
Let~$C$ be a nodal rational $M$-curve of degree~$5$ in $\RPP$. If~$C$ is $(M-2)$-perturbable, then $h\geq3$.
\end{lm}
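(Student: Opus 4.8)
The plan is to fix the two numerical invariants and count the connected components of the type~$\mathrm{I}$ perturbation. Since $C$ is a nodal rational $M$-curve of degree $5$, the Smith--Thom equality gives $b_{*}(\R C)=2+e+h=b_{*}(\C C)=8$, hence $e+h=6$. Writing $\widetilde C$ for the type~$\mathrm{I}$ perturbation, the hypothesis that $C$ be $(M-2)$-perturbable means that $\widetilde C$ has exactly $\frac{(5-1)(5-2)}{2}+1-2=5$ connected components. As in the proof of Lemma~\ref{lm:maxnodes}, each of the $e$ elliptic nodes contributes exactly one oval to $\widetilde C$, while the one-dimensional part of $\R C$ is a single immersed circle carrying the nonzero class of $H_{1}(\RPP;\Z/2\Z)$ and having the $h$ hyperbolic nodes as its self-intersections; its type~$\mathrm{I}$ perturbation is one pseudoline together with some number $o\ge 0$ of ovals. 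Thus $5=e+1+o$, that is $o=4-e=h-2$.

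The inequality $o\ge 0$ immediately kills the cases $h=0$ and $h=1$: they would force $e=6$, resp.\ $e=5$, and hence produce at least $7$, resp.\ $6$, components, contradicting $(M-2)$-perturbability. It therefore remains to exclude $h=2$. I would argue by contradiction: if $h=2$ then $e=4$ and $o=0$, so all four ovals of $\widetilde C$ come from the four elliptic nodes while the immersed circle perturbs to a single pseudoline with no extra oval. Tracking the complex orientation along the parametrizing circle $\RP$, the smoothing at each hyperbolic node is the orientation-coherent (Seifert) one; producing no oval out of two nodes forces the two nodes to be linked along $\RP$, i.e.\ the immersed circle is a clasp. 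I would then feed this back into the geometry of $\widetilde C$: the four ovals are pinned at the four isolated elliptic points, so their mutual position is that of the four acnodes, while the pseudoline is a perturbation of the clasp.

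The main obstacle is exactly this last exclusion, and I would complete it by confronting the clasp with the rigidity of $(M-2)$-quintics recalled before the lemma. An $(M-2)$-quintic has its four ovals either in convex position or in the triangle-plus-one position, and these patterns are constrained by B\'ezout (no three acnodes collinear, bounded nesting); one must check that a degree $5$ rational curve whose pseudoline carries a clasp cannot place its four acnodes in either admissible pattern. Equivalently, one applies Rokhlin's complex orientation formula to $\widetilde C$ (with $l=4$, $k=2$, $\sigma=0$, giving $\Lambda^{+}-\Lambda^{-}+2(\Pi^{+}-\Pi^{-})=-2$) and shows that the signs forced on the four elliptic ovals by a clasped pseudoline are incompatible with this value. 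A perhaps cleaner alternative is to mark one hyperbolic node and pass to the trigonal model $C_{X}\subset\Sigma_{3}$: by Corollary~\ref{coro:lpert} the associated marked toile would have a type~$\mathrm{I}$ perturbation with $4+1=5$ ovals, of which $3$ are true ovals, and the decomposition and \gc-analysis of toiles on $\Sigma_{3}$ should rule out the clasp, thereby forcing $h\ge 3$.
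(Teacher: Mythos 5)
Your counting argument is correct and, up to the final step, actually sharper than the paper's own proof: from $e+h=6$, the $5$ components of the type~$\mathrm{I}$ perturbation, and the fact that each elliptic node yields one oval, you correctly derive $o=h-2\ge 0$, hence $h\ge 2$, and you correctly isolate the only remaining obstruction, namely the configuration $h=2$, $e=4$ in which the two hyperbolic nodes are interlaced along the parametrizing circle (the ``clasp''), so that the Seifert smoothing returns a single pseudoline with $o=0$. (The paper simply asserts that a circle with $h$ self-intersections perturbs to a pseudoline plus $h$ ovals, which is exactly the statement that the nodes are non-interlaced; you have put your finger on the point that needs justification.)

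The gap is that you never actually exclude the clasp: the last paragraph is a list of strategies (``I would complete it by\dots'', ``one must check that\dots'', ``should rule out the clasp''), none of which is carried out. Worse, the one completion you make quantitative does not close the case: with $l=4$, $k=2$, $\sigma(C)=0$ and the four ovals arising from acnodes (hence small and mutually non-injective, so $\Pi^{\pm}=0$), Rokhlin's formula $\Lambda^{+}-\Lambda^{-}+2(\Pi^{+}-\Pi^{-})=-2$ is satisfied by $\Lambda^{+}=1$, $\Lambda^{-}=3$; no sign contradiction appears without further geometric input relating the interlacement of the two nodes to the signs of the four elliptic ovals (or a B\'ezout-type argument on the four arcs joining the two nodes, or the toile decomposition you mention). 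As it stands the crucial case $h=2$ remains open, so the proposal does not prove the lemma; to be acceptable it must contain an explicit argument showing that a rational quintic of type~$\mathrm{I}$ with four elliptic nodes and two interlaced hyperbolic nodes cannot exist.
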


\begin{proof} 
For a nodal rational $M$-curve of degree~$5$ in $\RPP$, we have that $e+h=6$. Since~$C$ is $(M-2)$-perturbable, its type~$\mathrm{I}$ perturbation has $4$ ovals and a pseudoline component. If $e\geq4$, since each elliptic nodal point gives rise to an oval, then $e=4$. If $h=1$ or $2$, then the real point set $\R C$ has a circle with $h$ self-intersections and its $\mathrm{I}$ perturbation produces $h$ extra ovals, contradicting the fact that~$C$ is $(M-2)$-perturbable.
\end{proof}

\begin{thm}
Let~$C$ be a nodal rational $M$-curve of degree~$5$ in $\RPP$. If~$C$ is $(M-2)$-perturbable, its rigid isotopy class is determined by its isotopy class.
\end{thm}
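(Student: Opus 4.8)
The plan is to reduce the statement to a combinatorial assertion about marked toiles and then to show that the isotopy type of $\R C$ determines the equivalence class of the associated marked toile. Since rigid isotopy classes of marked nodal rational quintics are in one-to-one correspondence with equivalence classes of degree $9$ marked toiles carrying exactly one type~$\mathrm{I}$ labelling, it suffices to prove that two such curves with the same isotopy type produce equivalent marked toiles.

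First I would fix the combinatorial framework. As $C$ is a nodal rational $M$-quintic, $e+h=6$ and, since the total number of nodes of a rational quintic is $6$, all six nodes are real, so $\sigma(C)=0$. By Lemma~\ref{lm:h3} the hypothesis of $(M-2)$-perturbability forces $h\geq3$, hence $e\leq3$, and I may select a hyperbolic nodal point $p\in\R C$ lying on the pseudoline of a type~$\mathrm{I}$ perturbation. Blowing up and applying the Nagata transformations as in the marked-dessin construction yields the marked toile $(D_C,v_{T_1},v_{T_2})$, a toile of degree $9$ with six nodal \tvs and a unique type~$\mathrm{I}$ labelling. Because $\deg(D_C)=9>3$ and $\deg(D_C)\geq6$, Proposition~\ref{prop:deep}, Corollary~\ref{cr:deepbf} and Proposition~\ref{prop:decomptoi} provide, up to weak equivalence, a generalized cut, and iterating the cutting procedure decomposes $D_C$ into type~$\mathrm{I}$ cubic blocks. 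Invoking the classification of type~$\mathrm{I}$ cubic dessins (Figure~\ref{fig:cubiques}), I would enumerate the decompositions compatible with a type~$\mathrm{I}$ perturbation having four ovals and one pseudoline.

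The heart of the argument is a dictionary lemma matching each isotopy type of $\R C$ with a single equivalence class of marked toiles. The isotopy type records $e$, $h$, the mutual positions of the $e$ isolated points and the $h$ self-crossings, and their arrangement relative to the ovals and the pseudoline of the perturbation. Because $\sigma(C)=0$, the Rokhlin-type complex orientation formula for nodal plane curves of odd degree reduces to a rigid numerical identity relating $\Lambda^+-\Lambda^-$ and $\Pi^+-\Pi^-$ to the number of ovals of the perturbation. I would argue that the isotopy type fixes the colour and adjacency pattern of every vertex of $D_C$ up to the elementary, weak, and marked equivalences, and that together with this identity and the convexity (Bézout) constraints recalled for the non-singular $(M-2)$-quintics of Figure~\ref{fig:nonsingM2} it leaves no freedom in the positive/negative character of the isolated nodal \tvs, which is the only candidate for an extra rigid isotopy invariant. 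Proposition~\ref{prop:sigmatrig} guarantees the consistency of this combinatorial count with the geometric $\sigma$.

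The main obstacle I expect is the exhaustiveness of this enumeration: one must verify both that no single isotopy type gives rise to two inequivalent marked toiles and that distinct marked toiles correspond to distinct isotopy types, across all the cases $(e,h)\in\{(3,3),(2,4),(1,5),(0,6)\}$. This is the delicate combinatorial bookkeeping, carried out in parallel with the cubic-block decomposition exactly as in the proofs of Propositions~\ref{prop:maxhyp} and~\ref{prop:maxell}, with the $(M-2)$ perturbation replacing the maximal one. Once the dictionary is established, isotopic curves yield equivalent marked toiles and hence rigidly isotopic curves, which is the assertion.
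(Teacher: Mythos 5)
Your proposal follows essentially the same route as the paper: choose a hyperbolic node $p$ on the pseudoline (available since Lemma~\ref{lm:h3} gives $h\geq3$), pass to the marked toile $(D_C,v_{T_1},v_{T_2})$, constrain it via its type~$\mathrm{I}$ perturbation and Corollary~\ref{coro:lpert} (the paper records this as a type~$\mathrm{I}$ toile with $7$ real nodal \tvs, two of them markings, and $3$ true ovals), and then enumerate the resulting equivalence classes of marked toiles, matching them bijectively to isotopy types. The only slips are bookkeeping ones — the toile here has $7$ real nodal \tvs rather than six, and the relevant oval count is that of the perturbed toile, not of the plane curve — neither of which affects the argument.
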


\begin{proof} 
Pick a hyperbolic nodal point $p\in\R C$ belonging to the pseudoline.
The corresponding marked toile $(D_C,v_1,v_2)$ associated to $(C,p)$ is a type~$\mathrm{I}$ toile with $7$ real nodal {\tvs}, among which two are markings.
By Corollary~\ref{coro:lpert}, the marked toile $(D_C,v_1,v_2)$ has a type~$\mathrm{I}$ perturbation with $3$ true ovals. 
Then, enumerating all equivalence classes of marked toiles satisfying the aforementioned restrictions and realizing the birational transformations in order to recover the associated curves $\R C\subset\RPP$ lead to the plane curves shown on Figures~\ref{fig:MM2I} and~\ref{fig:MM2II}.
\end{proof}

\begin{figure}[ht]
\centering
\begin{tabular}{ccccc}
\includegraphics[width=1in]{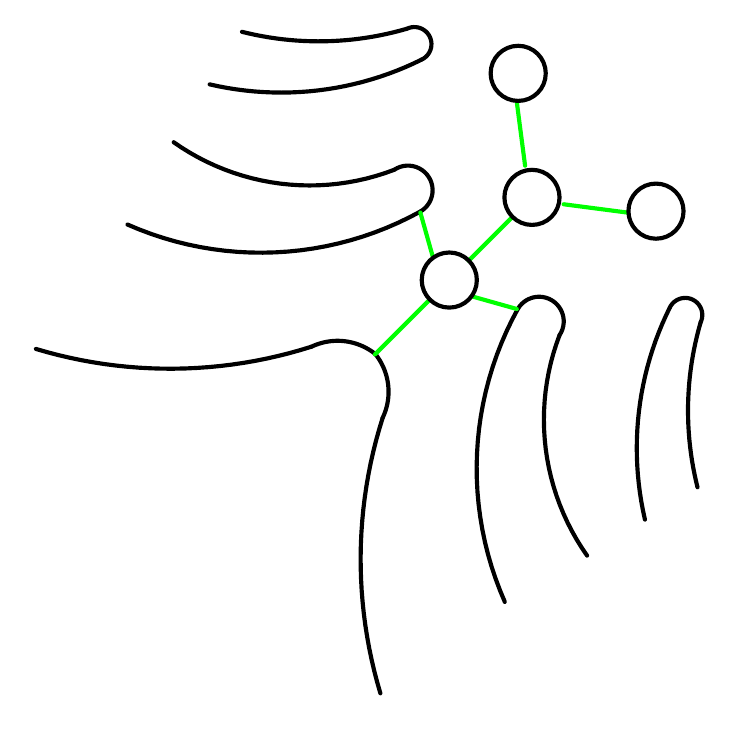}&
\includegraphics[width=1in]{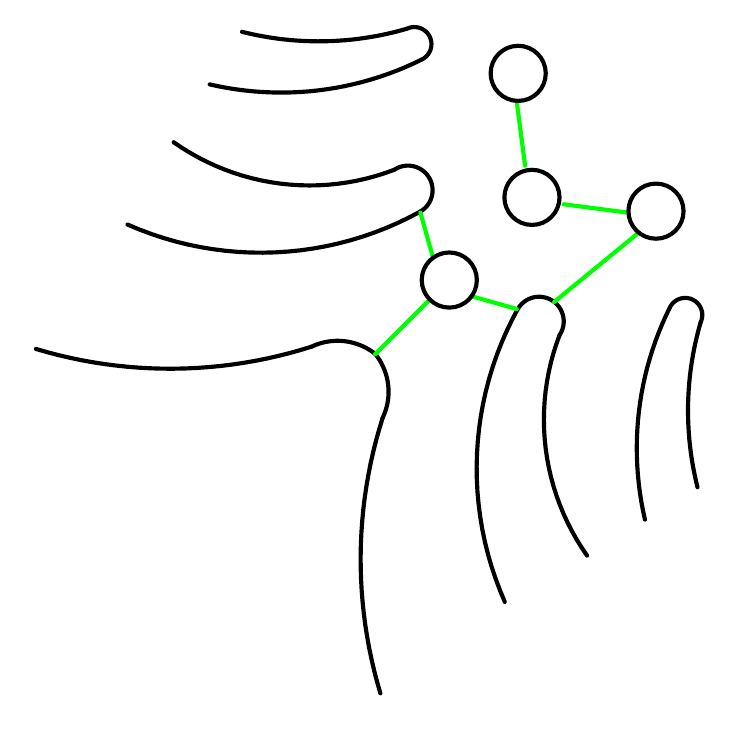}&
\includegraphics[width=1in]{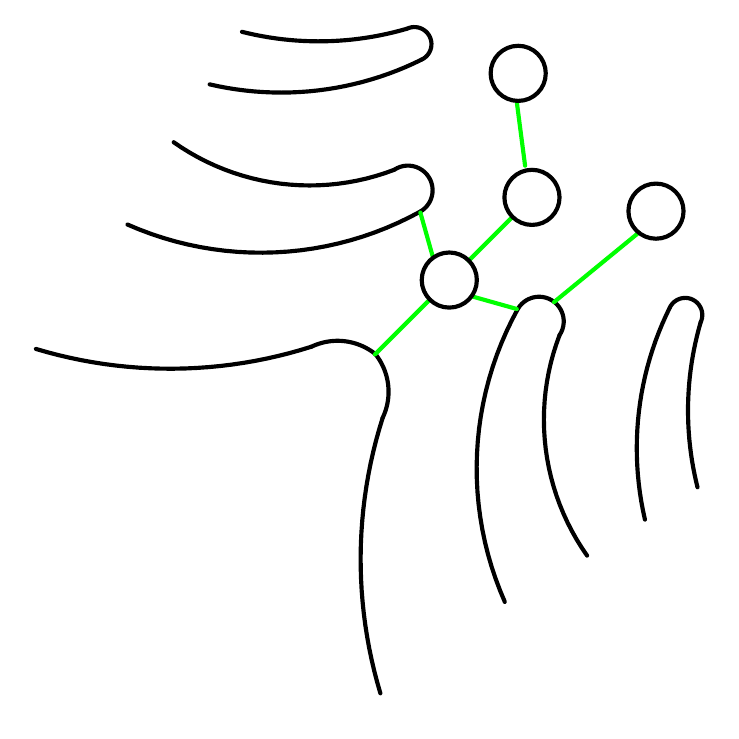}&
\includegraphics[width=1in]{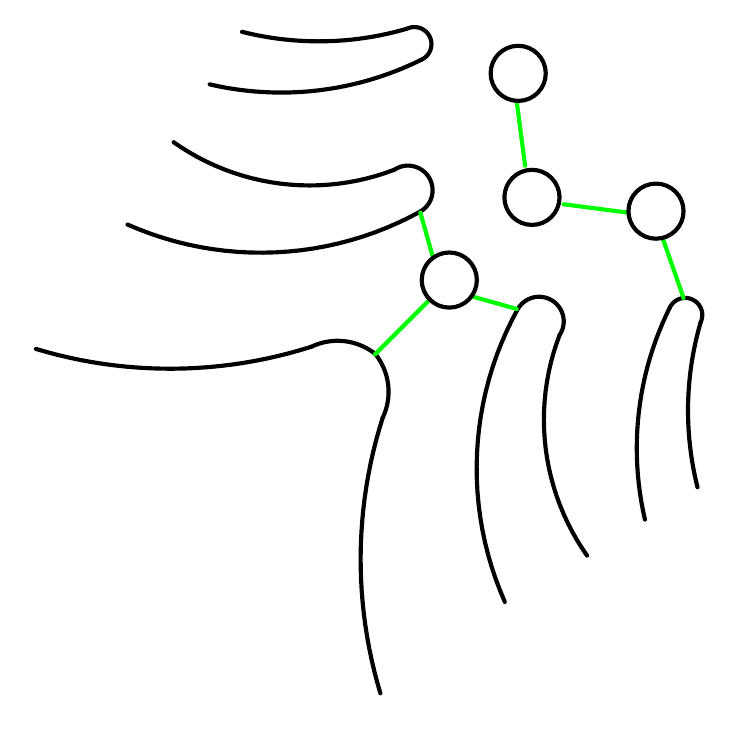}&
\includegraphics[width=1in]{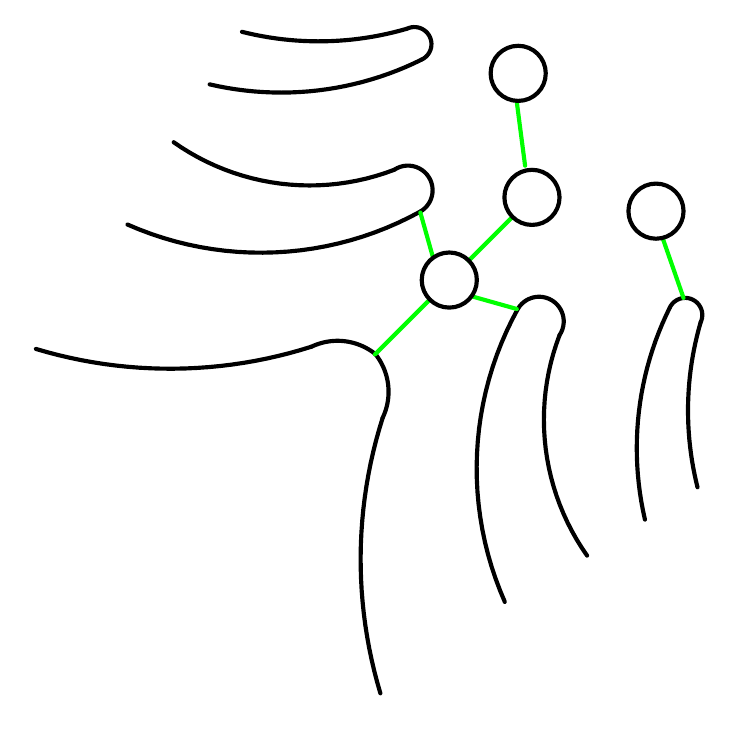}\\
\includegraphics[width=1in]{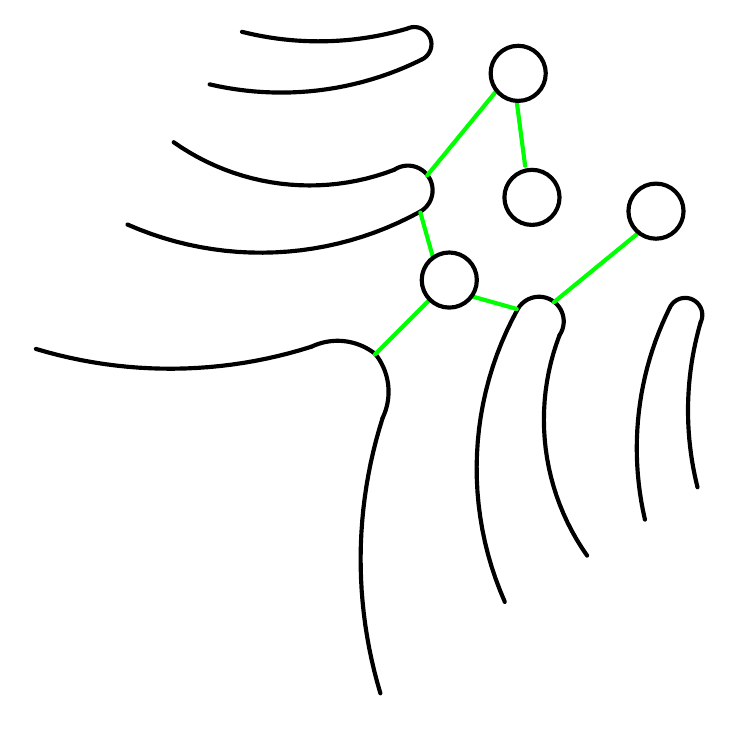}&
\includegraphics[width=1in]{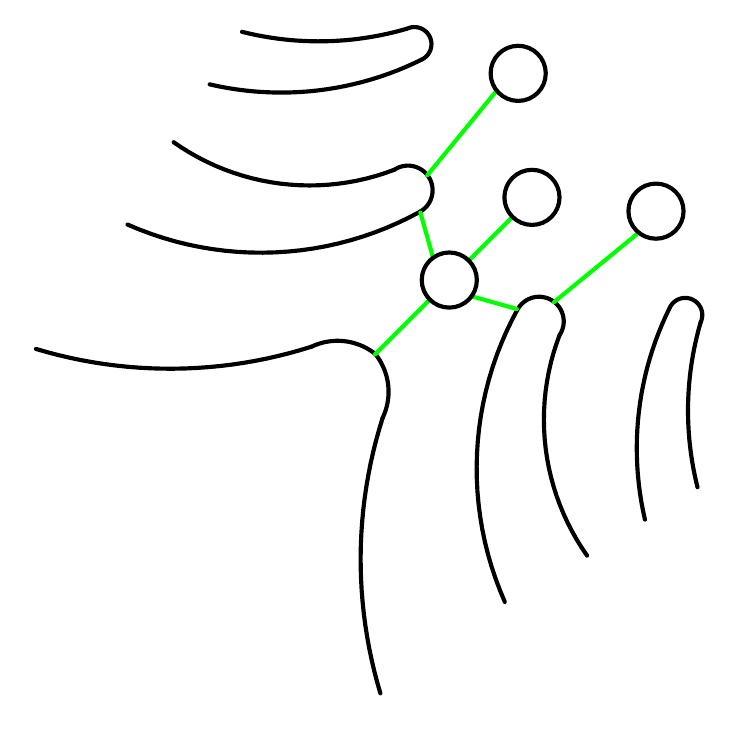}&
\includegraphics[width=1in]{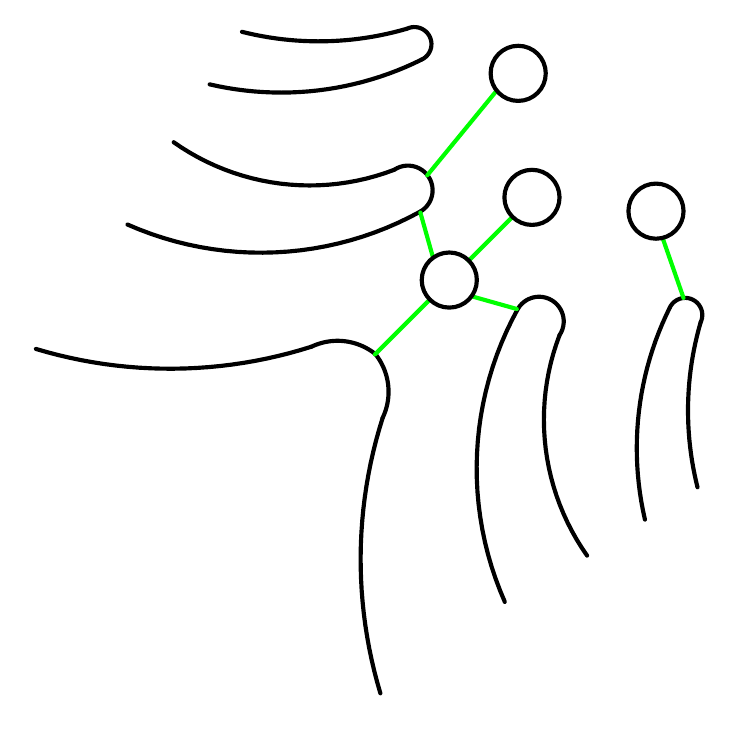}&
\includegraphics[width=1in]{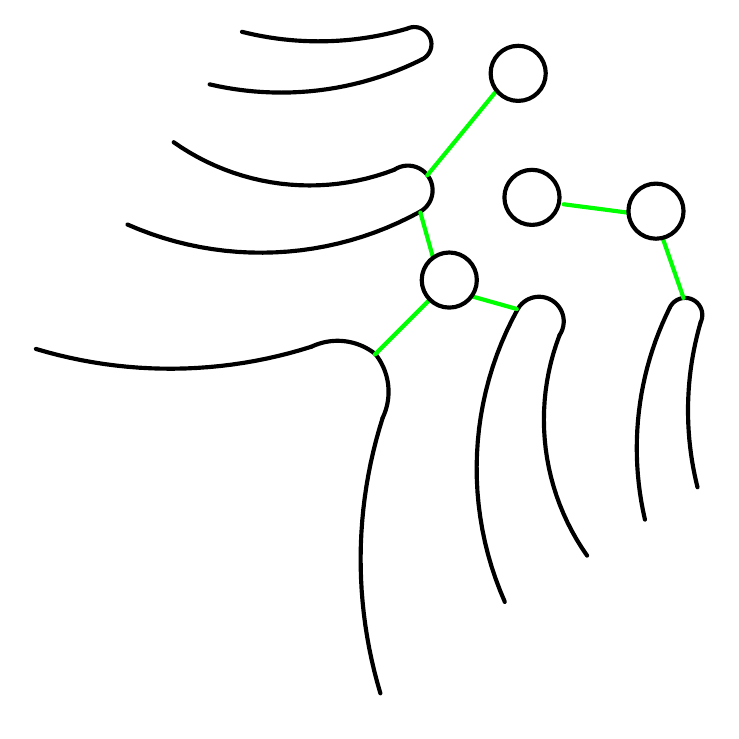}&
\includegraphics[width=1in]{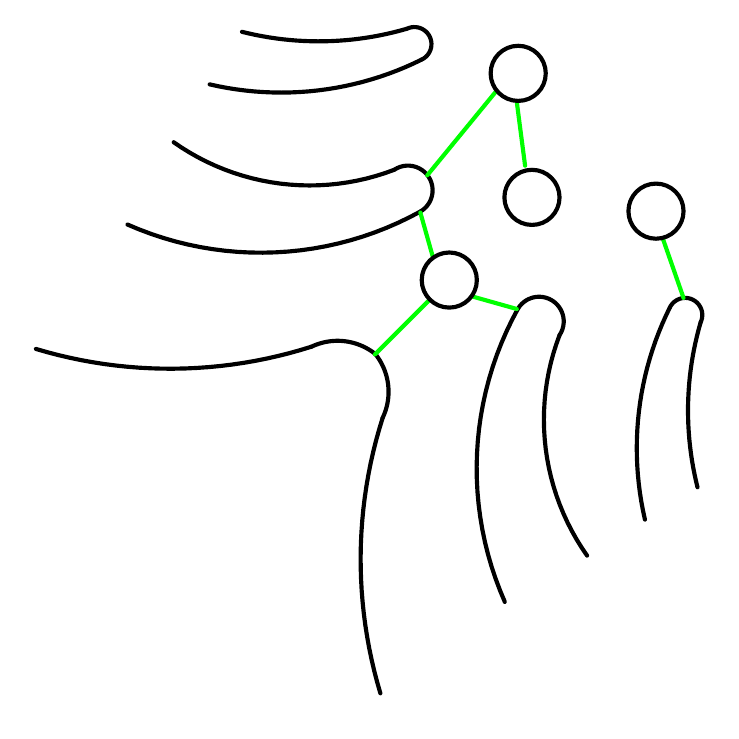}\\
\includegraphics[width=1in]{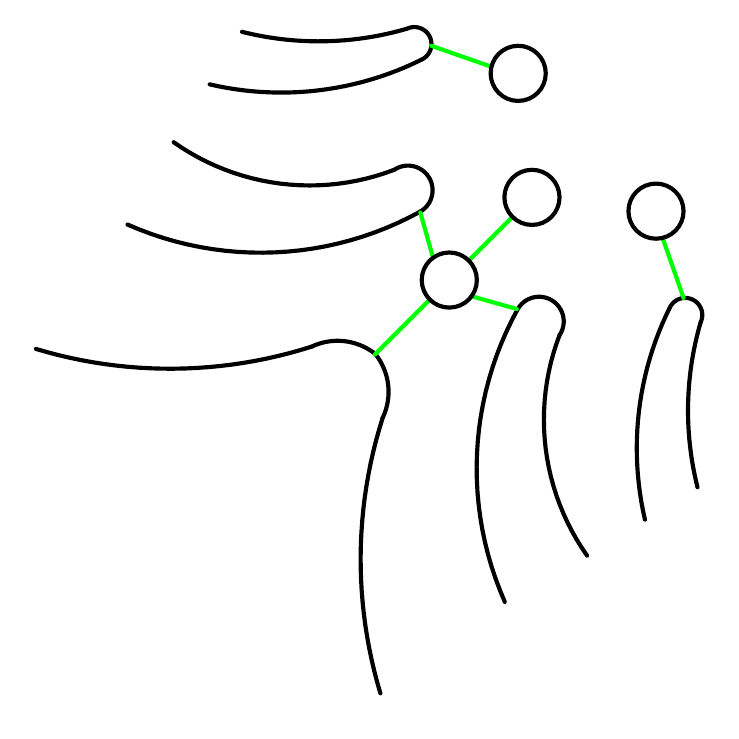}&
\includegraphics[width=1in]{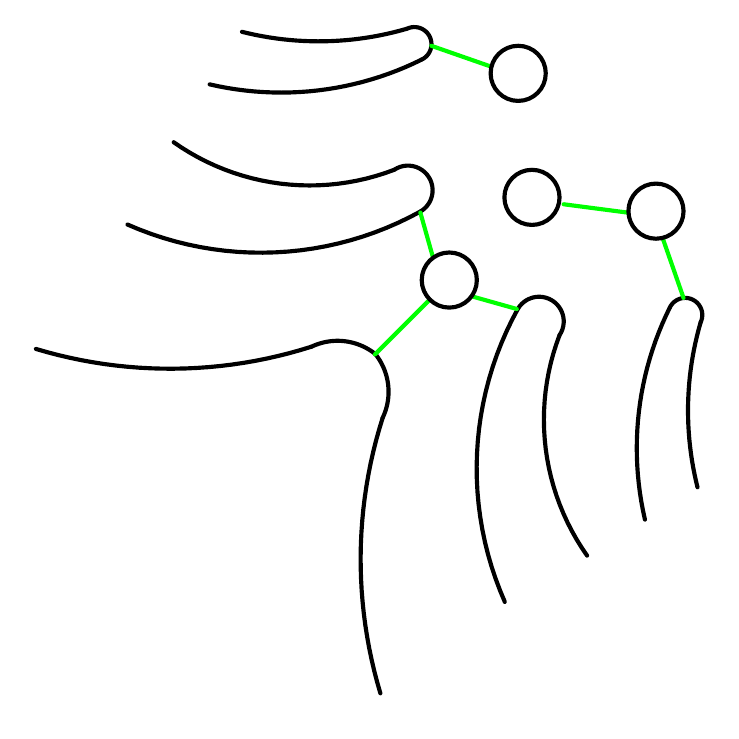}&
\includegraphics[width=1in]{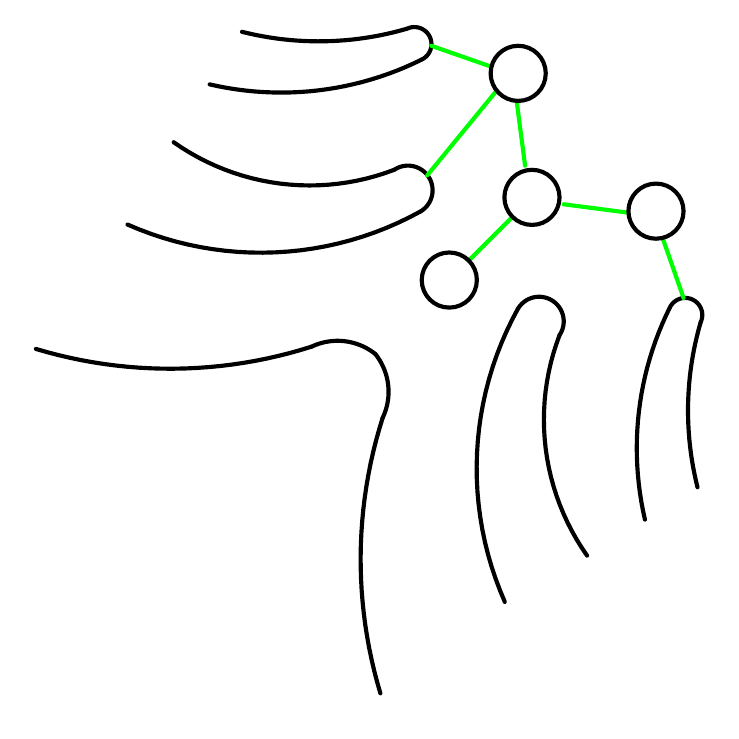}&
\includegraphics[width=1in]{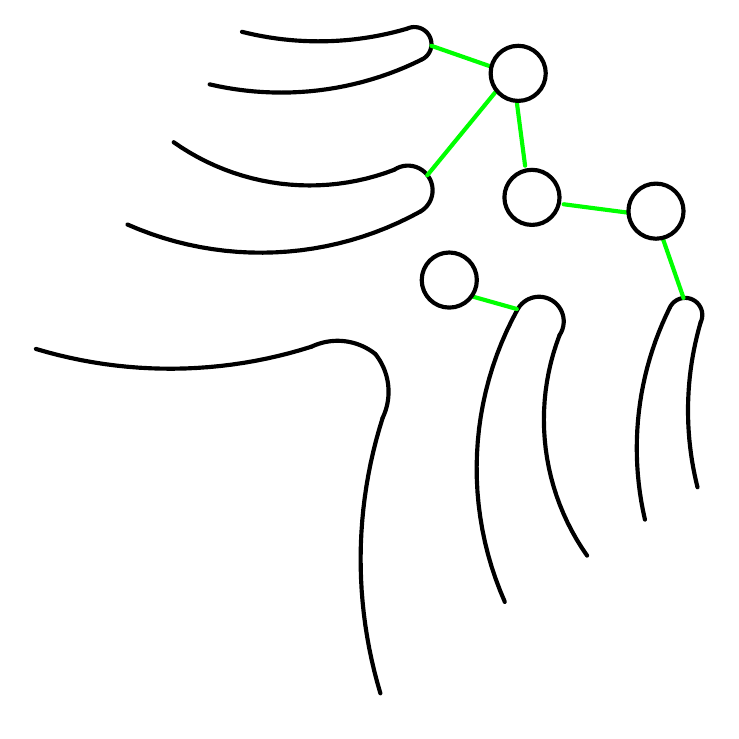}&
\includegraphics[width=1in]{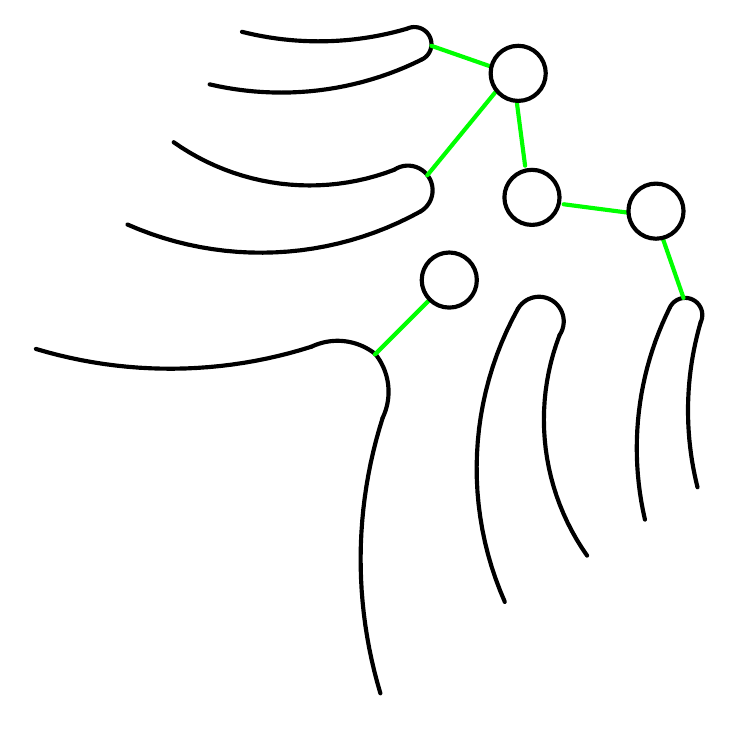}\\
\includegraphics[width=1in]{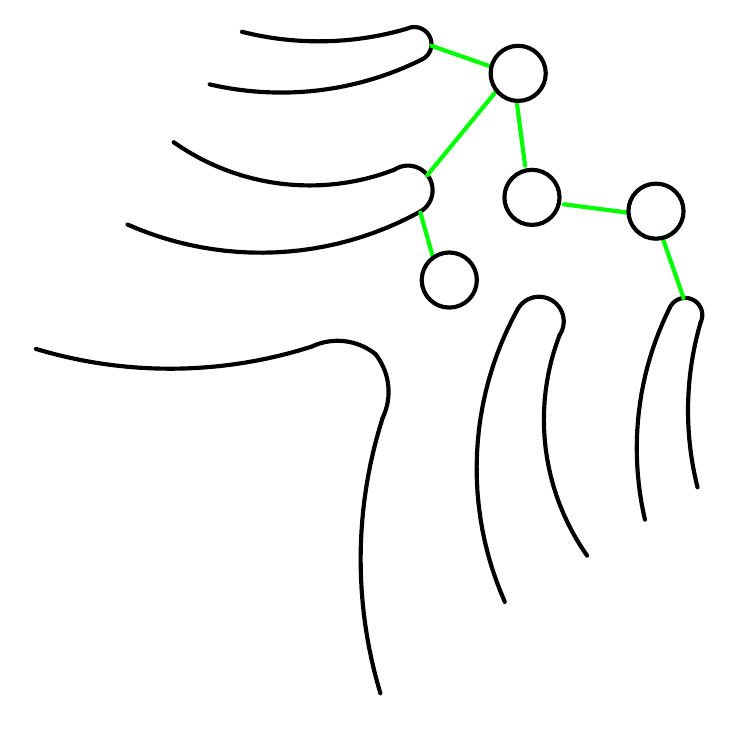}&
\includegraphics[width=1in]{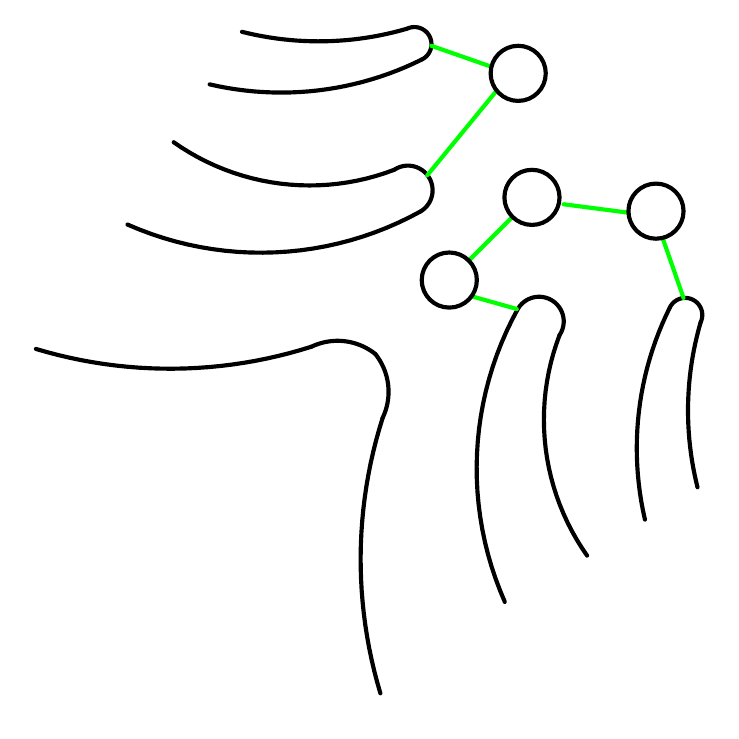}&
\includegraphics[width=1in]{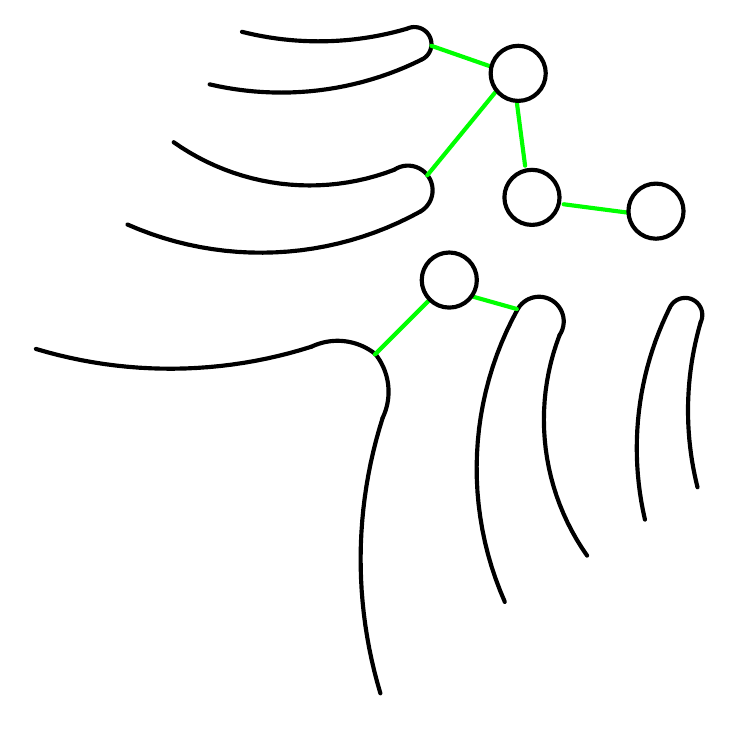}&
\includegraphics[width=1in]{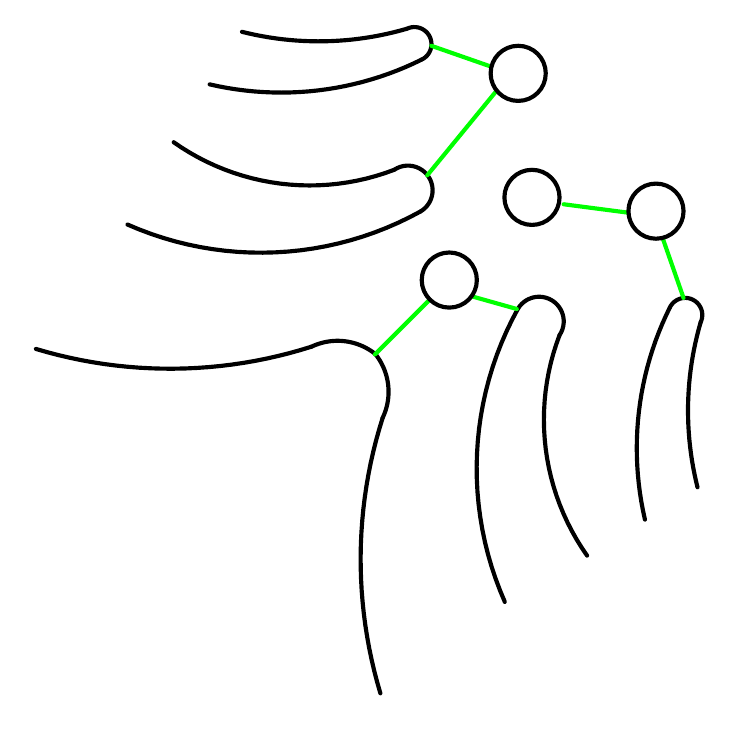}&
\includegraphics[width=1in]{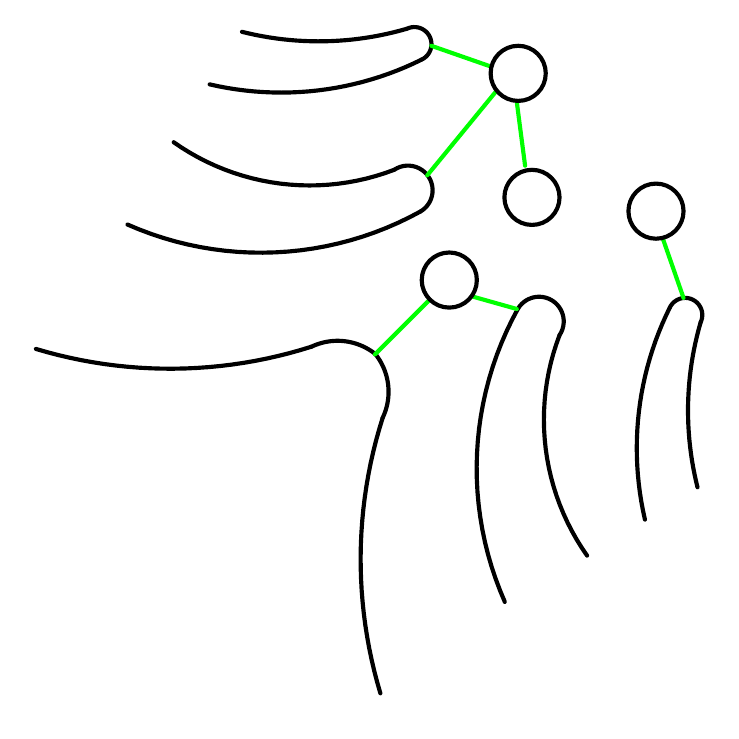}
\end{tabular}
\caption{Rigid isotopy classes of~$(M-2)$-perturbable nodal rational~$M$-curves in~$\RPP$. (Part I).}
\label{fig:MM2I}
\end{figure}

\begin{figure}[h]
\centering
\begin{tabular}{ccccc}
\includegraphics[width=1in]{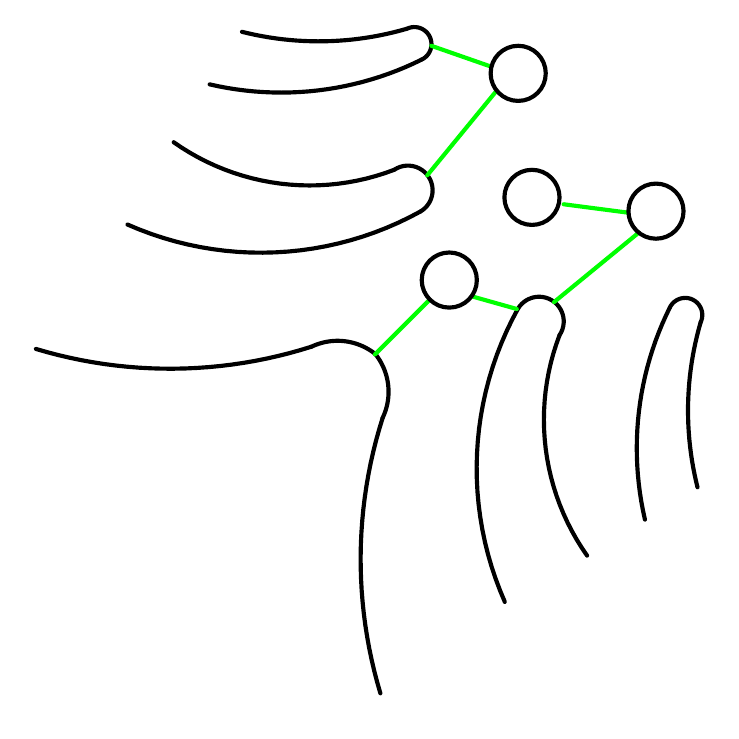}&
\includegraphics[width=1in]{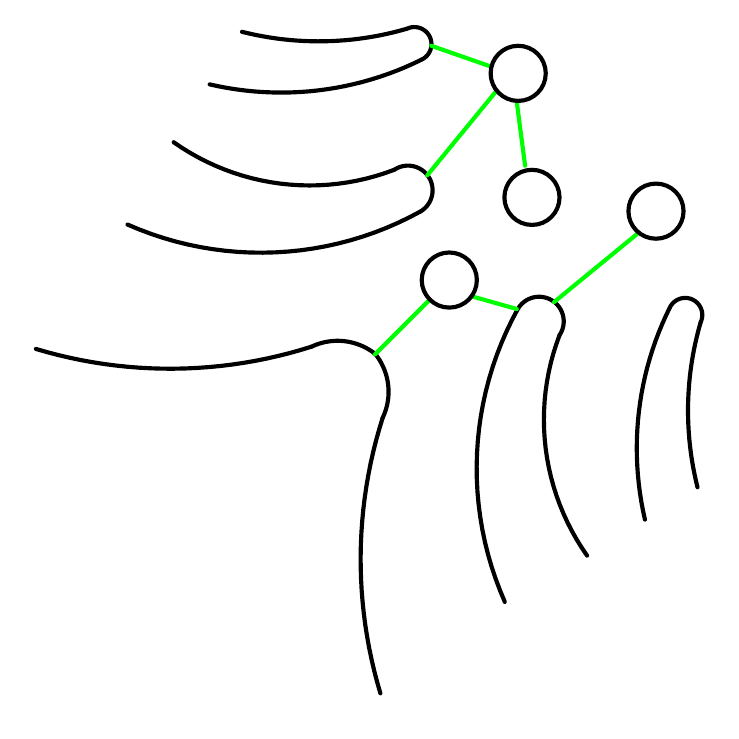}&
\includegraphics[width=1in]{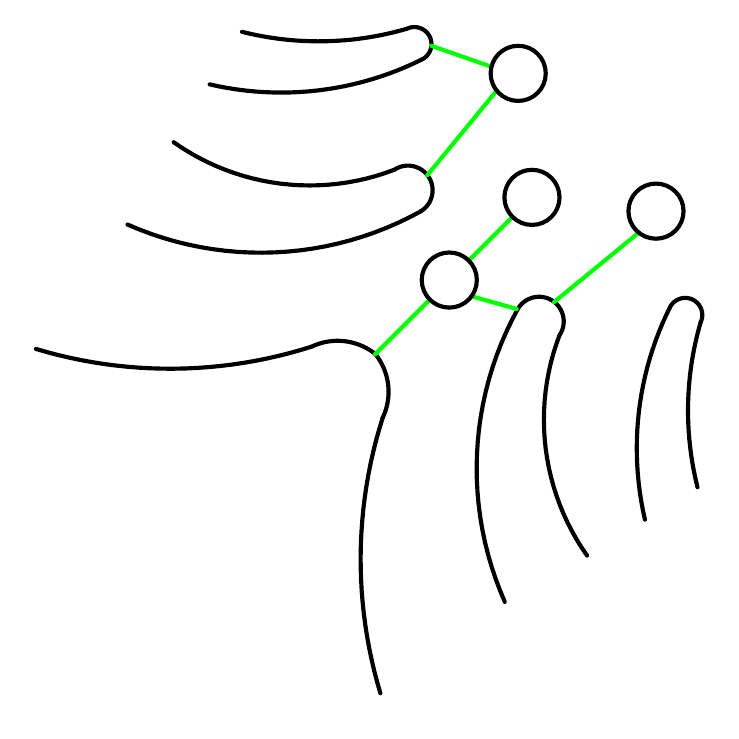}&
\includegraphics[width=1in]{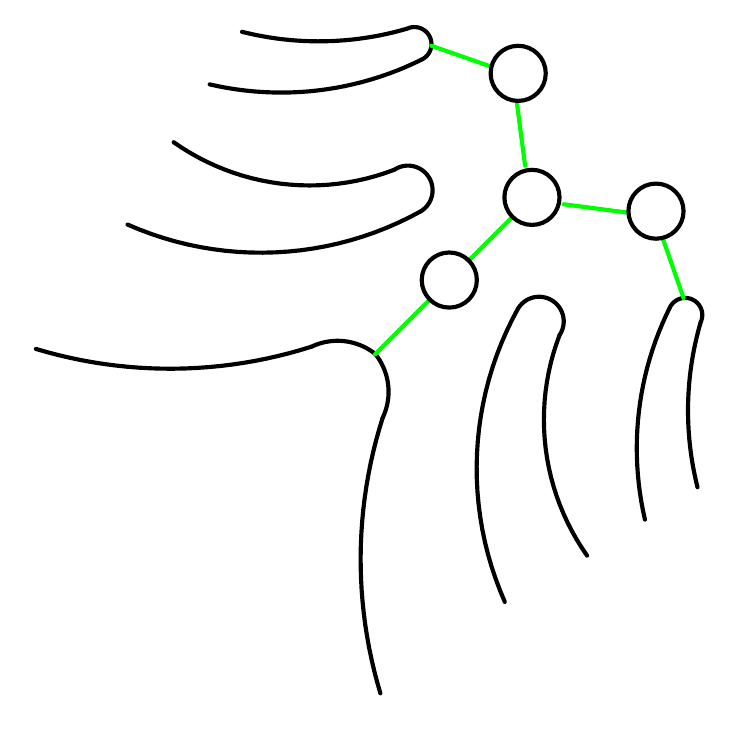}&
\includegraphics[width=1in]{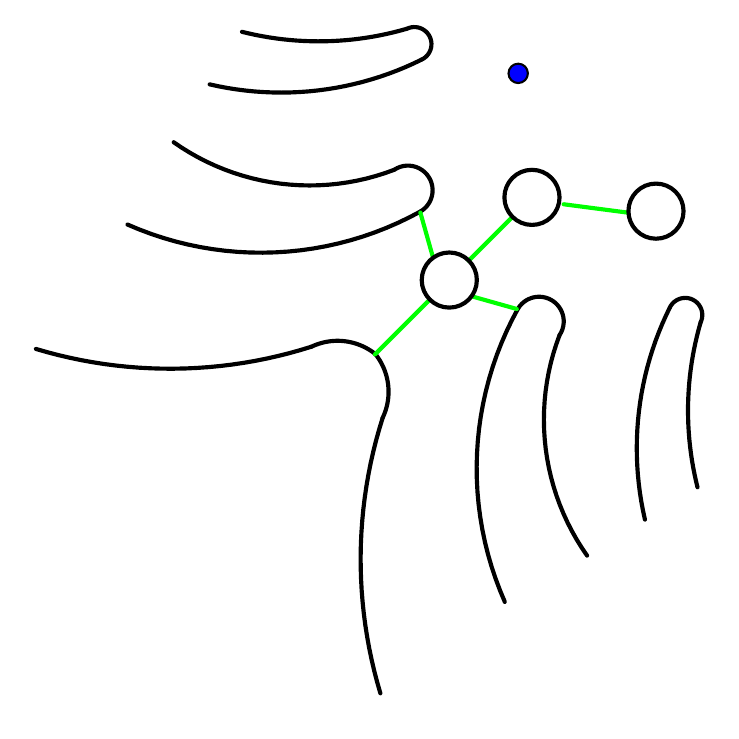}\\
\includegraphics[width=1in]{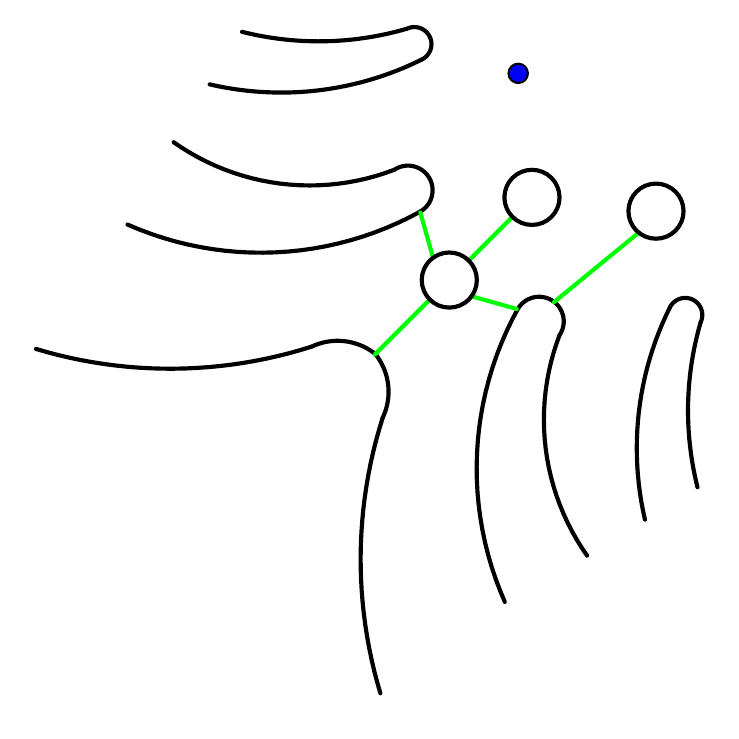}&
\includegraphics[width=1in]{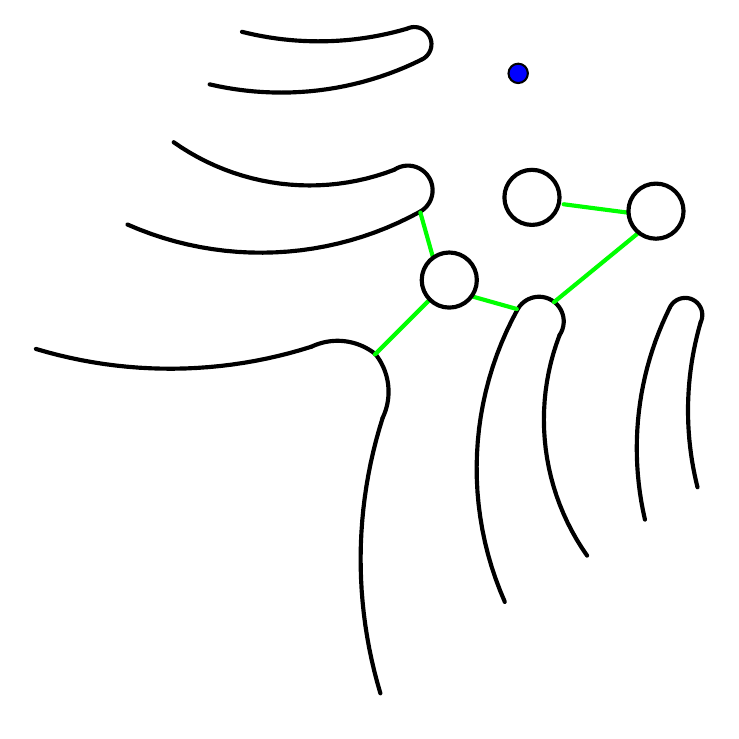}&
\includegraphics[width=1in]{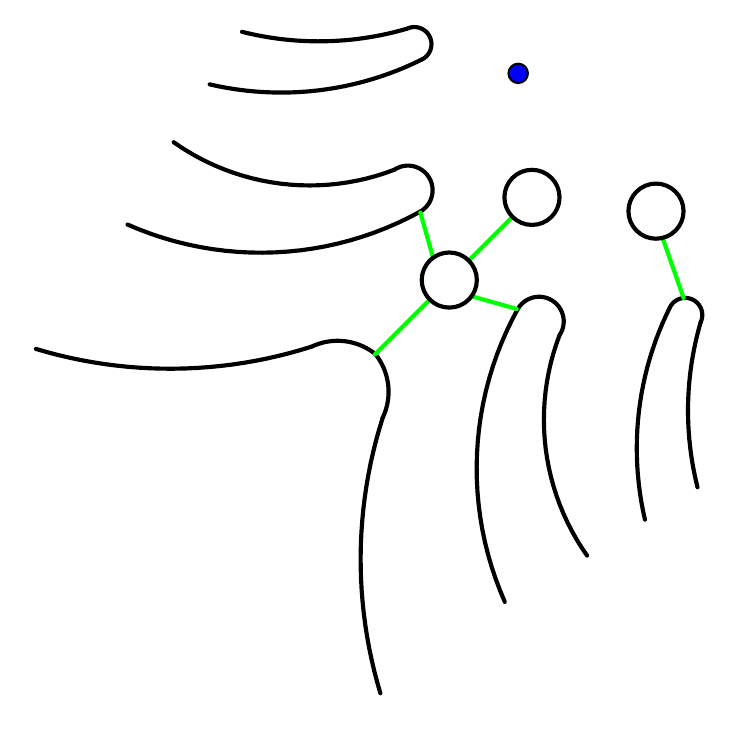}&
\includegraphics[width=1in]{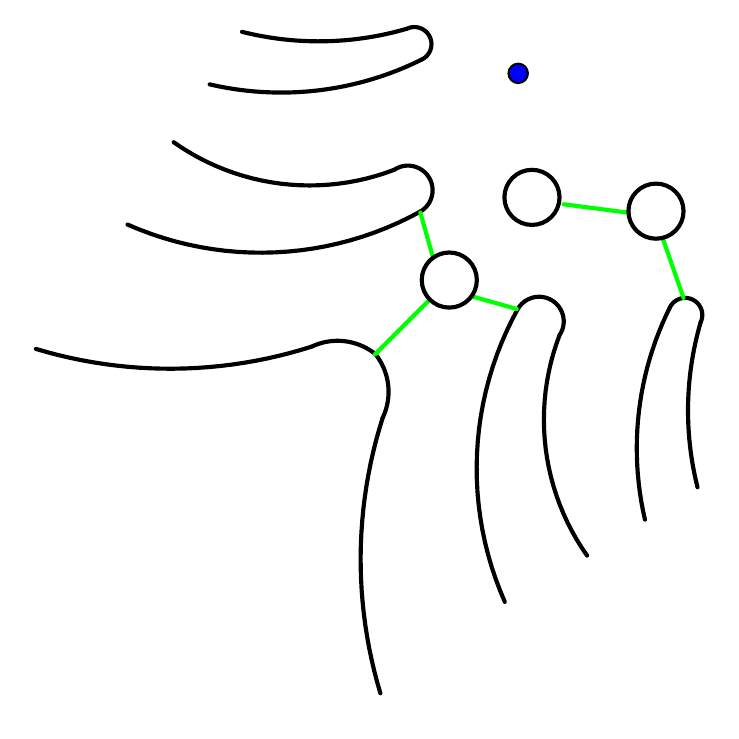}&
\includegraphics[width=1in]{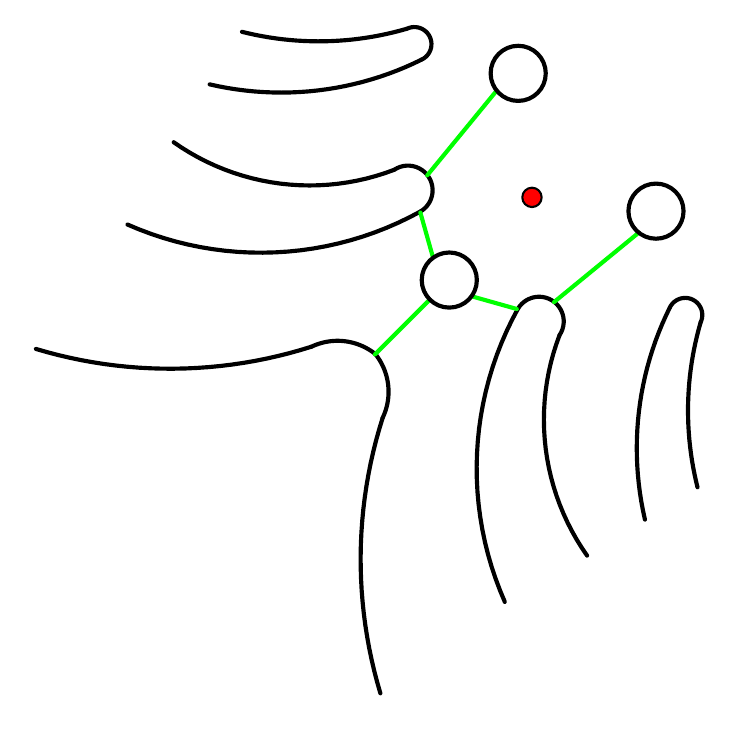}\\
\end{tabular}\\
\begin{tabular}{cccc}
\includegraphics[width=1in]{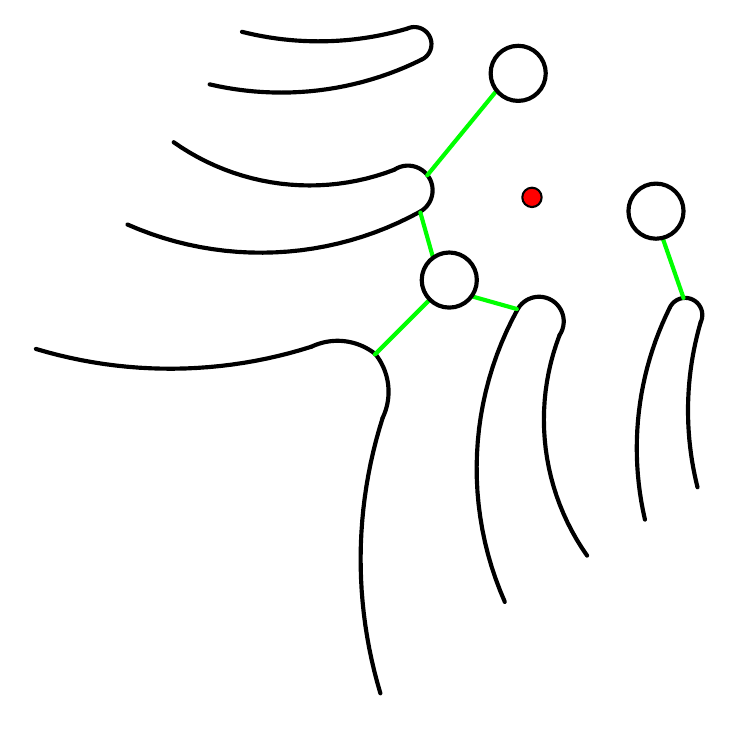}&
\includegraphics[width=1in]{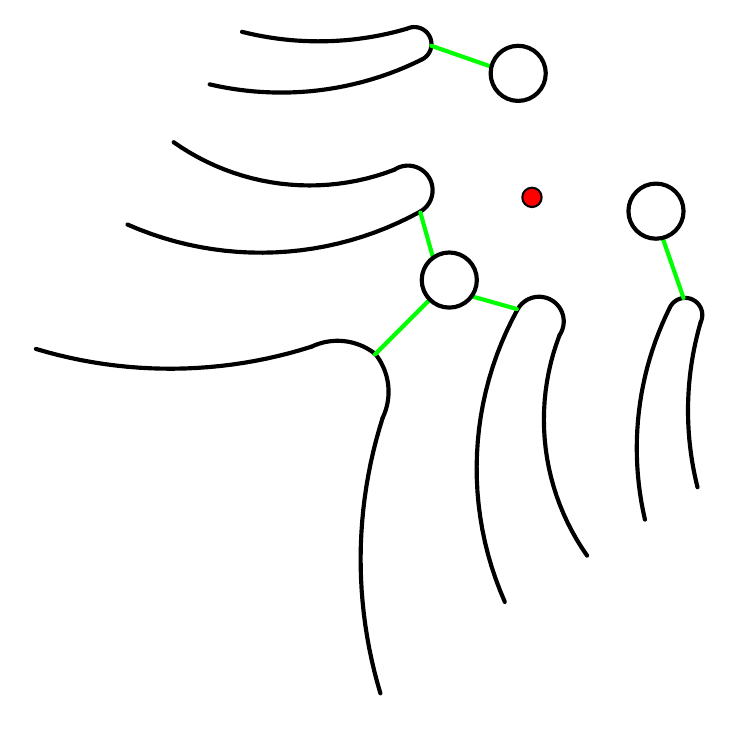}&
\includegraphics[width=1in]{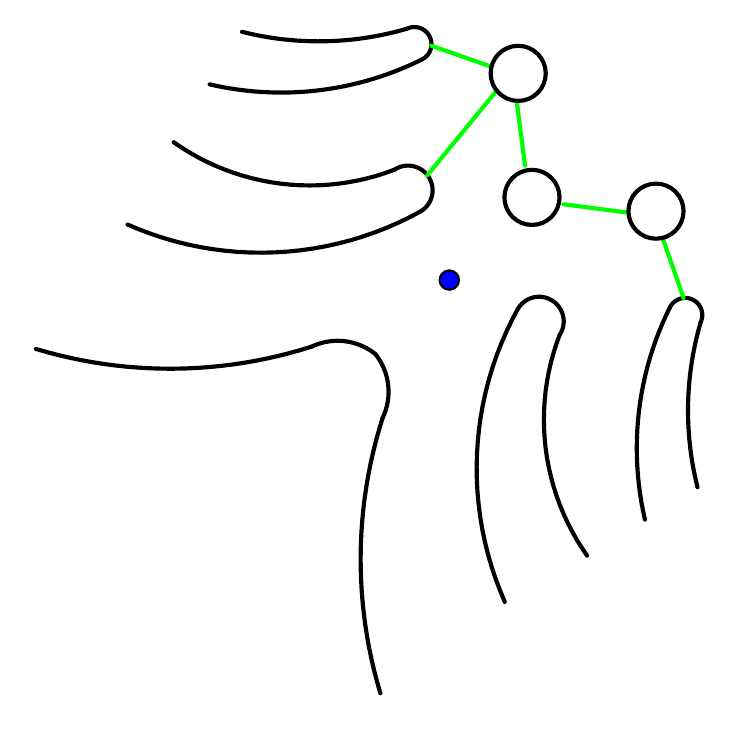}&
\includegraphics[width=1in]{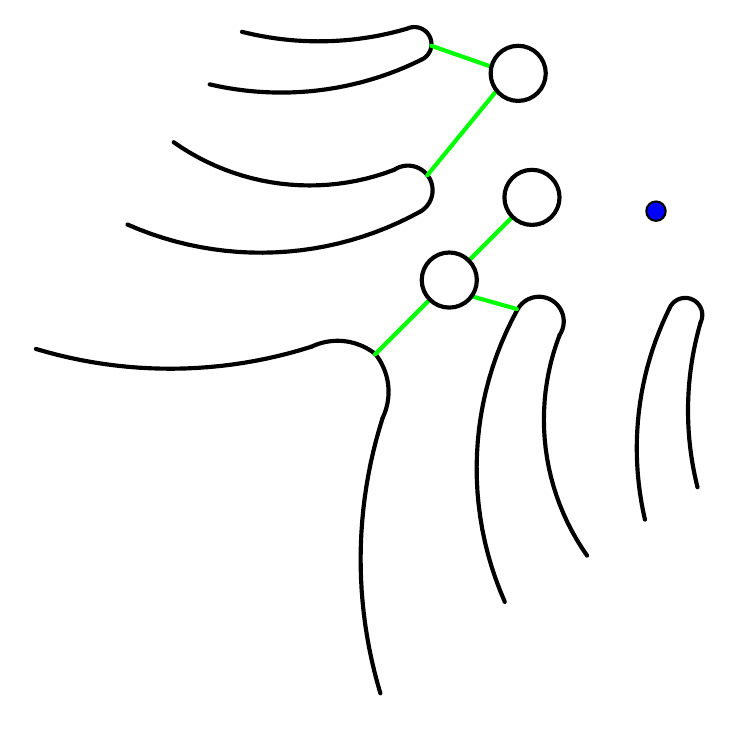}\\
\includegraphics[width=1in]{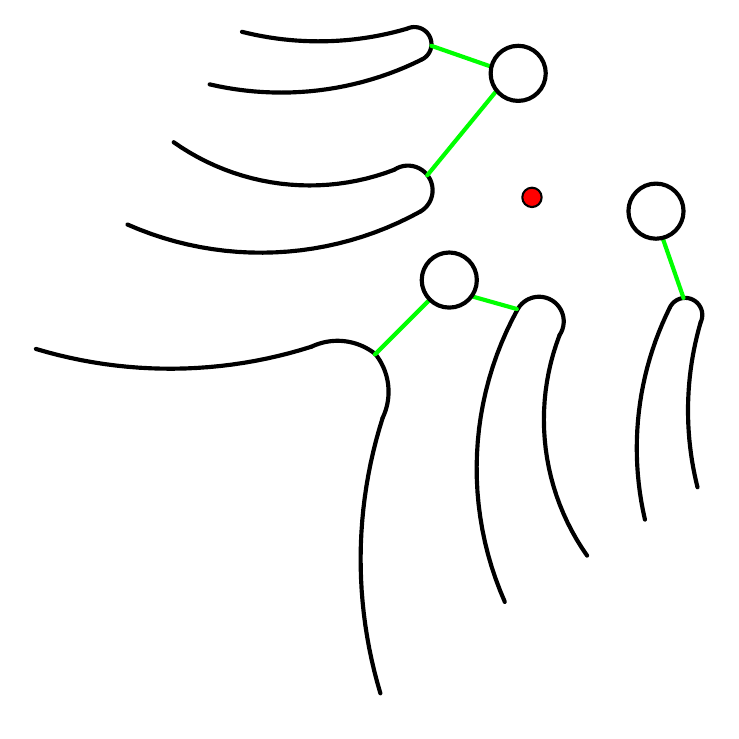}&
\includegraphics[width=1in]{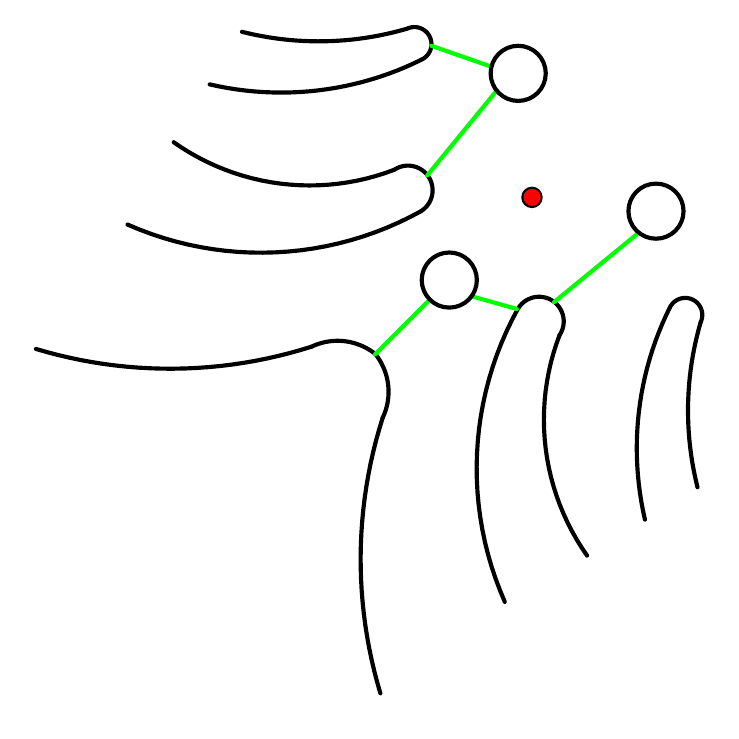}&
\includegraphics[width=1in]{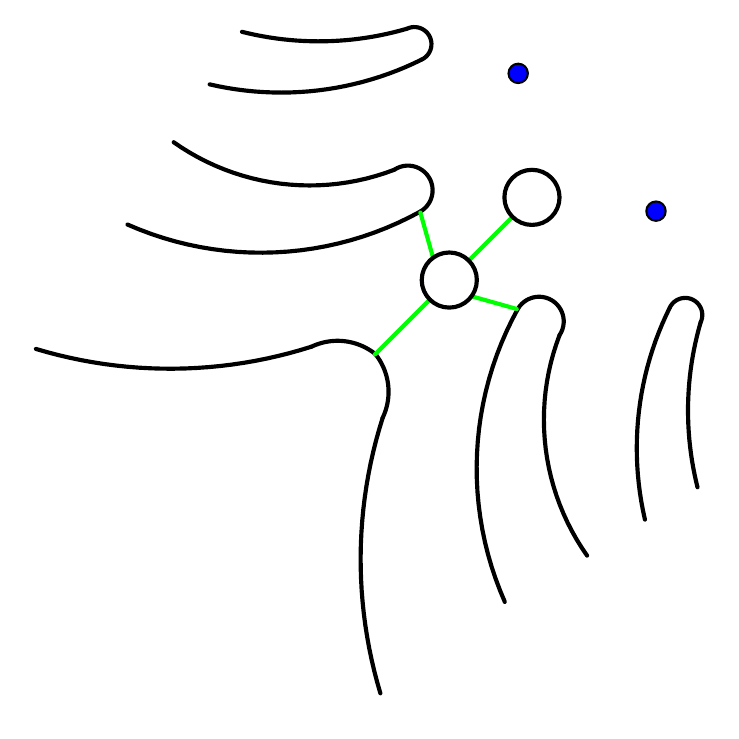}&
\includegraphics[width=1in]{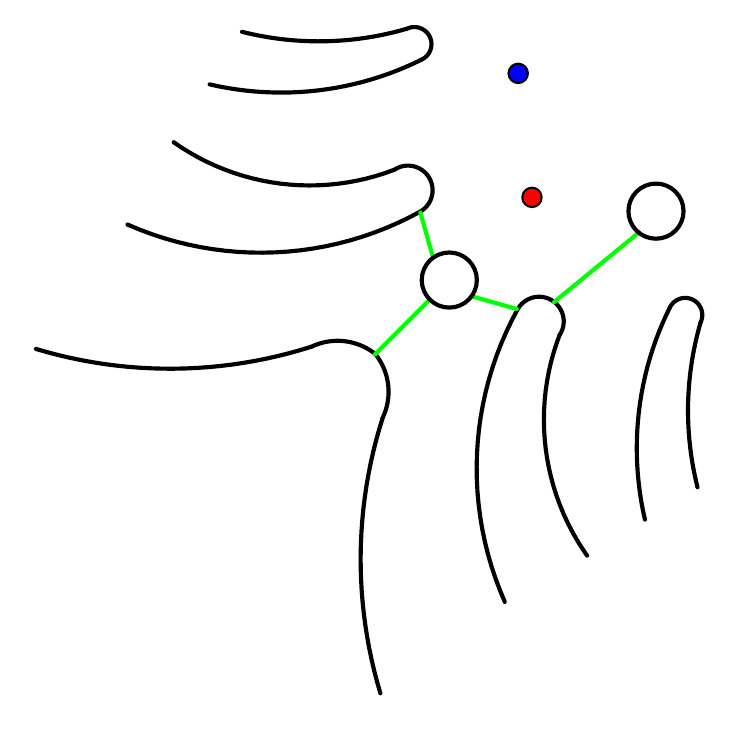}\\
\end{tabular}\\
\begin{tabular}{ccc}
\includegraphics[width=1in]{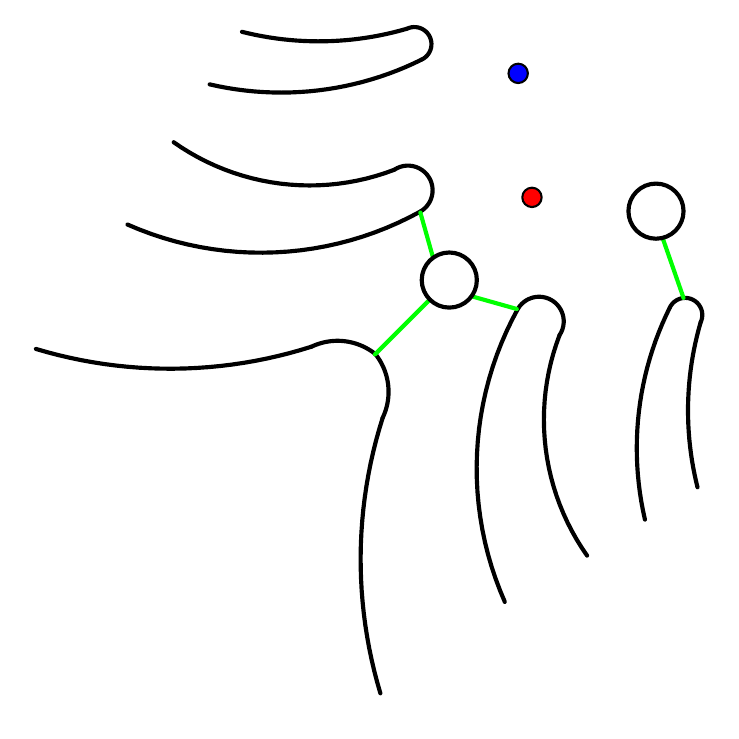}&
\includegraphics[width=1in]{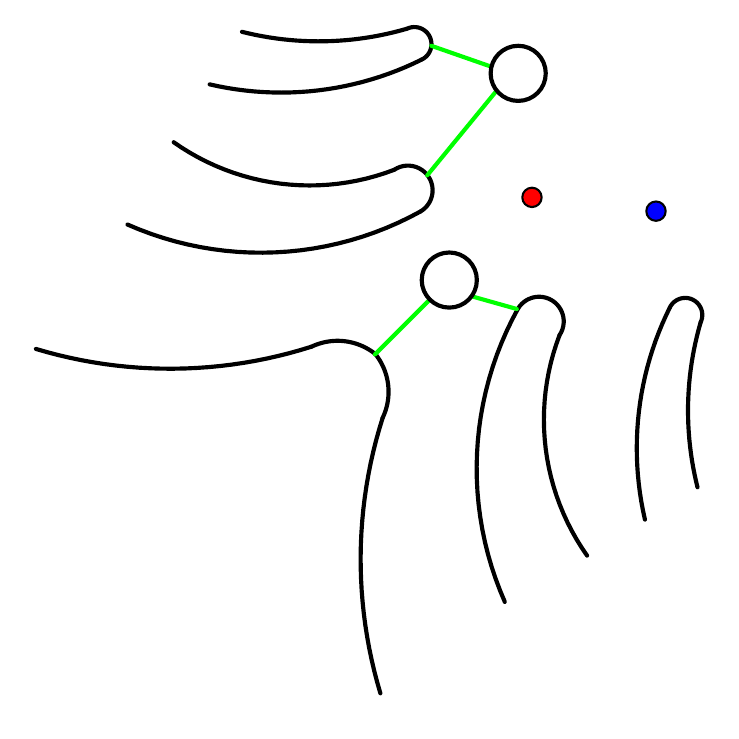}&
\includegraphics[width=1in]{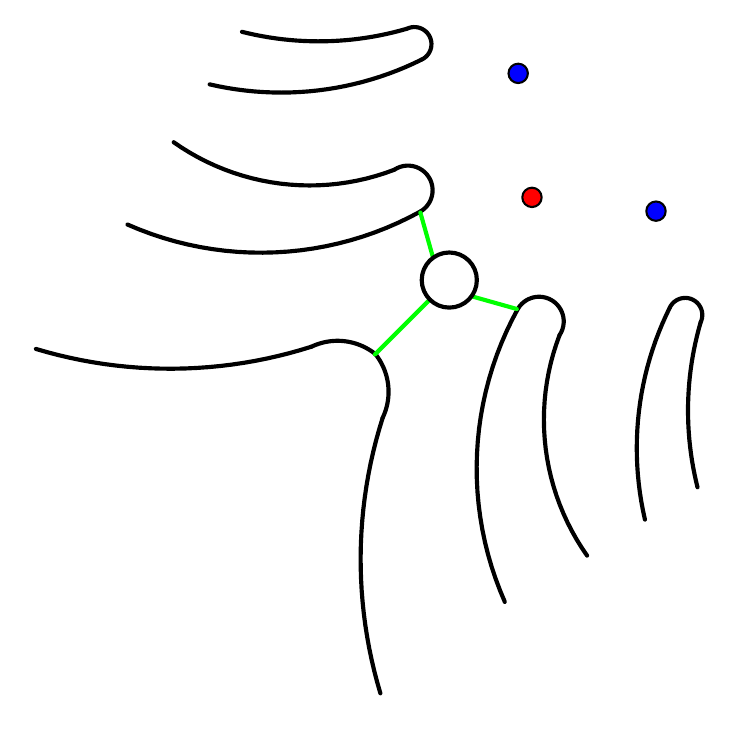}
\end{tabular}
\caption{Rigid isotopy classes of~$(M-2)$-perturbable nodal rational~$M$-curves in~$\RPP$. (Part II).}
\label{fig:MM2II}
\end{figure}

Given a type~$\mathrm{I}$ nodal curve~$C$ of odd degree in $\RPP$, denote by $h_p$ the number of hyperbolic nodal points connecting the pseudoline to an oval in a type~$\mathrm{I}$ perturbation of~$C$. 

\begin{prop}
Let~$C$ be a nodal rational $(M-2)$-curve of degree~$5$ in~$\RPP$. If~$C$ is $(M-2)$-perturbable
and $\sigma(C)=2$, then, the curve~$C$ belongs to one of the different rigid isotopy classes represented in Figure~\ref{fig:M2M2s}.
\end{prop}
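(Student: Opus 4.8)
The plan is to reproduce the strategy of Propositions~\ref{prop:maxhyp} and~\ref{prop:maxell}: reduce the rigid isotopy classification to an enumeration of marked toiles of degree~$9$, and then recover the plane curves by the inverse Nagata construction. First I would pin down the combinatorial data forced by the hypotheses. Since $C$ is a nodal rational $(M-2)$-curve of degree~$5$, the definition of an $(M-2)$-variety gives $b_{*}(\R C)=b_{*}(\C C)-4$, and as $b_{*}(\R C)=2+e+h$ and $b_{*}(\C C)=8$ we obtain $e+h=2$; because a rational quintic has $\frac{(5-1)(5-2)}{2}=6$ nodes, there are exactly $m=2$ pairs of complex conjugated imaginary nodes. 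By Proposition~\ref{prop:sigmatrig} the hypothesis $\sigma(C)=2$ reads $\sigma(D_C)=2$, so among the inner nodal $\times$-vertices arising from the two imaginary pairs exactly one is negative. Finally, $(M-2)$-perturbability means the type~$\mathrm{I}$ perturbation of $C$ is a non-singular $(M-2)$-quintic, and the value $\sigma(C)=2$ selects the convex quadrangle arrangement of four ovals described before Figure~\ref{fig:nonsingM2}, a position the perturbation of $C$ must respect.

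Next I would construct a marked toile. When $C$ has a hyperbolic nodal point on the pseudoline of its perturbation I would mark it as in Proposition~\ref{prop:maxhyp}, producing a degree~$9$ type~$\mathrm{I}$ marked toile $(D_C,v_{T_1},v_{T_2})$ with real markings, and invoke Corollary~\ref{coro:lpert} to fix the number of true ovals of its type~$\mathrm{I}$ perturbation from $l=4$. When instead every real node is elliptic, I would mark an elliptic node and use Corollary~\ref{coro:nodalpairs}, so that the markings become an inner nodal $\times$-vertex and the perturbation loses one oval. In both regimes the marked toile carries a prescribed number of real and inner nodal $\times$-vertices, exactly one of the latter being negative by $\sigma(D_C)=2$.

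I would then decompose the toile into cubic building blocks. By Proposition~\ref{prop:decomptoi} and Corollary~\ref{cr:deepbf}, the toile is weakly equivalent to one possessing a generalized cut or an axe, hence to a gluing of type~$\mathrm{I}$ cubic dessins along dotted generalized cuts or solid axes compatible with the region labeling; for the maximal sub-blocks I would use the $\mathrm{I}_1$--$\mathrm{I}_2$--$\mathrm{I}_1$ gluing of Lemma~\ref{lm:maxdec} and the smooth trigonal $M$-curve decomposition of~\cite{DIK}. Once the blocks are identified, the classification becomes the listing, up to equivalence of marked toiles, of all admissible distributions of the isolated (contracted) nodes, the non-isolated nodes, the two markings and the single negative inner node consistent with $\sigma(D_C)=2$. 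Each surviving marked toile is turned back into a plane curve by negative Nagata transformations at $v_{T_1},v_{T_2}$ followed by the blow-down of the exceptional section, producing the curves of Figure~\ref{fig:M2M2s}.

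The main difficulty is the enumeration, and inside it the control of the convex quadrangle position imposed by $\sigma(C)=2$. Just as the hexagonal Bézout obstruction rules out configurations in the maximal case, here the reversible cyclic order of the four ovals of the quadrangle restricts where the elliptic base point (when it is the marking) and the imaginary pairs may be placed, so that several a~priori admissible marked toiles must be discarded or identified under the equivalence of marked dessins. One must also check that the sign information recorded by $\sigma(D_C)=2$ together with the total Betti numbers is a genuine rigid isotopy invariant separating the surviving classes, which is what makes Figure~\ref{fig:M2M2s} both complete and non-redundant.
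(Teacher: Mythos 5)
Your overall strategy --- mark a node, pass to a degree~$9$ marked toile, decompose it into type~$\mathrm{I}$ cubic blocks via Proposition~\ref{prop:decomptoi}, enumerate the admissible marked toiles, and undo the Nagata transformations --- is exactly the one used in the paper. The gap is in your very first numerical step, and it propagates through everything that follows. You compute $e+h=2$ by applying the displayed definition of an $(M-a)$-variety literally (Betti defect $2a=4$). Throughout Section~\ref{ch:deg5}, however, ``$(M-j)$-curve'' is used to mean that $b_{*}(\C C)-b_{*}(\R C)=j$, i.e.\ $e+h=6-j$; a nodal rational $(M-2)$-quintic therefore has $e+h=4$ and a \emph{single} pair of complex conjugate nodes, not two pairs. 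This is not a harmless change of convention: by the argument of Lemma~\ref{lm:maxnodes} the type~$\mathrm{I}$ perturbation has at most $1+e+h$ connected components, so with $e+h=2$ the curve could never be $(M-2)$-perturbable (which requires $5$ components) and your classification problem would be empty. Concretely, the combinatorial data you feed into the enumeration is wrong: for $h\geq 1$ the marked toile must carry $5$ real nodal $\times$-vertices (two of them markings) and exactly \emph{one} inner nodal $\times$-vertex, which is forced to be negative by $\sigma(C)=\sigma(D_C)=2$; you instead posit two inner nodal $\times$-vertices with one negative, which is precisely the data of the $(M-4)$-curve case with $\sigma=2$, so your enumeration would reproduce the wrong figure.

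Two smaller omissions. In the case $h=0$ (hence $e=4$, not $e=2$) the paper first invokes Rokhlin's complex orientation formula for nodal curves to conclude that $C$ has exactly two positive and two negative elliptic nodes, and then marks a \emph{negative} one; without fixing the sign of the base point the marked toile, and hence the enumeration, is not pinned down. Also, to separate the rigid isotopy classes sharing an isotopy type when $h_p=2$, the paper does not rely on the quadrangle position of the ovals alone: it constructs a distinguished real node $q$ lying in the disk bounded by the segment $\overline{n_1n_2}$ of the line through the two pseudoline-to-oval hyperbolic nodes and the corresponding arc of the pseudoline. Some explicit rigid isotopy invariant of this kind is needed to show that Figure~\ref{fig:M2M2s} is non-redundant; appealing to ``the sign information recorded by $\sigma$ together with the total Betti numbers'' is not sufficient here, since those are constant on the family under consideration.
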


\begin{proof}
Since~$C$ is a nodal rational $(M-2)$-curve of degree~$5$ in $\RPP$, we have $h+e=4$. 
Let us begin with the case when $h\geq1$. Then, the marked toile $(D_C,v_1,v_2)$ associated to $(C,p)$ is a type~$\mathrm{I}$ toile with $5$ real nodal {\tvs}, among which two are markings, and a negative inner nodal {\tv}. 
By Corollary~\ref{coro:lpert}, the marked toile $(D_C,v_1,v_2)$ has a type~$\mathrm{I}$ perturbation with $3$ true ovals. 
Then, enumerating all equivalence classes of marked toiles satisfying the aforementioned restrictions and realizing the birational transformations in order to recover the associated curves $\R C\subset\RPP$ lead to the plane curves shown on Figure~\ref{fig:M2M2s}, as small perturbations of reducible degree~$5$ curves having a line as component.

Remark that in the case when $h_p=2$, the two negative ovals on a type~$\mathrm{I}$ perturbation of the curve are opposite in the quadrangle formed by the ovals.
Let~$n_1$ and $n_2$ be the hyperbolic nodal points adding up to $h_p$ and let $L$ be the line passing through~$n_1$ and $n_2$.
There is a unique connected component of $L\setminus\{n_1,n_2\}$ that do not intersect~$C$. Define this connected component as the segment $\overline{n_1 n_2}$. Analogously, there is a unique segment $S$ of the pseudoline between $n_1$ and $n_2$ that do not intersect the line $L$.
Then, the segments $\overline{n_1 n_2}$ and $S$ bound a disk in $\RPP$ that contains a nodal point $q$ of~$C$ that can be distinguished. This construction is valid for any curve in the rigid isotopy class of~$C$.

In the case when $h=0$, by Rokhlin's complex orientation formula for nodal curves,
the curve~$C$ has exactly $2$ positive elliptic nodal points and $2$ negative elliptic nodal points. 
Pick a negative elliptic nodal point $p\in\R C$
as a base point. Then, the marked toile $(D_C,v_1,v_2)$ associated to $(C,p)$ is a type~$\mathrm{I}$ toile with $3$ real isolated nodal {\tvs}, among which exactly $1$ is negative and the remaining $2$ are positive, and two inner nodal {\tvs}, among which one is the nodal {\tv} $v_1=v_2$ and the remaining one is a negative inner nodal {\tv}.
Then, enumerating all possible toiles satisfying these conditions {\it via} the decompositions given in Proposition \ref{prop:decomptoi}, there is only one 
equivalence class of marked dessins 
satisfying these restrictions. 
\end{proof} 

\begin{figure}[h] 
\centering
\includegraphics[width=4in]{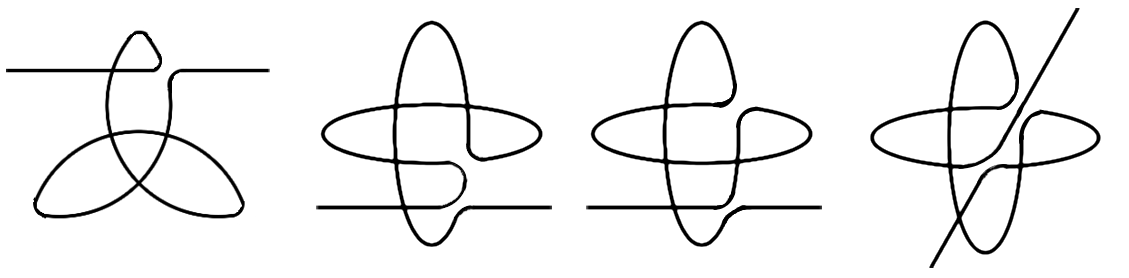}
\\
\includegraphics[width=4in]{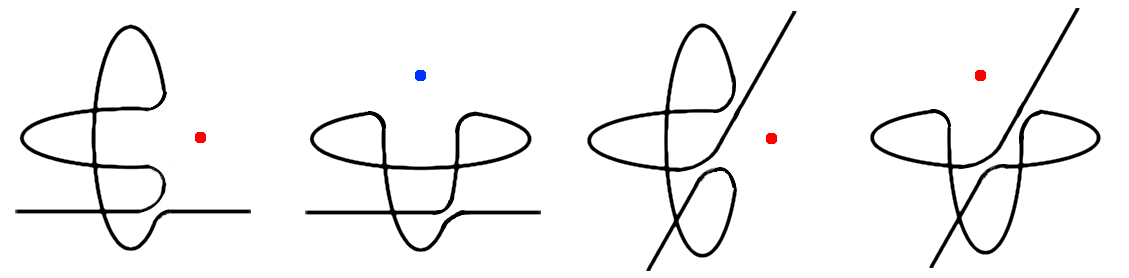}
\\
\includegraphics[width=4in]{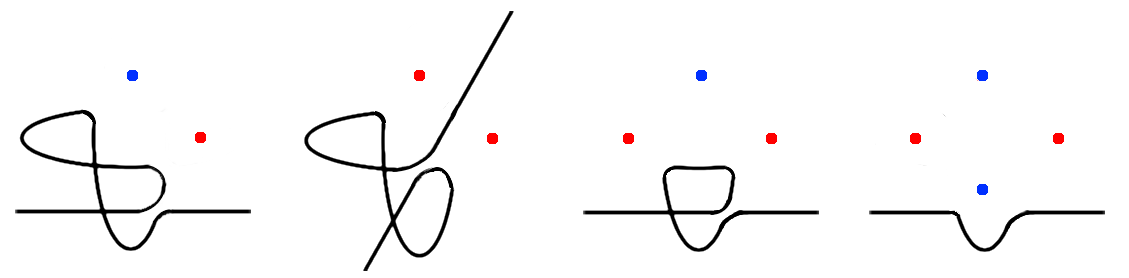}
\caption{The rigid isotopy class of~$(M-2)$-perturbable nodal rational~$(M-2)$-curves of degree~$5$ in~$\RPP$ with~$\sigma=2$. \label{fig:M2M2s}}
\end{figure}

\begin{prop}
Let~$C$ be a nodal rational $(M-2)$-curve of degree~$5$ in $\RPP$. If~$C$ is $(M-2)$-perturbable, $\sigma(C)=0$ and $h_p=1$, then the rigid isotopy class of~$C$ is determined by its isotopy class.
\end{prop}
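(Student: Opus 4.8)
The plan is to follow the scheme already used for the maximally perturbable and the $\sigma=2$ cases: to translate the statement, via the correspondence between rigid isotopy classes of marked rational quintics and equivalence classes of marked toiles of degree~$9$ (those with six nodal {\tvs} and a single type~$\mathrm{I}$ labelling), into a finite combinatorial enumeration of marked toiles. Because $h_p=1$, the curve~$C$ carries a \emph{unique} hyperbolic nodal point $p\in\R C$ lying on the pseudoline of a type~$\mathrm{I}$ perturbation; I would take this canonical point as base point, so that the associated marked toile $(D_C,v_{T_1},v_{T_2})$ is well defined on the whole isotopy class and its two markings are real (the tangent lines at a hyperbolic point being real). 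Passing to $(D_C,v_{T_1},v_{T_2})$ then reduces the problem to showing that the combinatorial type of this marked toile is determined, up to equivalence, by the topological arrangement of~$\R C$.

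First I would record the constraints forced by the hypotheses. Proposition~\ref{prop:sigmatrig} turns $\sigma(C)=0$ into $\sigma(D_C)=0$, so the inner nodal {\tv} of $D_C$ arising from the complex conjugate pair of nodes of~$C$ is positive and there are no negative inner nodal vertices. Corollary~\ref{coro:lpert} applies since $p$ is hyperbolic and lies on the pseudoline: writing $l$ for the number of ovals of the type~$\mathrm{I}$ perturbation of~$C$, which equals~$4$ because $C$ is $(M-2)$-perturbable, the type~$\mathrm{I}$ perturbation of $(D_C,v_{T_1},v_{T_2})$ has $l+1=5$ ovals, exactly $l-1=3$ of which are true ovals. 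These data, together with the prescribed count and nature of the six nodal {\tvs} (two real markings, three remaining real nodes, and one positive inner node) and the type~$\mathrm{I}$ labelling of the regions, single out the admissible combinatorial types.

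Next I would carry out the enumeration. Using Proposition~\ref{prop:deep} and Corollary~\ref{cr:deepbf} I may assume $D_C$ is bridge-free of depth at most~$1$, and then Proposition~\ref{prop:decomptoi} decomposes it, up to weak equivalence, as a gluing of three cubic toiles along dotted {\gc s} or a solid axe, coherent with the type~$\mathrm{I}$ labelling. It then remains to list the finitely many such gluings of type~$\mathrm{I}$ cubics compatible with the nodal data, the three true ovals, and the single pseudoline--oval connection forced by $h_p=1$, and, for each, to run the inverse construction (negative Nagata transformations at $v_{T_1},v_{T_2}$ followed by the blow-down of the exceptional section of $\Sigma_1$) to recover the plane curve. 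The hard part will be the uniqueness step: proving that the moves generating the equivalence of marked toiles --- weak equivalence, passing a singular fiber through a tangent line, and passing a nodal point through an inflexion point --- identify \emph{all} toiles realising one and the same arrangement of~$\R C$. In contrast with the $\sigma=2$, $h_p=2$ situation, where a distinguished nodal point~$q$ persists and furnishes an extra rigid isotopy invariant, here the constraints $\sigma=0$ and $h_p=1$ should leave no residual invariant beyond the isotopy type; establishing this absence of hidden invariants, and checking that the canonical choice of~$p$ makes the argument independent of the chosen representative, is precisely what yields the desired bijection between isotopy types and rigid isotopy classes, and hence the proposition.
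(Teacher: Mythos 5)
Your proposal follows essentially the same route as the paper: choose the unique hyperbolic node $p$ on the pseudoline as base point, deduce from $\sigma(C)=0$ and Corollary~\ref{coro:lpert} that the marked toile is a type~$\mathrm{I}$ toile with five real nodal vertices (two of them markings), one positive inner nodal vertex and three true ovals, then enumerate the equivalence classes of such marked toiles and recover the plane curves by the inverse birational transformations. The paper's proof is exactly this, stated more tersely (it leaves the enumeration and the absence-of-hidden-invariants check implicit), so your additional emphasis on the uniqueness step is a faithful elaboration rather than a different argument.
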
 

\begin{proof} 
Since~$C$ is a nodal rational $(M-2)$-curve of degree~$5$ in $\RPP$, we have $h+e=4$.
Let $p\in\R C$ be the hyperbolic nodal point connecting the pseudoline to an oval in a type~$\mathrm{I}$ perturbation of~$C$. 
Then, the marked toile $(D_C,v_1,v_2)$ associated to $(C,p)$ is a type~$\mathrm{I}$ toile with $5$ real nodal {\tvs}, among which two are markings, and with a positive inner nodal {\tv}.
By Corollary~\ref{coro:lpert}, the marked toile $(D_C,v_1,v_2)$ has a type~$\mathrm{I}$ perturbation with $3$ true ovals. 
Then, enumerating all equivalence classes of marked toiles satisfying the aforementioned restrictions and realizing the birational transformations in order to recover the associated curves $\R C\subset\RPP$ 
lead to the plane curves shown in Figure~\ref{fig:M2M2hp1}.
\end{proof}

\begin{figure}[h] 
\centering
\includegraphics[width=4in]{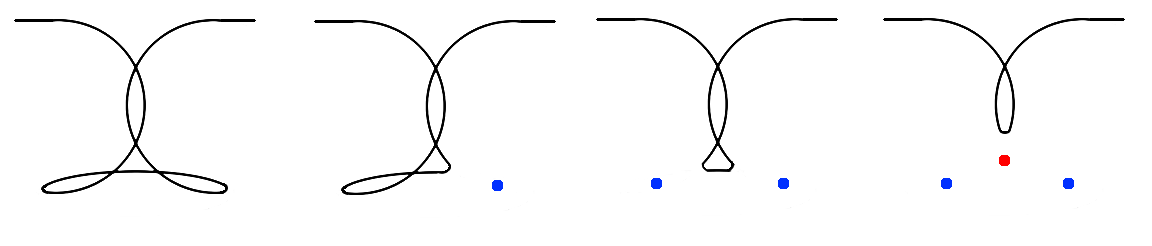}
\caption{Rigid isotopy class of $(M-2)$-perturbable nodal rational~$(M-2)$-curves of degree~$5$ in~$\RPP$ with~$h_p=1$ and $\sigma=0$. \label{fig:M2M2hp1}}
\end{figure}

\begin{prop}
Let~$C$ be a nodal rational $(M-2)$-curve of degree~$5$ in $\RPP$. If~$C$ is $(M-2)$-perturbable, $\sigma(C)=0$ and $h_p=2$, then, the curve~$C$ belongs to one of the different rigid isotopy classes represented in Figure~\ref{fig:M2M2hp2}.
\end{prop}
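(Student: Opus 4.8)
The plan is to follow verbatim the scheme of the three preceding propositions, reducing the classification to a finite combinatorial enumeration of marked toiles and then separating the resulting rigid isotopy classes.

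First I would fix the numerics. Since $C$ is a nodal rational $(M-2)$-curve of degree $5$, we have $e+h=4$; as a rational quintic has exactly $6$ nodes, $C$ carries precisely one pair of complex conjugate (imaginary) nodes. The hypothesis $\sigma(C)=0$ together with Proposition~\ref{prop:sigmatrig} forces the inner nodal \tv\ representing this pair to be positive. Being $(M-2)$-perturbable means the type~$\mathrm{I}$ perturbation of $C$ has $l=4$ ovals and a pseudoline, arranged in the non-convex $\sigma=0$ pattern of Figure~\ref{fig:nonsingM2} (three ovals forming a triangle disjoint from the pseudoline, plus one oval on the pseudoline side), and $h_p=2$ singles out two hyperbolic nodes joining the pseudoline to ovals, so in particular $h\ge 2$ and $e\le 2$.

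Next I would build the marked toile. Choosing one of the two $h_p$-nodes as base point $p\in\R C$, both tangent lines at $p$ are real, so the associated marked toile $(D_C,v_1,v_2)$ is a type~$\mathrm{I}$ toile of degree $9$ with exactly $6$ nodal \tvs: five real ones (the two markings $v_1,v_2$ together with the three remaining real nodes of $C$) and one positive inner nodal \tv\ from the imaginary pair. Corollary~\ref{coro:lpert} then gives that its type~$\mathrm{I}$ perturbation has $l+1=5$ ovals, of which $l-1=3$ are true ovals. The heart of the argument is the enumeration of all equivalence classes of marked toiles meeting these constraints: I would first invoke Proposition~\ref{prop:deep} and Corollary~\ref{cr:deepbf} to replace $D_C$ by a weakly equivalent toile of depth at most $1$, then apply Proposition~\ref{prop:decomptoi} to produce a generalized cut (dotted, or a solid axe at the isolated node when one is present) and cut $D_C$ into lower-degree pieces whose building blocks are the type~$\mathrm{I}$ cubic dessins of Figure~\ref{fig:cubiques}. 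Running over all gluings compatible with the type~$\mathrm{I}$ labeling, the positivity of the inner node, the prescribed number of true ovals, and the $h_p=2$ incidence, yields the finitely many admissible marked toiles; performing the negative Nagata transformations at $v_1,v_2$ followed by the blow-down recovers the plane quintics of Figure~\ref{fig:M2M2hp2}.

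I expect the enumeration to be the main obstacle, and for a subtle reason peculiar to this case. Unlike the $h_p=1$ situation, the statement asserts \emph{several} rigid isotopy classes; yet the total $\Z/2\Z$-Betti numbers of both $\R C$ and of its type~$\mathrm{I}$ perturbation are constant ($=6$) throughout this family, so these classes \emph{cannot} be told apart by the two global invariants of the Introduction (cf.~Theorem~\ref{thm:sti}). They must instead be separated by finer data read off the marked toile: the pair $(e,h)$, equivalently the individual numbers $b_0(\R C)=1+e$ and $b_1(\R C)=1+h$, which are themselves rigid isotopy invariants, and, within a fixed $(e,h)$, the B\'ezout-type positional invariants already exploited in the $\sigma=2$ analysis (the distinguished node cut out by the segment between the two hyperbolic points and an arc of the pseudoline). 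A secondary difficulty is to verify that the list produced does not depend on which of the two $h_p$-nodes is taken as $p$, i.e.\ that distinct admissible choices of base point give marked toiles equivalent under passing a singular fiber through a tangent line and passing a nodal point through an inflexion point; this is what guarantees the enumerated classes are well defined and coincide exactly with those of Figure~\ref{fig:M2M2hp2}.
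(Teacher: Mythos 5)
Your setup and enumeration scheme coincide with the paper's: same choice of base point (a hyperbolic node realizing $h_p$), same count of nodal \tvs\ on the marked toile ($5$ real ones, two of which are markings, plus one positive inner nodal \tv), same appeal to Corollary~\ref{coro:lpert} for the $3$ true ovals, and the same reduction of the enumeration to Propositions~\ref{prop:deep} and~\ref{prop:decomptoi}. You also correctly sense that the real content of this case is that a single isotopy class splits into several rigid isotopy classes that the two Betti-number invariants cannot see.

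The gap is in how you propose to separate those classes. The paper's claim is that there are exactly \emph{two} rigid isotopy classes per isotopy class, and the invariant that tells them apart is an orientation (chirality) datum, not a positional one: the line $L$ through the pair of complex conjugate nodes of $C$ meets $\R C$ in a unique real point $p_0$ (Bézout: the two imaginary nodes already absorb four intersections), and the triple $(p,p',p_0)$, where $p,p'$ are the two $h_p$-nodes, orients the pseudoline; this orientation extends to a complex orientation of $C$ and hence fixes a cyclic order on the ovals of the type~$\mathrm{I}$ perturbation, which is a rigid isotopy invariant of $(C,p)$. Your candidates --- the pair $(e,h)$ and the $\sigma=2$-style distinguished node cut out by the segment $\overline{n_1n_2}$ and an arc of the pseudoline --- are of a different nature: $(e,h)$ is already determined by the isotopy class of $\R C$, and the distinguished-node construction of the $\sigma=2$ case relies on the convex (quadrangle) position of the ovals with two opposite negative ones, which fails here since $\sigma(C)=0$ puts the four ovals in the non-convex configuration of Figure~\ref{fig:nonsingM2}. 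Neither detects the cyclic-order asymmetry, so with your invariants the enumeration would either undercount the classes or leave you unable to prove that two enumerated toiles yield non-rigidly-isotopic curves. The missing idea is precisely the auxiliary point $p_0$ coming from the line through the imaginary nodes.
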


\begin{proof} 
Pick a hyperbolic nodal point $p\in\R C$ connecting 
the pseudoline to an oval in a type~$\mathrm{I}$ perturbation of~$C$. 
Then, the marked toile $(D_C,v_1,v_2)$ associated to $(C,p)$ is a type~$\mathrm{I}$ toile with $5$ real nodal {\tvs}, among which two are markings, and with a positive inner nodal {\tv}.
By Corollary~\ref{coro:lpert}, 
the marked toile $(D_C,v_1,v_2)$ has a type~$\mathrm{I}$ perturbation with $3$ true ovals. 
Then, enumerating all equivalence classes of marked toiles satisfying the aforementioned and
realizing the birational transformations in order to recover the associated curves $\R C\subset\RPP$ lead to the plane curves shown in Figure~\ref{fig:M2M2hp2}.

In this setting, there are exactly two rigid isotopy classes per isotopy class. In order to distinguish them, we use the line $L$ passing by the complex conjugated pair of nodal points of~$C$. This line intersects the curve~$C$ in a unique real point~$p_0$. Let~$p'\neq p$ be the other hyperbolic nodal point 
connecting the pseudoline to an oval in a type~$\mathrm{I}$ perturbation of~$C$,
and let $C_0$ be the type~$\mathrm{I}$ perturbation of~$C$. 
Then, the triple $(p,p',p_0)$ induces an orientation on the pseudoline of the curve~$C$ that can be extended to a complex orientation of the whole curve, determining a cyclic order of the ovals of a type~$\mathrm{I}$ perturbation of~$C$.
This orientation is a rigid isotopy invariant of the marked curve $(C,p)$.
Then, every isotopy class endowed with the orientation given by $(p,p',p_0)$ determines a unique rigid isotopy class.
\end{proof}

\begin{figure}[h] 
\centering
\includegraphics[width=4in]{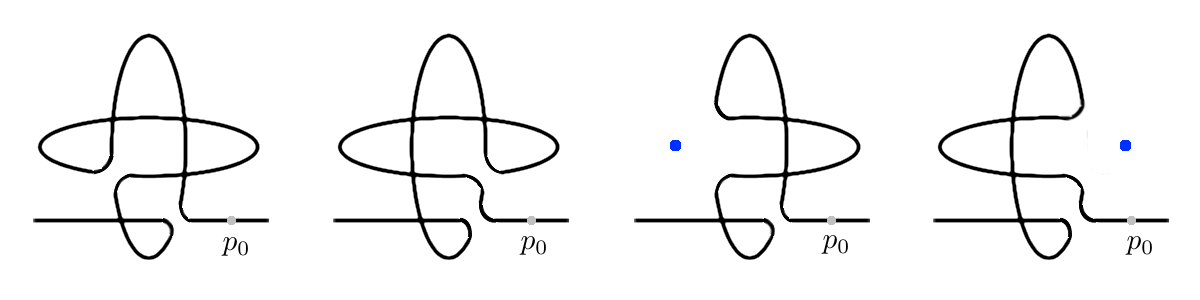}
\\
\includegraphics[width=2in]{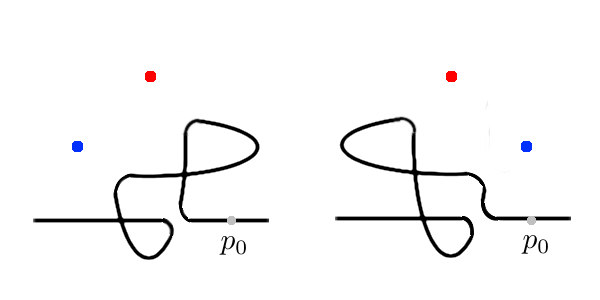}
\caption{Rigid isotopy classes of $(M-2)$-perturbable nodal rational~$(M-2)$-curves of degree~$5$ in~$\RPP$ with~$h_p=2$ and $\sigma=0$. \label{fig:M2M2hp2}}
\end{figure}

\begin{prop}
Let~$C$ be a nodal rational $(M-2)$-curve of degree~$5$ in $\RPP$. If~$C$ is $(M-2)$-perturbable, $\sigma(C) = 0$ and $h_p = 3$, then the rigid isotopy class of~$C$ is determined by its isotopy class.
\end{prop}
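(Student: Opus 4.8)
The plan is to repeat the scheme of the cases $h_p=1$ and $h_p=2$ and then to isolate the feature that makes the present case rigid. Since $C$ is a nodal rational $(M-2)$-curve of degree~$5$ we have $e+h=4$, and $h_p=3$ forces $h\geq 3$; moreover $C$ has exactly one pair of complex conjugated nodes. First I would pick as base point a hyperbolic nodal point $p\in\R C$ connecting the pseudoline to an oval of a type~$\mathrm{I}$ perturbation of~$C$, and form the marked toile $(D_C,v_1,v_2)$ associated to $(C,p)$. As in the previous propositions it is a type~$\mathrm{I}$ toile with $5$ real nodal {\tvs}, two of which are the markings, together with a single inner nodal {\tv}; by Proposition~\ref{prop:sigmatrig} the hypothesis $\sigma(C)=0$ makes this inner vertex positive, and Corollary~\ref{coro:lpert} (applied with $l=4$) shows that the type~$\mathrm{I}$ perturbation of $(D_C,v_1,v_2)$ has $3$ true ovals.

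The combinatorial heart of the proof is the enumeration, up to the equivalence of marked toiles introduced in Section~\ref{ch:deg5}, of all degree~$9$ type~$\mathrm{I}$ marked toiles subject to these constraints. I would carry this out with the decomposition machinery of Section~\ref{ch:toi}: using Proposition~\ref{prop:deep} and Corollary~\ref{cr:deepbf} I reduce to a bridge-free representative of depth at most~$1$ (or one already containing a {\gc}), and then Proposition~\ref{prop:decomptoi} exhibits a {\gc} or an axe cutting the toile into cubic blocks. Since the toile is of type~$\mathrm{I}$, each block is a type~$\mathrm{I}$ cubic ($\mathrm{I}_1$ or $\mathrm{I}_2$ in the notation of Section~\ref{sec:cubics}), and the requirement that the labelings match across the cuts, together with the positivity of the inner nodal {\tv} and the prescribed numbers of isolated and non-isolated markings $v_e$, $v_h$, leaves only finitely many admissible gluings. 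Transporting each surviving class back through the negative Nagata transformations and the blow-down of Section~\ref{ch:deg5} recovers the plane quintics $\R C\subset\RPP$ realizing each isotopy type.

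The delicate point, and the one that genuinely separates this case from $h_p=2$, is to check that no additional rigid isotopy invariant survives. In the case $h_p=2$ the line through the pair of complex conjugated nodes met $C$ in a distinguished real point $p_0$, and the triple formed by $p_0$ and the two hyperbolic nodes on the pseudoline fixed an orientation of the pseudoline that was a genuine rigid isotopy invariant, producing two classes per isotopy type. Here the pseudoline carries the \emph{odd} number $h_p=3$ of hyperbolic attaching nodes, so the cyclic order that any complex orientation induces on the attached ovals is already recorded by the isotopy type, and the reversal of orientation effected by complex conjugation is realized by a rigid isotopy; consequently the analogue of the $(p,p',p_0)$ datum carries no information beyond the isotopy class. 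I expect the main obstacle to be precisely this last step together with the bookkeeping of the enumeration, namely verifying that each isotopy type corresponds to exactly one equivalence class of marked toiles, and that the two a priori orientations are identified by a weak equivalence, for instance by straightening or creating a zigzag or by passing a nodal point through an inflexion point.
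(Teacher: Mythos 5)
Your proposal follows essentially the same scheme as the paper's proof: choose a hyperbolic node $p$ connecting the pseudoline to an oval, form the marked toile $(D_C,v_1,v_2)$, record that it is a type~$\mathrm{I}$ toile with $5$ real nodal vertices (two of them markings), a positive inner nodal vertex, and $3$ true ovals by Corollary~\ref{coro:lpert}, and then enumerate the equivalence classes of marked toiles via the decomposition results of Section~\ref{ch:toi} before transporting back to plane quintics. Your closing discussion of why the orientation datum $(p,p',p_0)$ from the $h_p=2$ case carries no extra information here is a reasonable supplement to the paper's terser argument, which instead simply notes that $h_p=3$ forces exactly $2$ non-marked non-isolated nodal vertices attaching the long component to ovals and lets the enumeration confirm that each isotopy class yields a single equivalence class.
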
 

\begin{proof} 
Pick a hyperbolic nodal point $p\in\R C$
connecting the pseudoline to an oval in 
a type~$\mathrm{I}$ perturbation of~$C$. 
Then, the marked toile $(D_C,v_1,v_2)$ associated to $(C,p)$ is a type~$\mathrm{I}$ toile with $5$ real nodal {\tvs}, among which two are markings, and with a positive inner nodal {\tv}.
By Corollary~\ref{coro:lpert}, the marked toile $(D_C,v_1,v_2)$ has a type~$\mathrm{I}$ perturbation with $3$ true ovals. 
Since $h_p=3$ for the curve~$C$, the toile $D_C$ has exactly $2$ non-marked non-isolated {\tvs} connecting the long component to an oval in the type~$\mathrm{I}$ perturbation of $D_C$.
Then, enumerating all equivalence classes of marked toiles satisfying the aforementioned restrictions and realizing the birational transformations in order to recover the associated curves $\R C\subset\RPP$ lead to the plane curves shown in Figure~\ref{fig:M2M2hp3}.
\end{proof}

\begin{figure}[h] 
\centering
\includegraphics[width=4in]{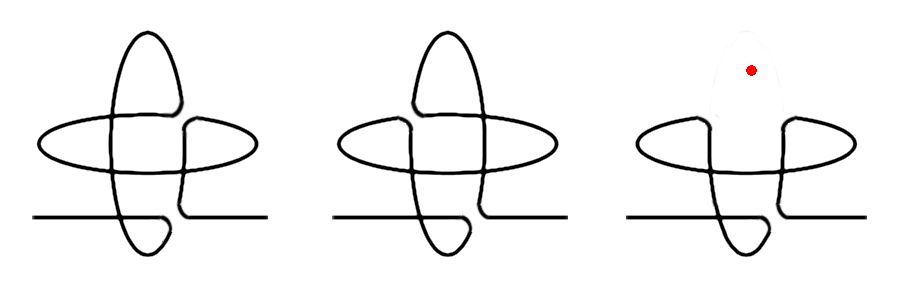}
\caption{Rigid isotopy class of $(M-2)$-perturbable nodal rational plane~$(M-2)$-curves of degree~$5$ with~$h_p=3$ and $\sigma=0$. \label{fig:M2M2hp3}}
\end{figure}

\begin{thm}
Let~$C$ be a nodal rational $(M-2)$-curve of degree~$5$ in $\RPP$. If~$C$ is $(M-2)$-perturbable and $\sigma(C)=0$, then, the curve~$C$ belongs to one of the different rigid isotopy classes represented in Figures~\ref{fig:M2M2hp1} to~\ref{fig:M2M2hp0}.
\end{thm}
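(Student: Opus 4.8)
The plan is to read this statement as the summary that assembles the four preceding propositions, organised by the invariant $h_p$. First I would record the numerical constraints. As in the earlier propositions, the $(M-2)$ hypothesis together with rationality forces $e+h=4$, so that $C$ carries exactly one pair of complex conjugate nodal points; by Proposition~\ref{prop:sigmatrig} and the hypothesis $\sigma(C)=0$, the inner nodal \tv\ representing this pair in any associated marked toile is positive. Consequently, for any admissible choice of base point the marked toile $(D_C,v_1,v_2)$ is a type~$\mathrm{I}$ toile whose unique inner nodal \tv\ is positive, and Corollary~\ref{coro:lpert} pins down the number of true ovals of its type~$\mathrm{I}$ perturbation. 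These observations reduce the theorem to a finite enumeration indexed by the single integer $h_p$.

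Next I would bound $h_p$. Since $C$ is rational, its real locus is a single immersed circle together with $e$ isolated points, so each hyperbolic node lies on the long component and a type~$\mathrm{I}$ smoothing of it splits an oval off the circle; the chains so produced must reconnect to the long component, whence $h_p\ge 1$ whenever $h\ge 1$, and $h_p=0$ precisely when $h=0$ (so $e=4$). Since $h_p\le h$, it then suffices to exclude $h=4$. The delicate point is that a curve with $h=4$, and hence $e=0$, has all four ovals of its type~$\mathrm{I}$ perturbation arising from hyperbolic resolutions, and this forces the convex quadrangular disposition of the four ovals, i.e. $\sigma(C)=2$, contradicting the hypothesis $\sigma(C)=0$. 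Deducing this incompatibility from the disposition of the ovals and the pseudoline recalled for $(M-2)$-quintics (convex quadrangle for $\sigma=2$ versus ``triangle plus one oval on the pseudoline'' for $\sigma=0$), rather than the individual enumerations, is the step I expect to require the most care. It yields $h\le 3$ and therefore $h_p\in\{0,1,2,3\}$.

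With the range of $h_p$ fixed, the dispatch is immediate. The cases $h_p=1$, $h_p=2$ and $h_p=3$ are precisely the content of the three preceding propositions and yield the rigid isotopy classes displayed in Figures~\ref{fig:M2M2hp1}, \ref{fig:M2M2hp2} and \ref{fig:M2M2hp3}. It remains to treat $h_p=0$, that is $h=0$ and $e=4$. Here I would choose an elliptic nodal point as base point, so that the markings collapse to a single inner nodal \tv\ $v_1=v_2$ and the three remaining elliptic nodes produce three isolated nodal \tvs, the toile carrying in addition the positive inner nodal \tv\ coming from the complex conjugate pair. I would then decompose $D_C$ as a gluing of three type~$\mathrm{I}$ cubic toiles along dotted \gc s or a solid axe, as permitted by Proposition~\ref{prop:decomptoi}, enumerate the finitely many resulting marked toiles compatible with the type~$\mathrm{I}$ labelling, and recover the plane curves of Figure~\ref{fig:M2M2hp0} by performing the birational transformations.

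Finally I would note that the four values of $h_p$ are mutually exclusive and, by the bound above, exhaustive, so that every such curve $C$ falls into one of the listed classes. The genuinely non-routine ingredients are the inequality $h_p\le 3$ and the combinatorial enumeration in the case $h_p=0$; once these are settled, the theorem is merely the union of the already established propositions.
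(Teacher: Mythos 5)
Your overall architecture --- dispatch $h_p\in\{1,2,3\}$ to the three preceding propositions and settle the remaining case $h=0$, $e=4$ by a direct enumeration of marked toiles --- is the same as the paper's, and your numerical bookkeeping ($e+h=4$, a single inner nodal {\tv} which is positive because $\sigma(C)=0$) is correct. The genuine gap is in the residual case $h=0$. The paper does not pick an arbitrary elliptic node as base point: it first applies Rokhlin's complex orientation formula for nodal curves, which with $l=4$ ovals, no injective pairs and $\sigma(C)=0$ gives $\Lambda^+(C)-\Lambda^-(C)=4-6+0=-2$, hence exactly one positive and three negative elliptic nodes. The base point is then taken to be the unique positive one. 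This step is doing real work: it makes the marked toile canonically attached to the curve (so one does not have to argue that the classification is independent of the choice of base point), and it fixes the signs of the three remaining isolated nodal {\tvs} (all negative), which is precisely the datum that cuts the enumeration down to a single equivalence class. Your version, which chooses an unspecified elliptic node and then enumerates toiles ``compatible with the type~$\mathrm{I}$ labelling,'' is underdetermined at exactly this point; you would either enumerate too many toiles or be unable to match them bijectively with rigid isotopy classes.

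On your bound $h_p\le 3$: you are right that the dispatch is exhaustive only if $h_p=4$ (hence $h=4$, $e=0$) is excluded, and the paper is indeed silent about this. However, your proposed justification --- that four ovals all arising from hyperbolic smoothings force the convex quadrangular disposition and hence $\sigma(C)=2$ --- is asserted rather than proved, and you yourself flag it as the step requiring the most care. Rokhlin's formula alone does not rule out $h=4$ with $\sigma(C)=0$ (it is consistent with one positive and three negative ovals and no nests), so the exclusion must come from a finer positional argument that you have not supplied. As written, this part of your proof is a promissory note rather than an argument; combined with the missing sign computation above, the proposal does not yet close the case it sets out to handle.
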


\begin{proof} 
It remains to consider curves with $h=0$, and hence $e=4$.
By Rokhlin’s complex orientation formula for nodal curves,
the curve~$C$ has exactly $1$ positive elliptic nodal 
point and $3$ negative elliptic nodal points. 
Pick the positive elliptic nodal point $p\in\R C$ of~$C$ as a base point. Then, the marked toile $(D_C,v_1,v_2)$ associated to $(C,p)$ is a type~$\mathrm{I}$ toile with $3$ real isolated nodal {\tvs}, which are all negative, and two inner nodal {\tvs}, among which one is the nodal {\tv} $v_1=v_2$ and the remaining one is a positive inner nodal {\tv}.
Then, enumerating all equivalence classes of marked toiles leads to the fact that there is only one equivalence class of marked dessins satisfying these restrictions. 
\end{proof}

\begin{figure}[h] 
\centering
\includegraphics[width=1in]{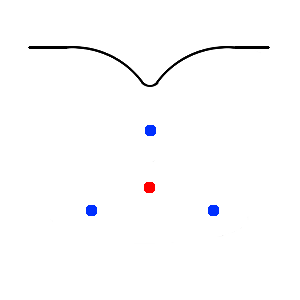}
\caption{Rigid isotopy class of $(M-2)$-perturbable nodal rational plane~$(M-2)$-curves of degree~$5$ with~$h=0$ and $\sigma=0$.}
\label{fig:M2M2hp0}
\end{figure}

\subsection{$(M-4)$-perturbable curves}

\begin{lm}\label{lm:h1}
Let~$C$ be a nodal rational curve of degree~$5$ in $\RPP$. If~$C$ is $(M-4)$-perturbable and $\sigma(C)=0$, then $h\geq1 $.
\end{lm}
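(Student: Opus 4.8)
The plan is to argue by contradiction: I assume $h=0$ and derive a contradiction from Rokhlin's complex orientation formula for nodal plane curves of odd degree. Note that $C$ may be fed into this formula because being $(M-4)$-perturbable presupposes, by Definition~\ref{df:Mdpert}, that $C$ is of type~$\mathrm{I}$.

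First I would record the real topology of $C$ under the assumption $h=0$. By the description recalled at the start of Section~\ref{ch:deg5}, the set $\R C$ is the disjoint union of a generically immersed circle and $e$ solitary (elliptic) real points, with $b_0(\R C)=1+e$ and $b_1(\R C)=1+h$. When $h=0$ the immersed circle has no self-intersections, hence is an embedded one-sided component (the pseudoline, since $\deg C=5$ is odd), and the $e$ elliptic nodes are isolated real points. A type~$\mathrm{I}$ perturbation turns each elliptic node into a small oval and keeps the pseudoline, producing $e+1$ connected components. Since $C$ is $(M-4)$-perturbable, this number equals $\tfrac{(5-1)(5-2)}{2}+1-4=3$, so $e=2$ and the perturbation has exactly two ovals.

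The key geometric point is that these two ovals cannot form an injective pair. Each oval is the result of a small perturbation of one of two distinct solitary points $p_1\neq p_2$, so each bounds a small disk lying in a neighborhood of its point; as $p_1$ and $p_2$ are distinct, neither oval is contained in the disk bounded by the other. Hence $\Pi^+(C)=\Pi^-(C)=0$, and with only two ovals there are no injective pairs at all.

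Finally I would substitute these data into Rokhlin's formula. With $d=5$, $k=2$, number of ovals $l=2$, $\sigma(C)=0$, and $\Pi^+(C)=\Pi^-(C)=0$, the formula reads
\[
\Lambda^+(C)-\Lambda^-(C)=l-k(k+1)+\sigma(C)=2-6+0=-4.
\]
But $\Lambda^+(C)+\Lambda^-(C)=2$ is the total number of ovals, so $\lvert\Lambda^+(C)-\Lambda^-(C)\rvert\le 2$, contradicting $\Lambda^+(C)-\Lambda^-(C)=-4$. This contradiction forces $h\ge 1$. The only step requiring genuine care is the injective-pair claim, namely justifying that two ovals arising from perturbing distinct solitary points are never nested; once that is in hand, the remainder is a direct magnitude comparison inside Rokhlin's formula.
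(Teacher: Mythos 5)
Your proof is correct and follows essentially the same route as the paper: both arguments reduce to the observation that when $h=0$ the two ovals of the type~$\mathrm{I}$ perturbation arise from distinct solitary points and therefore cannot be nested, which is incompatible with what a type~$\mathrm{I}$ $(M-4)$-quintic must look like. The only difference is presentational — the paper cites the known nest structure of non-singular type~$\mathrm{I}$ $(M-4)$-quintics, while you unpack that fact by running Rokhlin's complex orientation formula directly on the nodal curve.
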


\begin{proof} 
The fact that $\sigma(C)=0$ implies that the curve~$C$ can be perturbed to a non-singular type~$\mathrm{I}$ curve $C'$. The curve $C'$ being a non-singular $(M-4)$-curve of type~$\mathrm{I}$ has a nest of two ovals.
If $h=0$, then every nodal point of~$C$ is elliptic, producing a simple oval in $C'$ and no nest can be produced in this manner.
\end{proof}

\begin{thm}
Let~$C$ be a nodal rational $M$-curve of degree~$5$ in $\RPP$. If~$C$ is $(M-4)$-perturbable, its rigid isotopy class is determined by its isotopy class.
\end{thm}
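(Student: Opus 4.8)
The plan is to reduce the statement to a finite enumeration of marked toiles, exactly as in the $(M-2)$-perturbable case, and then to read off from that enumeration that the isotopy type is already a complete rigid isotopy invariant. First I would fix the numerical data. A nodal rational curve of degree $5$ has $\binom{4}{2}=6$ nodes, and since $C$ is an $M$-curve, $b_*(\R C)=2+e+h$ equals $b_*(\C C)=8$; hence $e+h=6$ and all six nodes are real. In particular $C$ has no imaginary nodes, so $\sigma(C)=0$, and Lemma~\ref{lm:h1} yields $h\geq1$. As the type~$\mathrm{I}$ perturbation of $C$ is a nest of two ovals together with the pseudoline, one checks that at least one hyperbolic node lies on the pseudoline of the perturbation; fix such a node $p\in\R C$ as base point.

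Next I would pass to the marked toile $(D_C,v_1,v_2)$ associated with $(C,p)$. Since $p$ is hyperbolic and every remaining node of $C$ is real, $D_C$ is a type~$\mathrm{I}$ toile of degree $9$ carrying exactly $7$ real nodal \tvs, two of which are the markings $v_1,v_2$. Because the type~$\mathrm{I}$ perturbation of $C$ has $l=2$ ovals, Corollary~\ref{coro:lpert} shows that the type~$\mathrm{I}$ perturbation of $(D_C,v_1,v_2)$ has $l+1=3$ ovals, of which exactly $l-1=1$ is a \emph{true oval}. This single free oval, together with the two markings and the type~$\mathrm{I}$ labelling of the regions, is what rigidifies the configuration.

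The computational heart is the enumeration of the admissible $(D_C,v_1,v_2)$ up to equivalence of marked dessins. After Proposition~\ref{prop:deep} and Corollary~\ref{cr:deepbf} I may assume $\deep(D_C)\leq1$ and that $D_C$ is bridge-free, so Proposition~\ref{prop:decomptoi} applies (here $\deg D_C=9\geq6$): the toile decomposes along dotted generalized cuts and solid axes into cubic blocks of type~$\mathrm{I}$ taken from Figure~\ref{fig:cubiques}. As the labelling is of type~$\mathrm{I}$, these cuts must be coherent with the region labels, and the presence of a unique true oval leaves only a short list of admissible gluings. I would enumerate them, discard those that do not recover the prescribed nest isotopy type after the inverse birational transformations, and verify that each admissible isotopy type is realized by a single equivalence class of marked toile.

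Finally, by the one-to-one correspondence between equivalence classes of marked toiles of degree~$9$ and rigid isotopy classes of marked rational quintics, each such class determines a unique rigid isotopy class of the pair $(C,p)$, the auxiliary choice of $p$ being absorbed into the equivalence relation on marked dessins. Thus isotopic curves give the same rigid isotopy class, which is the assertion. I expect the main obstacle to be exactly this enumeration: checking that no two choices of base hyperbolic node produce inequivalent marked toiles, and that the single-true-oval constraint genuinely forbids a second rigid isotopy class within a fixed isotopy type — in contrast with the $h_p=2$, $\sigma=0$ situation of the $(M-2)$ analysis, where an extra orientation invariant was required.
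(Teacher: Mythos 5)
Your proposal follows the paper's own argument essentially verbatim: pick a hyperbolic node on the pseudoline, pass to the marked toile with $7$ real nodal vertices, apply Corollary~\ref{coro:lpert} to get a single true oval, and conclude by enumerating the admissible marked toiles. The extra details you supply (the count $e+h=6$, the nest structure forcing $e\leq 2$ hence $h\geq 4$, and the decomposition machinery of Propositions~\ref{prop:deep} and~\ref{prop:decomptoi} underlying the enumeration) are consistent with the paper and make the same implicit steps explicit.
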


\begin{proof} 
Pick a hyperbolic nodal point $p\in\R C$ belonging to the pseudoline.
Then, the marked toile $(D_C,v_1,v_2)$ associated to $(C,p)$ is a type~$\mathrm{I}$ toile with $7$ real nodal {\tvs}. 
By Corollary~\ref{coro:lpert}, the marked toile $(D_C,v_1,v_2)$ has a type~$\mathrm{I}$ perturbation with $1$ true oval. 
Then, enumerating all equivalence classes of marked toiles satisfying the aforementioned restrictions leads to the fact that every isotopy class in Figure~\ref{fig:MM4} corresponds to exactly one rigid isotopy class.
\end{proof}

\begin{figure}[h] 
\centering
\includegraphics[width=4in]{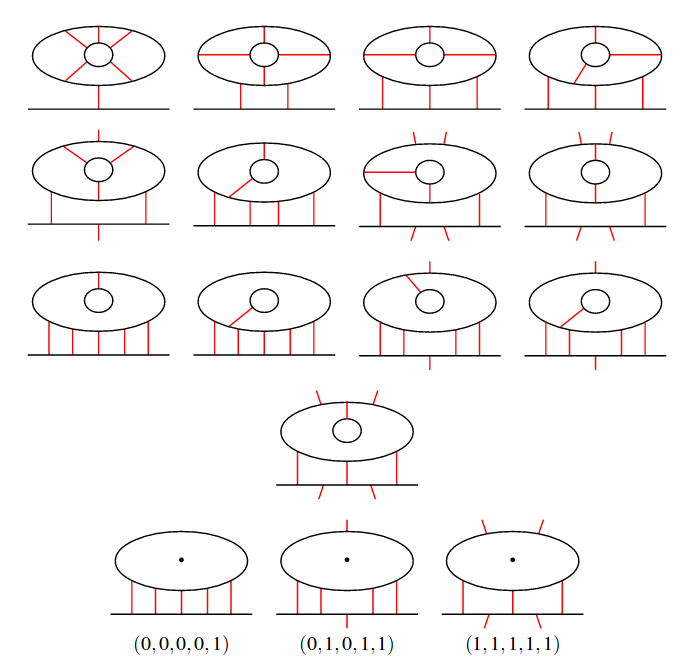}
\caption{Rigid isotopy classes of $(M-4)$-perturbable nodal rational~$M$-curves of degree~$5$ in~$\RPP$. Taken from \cite{IMR}.}
\label{fig:MM4}
\end{figure}


\begin{lm}\label{lm:hn1}
Let~$C$ be a nodal rational $(M-2)$-curve of degree~$5$ in $\RPP$. If~$C$ is $(M-4)$-perturbable, then $h\geq 1$.
\end{lm}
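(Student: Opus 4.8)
The plan is to argue by contradiction: I would assume $h=0$ and show that the type~$\mathrm{I}$ perturbation of $C$ is then forced to have strictly more connected components than an $(M-4)$-perturbable curve is allowed. The starting point is the numerical constraint on the nodes. A nodal rational quintic has exactly $6$ nodes, and since $C$ is an $(M-2)$-curve exactly one complex-conjugate pair among them is imaginary; hence the number of real nodes is $e+h=4$, the same relation already used in the analysis of $(M-2)$-perturbable curves. Under the assumption $h=0$ this gives $e=4$.

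Next I would examine $\R C$ explicitly when $h=0$. The real point set is the union of the image of $\R\widehat{C}$ under the normalization map with the $e$ isolated real points produced by the elliptic nodes. As $h=0$, the former has no self-crossings and is therefore a single embedded circle; because $\deg C=5$ is odd, the class $[\R C]$ is the nonzero element of $H_1(\RPP;\ZZ)$, so this embedded circle is a pseudoline. A type~$\mathrm{I}$ perturbation then leaves the pseudoline unchanged (there being no hyperbolic nodes to resolve) and replaces each of the four isolated elliptic points by its own small oval, so the type~$\mathrm{I}$ perturbation has at least $1+e=5$ connected components.

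Finally I would compare this with Definition~\ref{df:Mdpert}: for $d=5$ an $(M-4)$-perturbable curve has a type~$\mathrm{I}$ perturbation with $\tfrac{(d-1)(d-2)}{2}+1-4=3$ connected components, contradicting the count of at least $5$ just obtained. Hence $h\geq1$. The step I expect to require the most care — and the reason this argument differs from that of Lemma~\ref{lm:h1} — is the component count: rather than appealing to the nest structure of a smooth type~$\mathrm{I}$ $(M-4)$-curve (which is unavailable here, since the imaginary pair of nodes need not be removable), I rely only on the crude relation $e+h=4$, so I must verify that the pseudoline and the $e$ ovals are genuinely distinct components. This holds because the elliptic nodes are isolated real points disjoint from the immersed circle, each perturbing locally to a separate oval, and it is consistent with the bound $l\leq e+h+1$ from the proof of Lemma~\ref{lm:maxnodes}.
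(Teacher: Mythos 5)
Your proof is correct and uses the same ingredients as the paper's own one-line argument: the relation $e+h=4$ for an $(M-2)$-curve, the fact that with $h=0$ the type~$\mathrm{I}$ perturbation consists of the pseudoline plus one oval per elliptic node, and the component count $3$ forced by $(M-4)$-perturbability. You merely run the contradiction in the opposite direction (from $e=4$ to a count of $5$ components, rather than from $3$ components to $e=2$), so this is essentially the paper's proof.
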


\begin{proof} 
If $h=0$, the type~$\mathrm{I}$ perturbation of~$C$ has two ovals arising from elliptic nodes, in which case $h+e=2$, contradicting the fact that for a nodal rational plane curve $(M-2)$-curve of degree~$5$ we have $h+e=4$.
\end{proof}

\begin{thm}
Let~$C$ be a nodal rational $(M-2)$-curve of degree~$5$ in $\RPP$. If~$C$ is $(M-4)$-perturbable, its rigid isotopy class is determined by its isotopy class and the number $\sigma(C)$.
\end{thm}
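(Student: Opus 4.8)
The plan is to imitate the strategy used for the preceding perturbability classes: mark a suitable real nodal point, pass to the associated marked toile, and reduce the rigid isotopy classification to a combinatorial enumeration of equivalence classes of marked toiles, with $\sigma(C)$ recorded as an extra discrete datum. First I would fix the numerics. As $C$ is a nodal rational $(M-2)$-curve of degree $5$, one has $e+h=4$ and $C$ carries exactly one pair of complex conjugate imaginary nodes; being $(M-4)$-perturbable, its type~$\mathrm{I}$ perturbation has $l=2$ ovals forming a nest of depth two. By Lemma~\ref{lm:hn1} we have $h\geq1$, and since the real part of a rational curve is a single immersed circle, every hyperbolic node is a self-crossing of that circle; I would choose one such node $p\in\R C$ lying on a branch of the pseudoline of the type~$\mathrm{I}$ perturbation as the base point. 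The invariant $\sigma(C)\in\{0,2\}$ records whether the conjugate pair of imaginary nodes obstructs a type~$\mathrm{I}$ smoothing of the real part; being defined through the normalization, it is a rigid isotopy invariant and may therefore enter the classification.

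Next I would form the marked toile $(D_C,v_1,v_2)$ of $(C,p)$ and read off its combinatorial type. Removing $p$ and performing the two Nagata transformations at the real tangent nodes, the associated proper trigonal curve $C_X\subset\Sigma_3$ carries the $e+(h-1)=3$ remaining real nodes together with the two marking nodes $n_{T_1},n_{T_2}$, hence $D_C$ has $5$ real nodal {\tvs} (two of which are the markings $v_1,v_2$), while the single conjugate pair of imaginary nodes yields exactly one inner nodal {\tv}. By Proposition~\ref{prop:sigmatrig} this inner nodal {\tv} is positive precisely when $\sigma(C)=0$ and negative precisely when $\sigma(C)=2$. Since $l=2$, Corollary~\ref{coro:lpert} gives that the type~$\mathrm{I}$ perturbation of $(D_C,v_1,v_2)$ has $l+1=3$ ovals, exactly $l-1=1$ of which is a true oval. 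By Theorem~\ref{th:correpondance2} and the marked-toile formalism, rigid isotopy classes of the marked curves $(C,p)$ are in bijection with equivalence classes of marked toiles, so it suffices to classify the latter under these constraints.

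The heart of the proof is then the enumeration. Using Proposition~\ref{prop:deep} and Proposition~\ref{prop:decomptoi} I would decompose $D_C$, along dotted generalized cuts or a solid axe, into cubic building blocks of the types catalogued in Section~\ref{sec:cubics}, compatibly with the unique type~$\mathrm{I}$ labeling and with the prescribed counts: five real nodal {\tvs}, two markings, one true oval, and a single inner nodal {\tv} of the appropriate sign. Running through the admissible gluings and reducing modulo the equivalence of marked toiles (very weak equivalence together with passing singular fibers through tangent lines), I expect each isotopy type of $\R C$, once the sign of the inner nodal {\tv} is fixed, to determine a single equivalence class of marked toiles; performing the inverse birational transformations then recovers the plane curve. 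Because that sign is exactly $\sigma(C)$, this yields that the isotopy class of $C$ and the value of $\sigma(C)$ together determine the marked toile up to equivalence, and hence the rigid isotopy class of $C$.

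The main obstacle is this enumeration, and in particular the verification that $\sigma(C)$ is precisely the missing invariant: one must check both that for a fixed isotopy type the two signs of the inner nodal {\tv} do occur as genuinely inequivalent marked toiles, so that $\sigma$ cannot be omitted, and that, after fixing the sign, the equivalence relation collapses all remaining freedom so that no further discrete invariant survives. A secondary point to make rigorous is the independence of the construction from the choice of hyperbolic base point on the pseudoline, namely that any two admissible markings produce equivalent marked toiles and therefore the same rigid isotopy class.
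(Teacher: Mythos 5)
Your proposal follows essentially the same route as the paper: choose a hyperbolic node on the pseudoline (its existence guaranteed by Lemma~\ref{lm:hn1}), pass to the marked toile with $5$ real nodal vertices, two markings and one inner nodal vertex, invoke Corollary~\ref{coro:lpert} to get exactly one true oval, and reduce the theorem to an enumeration of equivalence classes of marked toiles, with the sign of the inner nodal vertex (equivalently $\sigma(C)$, via Proposition~\ref{prop:sigmatrig}) as the extra discrete datum. The paper's own proof is just a terser version of this same argument, leaving implicit the points you flag as needing verification (that the two signs give inequivalent toiles and that the enumeration collapses onto one class per isotopy type and sign), so your write-up is a faithful, slightly more explicit rendering of the intended proof.
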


\begin{proof} 
Pick a hyperbolic nodal point $p\in\R C$ belonging to the pseudoline.
Then, the marked toile $(D_C,v_1,v_2)$ associated to $(C,p)$ is a type~$\mathrm{I}$ toile with $5$ real nodal {\tvs}, among which two are markings, and with an inner nodal {\tv}.
By Corollary~\ref{coro:lpert}, the marked toile $(D_C,v_1,v_2)$ has a type~$\mathrm{I}$ perturbation with $1$ true oval. 
Then, enumerating all equivalence classes of marked toiles satisfying the aforementioned restrictions leads leads to the fact that every isotopy class in Figure~\ref{fig:M2M4} corresponds to exactly one rigid isotopy class.
\end{proof}

\begin{figure}[h] 
\centering
\includegraphics[width=4in]{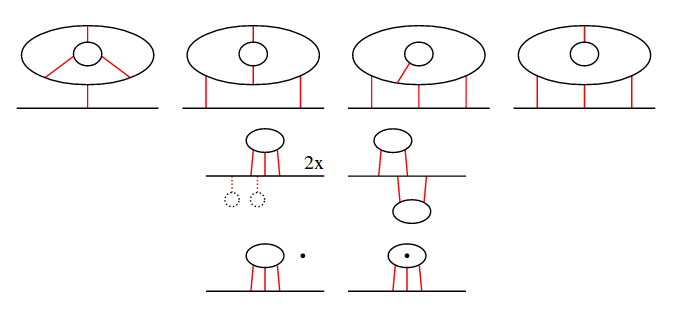}
\caption{Rigid isotopy classes of $(M-4)$-perturbable nodal rational~$(M-2)$-curves of degree~$5$ in~$\RPP$. Taken from \cite{IMR}.}
\label{fig:M2M4}
\end{figure}


\begin{thm}
Let~$C$ be a nodal rational $(M-4)$-curve of degree~$5$ in~$\RPP$. If~$C$ is $(M-4)$-perturbable, its rigid isotopy class is determined by its isotopy class and the sign of its elliptic nodal points.
\end{thm}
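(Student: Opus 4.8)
The plan is to mirror the scheme of the preceding theorems of this subsection, organising the argument by the number $h$ of hyperbolic nodal points. Since $C$ is a nodal rational $(M-4)$-curve, the identity $b_*(\mathbb{R}C)=2+e+h$ together with $b_*(\mathbb{C}C)=8$ gives $e+h=2$, so there are three subcases $(e,h)\in\{(0,2),(1,1),(2,0)\}$; the remaining two nodes of $C$ form two complex conjugate pairs. Being $(M-4)$-perturbable (Definition~\ref{df:Mdpert}), the type~$\mathrm{I}$ perturbation of $C$ has exactly $3$ connected components, i.e.\ the pseudoline and $l=2$ ovals, and this is the value of $l$ that I will feed into Corollary~\ref{coro:lpert}. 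I will treat the signs of the elliptic nodes as rigid-isotopy data: by Proposition~\ref{prop:sigmatrig} the quantity $\sigma$ is read off from the labelled toile, and the signs of the nodes are recorded by the type~$\mathrm{I}$ labelling of the regions (Theorem~\ref{df:type}), so they are invariants of the equivalence class of the marked toile.

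For the subcases $h\ge1$, namely $(0,2)$ and $(1,1)$, I would pick a hyperbolic nodal point $p\in\mathbb{R}C$ lying on the pseudoline of a type~$\mathrm{I}$ perturbation. The associated marked toile $(D_C,v_1,v_2)$ is then a type~$\mathrm{I}$ toile carrying the two markings together with one further real nodal vertex and two inner nodal vertices (the two conjugate pairs); by Corollary~\ref{coro:lpert} its type~$\mathrm{I}$ perturbation has $l+1=3$ ovals of which $l-1=1$ is a true oval. Using the decomposition of toiles into cubic blocks (Proposition~\ref{prop:decomptoi} and the cubic classification of Figure~\ref{fig:cubiques}) I would enumerate all equivalence classes of such marked toiles and then realise the inverse birational transformations to recover the plane curves $\mathbb{R}C\subset\RPP$. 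As in the earlier theorems, each equivalence class of marked toiles gives a single rigid isotopy class. When $(e,h)=(0,2)$ there are no elliptic nodes and the isotopy class alone determines the rigid isotopy class; when $(e,h)=(1,1)$ the sign of the unique elliptic node---i.e.\ whether the oval it produces in the type~$\mathrm{I}$ perturbation is positive or negative---is the extra datum distinguishing rigid isotopy classes within an isotopy type.

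The genuinely new case is $h=0$, $e=2$. By Lemma~\ref{lm:h1} this forces $\sigma(C)\neq0$, so both conjugate pairs obstruct a nonsingular type~$\mathrm{I}$ perturbation; applying Rokhlin's complex orientation formula for nodal curves pins down the admissible sign patterns of the two elliptic nodes. I would then take an elliptic node $p$ as base point: its tangent lines are complex conjugate, so the markings collapse to a single inner nodal vertex $v_1=v_2$, and $D_C$ consists of this vertex, the remaining elliptic node as an isolated nodal vertex, and the two inner nodal vertices of the conjugate pairs. Enumerating the type~$\mathrm{I}$ toiles with these data through the cubic decomposition, exactly as the pair $D_+,D_-$ was produced in the proof of Proposition~\ref{prop:maxell}, I expect to find that representatives of a fixed isotopy type are distinguished precisely by whether the elliptic nodes perturb to positive or to negative ovals, and that each admissible sign pattern yields one rigid isotopy class.

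The main obstacle is the final matching step, again as in the elliptic case of Proposition~\ref{prop:maxell}: one must show simultaneously that the signs of the elliptic nodes form a complete invariant and that they are necessary. Necessity follows from the invariance of the type~$\mathrm{I}$ labelling and of the induced complex orientation under rigid isotopy, so that the sign of each elliptic node (the sign of its perturbation oval) is preserved, together with the invariance of $\sigma$ (twice the number of negative inner nodal vertices) under elementary, weak and very weak equivalence; hence two curves of the same isotopy type but different node signs cannot give equivalent marked toiles. Completeness is the laborious half: it requires the full case-by-case enumeration of the marked toiles, checking that once the isotopy type and the signs are fixed the decomposition into type~$\mathrm{I}$ cubic dessins is unique up to the moves generating equivalence, and confirming that the sign patterns surviving Rokhlin's formula are exactly those realised. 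Carrying out this enumeration cleanly, and in particular ruling out spurious extra classes, is where the bulk of the work lies.
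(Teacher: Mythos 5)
Your proposal follows essentially the same scheme as the paper's proof: the paper organises the cases by $\sigma(C)\in\{0,2,4\}$ rather than by $(e,h)$, but in each case it chooses the same base point (a hyperbolic node on the pseudoline when $h\geq1$, an elliptic node selected by its sign when $h=0$), invokes Corollary~\ref{coro:lpert} to get exactly one true oval, uses Rokhlin's complex orientation formula to fix the admissible signs of the elliptic nodes, and concludes by the same enumeration of equivalence classes of marked toiles. One small slip to correct: when $h=0$, Lemma~\ref{lm:h1} only gives $\sigma(C)\in\{2,4\}$, so it is not true that \emph{both} conjugate pairs must obstruct a non-singular type~$\mathrm{I}$ perturbation --- in the paper's $\sigma(C)=2$ subcase exactly one inner nodal vertex is negative (and, by Rokhlin's formula, both elliptic nodes are then negative), while only the $\sigma(C)=4$ subcase has two negative inner nodal vertices.
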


\begin{proof}
Let us consider all the possible values that $\sigma(C)$ can take.

{\it Case 1:} let us start with the case when $\sigma(C)=0$.
By Lemma~\ref{lm:h1}, we have $h\geq1$. Pick a hyperbolic nodal point $p\in\R C$ belonging to the pseudoline. Then, the marked toile $(D_C,v_1,v_2)$ associated to $(C,p)$ is a type~$\mathrm{I}$ toile with $3$ real nodal {\tvs}, among which two are markings, and with $2$ positive inner nodal {\tvs}.
By Corollary~\ref{coro:lpert}, the marked toile $(D_C,v_1,v_2)$ has a type~$\mathrm{I}$ perturbation with $1$ true oval. 
Then, enumerating all equivalence classes of marked toiles satisfying the aforementioned restrictions leads leads to the fact that every isotopy class in Figure~\ref{fig:M4M4s0} corresponds to exactly one rigid isotopy class.

{\it Case 2:}
In the case when $\sigma(C)=2$, let us suppose $h\geq1$.
Pick a hyperbolic nodal point $p\in\R C$ belonging to the pseudoline. Then, the marked toile $(D_C,v_1,v_2)$ associated to $(C,p)$ is a type~$\mathrm{I}$ toile with $3$ real nodal {\tvs}, among which two are markings, and with $1$ positive inner nodal {\tv} and $1$ negative inner nodal {\tv}.
By Corollary~\ref{coro:lpert}, the marked toile $(D_C,v_1,v_2)$ has a type~$\mathrm{I}$ perturbation with $1$ true oval. 

Otherwise, we have $h=0$, and consequently $e=2$.
By Rokhlin’s complex orientation formula for nodal curves the curve~$C$ has exactly $2$ negative elliptic nodal points. 
Let us pick $p\in\R C$ a negative elliptic nodal point of~$C$ as a base point. Then, the marked toile $(D_C,v_1,v_2)$ associated to $(C,p)$ is a type~$\mathrm{I}$ toile with $1$ real isolated nodal {\tv}, which is negative, and $3$ inner nodal {\tvs}, among which one is the nodal {\tv} $v_1=v_2$ and the remaining two are one positive and one negative inner nodal {\tvs}.
Then, enumerating all equivalence classes of marked toiles satisfying the aforementioned restrictions leads leads to the fact that every isotopy class in Figure~\ref{fig:M4M4s2} corresponds to exactly one rigid isotopy class.

{\it Case 3:}
In the case when $\sigma(C)=4$, let us suppose $h\geq1$.
Pick a hyperbolic nodal point $p\in\R C$ belonging to the pseudoline. Then, the marked toile $(D_C,v_1,v_2)$ associated to $(C,p)$ is a type~$\mathrm{I}$ toile with $3$ real nodal {\tvs}, among which two are markings, and with $2$ negative inner nodal {\tvs}.
By Corollary~\ref{coro:lpert}, the marked toile $(D_C,v_1,v_2)$ has a type~$\mathrm{I}$ perturbation with $1$ true oval. 

Otherwise we have that $h=0$, and consequently $e=2$.
By Rokhlin’s complex orientation formula for nodal curves the curve~$C$ has exactly $1$ positive elliptic nodal point and$1$ negative elliptic nodal point. 
Let us pick $p\in\R C$ the positive elliptic nodal point of~$C$ as a base point. Then, the marked toile $(D_C,v_1,v_2)$ associated to $(C,p)$ is a type~$\mathrm{I}$ toile with $1$ real isolated nodal {\tv}, which is negative, and $3$ inner nodal {\tvs}, among which one is the nodal {\tv} $v_1=v_2$ and the remaining two are negative inner nodal {\tvs}.
Then, enumerating all equivalence classes of marked toiles satisfying the aforementioned restrictions leads leads to the fact that every isotopy class in Figure~\ref{fig:M4M4s4} corresponds to exactly one rigid isotopy class.
\end{proof}

\begin{figure}[h] 
\centering
\begin{subfigure}{\linewidth}
\centering 
\includegraphics[width=3in]{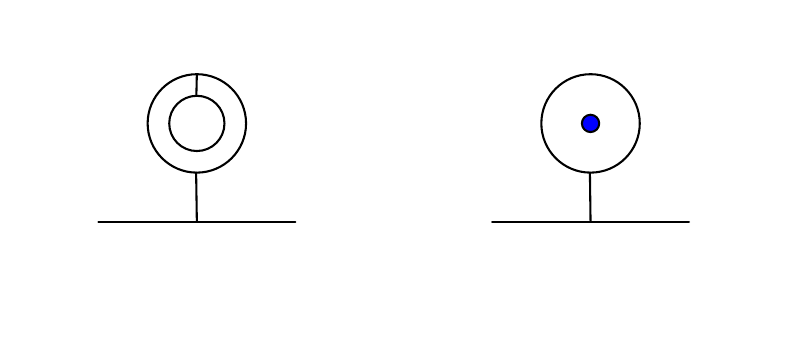}
\caption{Rigid isotopy classes of nodal rational $(M-4)$-perturbable curves of degree~$5$ in~$\RPP$ with~$\sigma=0$.}
\label{fig:M4M4s0}
\end{subfigure}
\begin{subfigure}{\linewidth} 
\centering
\includegraphics[width=4in]{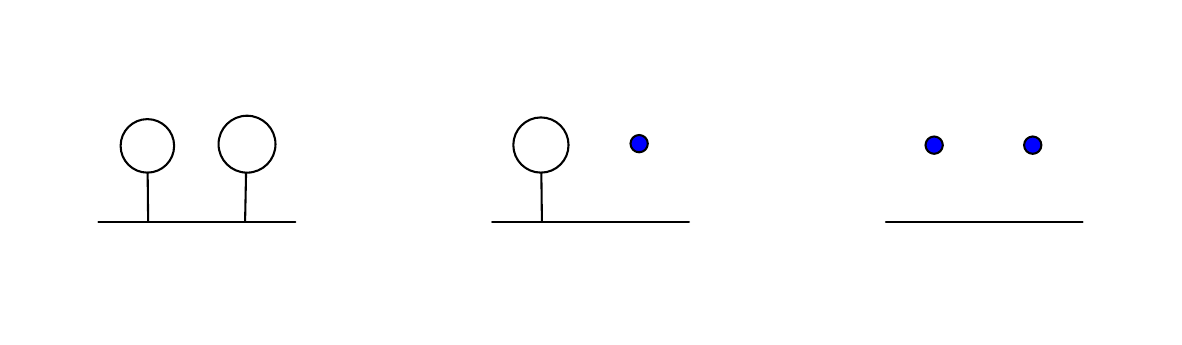}
\caption{Rigid isotopy classes of nodal rational $(M-4)$-perturbable curves of degree~$5$ in~$\RPP$ with~$\sigma=2$.}
\label{fig:M4M4s2}
\end{subfigure}
\begin{subfigure}{\linewidth} 
\centering
\includegraphics[width=4in]{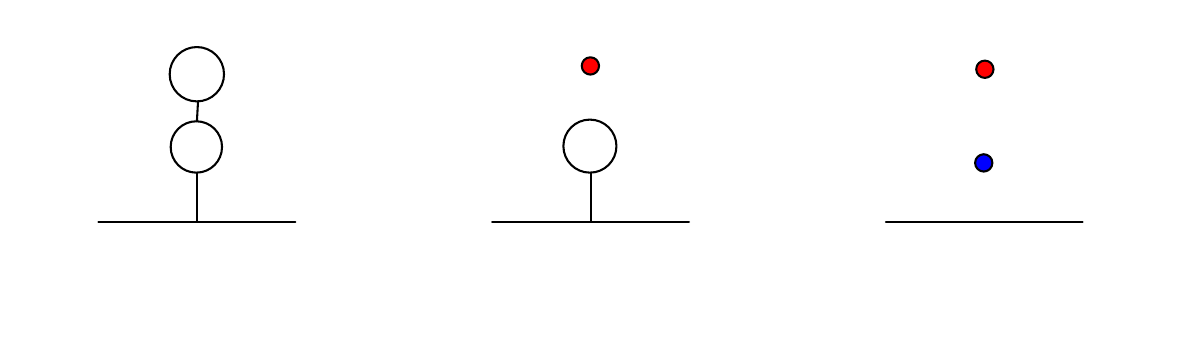}
\caption{Rigid isotopy classes of nodal rational $(M-4)$-perturbable curves of degree~$5$ in~$\RPP$ with~$\sigma = 4$.}
\label{fig:M4M4s4}
\end{subfigure}
\end{figure}

\subsection{$(M-6)$-perturbable curves}

\begin{thm}
There is a unique rigid isotopy class of rational $(M-6)$-curves of degree~$5$ in $\RPP$.
\end{thm}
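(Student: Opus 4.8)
The plan is to first pin down the real scheme. Since an $(M-6)$-curve of degree $5$ satisfies $b_*(\R C)=8-6=2$, the relation $b_*(\R C)=2+e+h$ forces $e=h=0$: the curve has no real node, all six nodes occur in three pairs of complex conjugate imaginary nodes, and $\R C$ is a smoothly embedded pseudoline. Moreover, since $C$ is rational its normalization is $\mathbb{P}^1$, for which $\C\widetilde{C}\setminus\R\widetilde{C}$ is automatically disconnected, so $C$ is of type $\mathrm{I}$ without any further hypothesis. The essential new difficulty, compared with all previous cases, is that $C$ has \emph{no} real nodal point to serve as a base point, so the marked-toile correspondence used throughout this section is not directly available.

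Next I would show that no discrete invariant is left to separate classes. Applying Rokhlin's complex orientation formula for nodal plane curves \cite{IMR} to the type $\mathrm{I}$ perturbation of $C$, which has no ovals (so $\Lambda^{\pm}=\Pi^{\pm}=0$ and $l=0$), together with $k=2$, gives $0=l-k(k+1)+\sigma(C)=-6+\sigma(C)$, whence $\sigma(C)=6$ is forced. Equivalently, by Proposition~\ref{prop:sigmatrig} the associated toile $D$ satisfies $\sigma(D)=6=c(D)$: every one of the three inner nodal $\times$-vertices is negative, and none of the three conjugate pairs of nodes admits a local type $\mathrm{I}$ smoothing. Thus both invariants used in this paper, $b_*(\R C)$ and the total Betti number of the type $\mathrm{I}$ perturbation, attain their extremal value, and the complex orientation data are rigidly determined, so there is no combinatorial freedom to record.

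It then remains to upgrade the unique isotopy type, obtained in \cite{IMR}, to a single rigid isotopy class, i.e.\ to prove that the equisingular stratum $\mathcal{M}$ of real rational quintics with three conjugate pairs of nodes and embedded real part is connected. The route I would take is a reduction to the already classified strata: deform $C$ equivariantly through a single codimension-one wall along which one conjugate pair of imaginary nodes is pulled onto $\RPP$, producing a curve that carries a real nodal point; marking that point places us in the $(M-4)$-perturbable situation, whose rigid isotopy classes have been completely enumerated above, and pulling back across the wall identifies the class of $C$. A more self-contained alternative is to normalize $\R C$ to a standard pseudoline by an element of $\mathrm{PGL}_3(\R)$ and an ambient isotopy, and then to move the three conjugate pairs of nodes to a normal position inside the connected space of degree $5$ real parametrizations $\mathbb{P}^1\to\mathbb{P}^2$.

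The hard part will be this last step: controlling the deformation so that it stays inside the stratum, that is, so that the three pairs of nodes remain distinct and imaginary and $\R C$ remains nonsingular throughout, without crossing any discriminant wall that would create a real node or collide two nodes. In the reduction approach the delicate point is equivalent: one must check that the chosen wall-crossing is reversible and that every $(M-6)$-curve reaches the \emph{same} classified class, so that no spurious extra component of $\mathcal{M}$ is produced. Establishing that connectedness is the entire content of the theorem, everything else being forced.
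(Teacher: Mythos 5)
Your reduction of the problem is correct as far as it goes: $e=h=0$ is forced, so all six nodes form three conjugate imaginary pairs, $\R C$ is an embedded pseudoline, and (as you note via Rokhlin's formula, a computation the paper does not even need) $\sigma(C)=6$ is determined. But you then state yourself that ``establishing that connectedness is the entire content of the theorem,'' and that content is exactly what your proposal does not supply. You offer two candidate strategies --- a wall-crossing that pulls a conjugate pair of nodes onto $\RPP$ so as to reduce to the $(M-4)$-perturbable classification, and a normalization of the parametrization --- but neither is carried out, and the first one is genuinely problematic: crossing the discriminant wall changes the equisingular stratum, and to conclude you would have to show both that every $(M-6)$-curve admits such a degeneration, and that the resulting adjacency forces all components of the $(M-6)$-stratum to meet the \emph{same} neighbouring class from the \emph{same} side; this is precisely the kind of chamber-adjacency analysis that does not come for free and that you do not perform.

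The paper's argument is entirely different and is the part you are missing. Fix the six nodes $p_1,\dots,p_6$ of a given $(M-6)$-curve $C_0$; by B\'ezout they are in general position and in particular do not lie on a conic. The quintics singular at these six points form a projective plane inside the $20$-dimensional space of quintics, and inside this plane the curves more degenerate than $C_0$ form a subset of codimension $2$: an extra node would have to be real, and would force the curve to split as a rational cubic plus a conic with empty real part passing through the six points, contradicting general position. Hence the equisingular locus in this net is connected. One then degenerates $C_0$ inside the net to a reducible curve consisting of a real line through the conjugate pair $p_5,\bar p_5$ and two complex conjugate conics through the remaining nodes, and concludes from the connectedness of the space of such reducible configurations. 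None of this linear-system machinery appears in your proposal, so the proof is incomplete at its decisive step.
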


This rigid isotopy class is shown in Figure~\ref{fig:M6}.

\begin{proof}
Let $C_0$ be a rational 
$(M-6)$-curve 
of degree~$5$ in~$\RPP$.
We have $h = e = 0$, and therefore $c(C_0) = 6$. 
Due to the B\'ezout theorem, the six nodal points $p_1, p_2,\dots,p_6$ of $C_0$ are in general position. In particular, they do not belong to a conic.
Let us consider the linear system formed by the curves of degree~$5$ with singularities at $p_1, p_2,\dots,p_6$. 
This is a plane in the space of quintic curves, which has dimension~$20$.
In this linear system, the curves which are more degenerated than $C_0$ form a codimension~$2$ subset. 
Indeed, if a curve~$C$ in this linear system has an extra nodal point, then this nodal point must be real. 
In this case, such a curve~$C$ is a reducible curve whose components are a rational cubic and a conic with empty real point set, which contradicts the fact that $p_1, p_2,\ldots,p_6$ do not belong to a conic.
Therefore, a degeneration of $C_0$ in this linear system must have at least two additional nodal points or additional singularities of other kind.
Hence, in this linear system, every degeneration is path-connected to $C_0$. 

On the other hand, among the degenerations, there is a reducible curve $C_0'$ formed by a real line and two complex conjugated conics such that the real line passes through the pair of complex conjugated points $p_5$ and $p_6$, one conic passes through the points $p_1, \ldots, p_5$, and the complex conjugated conic passes through the points $p_1, \ldots , p_4, p_6$. 
Thus, the statement of the theorem follows from the fact that any two real reducible curves of this kind
(each of the curves being formed by a real line and two complex conjugated conics without real intersection points) 
can be connected within the class of such real reducible curves.
\end{proof}

\begin{figure}[h]
\centering
\includegraphics[width=1.5in]{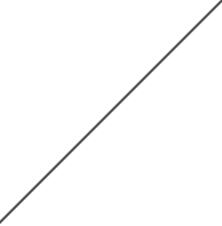}
\caption{The rigid isotopy class of nodal rational $(M-6)$-perturbable curves of degree~$5$ in~$\RPP$.}
\label{fig:M6}
\end{figure}

\bibliographystyle{plain} 
\bibliography{bibliothese} 
\end{document}